\newcommand{\norm}[1]{\left|\!\left|#1\right|\!\right|}
\newcommand{\bd}{\partial}
\newcommand{\inter}{\stackrel{\circ}} 
\newcommand{\calP}{{\mathcal{P}}}
\newcommand{\calC}{\mathcal{C}}
\newcommand{\calL}{\mathcal{L}}
\newcommand{\calE}{{\mathcal{E}}}
\newcommand{\calB}{{\mathcal{B}}}
\newcommand{\calO}{\mathcal{O}}
\newcommand{\calH}{{\mathcal{H}}}
\newcommand{\bbN}{\mathbb{N}}
\newcommand{\bbH}{\mathbb{H}}
\newcommand{\bbR}{\mathbb{R}}
\newcommand{\bbZ}{\mathbb{Z}}
\newcommand{\e}{\varepsilon}
\newcommand{\ds}{\displaystyle}
\renewcommand{\phi}{\varphi}
\renewcommand{\tilde}{\widetilde}
\renewcommand{\epsilon}{\varepsilon}
\DeclareMathOperator*{\supess}{sup\,ess}
\newtheorem{theo}{Theorem}[section]
\newtheorem{lemm}[theo]{Lemma}
\newtheorem{prop}[theo]{Proposition}
\newtheorem{rema}[theo]{Remark} 
\newtheorem{defi}[theo]{Definition} 
\newtheorem{coro}[theo]{Corollary} 
\newtheorem{fact}[theo]{Fact}
\title{Regularity of entropy, geodesic currents and entropy at infinity}
\author{Barbara Schapira, Samuel Tapie}
\date{Version \today}
\begin{document}   

\maketitle

\begin{abstract}
In this work, we introduce the notion of {\em entropy at infinity}, and define a wide class of   noncompact manifolds with negative curvature, those which admit a critical gap between entropy at infinity and topological entropy. 
We call them {\em strongly positively recurrent manifolds} (SPR), and provide many examples. We show that dynamically, they behave as 
compact manifolds. In particular, they admit a finite measure of maximal entropy. 

Using the point of view of currents at infinity, we show that on these SPR manifolds
the topological entropy of the geodesic flow varies in a $C^1$-way along (uniformly) 
$C^1$-perturbations of the metric. 
This result generalizes former work of Katok (1982) and Katok-Knieper-Weiss (1991) in the compact case. 
\end{abstract}

(\footnote{MSC Classification 37A10, 37A35, 37A40, 37B40, 37D35,37D40})


\section{Introduction}
 
\subsection{Variation of the topological entropy\,: An overview}

The initial motivation of this work was to answer the following simple question. 
Consider a hyperbolic surface of finite volume and 
a smooth compact perturbation of the metric. 
Does the topological entropy of the geodesic flow vary regularly ? 
More generally, what happens for a smooth perturbation of the metric 
of a noncompact negatively curved Riemannian manifold ? 

The answer has been known on compact manifolds since almost thirty 
years \cite{KKW91,KKPW,Flaminio95}, 
and has been extended to the convex-cocompact case in \cite{Tap11}. 
A similar argument gives the regularity of the topological entropy 
for a perturbation of an Anosov flow, cf  \cite{KKW91}. 

Compactness of the underlying space is crucial in the above results, 
and no result was known untill now for manifolds 
with a non-compact non-wandering set. 
Even the case of a smooth compact  perturbation of the metric of a 
finite volume hyperbolic surface was not accessible with their arguments. 
Let us recall the two main steps of their argument to understand why.

The key step is the following inequality, due to Katok in \cite{Katok82} 
for surfaces, extended in \cite{KKW91} to all dimensions.

\begin{theo}[Katok 1982 ; Katok-Knieper-Weiss 1991]
Let $g_1, g_2$ be Riemannian metrics with negative sectional curvature 
on the same compact manifold $M$. Then the entropies of their geodesic flows satisfy
\begin{equation}\label{eq:CompEnt1}
h_{top}(g_1) \leq h_{top}(g_2)\,\times\, \int_{S^{g_1}M} \norm{v}^{g_2} d\overline m_{BM}^{g_1}(v) ,
\end{equation}
where $\ds \norm{v}^{g_2} = \sqrt{g_2(v,v)}$ and $\ds \overline m_{BM}^{g_1}$ 
is the normalized Bowen-Margulis measure on the $g_1$-unit tangent bundle $S^{g_1}M$ 
for the $g_1$-geodesic flow.
\end{theo}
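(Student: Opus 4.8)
The plan is to compare the \emph{closed geodesics} of the two metrics. Since $M$ is compact with negative sectional curvature, both geodesic flows $(\phi^{g_i}_t)$ are topologically mixing Anosov flows, so I will freely use the corresponding thermodynamic formalism: $\overline m_{BM}^{g_i}$ is the unique measure of maximal entropy; by Margulis' counting, $\frac1T\log\#\{\text{closed }g_i\text{-geodesics of }g_i\text{-length}\le T\}\to h_{top}(g_i)$; and one has large-deviation upper bounds for periodic orbits. Write $h_i=h_{top}(g_i)$, $m_1=\overline m_{BM}^{g_1}$, and $I=\int_{S^{g_1}M}\norm{v}^{g_2}\,dm_1(v)$, noting $0<I<\infty$ by compactness. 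For a closed $g_1$-geodesic $\gamma$, write $\mu_\gamma$ for the normalized Lebesgue probability measure carried by the corresponding periodic orbit of $(\phi^{g_1}_t)$ in $S^{g_1}M$.

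First I would set up the correspondence between closed geodesics. Every nontrivial conjugacy class $[\tau]\subset\pi_1(M)$ contains a unique closed $g_1$-geodesic $\gamma$ and a unique closed $g_2$-geodesic $\gamma^*$, and $\gamma\mapsto\gamma^*$ is injective. Lifting to $\widetilde M$ and using that a $g_2$-geodesic axis of $\tau$ realizes the $g_2$-translation length of $\tau$, one gets, with $\gamma$ parametrized by $g_1$-arclength,
$$\ell_{g_2}(\gamma^*)\ \le\ \int_0^{\ell_{g_1}(\gamma)}\norm{\dot\gamma(t)}^{g_2}\,dt\ =\ \ell_{g_1}(\gamma)\cdot\int_{S^{g_1}M}\norm{v}^{g_2}\,d\mu_\gamma(v).$$
In words: the $g_2$-length of $\gamma^*$ is at most the $g_1$-length of $\gamma$ times its ``$g_2$-average''. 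This step uses only negative curvature and the fact that geodesics minimize length in their homotopy class.

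The heart of the proof is then to control this $g_2$-average for \emph{most} closed $g_1$-geodesics. Fix $\e>0$ and call $\gamma$ \emph{$\e$-bad} if $\int\norm{v}^{g_2}\,d\mu_\gamma(v)>I+\e$. Since $\norm{\cdot}^{g_2}$ is continuous and $m_1$ is the \emph{unique} measure of maximal entropy, with $\int\norm{\cdot}^{g_2}\,dm_1=I$, upper semicontinuity of the metric entropy together with weak-$*$ compactness of $\{\mu:\int\norm{\cdot}^{g_2}\,d\mu\ge I+\e\}$ gives $\delta(\e):=h_1-\sup\{h_\mu(\phi^{g_1}):\int\norm{\cdot}^{g_2}\,d\mu\ge I+\e\}>0$. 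The large-deviation estimate for periodic orbits of the Anosov flow $(\phi^{g_1}_t)$ then yields $\limsup_T\frac1T\log\#\{\e\text{-bad }\gamma:\ell_{g_1}(\gamma)\le T\}\le h_1-\delta(\e)<h_1$, i.e. $\e$-bad geodesics are exponentially negligible among the $\sim e^{h_1 T}$ closed $g_1$-geodesics of length $\le T$. Running the count: for $T$ large there are at least $e^{(h_1-o(1))T}$ closed $g_1$-geodesics of $g_1$-length in $[T,T+1]$, all but $e^{(h_1-\delta(\e))T+o(T)}$ of which are not $\e$-bad, and each of these maps injectively to a closed $g_2$-geodesic $\gamma^*$ with $\ell_{g_2}(\gamma^*)\le(I+\e)(T+1)$; hence $\#\{\text{closed }g_2\text{-geodesics of length}\le(I+\e)(T+1)\}\ge\tfrac12 e^{(h_1-o(1))T}$ for large $T$. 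Taking logarithms, dividing by $T$, letting $T\to\infty$ and then $\e\to0$ gives \eqref{eq:CompEnt1}.

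The main obstacle is precisely the large-deviation step, and this is where compactness of $M$ is indispensable: it is what makes $(\phi^{g_1}_t)$ a topologically mixing Anosov flow with a well-behaved thermodynamic formalism (unique MME, Margulis counting, equidistribution and large deviations for periodic orbits). Everything else is soft — the geodesic correspondence, the length comparison, and the bi-Lipschitz bounds $c_1g_1\le g_2\le c_2g_1$ (also from compactness) which merely ensure $0<I<\infty$ and finiteness of the counts. A measure-theoretic variant replaces the periodic-orbit bookkeeping by Abramov's formula: reparametrizing $(\phi^{g_1}_t)$ by the positive function $\norm{\cdot}^{g_2}$ on $S^{g_1}M$ produces a flow $\psi$ whose entropy with respect to the renormalized Bowen--Margulis measure equals $h_1/I$, after which one argues $h_{top}(\psi)\le h_2$; but the analysis of $\psi$ rests on the very same compact thermodynamic formalism, so I expect the counting argument above to be the cleanest route.
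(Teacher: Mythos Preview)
Your proof is correct and follows essentially the original Katok--Knieper--Weiss strategy: put closed $g_1$-geodesics and closed $g_2$-geodesics in bijection via free homotopy classes, bound $\ell_{g_2}(\gamma^*)$ by the $g_2$-length of the $g_1$-geodesic $\gamma$, and use large deviations for periodic orbits of the Anosov flow to show that for most $\gamma$ this $g_2$-length is close to $\ell_{g_1}(\gamma)\cdot I$. Margulis counting on both sides then gives the inequality.

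The paper, however, takes precisely the ``measure-theoretic variant'' you sketch at the end, and makes it the main route rather than an afterthought. Instead of reparametrizing by $\norm{\cdot}^{g_2}$, it introduces the \emph{instantaneous geodesic stretch} $\calE^{g_1\to g_2}(v)=\left.\frac{d}{dt}\right|_{t=0}\calB^{g_2}_{v_+}(\pi v,\pi g_1^t v)$, which is the exact infinitesimal speed of the Morse correspondence $\Psi^{g_1\to g_2}$ between the two flows. Working with geodesic currents on $\bd^2\tilde M$, the paper proves an \emph{exact} Abramov-type transformation law $h(m_\mu^{g_2},g_2)=I_\mu(g_2,g_1)\cdot h(m_\mu^{g_1},g_1)$ for any finite invariant measure (Theorem~\ref{entropy-transformation}); applying this to the $g_2$-Bowen--Margulis current and invoking the variational principle on the $g_1$ side gives $h_{top}(g_2)\le I_{\mu_{BM}^{g_2}}(g_2,g_1)\,h_{top}(g_1)$ (Corollary~\ref{coro:EntBM}), and the pointwise bound $\calE^{g_2\to g_1}(v)\le\norm{v}^{g_1}$ (Lemma~\ref{lem:Ineq-Knieper}) recovers~\eqref{eq:CompEnt1}. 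What this buys: the argument never touches periodic orbits, large deviations, or Margulis counting, so it extends verbatim to noncompact manifolds with finite Bowen--Margulis measure; and it yields the sharper constant $I_{\mu_{BM}^{g_2}}(g_2,g_1)$ rather than $\int\norm{v}^{g_1}\,d\overline m_{BM}^{g_2}$. Your approach, by contrast, is more self-contained in the compact case but---as you correctly diagnose---leans on the Anosov thermodynamic formalism at every step and does not survive the passage to noncompact nonwandering sets.
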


Reversing the role of $g_1$ and $g_2$ also provides a lower bound for $h_{top}(g_1)$, 
and a first order power expansion gives the following smoothness result.

\begin{theo}[Katok-Knieper-Weiss 1991]
Let $(g_\lambda)_{\lambda\in (-1, 1)}$ be a $\calC^2$-family 
of $\calC^2$ Riemannian metrics with negative sectional curvature 
on the same compact manifold $M$. Then $\lambda\mapsto h_{top}(g_\lambda)$ 
is $\calC^1$, and its derivative is given by
\begin{equation}\label{eq:VarEnt1}
 \left.\frac{d}{d\lambda}\right|_{\lambda = 0} h_{top}(g_\lambda) =
 -h_{top}(g_0)\,\times\,\int_{S^{g_0}M} \left.\frac{d}{d\lambda}\right|_{\lambda = 0}
 \norm{v}^{g_\lambda}d\overline m_{BM}^{g_0}(v).
\end{equation}
where $\ds m_{BM}^{g_0}$ is the normalized Bowen-Margulis measure on 
the $g_0$-unit tangent bundle $S^{g_0}M$ for the $g_0$-geodesic flow.
\end{theo}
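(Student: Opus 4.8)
The plan is to extract both the derivative formula \eqref{eq:VarEnt1} and the $\calC^1$ regularity from the comparison inequality \eqref{eq:CompEnt1} of the previous theorem, applied in \emph{both directions}, the only nontrivial extra ingredient being the continuity of the Bowen--Margulis measure under a perturbation of the metric. First I would apply \eqref{eq:CompEnt1} to the pairs $(g_1,g_2)=(g_0,g_\lambda)$ and $(g_1,g_2)=(g_\lambda,g_0)$; using that $\overline m_{BM}^{g_0}$ is a probability measure carried by the set where $\norm{v}^{g_0}=1$, this yields the sandwich
\begin{equation}\label{eq:sandwich}
\frac{h_{top}(g_0)}{\ds\int_{S^{g_0}M}\norm{v}^{g_\lambda}\,d\overline m_{BM}^{g_0}(v)}\;\leq\; h_{top}(g_\lambda)\;\leq\; h_{top}(g_0)\int_{S^{g_\lambda}M}\norm{v}^{g_0}\,d\overline m_{BM}^{g_\lambda}(v).
\end{equation}
Since $(g_\lambda)$ is a $\calC^2$-family on the compact $M$, $\norm{v}^{g_\lambda}\to 1$ uniformly on $S^{g_0}M$, and, after the canonical rescaling identification $v\mapsto v/\norm{v}^{g_\lambda}$ of $S^{g_0}M$ with $S^{g_\lambda}M$, $\norm{v}^{g_0}\to 1$ uniformly on $S^{g_\lambda}M$; hence both outer terms of \eqref{eq:sandwich} tend to $h_{top}(g_0)$, and running the same estimate around an arbitrary base point shows that $\lambda\mapsto h_{top}(g_\lambda)$ is continuous.

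Next I would establish the continuity of the Bowen--Margulis measure under the perturbation. On a compact negatively curved manifold the geodesic flow of each $g_\lambda$ is Anosov; structural stability provides Hölder orbit equivalences between these flows, and after transporting everything to $S^{g_0}M$ (via the rescaling, composed with an orbit equivalence close to the identity) the unique measure of maximal entropy $\overline m_{BM}^{g_\lambda}$ should converge weak-$*$ to $\overline m_{BM}^{g_0}$ as $\lambda\to 0$; concretely this can be read off from the continuity of equilibrium states of the zero potential under $\calC^0$-small perturbations of the flow, or from the continuous dependence of the Patterson--Sullivan densities on the metric. This is the step I expect to be the main obstacle: it is exactly where compactness (Anosov-ness) enters, and its failure for a non-compact non-wandering set is what, in the rest of the paper, will force the SPR hypothesis and the currents-at-infinity machinery.

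With these two facts in hand I would carry out the first-order expansion. Set $\dot n(v):=\left.\frac{d}{d\lambda}\right|_{0}\norm{v}^{g_\lambda}$, a continuous function on $S^{g_0}M$, and $D:=\int_{S^{g_0}M}\dot n(v)\,d\overline m_{BM}^{g_0}(v)$. The $\calC^2$ dependence on $\lambda$ over the compact $M$ gives $\norm{v}^{g_\lambda}=1+\lambda\dot n(v)+o(\lambda)$ with remainder uniform in $v$. Integrating against the fixed probability measure $\overline m_{BM}^{g_0}$ in the left-hand term of \eqref{eq:sandwich} gives $h_{top}(g_\lambda)\geq h_{top}(g_0)\bigl(1-\lambda D+o(\lambda)\bigr)$. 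For the right-hand term I transport $\overline m_{BM}^{g_\lambda}$ to $S^{g_0}M$ by the rescaling; a rescaled $g_0$-unit vector has $g_0$-norm $1/\norm{v}^{g_\lambda}=1-\lambda\dot n(v)+o(\lambda)$, so by the weak-$*$ convergence of the transported measures and continuity of $\dot n$ the integral equals $1-\lambda D+o(\lambda)$, whence $h_{top}(g_\lambda)\leq h_{top}(g_0)\bigl(1-\lambda D+o(\lambda)\bigr)$. Squeezing yields $h_{top}(g_\lambda)=h_{top}(g_0)-\lambda\,h_{top}(g_0)\,D+o(\lambda)$, i.e. \eqref{eq:VarEnt1} at $\lambda=0$.

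Finally, for the $\calC^1$ statement I would run the identical computation at an arbitrary base point $\lambda_0$, obtaining $\left.\frac{d}{d\lambda}\right|_{\lambda_0}h_{top}(g_\lambda)=-h_{top}(g_{\lambda_0})\int_{S^{g_{\lambda_0}}M}\left.\frac{d}{d\lambda}\right|_{\lambda_0}\norm{v}^{g_\lambda}\,d\overline m_{BM}^{g_{\lambda_0}}(v)$. This right-hand side is continuous in $\lambda_0$: $h_{top}$ is continuous by the first step, the $\lambda$-derivative of the norm is jointly continuous in $(\lambda_0,v)$ since the family is $\calC^2$, and $\overline m_{BM}^{g_{\lambda_0}}$ varies weak-$*$ continuously by the second step. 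A continuous derivative at every point gives that $\lambda\mapsto h_{top}(g_\lambda)$ is $\calC^1$, completing the argument.
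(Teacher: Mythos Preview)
Your proposal is correct and follows essentially the same approach as both the original \cite{KKW91} argument and the paper's own generalization in Theorem~\ref{theo:VarEnt}: apply the comparison inequality \eqref{eq:CompEnt1} in both directions to obtain the sandwich \eqref{eq:sandwich}, then perform a first-order expansion, the only delicate point being the continuity of $\lambda\mapsto \overline m_{BM}^{g_\lambda}$. The one technical difference is that for this continuity you invoke structural stability and the thermodynamic formalism for Anosov flows, whereas the paper (Section~\ref{ssec:Conv-BM}) proceeds via convergence of the Patterson--Sullivan densities and Busemann functions; both routes are valid in the compact case, but the latter is what the paper needs since it is the one that survives in the non-compact SPR setting.
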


In the previously quoted works, the proofs of (\ref{eq:CompEnt1}) 
strongly use the compactness of the non-wandering set. 
In the first part of our paper, we use a different approach 
to generalize it to the non-compact setting. 
This improves it even in the compact case, 
providing an explicit transformation rule for the entropies, 
equality which immediately implies (\ref{eq:CompEnt1}), 
and has other interesting consequences. 

The previously known proofs of (\ref{eq:VarEnt1}) use 
the compactness of $M$ for a crucial point: 
to ensure the finiteness and the continuity of the normalized 
Bowen-Margulis measures $m_{BM}^{g_\lambda}$ in the weak-*topology as $\lambda$ varies. 
Neither finiteness of the Bowen-Margulis measure nor its continuity under a variation 
of the metric can be ensured in general. 
Maybe the most striking fact of our work is that we introduce a new wide class of manifolds, 
which we call {\em SPR manifolds}, SPR meaning {\em strongly / stably positively recurrent}. 
The terminology {\em Stably positively recurrent} has been introduced 
by Gurevic-Savchenko \cite{GS} in the context of countable Markov shifts. 
Sarig \cite{Sarig01} modified it, in the same context, into {\em strongly positively recurrent}, 
terminology which has been used later by other authors as Buzzi \cite{BBG}. 
See also the very recent work of Velozo \cite{Velozo}, who follows also this terminology.  
Both terminologies are meaningful, and had not yet been considered in a geometric context.  

The class of SPR manifolds that we define here has the remarkable property 
that the Bowen-Margulis measure is finite, and moreover stays finite 
and varies continuously along small perturbations. 
In particular, under uniformly $\calC^2$ variation of 
such SPR Riemannian metrics, 
the topological entropy is $\calC^1$ and its derivative 
is given by (\ref{eq:VarEnt1}).
 
These SPR manifolds include finite volume hyperbolic manifolds, 
and more generally almost
all known examples where the geodesic flow admits 
a (finite) measure of maximal entropy, 
as  geometrically finite negatively curved manifolds 
with spectral gap \cite{DOP}, 
Schottky product examples from \cite{Peigne}, 
and unpublished examples of Ancona \cite{Ancona}.  
The class of SPR manifolds is much larger than only 
the above mentioned examples. 
We postpone the extensive study of SPR manifolds to a later paper \cite{STdufutur}. 
Therefore, the second half of our paper will be devoted to the presentation of a geometrical setting, 
as large as possible, where this finiteness and continuity of Bowen-Margulis measures can be ensured.

Let us now present our main results with more details.


\subsection{Invariant measures and change of Riemannian metrics}

Let $(M, g_1)$ be a complete Riemannian manifold, 
and $g_2$ be another Riemannian metric on $M$ such that 
there exists $C>1$ with $\ds \frac 1 C g_1 \leq g_2 \leq C g_1$. 
We assume moreover that both $g_1$ and $g_2$ have pinched negative sectional curvatures 
with uniformly bounded derivatives: 
this implies that $g_1$-geodesics are $g_2$-quasi-geodesics 
and the visual boundary of the universal cover $(\tilde M, g_1)$ 
is canonically identified with the visual boundary of $(\tilde M, g_2)$; 
we will denote it by $\bd \tilde M$. We will use extensively 
this correspondance to compare the dynamics of the geodesic flows on $S^{g_1}M$ and $S^{g_2}M$. 

Let $\Gamma = \pi_1(M)$ acting on the universal cover $\tilde M$, 
let $m$ be a locally finite measure on $S^{g_1}M$, invariant by the geodesic flow $(g_1^t)_{t\in \bbR}$, 
and $\tilde m$ its lifts to $S^{g_1}\tilde M$.
We write $\bd^2 \tilde M = (\bd \tilde M\times\bd \tilde M)\backslash$Diag. 
In $g_1$-Hopf coordinates (cf Section \ref{sec:Hopf}), 
$S^{g_1}\tilde M \simeq \bd^2\tilde M \times \bbR$,
and $\tilde m$ has a local product structure of the form $d\tilde m = d\mu \times dt$, 
where $\mu$ is a $\Gamma$-invariant \emph{geodesic current} on $\bd^2 \tilde M$. 
We write therefore $m = m_\mu^{g_1}$. 

We can now define a measure $\tilde m_\mu^{g_2}$ on $S^{g_2}\tilde M$, 
given in $g_2$-Hopf parametrization by the same local product formula
 $\ds \tilde m_\mu^{g_2} = d\mu \times dt$: by $\Gamma$-invariance, 
this induces a locally finite measure $m_\mu^{g_2}$ on $S^{g_2}M$, 
which is invariant for the geodesic flow $(g_2^t)_{t\in \bbR}$. 
The ergodic properties of $(S^{g_1}M, g_1^t, m_\mu^{g_1})$ and 
$(S^{g_2}M, g_2^t, m_\mu^{g_2})$ are strongly related.

Well known facts imply that if $m_\mu^{g_1}$ and $m_\mu^{g_2}$ are finite 
then one is ergodic or conservative if and only if the other is. 
The reader may believe that, since $\ds \frac 1 C g_1 \leq g_2 \leq C g_1$, 
then $m_\mu^{g_1}$ is finite if and only if $m_\mu^{g_2}$ is. 
We will indeed show that it is the case and relate the masses and entropies of these measures.

\medskip

In this purpose, let us introduce the \emph{instantanous geodesic stretch} 
$\calE^{g_1\to g_2} : S^{g_1}\tilde M \to \bbR$ defined for all $v\in S^{g_1}\tilde M$ by
$$
\calE^{g_1\to g_2}(v) = 
\left.\frac{d}{dt}\right|_{t=0^+}\calB^{g_2}_{v_+^{g_1}}(\pi v, \pi g_1^t \tilde v) 
= \left.\frac{d}{dt}\right|_{t=0^+}\calB^{g_2}_{v_+^{g_1}}(o, \pi g_1^t \tilde v),
$$
where  $\calB^{g_2}_{v_+^{g_1}}(.,.)$ is the Busemann function for $g_2$ based at 
the end point of the $g_1$-geodesic generated by $v$. 
By  $\Gamma$ invariance, it induces a map $\calE^{g_1\to g_2}: S^{g_1}M \to \bbR$.
We will see in Section \ref{subsec:stretch} that this is the derivative 
along $g_1$-geodesics of a natural \emph{Morse correspondance} 
$\Psi^{g_1\to g_2} : S^{g_1}M\to S^{g_2} M$ between the $g_1$ and $g_2$ geodesic flows.
This implies the following.

\begin{prop}\label{exact-change-variable}
For all $m_\mu^{g_2}$-measurable map $G : S^{g_2}M\to \bbR$, 
the map $G\circ \Psi^{g_1\to g_2}$ is $m_\mu^{g_1}$-measurable and
$$
\int_{S^{g_2}M} G\,dm^{g_2}_\mu=\int_{S^{g_1}M} G\circ \Psi^{g_1\to g_2} \times \calE^{g_1\to g_2} \,dm^{g_1}_\mu\,.
$$
In particular, the masses of $m_\mu^{g_2}$ satisfies
$$
\norm{m_\mu^{g_2}} = \int_{S^{g_1}M} \calE^{g_1\to g_2} \,dm^{g_1}_\mu\,.
$$
\end{prop}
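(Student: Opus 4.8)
The plan is to show that the Morse correspondence $\Psi^{g_1\to g_2}$ conjugates the two geodesic flows up to a time reparametrization governed by the geodesic stretch $\calE^{g_1\to g_2}$, and then to apply the standard change-of-variables formula for such reparametrizations. First I would recall from Section \ref{subsec:stretch} the construction of $\Psi^{g_1\to g_2} : S^{g_1}M \to S^{g_2}M$: lifting to the universal cover, a vector $v\in S^{g_1}\tilde M$ determines a pair of endpoints $(v_-^{g_1},v_+^{g_1})\in\bd^2\tilde M$ and these are also the endpoints of a unique $g_2$-geodesic; parametrizing that $g_2$-geodesic so that the basepoint is the foot of the perpendicular from the orbit of $o$ (or via the Busemann cocycle normalization used to define $\calE$), one obtains $\tilde\Psi^{g_1\to g_2}(\tilde v)\in S^{g_2}\tilde M$. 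This map is $\Gamma$-equivariant, hence descends, and it is a homeomorphism since $g_1$- and $g_2$-geodesics are mutually quasi-geodesic with the same boundary.

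The second step is the key identity: for $v\in S^{g_1}\tilde M$ and $t\in\bbR$,
$$
\Psi^{g_1\to g_2}(g_1^t v) = g_2^{\,\tau(v,t)} \Psi^{g_1\to g_2}(v),
\qquad
\tau(v,t) = \int_0^t \calE^{g_1\to g_2}(g_1^s v)\,ds .
$$
This follows from the definition of $\calE^{g_1\to g_2}$ as $\left.\frac{d}{dt}\right|_{t=0^+}\calB^{g_2}_{v_+^{g_1}}(o,\pi g_1^t\tilde v)$ together with the cocycle property of the Busemann function: the quantity $\calB^{g_2}_{v_+^{g_1}}(o,\pi g_1^t\tilde v)$ measures precisely how far along the $g_2$-geodesic with endpoint $v_+^{g_1}$ one has moved, so its derivative in $t$ is the infinitesimal $g_2$-time elapsed per unit $g_1$-time. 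In Hopf coordinates this is transparent: $\Psi^{g_1\to g_2}$ is the identity on the $\bd^2\tilde M$ factor and acts on the $\bbR$ factor by the (strictly increasing, by positivity of $\calE$) reparametrization $t\mapsto\tau(v,t)$.

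Given this, the proposition is a direct application of the change-of-variables formula for time reparametrizations of flows with a product measure. Concretely, write $d\tilde m_\mu^{g_1} = d\mu\,dt$ in $g_1$-Hopf coordinates and $d\tilde m_\mu^{g_2} = d\mu\,ds$ in $g_2$-Hopf coordinates; under the map $(x_-,x_+,t)\mapsto(x_-,x_+,\tau)$ with $\tau = \tau((x_-,x_+,0),t)$ one has $d\tau = \calE^{g_1\to g_2}(g_1^t v)\,dt$ along each orbit, so pushing forward $\calE^{g_1\to g_2}\,d\tilde m_\mu^{g_1}$ by $\tilde\Psi^{g_1\to g_2}$ yields exactly $d\tilde m_\mu^{g_2}$. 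Integrating a test function $G$ and using $\Gamma$-invariance to pass to the quotient gives $\int G\,dm_\mu^{g_2} = \int (G\circ\Psi^{g_1\to g_2})\,\calE^{g_1\to g_2}\,dm_\mu^{g_1}$; taking $G\equiv 1$ gives the mass formula. Measurability of $G\circ\Psi^{g_1\to g_2}$ follows since $\Psi^{g_1\to g_2}$ is a homeomorphism.

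The main obstacle is not the formal computation but the regularity and integrability bookkeeping: one must check that $\calE^{g_1\to g_2}$ is well-defined, finite, and measurable (indeed bounded above and below by constants depending only on $C$, using $\frac1C g_1\le g_2\le C g_1$, which also guarantees $\tau(v,\cdot)$ is a genuine bi-Lipschitz reparametrization of $\bbR$), that the one-sided derivative defining $\calE$ exists everywhere along orbits and that $t\mapsto\calB^{g_2}_{v_+^{g_1}}(o,\pi g_1^t\tilde v)$ is absolutely continuous with this derivative, so that the fundamental theorem of calculus applies to give the integral formula for $\tau$. These are exactly the points established in Section \ref{subsec:stretch}, so here they may be invoked; the remaining argument is the routine change of variables sketched above.
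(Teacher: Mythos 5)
Your proof is correct and follows essentially the same route as the paper: the reparametrization identity $\Psi^{g_1\to g_2}\circ g_1^t = g_2^{\int_0^t\calE^{g_1\to g_2}(g_1^s v)\,ds}\circ\Psi^{g_1\to g_2}$ (the paper's Lemma \ref{lem:Morse-Psi} combined with the definition of $\calE^{g_1\to g_2}$), followed by a change of variables on the $\bbR$-factor of the common product structure $d\mu\times dt$ in Hopf coordinates, which is exactly how the paper derives the statement from Proposition \ref{prop:Change-Var-Flow}. Your version in fact spells out the Fubini/disintegration step that the paper leaves implicit in its ``follows immediately'' remark.
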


Some other versions of the geodesic stretch has already been considered 
in \cite{FF93} of \cite{Kni95}; we explain in Section \ref{subsec:stretch} 
the relationship with these references and the interest of our new definition.
We then introduce in Section \ref{sec:Entropy} a notion of \emph{local entropy} 
for invariant measures, which is an analogous in the non-compact setting 
to Brin-Katok entropy, and which coincides with the classical metric entropy 
for Gibbs measures. This also allows us to relate the local entropies of 
$(S^{g_1}M, g_1^t, m_\mu^{g_1})$ and $(S^{g_2}M, g_2^t, m_\mu^{g_2})$.

\begin{theo}
Under the previous notations, the local entropies of $(g_1^t, m_\mu^{g_1})$ 
and $(g_2^t, m_\mu^{g_2})$ are related as follows.
 $$
h_{loc}(m^{g_2}_\mu,g_2)=I_\mu(g_2, g_1)\,.\, h_{loc}(m^{g_1}_\mu,g_1)\,.
$$ 
\end{theo}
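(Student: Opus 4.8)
The statement to prove is the identity
$$
h_{loc}(m^{g_2}_\mu,g_2)=I_\mu(g_2, g_1)\,\cdot\, h_{loc}(m^{g_1}_\mu,g_1),
$$
where (from the context) $I_\mu(g_2,g_1)$ should be the \emph{average geodesic stretch}, i.e.\ the mean of $\calE^{g_1\to g_2}$ (or rather its inverse-direction analogue) against the normalized measure $m^{g_1}_\mu / \norm{m^{g_1}_\mu}$. The governing idea is that the Morse correspondence $\Psi^{g_1\to g_2}$ is a measurable conjugacy between the two geodesic flows \emph{up to a time reparametrization}, and the reparametrization speed is exactly the geodesic stretch $\calE^{g_1\to g_2}$. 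So the plan is: first, make precise the sense in which $\Psi := \Psi^{g_1\to g_2}$ intertwines $g_1^t$ with $g_2^t$ after the time change $\tau(v,t)=\int_0^t \calE^{g_1\to g_2}(g_1^s v)\,ds$; this is promised in Section~\ref{subsec:stretch}, and I would cite it. Then, transport the Brin--Katok-type local entropy through this reparametrized conjugacy.

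\textbf{Step 1: reduce to a reparametrization statement.} I would first record that $\Psi$ sends $g_1$-geodesic orbits to $g_2$-geodesic orbits, bijectively on the level of orbits, and that along a fixed orbit the new time parameter is $\tau(v,t)=\int_0^t\calE^{g_1\to g_2}(g_1^sv)\,ds$. By Proposition~\ref{exact-change-variable} applied to well-chosen $G$, together with the ergodic decomposition, one gets that for $m^{g_1}_\mu$-a.e.\ $v$ the Birkhoff averages $\frac1t\int_0^t \calE^{g_1\to g_2}(g_1^sv)\,ds$ converge; if $m^{g_1}_\mu$ is ergodic the limit is the constant $I_\mu(g_2,g_1)=\frac{1}{\norm{m^{g_1}_\mu}}\int_{S^{g_1}M}\calE^{g_1\to g_2}\,dm^{g_1}_\mu$. (The non-ergodic case follows by integrating over ergodic components, since local entropy is an a.e.\ defined function of the point.) The role of the two-sided estimate $\frac1C g_1\le g_2\le Cg_1$ is to guarantee that $\calE^{g_1\to g_2}$ is bounded between $1/C$ and $C$, hence bounded and bounded away from $0$, so all the integrals and the reparametrization are well-behaved and the new flow is complete with comparable dynamical balls.

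\textbf{Step 2: transport local entropy through the time change.} Here is the heart of the argument. Write $B^{g_i}(v,\epsilon,t)$ for the dynamical $(\epsilon,t)$-Bowen ball of the flow $g_i^t$ around $v$. Because $\Psi$ is a Morse correspondence at bounded distance, $\Psi$ maps $B^{g_1}(v,\epsilon,t)$ into $B^{g_2}(\Psi v, \epsilon', \tau(v,t))$ and conversely, for comparable $\epsilon,\epsilon'$ and with $\tau(v,t)/t\to I_\mu(g_2,g_1)$. Feeding this into the definition of $h_{loc}$ as
$$
h_{loc}(m,g)= \lim_{\epsilon\to0}\,\liminf_{t\to\infty}\,-\frac1t\log m\bigl(B^{g}(v,\epsilon,t)\bigr)
$$
(the exact form being the one set up in Section~\ref{sec:Entropy}), one rewrites $-\frac1{\tau(v,t)}\log m^{g_2}_\mu(B^{g_2}(\Psi v,\cdot,\tau(v,t)))$ in terms of $t$ and multiplies and divides by $\tau(v,t)/t$. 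The measure-theoretic input needed is that $(\Psi)_*m^{g_1}_\mu$ and $m^{g_2}_\mu$ are mutually absolutely continuous with Radon--Nikodym derivative bounded above and below on sets of the relevant scale — which again comes from the local product structure ($d\mu\times dt$ in both parametrizations) and the boundedness of $\calE^{g_1\to g_2}$, so that the $\log$-asymptotics are unaffected. Letting $t\to\infty$ and then $\epsilon\to0$ yields $h_{loc}(m^{g_2}_\mu,g_2)=I_\mu(g_2,g_1)\cdot h_{loc}(m^{g_1}_\mu,g_1)$.

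\textbf{Main obstacle.} The routine part is the Birkhoff/ergodic-components bookkeeping; the delicate part is Step 2, specifically controlling the \emph{uniformity} of the comparison $\Psi\bigl(B^{g_1}(v,\epsilon,t)\bigr)\asymp B^{g_2}(\Psi v,\epsilon',\tau(v,t))$ over the relevant range of $v$ and $t$, and ensuring that the fluctuations of $\tau(v,t)-t\,I_\mu(g_2,g_1)$ (which are $o(t)$ a.e.\ by Birkhoff but \emph{not} uniformly so) do not spoil the $\liminf$ as $t\to\infty$. This is exactly where a non-compact non-wandering set would naively cause trouble; the fix is that $h_{loc}$ is defined via a pointwise $\liminf$, so one only needs the a.e.\ sublinear control, combined with the fact that $\calE^{g_1\to g_2}$ being bounded makes the Bowen-ball distortion at worst a bounded multiplicative factor, which disappears after dividing by $t\to\infty$. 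One must be slightly careful that the $\liminf$ over $t$ and the reparametrization $t\mapsto\tau(v,t)$ commute, which holds because $\tau(v,\cdot)$ is a continuous increasing bijection of $\bbR$ with derivative bounded in $[1/C,C]$.
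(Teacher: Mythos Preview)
Your approach is essentially the paper's: push dynamical balls through the Morse correspondence $\Psi^{g_1\to g_2}$, use that the time change is $\tau(v,t)=\int_0^t\calE^{g_1\to g_2}(g_1^sv)\,ds$, and apply Birkhoff. But there is a real gap in Step~2, and a notational slip.

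The slip: the Birkhoff limit of $\tau(v,t)/t$ is $I_\mu(g_1,g_2)=\frac{1}{\norm{m_\mu^{g_1}}}\int\calE^{g_1\to g_2}\,dm_\mu^{g_1}$, not $I_\mu(g_2,g_1)$. The factor $I_\mu(g_2,g_1)$ appears only after inverting, since $h_{loc}$ for $g_2$ involves $-\frac{1}{\tau}\log m_\mu^{g_2}(\cdot)=\frac{t}{\tau}\cdot(-\frac{1}{t}\log\ldots)$ and $t/\tau\to I_\mu(g_2,g_1)$ by the mass relation $I_\mu(g_1,g_2)I_\mu(g_2,g_1)=1$.

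The gap: the claim that $\Psi\bigl(B^{g_1}(v,\epsilon,t)\bigr)$ and $B^{g_2}(\Psi v,\epsilon',\tau(v,t))$ are comparable ``for comparable $\epsilon,\epsilon'$'' is too optimistic. The Morse correspondence displaces base points by a distance bounded by the Morse constant $C_2(g_1,g_2)$, and the actual comparison (the paper's Corollary~\ref{coro:comparison}) gives $\epsilon'=\epsilon\cdot(\text{const})+C_2(g_1,g_2)+2C_3(g_1,g_2)$, an \emph{additive} shift that does not vanish as $\epsilon\to0$. So with the Brin--Katok definition you wrote, taking $\lim_{\epsilon\to0}$ on the $g_1$ side does not give you arbitrarily small $\epsilon'$ on the $g_2$ side, and the argument as stated does not close. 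The paper fixes this with a feature of negative curvature you did not invoke: by convexity of the distance (Lemma~\ref{uniform-negative} / Lemma~\ref{lem:dynamical-balls-inclusion}), $B^g(v;T+C,T'+C,\epsilon_2)\subset B^g(v;T,T',\epsilon_1)$ for any $0<\epsilon_1<\epsilon_2$, so the local entropy is actually independent of $\epsilon$ and one may work at a fixed scale throughout. This is also why the paper's operative definition $h^{loc}_\Gamma$ (equation~(\ref{local-entropy-for-us})) uses a $\limsup$ over return times to a compact $K$, an $\supess$ over $v\in K$, and small dynamical balls at a \emph{fixed} $\epsilon$, rather than the standard $\lim_{\epsilon\to0}\liminf_t$ you wrote; in the non-compact setting these choices matter (see Remark~\ref{erratum}). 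Once you replace your definition by the paper's and invoke the $\epsilon$-independence, your Steps~1--2 become exactly the paper's proof.
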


In particular, the combination of the previous theorem with 
the variational principle implies that 
$$
h_{top}(g_2)\leq \int_{S^{g_2}M}\mathcal{E}^{g_2\to g_1}(v)\frac{dm_\mu^{g_2}(v)}{\norm{m_\mu^{g_2}}} h_{top}(g_1),
$$
which is an optimal improvement of \ref{eq:CompEnt1}.

Eventually, in Section \ref{sec:Gibbs}, we show that $m_\mu^{g_1}$ 
is a Gibbs measure for the potential $G: S^{g_1}\to \bbR$ if and only if 
$m_\mu^{g_2}$ is a Gibbs measure for the potential $G\circ \Psi^{g_2\to g_1}\times \calE^{g_2\to g_1}$, 
and give some applications of this last fact to a comparison 
between the length spectrum of $(M,g_1)$ and $(M,g_2)$.


\subsection{Entropy at infinity, SPR manifolds and Bowen-Margulis measures}

Let $(M,g)$ be a Riemannian manifold with pinched negative sectional 
curvatures whose derivatives are uniformly bounded. 
We introduce a notion of {\em entropy at infinity} (see section \ref{sec:SPR}), 
which measures the highest possible complexity of the dynamics 
outside a  compact set in the manifold.
 
 We call the Riemannian manifold $(M,g)$ {\em strongly positively recurrent}, shortly SPR, 
if the entropy at infinity is strictly smaller than the topological entropy 
of the geodesic flow. 
This SPR property implies that the geodesic flow  admits a measure of maximal entropy, 
that this fact remains true under a nice small perturbation of the metric, and that these measures vary continuously in the narrow topology. 
Let us summarize the main results that we establish here on the SPR property. 
We refer to \cite{STdufutur} for further study of this property and its consequences. 

\begin{theo}\label{examples-SPR}
Let $(M,g)$ be a Riemannian manifold with pinched negative curvature. 
\begin{enumerate}
\item The SPR property implies that the geodesic flow admits an invariant probability measure of maximal entropy $m_{BM}^g$, 
the so-called {\em Bowen-Margulis measure}. In the terminology of \cite{PS16},
the SPR property implies that the  geodesic flow is positively recurrent. 
\item Geometrically finite manifolds with critical gap (see \cite{DOP}) have the SPR property\,,
\item Topologically infinite examples of \cite{Ancona} have the SPR property,
\item Schottky product examples of \cite{Peigne} have the SPR property.

\end{enumerate}
\end{theo}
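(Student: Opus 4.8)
The plan is to prove (1) in full and to reduce each of (2)--(4) to the single inequality $h_\infty<h_{top}(g)$, which \emph{is} the SPR property: once an upper bound on the entropy at infinity $h_\infty$ of Section~\ref{sec:SPR} is available, the known value $h_{top}(g)=\delta_\Gamma$ (the critical exponent of $\Gamma$, finite since the curvature is pinched) does the rest. \textbf{Part (1).} I would start from a sequence of $g^t$-invariant probability measures $m_n$ on $S^gM$ with $h_{loc}(m_n,g)\to h_{top}(g)$, which exist by the variational principle for the local entropy of Section~\ref{sec:Entropy}. Passing to a subsequence, let $m$ be a vague limit of the $m_n$: it is an invariant measure with $\norm m=1-c$ for some $c\in[0,1]$, the mass escaping to infinity. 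The heart of the argument is a \emph{semicontinuity-under-escape} estimate
\[
\limsup_{n\to\infty}h_{loc}(m_n,g)\ \le\ (1-c)\,h_{loc}\!\left(\tfrac{1}{1-c}\,m,\,g\right)+c\,h_\infty,
\]
which I would obtain by cutting a typical $(T,\varepsilon)$-orbit segment into the portions it spends inside, resp.\ outside, a large compact core of $S^gM$: the inner portions are controlled by the Brin--Katok--type local entropy of the normalized limit $\tfrac{1}{1-c}m$, the outer portions by the very definition of $h_\infty$. Since $h_{loc}(\cdot,g)\le h_{top}(g)$ and, by the SPR hypothesis, $h_\infty<h_{top}(g)$, the right-hand side is $<h_{top}(g)$ as soon as $c>0$; hence $c=0$, $m$ is a probability measure, and upper semicontinuity of $h_{loc}$ forces $h_{loc}(m,g)\ge\limsup_n h_{loc}(m_n,g)=h_{top}(g)$. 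So $m$ is a measure of maximal entropy; in pinched negative curvature such a measure, when it exists as a probability measure, is unique and coincides with the Bowen--Margulis measure $m_{BM}^g$, and its finiteness is exactly positive recurrence of the geodesic flow in the sense of \cite{PS16}.

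\textbf{Part (2).} Write $M$ as the union of a compact part and finitely many cusp neighbourhoods, attached to parabolic subgroups $\Gamma_{p_1},\dots,\Gamma_{p_k}$. I would argue that an orbit segment contributing to the dynamics at infinity must spend long stretches deep inside one cusp, and that inside the $i$-th cusp the number of $\varepsilon$-separated length-$T$ segments grows at exponential rate $\delta_{\Gamma_{p_i}}$, by the estimates on parabolic Poincar\'e series of \cite{DOP}; summing over the finitely many cusps gives $h_\infty\le\max_i\delta_{\Gamma_{p_i}}$ (in fact equality). The critical gap hypothesis of \cite{DOP} is precisely $\max_i\delta_{\Gamma_{p_i}}<\delta_\Gamma$, i.e.\ the SPR inequality. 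In particular this covers all finite-volume hyperbolic manifolds, where a rank-$r$ parabolic subgroup has critical exponent $r/2$, strictly less than $\delta_\Gamma=n-1$.

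\textbf{Parts (3) and (4).} Here $M$ is topologically infinite and there is no compact convex core, so $h_\infty$ cannot be read off from finitely many cusps; I would instead exploit the explicit structure of the groups. For Peign\'e's Schottky products \cite{Peigne}, $\Gamma$ is a ping-pong combination of a convex-cocompact factor with parabolic subgroups whose critical exponents are strictly below $\delta_\Gamma$; coding geodesics by reduced words in the factors, every long excursion to infinity corresponds to a long syllable in a parabolic factor, so the number of such excursions of length $T$ is at most $e^{\delta_{\mathrm{parab}}T}$ up to subexponential factors, whence $h_\infty\le\delta_{\mathrm{parab}}<\delta_\Gamma$. Ancona's manifolds \cite{Ancona} are engineered by the same principle: the geometry forces orbit segments making a prescribed long excursion into the noncompact part to grow with an exponent bounded away from $\delta_\Gamma$, and uniformizing over an exhaustion of $M$ again yields $h_\infty<\delta_\Gamma$. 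In both cases the counting estimates needed are exactly the ones already established in \cite{Peigne} and \cite{Ancona} to prove finiteness of $m_{BM}^g$.

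\textbf{Main obstacle.} I expect the one genuine difficulty to be the semicontinuity-under-escape inequality of Part (1). In the non-compact setting the entropy map is a priori neither finite nor upper semicontinuous, so the real content is the quantitative claim that escaping mass $c$ can carry at most entropy $c\,h_\infty$; making this precise is what the local-entropy formalism of Section~\ref{sec:Entropy} and the definition of entropy at infinity of Section~\ref{sec:SPR} are designed to supply, and once it is in place the SPR gap closes the argument almost formally. For (2)--(4) the analogue is milder but still the crux: one must upgrade the known finiteness estimates for the Bowen--Margulis measure into genuine exponential-growth bounds on excursions, i.e.\ into an upper bound on $h_\infty$ and not merely on the mass of a single measure.
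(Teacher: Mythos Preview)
Your approach to Part~(1) is genuinely different from the paper's, and the gap you flag at the end is real and, in the framework of this paper, not filled. The paper's $\delta_\infty$ is defined group-theoretically as the infimum over compact $W$ of the critical exponent of the Poincar\'e series of $\Gamma_{\tilde W}$ (Definition~\ref{def:entropy-at-infini}); it is \emph{not} defined as a supremum of entropies of measures escaping to infinity. Your semicontinuity-under-escape inequality would require relating these two quantities, and that identification is nowhere established here (the paper postpones such questions to \cite{STdufutur}). Moreover, upper semicontinuity of $h_{loc}$ along a vaguely convergent sequence on a non-compact phase space is precisely the delicate point; you invoke it in the last step without justification. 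So the variational route you sketch, while in the spirit of Sarig's and Velozo's work (the paper even notes Velozo obtained Part~(1) independently by a different method), is not a proof within this paper's setup.

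The paper's argument for Part~(1) is instead entirely on the Patterson--Sullivan side. One first shows (Theorem~\ref{theo:SPR-Div}, via Proposition~\ref{prop:MassPatSul}) that SPR forces $\Gamma$ to be divergent: the key estimate is that the Patterson--Sullivan measure of the set $U_T$ of directions staying outside $\Gamma\cdot\tilde W$ for time $T$ decays like $e^{-(\delta_\Gamma-\delta_{W^c}-\eta)T}$, obtained by covering $U_T$ with shadows $\calO_{\tilde W}(\gamma\cdot\tilde W)$ indexed by long elements of $\Gamma_{\tilde W}$. Divergence then feeds into the finiteness criterion of \cite{PS16} (Theorem~\ref{theo:Fin-BM-PS16}): $m_{BM}^g$ is finite iff $\Gamma$ is divergent and $\sum_{\gamma\in\Gamma_{\tilde W}} d(o,\gamma o)e^{-\delta_\Gamma d(o,\gamma o)}<\infty$, and the latter is automatic once $\delta_{W^c}<\delta_\Gamma$. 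No variational principle, no escape-of-mass, no semicontinuity.

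For Part~(2) you are essentially aligned with the paper (Proposition~\ref{prop:DeltaInfty-GeoFin}): one computes $\delta_\infty=\max_i\delta_{\calP_i}$ by analysing $\Gamma_{\tilde W}$ directly (Proposition~\ref{prop:Gamma-Infty-GeoFin}), so critical gap is exactly SPR. For Part~(4), the paper does more than your coding sketch: it proves the exact formula $\delta_\infty(M_{G*H})=\max\{\delta_\infty(M_G),\delta_\infty(M_H)\}$ (Theorem~\ref{theo:Schottky}) via a careful inclusion between $\Gamma_{\tilde W_\Gamma}$ and translates of $G_{\tilde W_G}$, $H_{\tilde W_H}$. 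For Part~(3) your description misses the actual mechanism: Ancona-type examples are built by a \emph{compact} perturbation of a hyperbolic surface (shrinking finitely many pants curves), which leaves $\delta_\infty$ unchanged by Proposition~\ref{prop:DeltaWProp}, while the bottom of the Laplace spectrum tends to $0$ and hence, via Patterson--Sullivan, $\delta_\Gamma\to n$; the gap opens for this spectral reason, not from an orbit-counting bound on excursions.
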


As mentionned above, this SPR property is stable in the following sense.

\begin{theo} \label{theo:SPRStable}
Let $(M,g_0)$ be a SPR manifold with pinched negative curvature and bounded 
derivatives of the curvature. Let $(g_\varepsilon)_{\e\in(-1,1)}$ 
be a $C^1$-uniform variation of the metric. 
Then there exists $\e_0>0$ such that for all $\e\in (-\e_0, \e_0)$, 
the manifold $(M,g_0)$ is SPR. 
Moreover, the Bowen-Margulis measures $(m_{BM}^{g_\e})$ vary continuously 
in the narrow topology at $\e=0$.
\end{theo}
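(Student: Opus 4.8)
The plan is to prove Theorem~\ref{theo:SPRStable} in two parts: first the stability of the SPR property (the spectral gap between entropy at infinity and topological entropy persists), and then the narrow continuity of the Bowen--Margulis measures at $\e=0$.

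\medskip
\textbf{Step 1: Continuity of the topological entropy.}
I would first record that $\e\mapsto h_{top}(g_\e)$ is continuous at $\e=0$. Since $(g_\e)$ is a $C^1$-uniform variation, for every $\delta>0$ there is $\e_1$ such that $\frac{1}{1+\delta}g_0\le g_\e\le (1+\delta)g_0$ for $|\e|<\e_1$, so Proposition~\ref{exact-change-variable} and the entropy transformation theorem give two-sided bounds $h_{top}(g_\e)\le C(\delta)\,h_{top}(g_0)$ and conversely, with $C(\delta)\to 1$ as $\delta\to0$; indeed the geodesic stretch $\calE^{g_0\to g_\e}$ is uniformly close to $1$ under a $C^1$-uniform perturbation, since it is the derivative of a Busemann cocycle and hence controlled by the metric comparison constant. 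This yields $h_{top}(g_\e)\to h_{top}(g_0)$.

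\medskip
\textbf{Step 2: Upper semicontinuity of the entropy at infinity.}
The crucial and hardest step is to show that the entropy at infinity $h_{top}^\infty(g_\e)$ does not jump up as $\e\to0$: $\limsup_{\e\to0}h_{top}^\infty(g_\e)\le h_{top}^\infty(g_0)$. The idea is that the entropy at infinity is computed by orbits spending all their time outside larger and larger compact sets $K_R\subset M$; a $g_\e$-geodesic that eventually escapes every compact set is, via the Morse correspondence $\Psi^{g_0\to g_\e}$ of Section~\ref{subsec:stretch}, uniformly close to a $g_0$-geodesic with the same escaping behaviour (the Morse lemma gives a bounded Hausdorff distance between $g_0$- and $g_\e$-geodesics). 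Transporting the dynamics near infinity through $\Psi^{g_0\to g_\e}$ and using again that $\calE^{g_0\to g_\e}\to 1$ uniformly, one bounds the exponential growth rate of $(R,T)$-separated escaping orbits for $g_\e$ by $(1+o(1))$ times that for $g_0$. Letting $R\to\infty$ after $\e\to0$ gives the semicontinuity. (Here I expect the main technical obstacle: one must make uniform in $\e$ the constants in the Morse lemma and in the comparison of escaping orbits, and correctly commute the limits $R\to\infty$ and $\e\to0$; this is where the uniform pinching and the uniformly bounded derivatives of the curvature are essential.)

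\medskip
\textbf{Step 3: Persistence of the gap, and narrow continuity.}
Combining Steps~1 and~2: since $(M,g_0)$ is SPR we have $h_{top}^\infty(g_0)<h_{top}(g_0)$; choose $\kappa>0$ with $h_{top}^\infty(g_0)<h_{top}(g_0)-2\kappa$. By Step~1, $h_{top}(g_\e)>h_{top}(g_0)-\kappa$ for $|\e|<\e_0$, and by Step~2, $h_{top}^\infty(g_\e)<h_{top}^\infty(g_0)+\kappa<h_{top}(g_0)-\kappa<h_{top}(g_\e)$ after shrinking $\e_0$; hence $(M,g_\e)$ is SPR, and by Theorem~\ref{examples-SPR}(1) it carries a Bowen--Margulis probability $m_{BM}^{g_\e}$. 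For the narrow continuity at $\e=0$: the $m_{BM}^{g_\e}$ form a family of invariant probabilities with entropies $h_{top}(g_\e)\to h_{top}(g_0)$; any narrow (or even weak-$*$) limit point $m_\infty$ along $\e_n\to0$ is $g_0$-invariant, and tightness is the key point — the uniform spectral gap means that mass escaping to infinity would carry at most entropy $\le \sup_\e h_{top}^\infty(g_\e)<h_{top}(g_0)$, so by the Ruelle-type inequality / affinity and upper semicontinuity of the entropy functional (of the form used in the SPR theory, e.g.\ along the lines of \cite{PS16}), no mass escapes and $m_\infty$ is a probability. Then $h(m_\infty,g_0)\ge\limsup h(m_{BM}^{g_\e},g_\e)= h_{top}(g_0)$ by upper semicontinuity adapted through the Morse correspondence, forcing $m_\infty=m_{BM}^{g_0}$ by uniqueness of the measure of maximal entropy. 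Since every limit point equals $m_{BM}^{g_0}$, the family converges narrowly, which completes the proof.
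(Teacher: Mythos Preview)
Your Steps~1 and~3 (stability of the gap) are in line with the paper: continuity of $\delta_\Gamma(g_\e)=h_{top}(g_\e)$ is Lemma~\ref{lem:continuity-exponent}, and Step~3 follows once Steps~1--2 hold. However, your Step~2 does not match the paper's definition of $\delta_\infty$ and is too vague to be a proof. In the paper, $\delta_\infty(g)$ is the infimum over compact $W$ of the critical exponent $\delta_{W^c}(g)$ of the Poincar\'e series summed over the set $\Gamma_{\tilde W}(g)$ (Definition~\ref{def:GammaW}, Definition~\ref{def:entropy-at-infini}); there is no notion of ``$(R,T)$-separated escaping orbits''. The actual argument (Lemma~\ref{lem:ContGammaW} and Corollary~\ref{coro:continuite-delta-W}) is to show that for small $\e$ the \emph{sets} $\Gamma_{\tilde W_r}(g_\e)$ and $\Gamma_{\tilde W}(g_0)$ differ only by finitely many left/right translates, because a $g_\e$-geodesic between two points is uniformly close to the $g_0$-geodesic with the same endpoints (Proposition~\ref{prop:Conv-Geod2}); this immediately yields $\delta_{W_r^c}(g_\e)\le e^\alpha\delta_{W^c}(g_0)$ and the continuity of $\delta_\infty$.

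Your argument for narrow continuity of $m_{BM}^{g_\e}$ is genuinely different from the paper's, and as written it has a gap. The paper does \emph{not} argue via escape of mass and entropy: it first proves weak-$*$ convergence $m_{BM}^{g_\e}\to m_{BM}^{g_0}$ directly from convergence of Patterson--Sullivan densities and Busemann functions (Proposition~\ref{prop:Conv-PattSull}, Theorem~\ref{theo:ContMBM}), and then proves tightness by a \emph{uniform} exponential estimate $\nu_o^{g_\e}(U_T(\tilde W,g_\e))\le Ce^{-\beta T}$ (Lemma~\ref{lem:UnifMassPatSul}), deduced from the uniform inclusion of $\Gamma_{\tilde W_r}(g_\e)$ into finitely many translates of $\Gamma_{\tilde W}(g_0)$. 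The final lemma of Section~\ref{ssec:entropy-variation} then converts this into $m_{BM}^{g_\e}(S^{g_\e}(M\setminus W_R))\le\alpha$ uniformly in $\e$. Your approach instead needs two nontrivial inputs that you do not establish: (i) a statement that any escaping mass of a sequence of $g_\e$-invariant probabilities carries entropy at most $\sup_\e\delta_\infty(g_\e)$---this is an ``entropy at infinity bounds lost entropy'' principle which is delicate already for a fixed metric and is not proved here uniformly in $\e$; and (ii) upper semicontinuity of entropy across \emph{varying} flows, i.e.\ $h(m_\infty,g_0)\ge\limsup h(m_{BM}^{g_\e},g_\e)$. Even after transporting through $\Psi^{g_\e\to g_0}$ to reduce to a fixed flow, upper semicontinuity of $m\mapsto h_{KS}(m,g_0)$ on a noncompact phase space is not automatic; indeed, proving it is essentially equivalent to the tightness you are trying to derive. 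So the entropy route is circular unless you supply these ingredients independently; the paper avoids this by working directly with the Patterson--Sullivan construction.
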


This allows us to show the following regularity property for the topological entropy, 
which answers our initial question. We refer to section \ref{sec:SPR} 
 for technical details on the assumptions. 
Denote by $h_{top}(g)$ the topological entropy of 
the geodesic flow of the metric $g$.

\begin{theo} \label{theo:main} Let $(M,g_0)$ be a SPR manifold with pinched negative curvature 
and bounded derivatives of the curvature.
 Let $(g_\varepsilon)$ be a $C^1$-uniform variation of the metric with negative sectional curvatures. 
Then the map $\varepsilon\to h_{top}(g_\varepsilon)$ is $C^1$ near $\epsilon=0$, with derivative at $0$
$$
{\frac{{\rm d}\, h_{top}(g_\e)}{{\rm d}\,\e}}_{|\e=0}=-h_{top}(g_0)\int_{S^{g_0}M} \left.\frac{{\rm d}\, \norm{v}^{g_\e}}{{\rm d}\,\e}\right|_{\e=0 }\,d\frac{m_{BM}^{g_0}}{\|m_{BM}^{g_0}\|}\,,
$$ 
the measure $\frac{m_{BM}^{g_0}}{\|m_{BM}^{g_0}\|}$ being the invariant probability measure of maximal entropy for the $g_0$-geodesic flow. 
\end{theo}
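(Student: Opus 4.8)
The plan is to sandwich $h_{top}(g_\e)/h_{top}(g_0)$ between two functions of $\e$ which equal $1$ at $\e=0$ and have the same derivative there, namely $-\int_{S^{g_0}M}\left.\frac{d}{d\e}\right|_{\e=0}\norm{v}^{g_\e}\,d\overline m_{BM}^{g_0}(v)$; the squeeze then yields differentiability at $0$ with the announced value, and running the argument at every nearby base metric gives the $C^1$ statement. Fix $\e_0>0$ as in Theorem~\ref{theo:SPRStable}, so that for $|\e|<\e_0$ the metric $g_\e$ is SPR and carries its Bowen--Margulis probability $m_{BM}^{g_\e}=m^{g_\e}_{\mu_\e}$ (Theorem~\ref{examples-SPR}), $\mu_\e$ being the associated geodesic current; write $m^{g_0}_{\mu_\e}$ for the finite $g_0$-invariant measure on $S^{g_0}M$ obtained by reading $\mu_\e$ in $g_0$-Hopf coordinates, and $\overline m^{g_0}_{\mu_\e}$ for its normalisation. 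Applying the entropy comparison $h_{top}(g_2)\le\big(\int_{S^{g_2}M}\calE^{g_2\to g_1}\,dm_{BM}^{g_2}\big)\,h_{top}(g_1)$ of Section~\ref{sec:Entropy} in both directions between $g_0$ and $g_\e$, and rewriting the upper bound for $h_{top}(g_\e)$ on the fixed unit bundle $S^{g_0}M$ by means of the mass formula of Proposition~\ref{exact-change-variable}, one obtains
$$ \frac{h_{top}(g_0)}{\int_{S^{g_0}M}\calE^{g_0\to g_\e}\,d\overline m_{BM}^{g_0}}\ \le\ h_{top}(g_\e)\ \le\ \frac{h_{top}(g_0)}{\int_{S^{g_0}M}\calE^{g_0\to g_\e}\,d\overline m^{g_0}_{\mu_\e}}. $$
Both denominators are integrals of the \emph{same} function $\calE^{g_0\to g_\e}$ over $S^{g_0}M$ against $g_0$-flow invariant probability measures, and both equal $1$ at $\e=0$ because $\calE^{g_0\to g_0}\equiv 1$.

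It remains to analyse the first-order behaviour of $\e\mapsto\int_{S^{g_0}M}\calE^{g_0\to g_\e}\,d\nu_\e$, where $\nu_\e$ is either frozen equal to $\overline m_{BM}^{g_0}$ or equal to $\overline m^{g_0}_{\mu_\e}$. I would use three ingredients. First, a uniform expansion $\calE^{g_0\to g_\e}(v)=1+\e\,c(v)+o(\e)$, the remainder being $o(\e)$ uniformly in $v\in S^{g_0}M$ and $c$ bounded and continuous, coming from the $C^1$-uniform control of the metric and of Busemann functions discussed in Section~\ref{subsec:stretch}. Second, the pointwise identity $c(v)=\left.\frac{d}{d\e}\right|_{\e=0}\norm{v}^{g_\e}$: indeed, Cauchy--Schwarz for $g_\e$ between the $g_0$-velocity of the $g_0$-geodesic issued from $v$ and the $g_\e$-unit gradient of the $g_\e$-Busemann function based at $v_+$ gives $\calE^{g_0\to g_\e}(v)\le\norm{v}^{g_\e}$ for all $\e$, with equality at $\e=0$, so that $\e=0$ is an interior maximum of the differentiable map $\e\mapsto\calE^{g_0\to g_\e}(v)-\norm{v}^{g_\e}$, whence its derivative vanishes there. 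Third, the narrow convergence $\overline m^{g_0}_{\mu_\e}\to\overline m_{BM}^{g_0}$ as $\e\to0$, which I would deduce from the narrow continuity of $m_{BM}^{g_\e}$ and of its mass (Theorem~\ref{theo:SPRStable}), transported to $S^{g_0}M$ through Proposition~\ref{exact-change-variable} together with the uniform convergence $\calE^{g_\e\to g_0}\to 1$ and the continuity of the Morse correspondences in the metric. Combining the three: $\int_{S^{g_0}M}\calE^{g_0\to g_\e}\,d\nu_\e = 1+\e\int c\,d\nu_\e+o(\e)=1+\e\int_{S^{g_0}M}\left.\frac{d}{d\e}\right|_{\e=0}\norm{v}^{g_\e}\,d\overline m_{BM}^{g_0}(v)+o(\e)$ in both cases, so each side of the displayed sandwich equals $h_{top}(g_0)\big(1-\e\int_{S^{g_0}M}\left.\frac{d}{d\e}\right|_{\e=0}\norm{v}^{g_\e}\,d\overline m_{BM}^{g_0}(v)+o(\e)\big)$; since this pins down the difference quotient of $\e\mapsto h_{top}(g_\e)$ at $0$ regardless of the sign of $\e$, the map is differentiable at $0$ with the asserted derivative.

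For the $C^1$ conclusion, Theorem~\ref{theo:SPRStable} allows one to repeat the whole argument with base metric $g_{\e_1}$ for every $\e_1\in(-\e_0,\e_0)$, yielding differentiability at $\e_1$ with derivative $-h_{top}(g_{\e_1})\int_{S^{g_{\e_1}}M}\left.\frac{d}{d\e}\right|_{\e=\e_1}\norm{v}^{g_\e}\,d\overline m_{BM}^{g_{\e_1}}$. Continuity of this quantity in $\e_1$ then follows from continuity of $\e_1\mapsto h_{top}(g_{\e_1})$ (now known, being differentiable), narrow continuity of $\e_1\mapsto m_{BM}^{g_{\e_1}}$ (Theorem~\ref{theo:SPRStable}), and joint continuity and uniform boundedness of $(\e_1,v)\mapsto\left.\frac{d}{d\e}\right|_{\e=\e_1}\norm{v}^{g_\e}$, which hold because the variation is $C^1$-uniform. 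Hence $\e\mapsto h_{top}(g_\e)$ is $C^1$ near $0$.

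The step I expect to be the main obstacle is the passage to the limit in the upper bound, where the probability measure $\overline m^{g_0}_{\mu_\e}$ itself varies with $\e$: one must combine the uniform first-order expansion of the geodesic stretch with the narrow---not merely weak-$*$---continuity of the Bowen--Margulis measures in order to extract the correct first-order coefficient. This is exactly the point that fails in the non-compact setting in the absence of the SPR hypothesis, and where Theorem~\ref{theo:SPRStable}---finiteness of $m_{BM}$, its stability under perturbation and its narrow continuity---is used essentially; a secondary technical point is the uniform-in-$v$ expansion of Section~\ref{subsec:stretch} and the resulting legitimacy of differentiating under the integral sign, which rests on the $C^1$-uniform control of the metric and hence of the instantaneous geodesic stretch.
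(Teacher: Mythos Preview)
Your sandwich strategy is the right one, and your reformulation of the two bounds as integrals of the \emph{same} function $\calE^{g_0\to g_\e}$ on the fixed bundle $S^{g_0}M$ (via Corollary~\ref{prop:transformation-mass}) is a nice touch. The handling of the upper bound via narrow convergence of $\overline m^{g_0}_{\mu_\e}$ is also the correct diagnosis of where SPR enters.

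There is, however, a genuine gap at the step you flag only as ``secondary''. You assert a uniform expansion $\calE^{g_0\to g_\e}(v)=1+\e\,c(v)+o(\e)$ and then identify $c(v)$ by arguing that $\e\mapsto \calE^{g_0\to g_\e}(v)-\norm{v}^{g_\e}$ is differentiable with a maximum at $\e=0$. But nothing in Section~\ref{subsec:stretch} (nor in Section~\ref{sec:ConvGeod}, which only proves \emph{continuity} of Busemann functions in $\e$) gives differentiability of $\e\mapsto\calE^{g_0\to g_\e}(v)$, let alone uniformly in $v$. Your interior-maximum argument presupposes exactly this differentiability, so it is circular. The paper addresses this point explicitly in the Remark following Theorem~\ref{theo:VarEntMet}: establishing such an expansion ``would be long and technical, and would require additional assumptions of regularity on the variation of metric'' beyond $C^1$-uniform.

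The paper's way around this is much cheaper and worth adopting: instead of expanding $\calE^{g_0\to g_\e}$, use Lemma~\ref{lem:Ineq-Knieper} once more to bound $\calE^{g_2\to g_1}(v)\le\norm{v}^{g_1}$ in Corollary~\ref{coro:EntBM}. This produces a sandwich of $h_{top}(g_\e)-h_{top}(g_0)$ by quantities of the form $\int (\norm{v}^{g_0}-\norm{v}^{g_\e})\,d\overline m$, where now the integrand is \emph{manifestly} $C^1$ in $\e$ by the uniform-$C^1$ hypothesis (Definition~\ref{def:unifC1}), and the only remaining issue is the narrow convergence of $\overline m_{BM}^{g_\e}$---precisely your ``main obstacle''. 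See the proof of Theorem~\ref{theo:VarEnt}. In short: your architecture is right, but swap the Taylor expansion of the stretch for the pointwise bound by the norm; the rest of your argument then goes through.
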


Let us emphasize the fact that this theorem is valid in a much greater 
generality than what we thought initially possible.  
On the one hand, SPR manifolds are a very general and interesting class of manifolds, 
much larger than the well known and well studied class of finite volume, 
or even geometrically finite hyperbolic manifolds, 
as illustrated by Theorem \ref{examples-SPR}. 
It may be an optimal class to get such result in the sense that we guess that
phase transitions for the entropy can happen when the manifold is not SPR (see \cite{STdufutur}). 

On the other hand, we allow much more general perturbations than 
only compact ones since we deal with noncompact $C^2$-perturbations of our metric, 
as soon as they are not too wild at infinity.

\medskip

The paper is organized as follows. 
In Section \ref{sec:Hopf}, we develop the point of view of geodesic currents at infinity, 
which allows us to associate to an invariant measure $m_\mu^{g_1}$ 
for the geodesic flow for $(M,g_1)$ an invariant measure $m_\mu^{g_2}$ 
for the geodesic flow on $(M,g_2)$, and compare their ergodic properties.

In Section \ref{sec:Entropy}, we introduce different notions of entropy 
and develop methods of section \ref{sec:Hopf} to relate the entropies 
of $m_\mu^{g_1}$ and $m_\mu^{g_2}$.  

In Section \ref{sec:Gibbs}, we recall general fact about Gibbs measures 
on non-compact manifolds, we show that $m_\mu^{g_1}$ is a Gibbs measure
 if and only if $m_\mu^{g_2}$ is and give applications to the length spectrum.

In Section \ref{sec:ConvGeod} we show some continuity results for geodesics, 
Busemann functions and non-normalized Bowen-Margulis measures which will be needed in the sequel.

In Section \ref{sec:DiffEnt}, we first show that for a fixed geodesic current $\mu$ on $\bd^2\tilde M$, the metric entropy $\ds \e \mapsto h\left(g^t_\e, m_\mu^{g_\e}\right)$ is $\calC^1$ under a $\calC^1$-uniform variation of the Riemannian metrics $g_\e$. We then show in a very similar proof that, if under a $\calC^1$-uniform variation of Riemmanian metrics the normalized Bowen-Margulis measures $\overline{m}_{BM}^{g_\e}$ vary continuously in the narrow topology, then the topological entropy is also $\calC^1$.

Eventually, in Section \ref{sec:SPR}, we introduce entropy at infinity and SPR manifolds, we show that they have finite Bowen-Margulis measure, and that under a small $\calC^1$-uniform variation of Riemmanian metrics they remain SPR.
On the way, we  give some properties of the entropy at infinity of independent interest.

Theorem \ref{examples-SPR} follows from results of section \ref{ssec:SPR-Ex}, where we provide many examples of SPR manifolds. Theorem \ref{theo:SPRStable} is a reformulation of the second part of
Theorem \ref{theo:SPR-FinBM}. At last, our main variational formula for the topological entropy, Theorem \ref{theo:main}, 
follows from Theorems \ref{theo:VarEnt} and \ref{theo:SPRStable} (or \ref{theo:SPR-FinBM}).

\subsection*{Acknowledgments}

The authors acknowledge the support of the ANR grant  ANR JCJC 0108-GEODE (2010-2015), the grant PEPS CNRS Egalit\'e CHATS (Integer Project 2015), and the Centre Henri Lebesgue ANR-11-LABX-0020-01.

 
\section{Hopf parametrization and geodesic currents}\label{sec:Hopf}

\subsection{Hopf parametrization and geodesic flow}\label{subsec:hopf}

Let $(M,g_0)$ be a complete manifold with pinched negative sectional curvatures 
satisfying $-b^2 \leq K_{g_0} \leq -b^2 <0$, and derivatives of the curvature bounded. 
Let $\tilde M$ be its universal cover, equipped with the lifted metric 
which we will still denote by $g_0$, 
and let $\bd_{g_0} \tilde M$ be its visual boundary.
 Let $\Gamma = \pi_1(M)$ be the fundamental group, 
acting properly by diffeomorphisms on $\tilde M$. 
Denote by $p_\Gamma$ indistinctly the projection $\tilde M\to M$ 
and its linear tangent map $T\tilde M\to TM$. 
A metric $g$ on $M$ (or equivalently, a $\Gamma$-equivariant metric $g$ on $\tilde M$) 
will be called \emph{admissible} if it has pinched negative sectional curvature, 
if the derivatives of the curvature are bounded, 
and if there exists a constant $C_1(g_0,g)>1$ such that at all $x\in M$,
    \begin{equation}\label{eq:QIsom}
     \frac 1 {C_1(g_0,g)}\, g_0 \leq g \leq C_1(g_0,g)\, g_0.
    \end{equation}

    This implies that $g$-geodesics are $g_0$-quasi-geodesics, 
which are contained in the $C_2(g_0,g)$-neighbourhood of $g_0$-geodesics, 
where $C_2(g_0,g)$ only depends on $C_1(g_0,g)$ (see \cite[Th.1.7 p401]{BH99}). 
In particular the visual boundary $\bd_{g} \tilde M$ of $(\tilde M, g)$ 
is canonically identified to the visual boundary of $(\tilde M, g_0)$, 
and they will therefore both be denoted by $\bd \tilde M$. 
Moreover, this identification is H\"older continuous w.r.t the visual 
 distances induced by both $g_0$ and $g$, so that $\bd\tilde M$ has a natural H\"older structure. 

The {\em limit set} $\Lambda_\Gamma \subset \bd \tilde M$ is 
the set of accumulation points of any orbit $\Gamma.x$ on the boundary. 
It does not depend either of the chosen admissible metric. 
 The {\em radial limit set} $\Lambda_\Gamma^{r}\subset\Lambda_\Gamma$ 
is the set of endpoints of geodesics which, on the quotient manifold $M$, 
return infinitely often to some compact set. 
It does not depend either on the chosen admissible metric. 
    
    \medskip
    
Let us fix once for all a point $o\in \tilde M$. 
Let $g$ be any admissible metric on $M$, and   $d^g$ 
the distance induced by $g$ on $M$ and $\tilde M$.
Denote by $S^gM$ (resp. $S^g\tilde M$) the unit tangent bundle of $(M,g)$ (resp.  $(\tilde M,g)$), and $\bd^2 \tilde M = (\bd \tilde M\times \bd \tilde M)\backslash\mbox{Diag}$. 
We  write $\pi : TM \to M$ and $\pi: T\tilde M \to \tilde M$ the projections from the tangent bundle to its base, 
and by $(g^t)_{t\in \bbR}$ the geodesic flow on $S^gM$ or $S^g\tilde M$. 
For any $v\in S^g \tilde M$,  write $v_-^g$ and $v_+^g$ for the endpoints
 in $\bd \tilde M$ of the geodesic $\{\pi g^t v ; t\in \bbR\}$. 
    
\begin{rema}\rm 
We keep track in our notations of the metric $g$ 
since we will soon compare these quantities for two different admissible metrics $g_1$ and $g_2$.
\end{rema}

\noindent
For all $\xi\in \bd \tilde M$, let $\calB^g_\xi$ be the Busemann function at 
$\xi$ defined, 
for any $x,y\in \tilde M$, by
$$
\calB^g_\xi(x,y) = \lim_{z\to \xi} d^g(x,z) - d^g(y,z).
$$

\noindent
The map 
$$
\ds H^g : v \mapsto \left(v_-^g, v_+^g, \calB_{v_+^g}(o, \pi v)\right)
$$ 
is a H\"older homeomorphism from $S^g \tilde M$ to 
$\bd^2 \tilde M\times \bbR$, called the \emph{Hopf parametrization} of the unit tangent bundle. 
    
The action of $\Gamma$ by (differentials of) isometries 
on $S^g\tilde M$ can be written in these coordinates as 
$$
\gamma.(v^g_-,v^g_+,t)=\left(\gamma.v^g_-,\gamma.v^g_+,t+\calB^g_{v^g_+}(o,\gamma^{-1}.o)\right)\,.
$$
Let us emphasize the fact that this action of $\Gamma$ on 
$\bd^2\tilde M\times \bbR$, and more specifically on the third factor,  depends strongly on the cocycle $\calB^g$, and therefore on the metric $g$.

  \subsection{Geodesic currents and invariant measures}
  
 In the coordinates given by the Hopf parametrization of $S^g\tilde M$,
 the geodesic flow $(g^t)$ acts by translation on the last factor\,: 
for all $ v\in S^g\tilde M$, and 
$s\in \bbR$, 
$$
\mbox{if}\quad H^g(v) = (v_-, v_+, t) \quad \mbox{then}\quad   H^g(g^s v) = (v_-, v_+, t+s)\,.
$$
Therefore, any positive Radon measure $m$ on $S^g M$ invariant by the flow 
lifts to a measure $\tilde m$ on $S^g \tilde M$ of the form 
$\tilde m = (H^g)^*(\mu\times dt)$, 
where $dt$ is the Lebesgue measure on $\bbR$, 
and $\mu$ is a $\Gamma$-invariant locally finite positive measure 
on $\bd^2 \tilde M$. 
    
\begin{defi}[Geodesic current]
A \emph{$\Gamma$-invariant geodesic current}, or simply \emph{geodesic current}, 
is a $\Gamma$-invariant  positive Radon  
measure on $\bd^2 \tilde M$. 
\end{defi}

Given any geodesic current $\mu$ and any admissible metric $g$ on $M$, 
we will denote by $m^g_\mu$ the unique measure on $S^gM$ 
invariant by the geodesic flow $(g^t)$ 
whose lifts on $S^g\tilde M$ is $\tilde m^g_\mu = (H^g)^*(d\mu\times dt)$. 
The {\em non-wandering set}  $\Omega^g\subset S^gM$  of 
the geodesic flow $(g^t)$ is 
$$
\Omega^g = (H^g)^{-1}\left((\Lambda_\Gamma\times \Lambda_\Gamma)\backslash \mbox{Diag}\times \bbR\right)\,.
$$
It was shown in Eberlein \cite{Eberlein} that for the geodesic flow of a negatively curved manifold, this definition coincides with the usual definition of the nonwandering set of a flow.  

It follows from (\ref{eq:QIsom}) and  \cite[Thm 1.7 p401]{BH99} that $\Omega^g$ is compact (i.e. (M,g) is \emph{convex-cocompact}) if and only if $\Omega^{g_0}$ is. We will mainly be interested in the case where $\Omega^g$ is \emph{not compact}.
    
    The measure $m^g_\mu$ is locally finite, but may have infinite mass as soon as $(M,g)$ is not convex-cocompact. 
We will use all over this paper the fact that many properties of the measure $m_\mu^g$ only depend on the geodesic current $\mu$ and not on the chosen admissible metric $g$.

Recall first that an invariant measure is {\em ergodic} if every invariant set either 
has measure zero or its complementary set has measure zero. 
An invariant measure is  {\em periodic} if it is (proportional to) the Lebesgue measure on a periodic orbit. 
The measure $m$  is {\em conservative} if it satisfies the conclusion of Poincar\'e recurrence Theorem\,: for all sets $A$ of positive measure $m(A)>0$, and $m$-almost all vectors $v$, the orbit $(g^tv)$ returns infinitely often in $A$. 
The measure $m$ has a {\em product structure} if the associated geodesic current is equivalent to a product of measures on $\partial \tilde M$. 
The measure $m$ is {\em strongly mixing} if 
it is finite and satisfies $m(A\cap g^t B)\to m(A)m(B)$ 
when $t\to \pm \infty$ for all Borel sets $A,B$. 
It is {\em weakly mixing} if it is finite and 
$\frac{1}{T}\int_0^T\left|m(A\cap g^t B)-m(A)m(B)\right|$
 goes to $0$ when $T\to \pm \infty$ for all Borel sets $A,B$.

 First well known properties are given in the following proposition.
    \begin{prop}\label{easy-comparison}
     Let $\mu$ be a geodesic current, let $g_1$ and $g_2$ be two admissible metrics on $M$. Then 
     \begin{enumerate}
     \item the measure $m_\mu^{g_1}$ is supported by a 
(finite number of) closed geodesic(s) if and only if $m_\mu^{g_2}$ is ;
      \item the measure $m_\mu^{g_1}$ is ergodic for 
the geodesic flow $(g_1^t)$ if and only if $m_\mu^{g_2}$ 
is ergodic for the geodesic flow $(g_2^t)$ ;
      \item the measure $m_\mu^{g_1}$ is conservative for 
the geodesic flow $(g_1^t)$ if and only if $m_\mu^{g_2}$ is 
conservative for the geodesic flow $(g_2^t)$ ;
\item  the measure $m_\mu^{g_1}$ has a local product structure
iff the measure $m_\mu^{g_2}$ has a local product structure. 
     \end{enumerate}
    \end{prop}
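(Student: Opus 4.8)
The plan is to observe that all four properties in the statement are, in the end, properties of the geodesic current $\mu$ together with the action of $\Gamma$ on $\partial\tilde M$, neither of which depends on the admissible metric (recall from Section~\ref{subsec:hopf} that $\partial\tilde M$, its $\Gamma$-action, the set $\partial^2\tilde M$ and the notion of geodesic current are all metric-independent). Indeed, via the Hopf parametrization one has $S^g\tilde M\simeq\partial^2\tilde M\times\bbR$ for every admissible $g$, with $\tilde m_\mu^g=(H^g)^*(d\mu\times dt)$ and geodesic flow acting by translation on the $\bbR$-factor; the whole $g$-dependence sits in the homeomorphism $H^g$ and in the Busemann cocycle $\calB^g$ governing the $\Gamma$-action on the third coordinate. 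Since the flow acts precisely on that third coordinate, any invariant of the system that can be read off from $\mu$ and the boundary action of $\Gamma$ alone is automatically common to $m_\mu^{g_1}$ and $m_\mu^{g_2}$. I would then treat the four items in this light.

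Items (1) and (4) are bookkeeping. For (1), $m_\mu^g$ is supported on a finite union of closed $g$-geodesics iff its lift $(H^g)^*(d\mu\times dt)$ is supported on finitely many $\Gamma$-orbits of $g$-geodesics, i.e. iff $\mu$ is supported on a finite union of $\Gamma$-orbits $\Gamma\cdot(\xi_j,\eta_j)$ in which each $(\xi_j,\eta_j)$ is the pair of fixed points of a hyperbolic element of $\Gamma$ (equivalently $\mathrm{Stab}_\Gamma(\xi_j,\eta_j)$ is infinite); this condition involves only $\mu$ and the action of $\Gamma$ on $\partial\tilde M$, hence holds for $g_1$ iff it holds for $g_2$. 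For (4), by the definition recalled above $m_\mu^g$ has a (local) product structure exactly when its associated geodesic current is (locally) equivalent to a product of two measures on $\partial\tilde M$; but this current is $\mu$ in both cases, so there is nothing to prove.

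The substance is in (2) and (3). Here I would invoke the classical equivalence (going back to Hopf and Sullivan, and worked out in full generality by Kaimanovich, Roblin, Paulin--Pollicott--Schapira, and, in the measure-theoretic language, Aaronson): the geodesic flow $(g^t)$ on $(S^gM,m_\mu^g)$ is ergodic (resp. conservative) if and only if the action $\Gamma\curvearrowright(\partial^2\tilde M,\mu)$ is ergodic (resp. conservative) — and the right-hand side makes no reference to the metric. For ergodicity the reduction is essentially formal: a bounded flow-invariant function on $S^gM$ lifts to a $\Gamma$- and translation-invariant function on $\partial^2\tilde M\times\bbR$, which is necessarily independent of the $\bbR$-coordinate, and then — because the cocycle $\calB^g$ acts only on that coordinate — it is exactly a $\Gamma$-invariant function on $(\partial^2\tilde M,\mu)$ for the boundary action; so flow-ergodicity of $m_\mu^g$ is equivalent to ergodicity of $\Gamma\curvearrowright(\partial^2\tilde M,\mu)$, whatever $g$. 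For conservativity one argues similarly with the Hopf decomposition: the conservative and dissipative parts of the flow are flow-invariant sets, hence lift to $\Gamma$-invariant sets of the form $C\times\bbR$ and $D\times\bbR$ with $C,D\subseteq\partial^2\tilde M$ invariant for the boundary action, and one identifies these with the conservative/dissipative decomposition of $\Gamma\curvearrowright(\partial^2\tilde M,\mu)$.

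Accordingly, I expect the only genuine obstacle to be this last identification — that the Hopf (conservative/dissipative) decomposition of the geodesic flow corresponds to that of the boundary $\Gamma$-action; everything else is either formal (ergodicity) or pure bookkeeping in Hopf coordinates (items (1) and (4)), and metric-independence is then immediate throughout. As an alternative uniform route for (1)--(3) — which I would mention but not pursue, since it gives less fine information — one may use the endpoints-preserving Morse correspondence $\Psi^{g_1\to g_2}:S^{g_1}M\to S^{g_2}M$ (which carries closed $g_1$-geodesics bijectively to closed $g_2$-geodesics) together with Proposition~\ref{exact-change-variable}, which exhibits $m_\mu^{g_2}$ as the $\Psi^{g_1\to g_2}$-pushforward of $\calE^{g_1\to g_2}\cdot m_\mu^{g_1}$ with $\calE^{g_1\to g_2}>0$; being a measure-class-preserving orbit equivalence, $\Psi^{g_1\to g_2}$ transports ergodicity, conservativity, and the property "supported on finitely many closed geodesics" from $g_1$ to $g_2$.
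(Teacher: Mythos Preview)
Your approach is correct and, for items (1), (2) and (4), essentially identical to the paper's: all three are read off directly from $\mu$ and the $\Gamma$-action on $\partial^2\tilde M$, with (2) reduced to ergodicity of $\Gamma\curvearrowright(\partial^2\tilde M,\mu)$ (the paper simply cites Roblin for this equivalence rather than sketching it as you do).

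For item (3) the paper takes a different, more geometric shortcut: instead of matching Hopf decompositions of the flow and of the boundary action, it invokes the criterion (Roblin, proof of (b) p.~19) that $m_\mu^g$ is conservative iff $\mu$ gives full measure to $\Lambda_\Gamma^r\times\Lambda_\Gamma^r$, and then observes that the radial limit set $\Lambda_\Gamma^r$ was already shown (Section~\ref{subsec:hopf}) to be independent of the admissible metric. This bypasses precisely the step you flag as the ``only genuine obstacle'' --- the identification of the two Hopf decompositions --- by replacing it with a concrete geometric characterization that is manifestly metric-free. Your route would work too, but requires justifying that identification; the paper's route trades that for a citation and a fact about $\Lambda_\Gamma^r$ already established in the preliminaries. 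Your alternative via $\Psi^{g_1\to g_2}$ and Proposition~\ref{exact-change-variable} is also valid, though note that in the paper's logical order that proposition is established only later (Section~\ref{subsec:Morse} onward), so it would be a forward reference.
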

    
\begin{proof} The measure $m_\mu^{g_1}$ is supported by a 
closed geodesic if and only if 
$\mu$ is carried by the $\Gamma$-orbit of a couple 
$(\xi_-, \xi_+)\in \bd^2 \tilde M$ where $\xi_-$ and $\xi_+$ 
are the fixed point of a hyperbolic element $\gamma\in \Gamma$. 
Since this property does not depend on $g_1$, it shows 1. 

The measure $m_\mu^{g_1}$ is ergodic for the geodesic flow $(g_1^t)$ 
if and only
$\mu$ is ergodic under the action of $\Gamma$ on $\bd^2 \tilde M$ 
(cf for instance \cite[p. 19]{Roblin}). 
This property only depends on $\mu$, which shows 2. 

The measure $m_\mu^{g_1}$ is conservative for the geodesic flow $(g_1^t)$ 
if and only $\mu$ gives full measure to 
$\Lambda^r_\Gamma\times \Lambda^r_\Gamma$   \cite[proof of (b) page 19]{Roblin} 
where $\Lambda_\Gamma^r$ is the radial limit set, 
which does not depend on the (admissible) metric $g_i$. 
This shows~3.
\end{proof}

One should note that in general an invariant measure $m_\mu^g$, 
even with finite total mass, has no reason to be a probability measure.

We will see further nontrivial relationships between $m_\mu^{g_1}$ and $m_\mu^{g_2}$ later. It would be interesting to know if this kind of result can be extended to mixing property. Explicit examples of mixing measures have all a local product structure. But there exist mixing measures without such a product structure.


  \subsection{Geodesic stretches}\label{subsec:stretch}
  
  Let $g_1$ and $g_2$ be two admissible metrics. 
  For all $v\in S^{g_1}M$, define  the quantity  
  \begin{equation}\label{eq:GeodStretch1}
   e^{g_1 \to g_2}(v) = 
\inf_{t>0} \frac{d^{g_2}(\pi \tilde v, \pi g_1^t \tilde v)}{t},
  \end{equation}
  where $\tilde v$ is a lift of $v$ to $S^g \tilde M$. 
This does not depend on the choice of $\tilde v$. 
Knieper showed in \cite{Kni95} that if $m$ is any invariant measure for $(g_1^t)$, then for  $m$-almost every $v\in S^{g_1}M$, 
   \begin{equation}\label{eq:GeodStretch2}
 e^{g_1 \to g_2}(v) = 
\lim_{t\to+\infty} \frac{d^{g_2}(\pi \tilde v, \pi g_1^t \tilde v)}{t}.
    \end{equation}
    
This asymptotic geodesic stretch has been studied by many authors, among which 
\cite{FF93}, \cite{Kni95}, \cite{Glorieux}.  
Sambarino uses a different point of view of reparametrization of the geodesic flow (see for example \cite{Sambarino}) which is very close from our point of view below. 

Recall that, $\xi\in\partial \tilde M$ being fixed, 
the Busemann function $\calB^{g}_\xi(\dot,\dot)$ is $C^2$ 
on $\tilde M^2$ \cite[prop. 3.1]{HIH77}.  
Therefore, for all $v\in S^{g_1} M$, we can define
  \begin{equation}\label{eq:GeodStretch3}
  \calE^{g_1\to g_2}(v) = 
\left.\frac{d}{dt}\right|_{t=0^+}\calB^{g_2}_{v_+^{g_1}}(\pi v, \pi g_1^t \tilde v) 
= \left.\frac{d}{dt}\right|_{t=0^+}\calB^{g_2}_{v_+^{g_1}}(o, \pi g_1^t \tilde v),
  \end{equation}
  where $\tilde v\in S^{g_1} \tilde M$ 
is any lift of $v$, and $\calB^{g_2}_{v_+^{g_1}}(.,.)$ 
is the Busemann function for $g_2$ based at 
the end point of the $g_1$-geodesic generated by $v$. 
This definition was inspired by Ledrappier's paper \cite{Led94}. 
In his notations, our geodesic stretch satisfies 
$\calE^{g_1\to g_2}(v)=\alpha^{g_2}(v)$, 
where $\alpha^{g_2}$ is the harmonic $1$-form on the $g_1$-stable foliation 
associated to the Busemann cocycle of the metric $g_2$.

\begin{figure}[ht!]\label{fig:geod-stretch} 
\begin{center}
\input{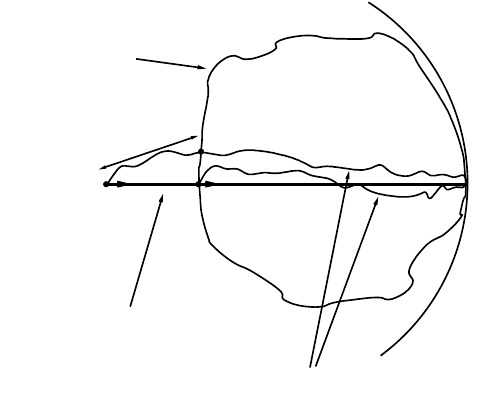_t} 
\caption{Geodesic stretch} 
\end{center}
\end{figure} 

  \begin{defi}[Geodesic stretch]
  The maps $e^{g_1 \to g_2} : S^{g_1}M\to \bbR$ and 
$\calE^{g_1 \to g_2} : S^{g_1}M\to \bbR$ will be called respectively 
the \emph{asymptotic} and \emph{instantaneous geodesic stretch} of $g_2$ with respect to $g_1$.
  \end{defi}
 
 Anyway, we will most of the time call them both without distinction {\em  geodesic stretch}. 

 By construction, for all $v\in S^{g_1}M$, 
$\calE^{g_1\to g_1}(v) = e^{g_1\to g_1}(v) = 1$. 
Observe that there is no obvious relation from the definition between 
$e^{g_1\to g_2}$ (resp. $\calE^{g_1\to g_2}$) and 
$e^{g_2\to g_1}$ (resp. $\calE^{g_2\to g_1}$). 

If $m$ is ergodic, then $e^{g_1\to g_2}$ is $m$-almost everywhere constant. 
Of course its value   strongly depends on the measure $m$. 
On the opposite, the map $\calE^{g_1\to g_2}$ is defined everywhere 
and does not depends on the chosen measure. 
It is in general non-constant, 
globally H\"older on $S^{g_1}M$ \cite[Appendix of Brin]{Bal95},\cite[thm 7.3]{PPS}, 
and $C^1$ along $g_1$-geodesics (as Busemann functions are $C^2$, see \cite{HIH77}). 
We will need the following basic estimate.
 
 \begin{lemm}\label{lem:Ineq-Knieper}
  Let $g_1$ and $g_2$ be two admissible metrics, and $m$ any $g_1$-invariant measure. 
For $m$-almost all $v\in S^{g_1} \tilde M$,
$$
e^{g_1\to g_2}(v) \leq \int_{S^{g_1}M}\norm{v}^{g_2}\,dm\,,
$$
whereas for all $v\in S^{g_1} \tilde M$,
$$   \calE^{g_1\to g_2}(v) \leq \norm{v}^{g_2}\,.
$$
\end{lemm}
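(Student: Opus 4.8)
The plan is to deduce both inequalities directly from the definitions of the two geodesic stretches together with an elementary $1$-Lipschitz property of Busemann functions. Recall that for any admissible metric $g_2$ and any $\xi\in\bd\tilde M$, the function $x\mapsto \calB^{g_2}_\xi(o,x)$ is $1$-Lipschitz for the distance $d^{g_2}$; this is immediate from the triangle inequality in the definition $\calB^g_\xi(x,y)=\lim_{z\to\xi} d^g(x,z)-d^g(y,z)$. Equivalently, for any $x,y\in\tilde M$ one has $\abs{\calB^{g_2}_\xi(x,y)}\le d^{g_2}(x,y)$.

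For the second (pointwise) inequality, fix $v\in S^{g_1}\tilde M$ and a lift $\tilde v$, and write $c(t)=\pi g_1^t\tilde v$ for the $g_1$-geodesic it generates, so $c(0)=\pi v$ and $\frac{d}{dt}\big|_{t=0}c(t)=v$. By definition,
$$
\calE^{g_1\to g_2}(v)=\left.\frac{d}{dt}\right|_{t=0^+}\calB^{g_2}_{v_+^{g_1}}(\pi v, c(t)).
$$
Using the $1$-Lipschitz bound above, for $t>0$ small,
$$
\calB^{g_2}_{v_+^{g_1}}(\pi v, c(t)) \le d^{g_2}(\pi v, c(t)) \le \int_0^t \norm{\dot c(s)}^{g_2}\,ds,
$$
and since this quantity vanishes at $t=0$, dividing by $t$ and letting $t\to 0^+$ gives $\calE^{g_1\to g_2}(v)\le \norm{\dot c(0)}^{g_2}=\norm{v}^{g_2}$ (here $\dot c(s)$ is the $g_1$-unit velocity, whose $g_2$-norm at $s=0$ is exactly $\norm{v}^{g_2}$, and continuity of $s\mapsto\norm{\dot c(s)}^{g_2}$ handles the limit of the average). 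This is the pointwise claim.

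For the first (almost-everywhere) inequality, I would use the ergodic characterization of the asymptotic stretch: by Knieper's formula (\ref{eq:GeodStretch2}), for $m$-a.e.\ $v$,
$$
e^{g_1\to g_2}(v)=\lim_{t\to+\infty}\frac{d^{g_2}(\pi\tilde v,\pi g_1^t\tilde v)}{t}
\le \lim_{t\to+\infty}\frac{1}{t}\int_0^t \norm{\dot c(s)}^{g_2}\,ds,
$$
again by bounding the $g_2$-distance by the $g_2$-length of the $g_1$-geodesic segment. The integrand $s\mapsto\norm{\dot c(s)}^{g_2}$ is, up to the natural identification, the value of the function $w\mapsto\norm{w}^{g_2}$ along the orbit $(g_1^s v)$ on $S^{g_1}M$, which is bounded (between $1/\sqrt{C_1}$ and $\sqrt{C_1}$) and hence $m$-integrable. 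So if $m$ is ergodic, Birkhoff's ergodic theorem identifies the $\limsup$ of the time average with $\int_{S^{g_1}M}\norm{w}^{g_2}\,dm$, giving the claim; in the non-ergodic case one passes to the ergodic decomposition, or simply observes that the time average converges $m$-a.e.\ to the conditional expectation of $\norm{\cdot}^{g_2}$, which after integrating is dominated by $\int \norm{\cdot}^{g_2}\,dm$ on the relevant ergodic component — either way the stated (non-normalized) bound holds $m$-a.e.

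\textbf{Main obstacle.} Nothing here is deep; the only point requiring a little care is the interchange of the two normalizations and ergodic components in the first inequality, and making sure the limit in the definition of $\calE^{g_1\to g_2}$ really is a one-sided derivative that survives the Lipschitz estimate (it does, because $\calB^{g_2}_{v_+^{g_1}}(\pi v,\cdot)$ is $C^1$ along the $g_1$-geodesic by the regularity of Busemann functions recalled before the lemma, so the one-sided derivative exists and equals the limit of the difference quotients we just bounded).
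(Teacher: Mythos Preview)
Your argument is correct and matches the paper's approach: for the pointwise inequality both use $\calB^{g_2}_\xi(x,y)\le d^{g_2}(x,y)$ followed by the arc-length bound, and for the almost-sure inequality the paper simply cites \cite[p.44]{Kni95} while you spell out the Birkhoff argument underlying that citation. One minor caveat: your remark that in the non-ergodic case the conditional expectation is ``dominated by $\int\norm{\cdot}^{g_2}\,dm$'' pointwise is not correct in general (the ergodic average on a given component can exceed the global mean), but this is immaterial here since the paper only ever applies the lemma to ergodic probability measures, and Knieper's cited statement assumes ergodicity as well.
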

\begin{proof}
 The first estimate was shown in \cite[p.44]{Kni95}.
 The second follows from  triangular inequality. Indeed, 
for all $t\ge 0$, $\calB_{v^{g_1}_+}^{g_2}(\pi(v), \pi(g_1^t v))\le d^{g_2}(\pi(v),\pi(g_1^tv))$, and these two quantities vanish at $t=0$ so 
that their derivatives at $t=0$ satisfy the same inequality.
Moreover, $d^{g_2}(\pi(v),\pi(g_1^t v))$ is smaller than 
the $g_2$-length of the curve $(\pi(g_1^sv))_{0\le s\le t}$, 
whose derivative at zero is exactly $\|v\|_{g_2}$. 
 \end{proof}

 Lemma \ref{lem:GeodStretch} and  Corollary \ref{coro:GeodStretch} below 
justify the common name of {\em geodesic stretch} given to the  two maps $e^{g_1\to g_2}$ 
and $\calE^{g_1\to g_2}$.
Before stating them, recall a well known feature of negative curvature. 
On a geodesic space $X$, each triangle $(x,y,z)$ admits an interior triangle $(p,q,r)$ such that
$d(r,x)=d(q,x)$, $d(q,z)=d(p,z)$ and $d(p,y)=d(r,y)$. 
If $g$ is a metric with negative curvature, there exists a universal constant $\Delta(g)$ such that 
for any geodesic triangle $(x,y,z)$ in $\tilde{M}$, the associated interior triangle has sides smaller than $\Delta(g)$. 

 \begin{lemm}\label{lem:GeodStretch}
  There exists $C_3=C_3(g_1,g_2)>0$, depending only on the constant $C_2(g_1,g_2)$ and the hyperbolicity constant $\Delta(g_2)$,  such that for all $\tilde v\in S^{g_1}\tilde M$ and for all $T>0$,
  $$
\left|d^{g_2}(\pi\tilde v,\pi g_1^T \tilde v)-\calB^{g_2}_{v_+^{g_1}}(\pi \tilde v,\pi g_1^T \tilde v)\right|=
\left|d^{g_2}(\pi \tilde v, \pi g_1^T \tilde v) - \int_0^T \calE^{g_1\to g_2}(g_1^t \tilde v)dt \right|\leq C_3(g_1,g_2)\,.
$$
  \end{lemm}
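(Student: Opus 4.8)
The plan is to reduce everything to a statement about the additive cocycle defined by the $g_2$-Busemann function along $g_1$-geodesics, and then to compare this cocycle with the $g_2$-distance via the thin-triangles property of negative curvature. First I would record the identity
$$
\calB^{g_2}_{v_+^{g_1}}(\pi\tilde v,\pi g_1^T\tilde v)=\int_0^T \calE^{g_1\to g_2}(g_1^t\tilde v)\,dt,
$$
which is essentially the fundamental theorem of calculus: the map $t\mapsto \calB^{g_2}_{v_+^{g_1}}(\pi\tilde v,\pi g_1^t\tilde v)$ is $C^1$ (Busemann functions for $g_2$ are $C^2$ on $\tilde M\times\tilde M$ by \cite{HIH77}), it vanishes at $t=0$, and by the cocycle property $\calB^{g_2}_\xi(x,z)=\calB^{g_2}_\xi(x,y)+\calB^{g_2}_\xi(y,z)$ its derivative at a generic time $t$ is exactly $\calE^{g_1\to g_2}(g_1^t\tilde v)$. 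This disposes of the second equality in the statement and leaves only the inequality to prove.

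For the inequality I would fix $x=\pi\tilde v$, $y=\pi g_1^T\tilde v$, and look at the point $y$ moving along the $g_1$-geodesic toward $\xi=v_+^{g_1}$. Since $g_1$-geodesics are $g_2$-quasigeodesics staying within $C_2(g_1,g_2)$ of genuine $g_2$-geodesics, the $g_2$-geodesic ray from $x$ to $\xi$ and the $g_1$-geodesic ray from $x$ to $\xi$ are at bounded $g_2$-Hausdorff distance; call $\sigma(s)$ the $g_2$-geodesic ray from $x$ to $\xi$. The key geometric input is the standard fact that for a point $y$ near the $g_2$-geodesic ray $[x,\xi)$, the difference $d^{g_2}(x,y)-\calB^{g_2}_\xi(x,y)$ is bounded by a constant depending only on the hyperbolicity constant $\Delta(g_2)$ and on how far $y$ is from that ray; concretely, if $y_0$ is a nearest point to $y$ on $[x,\xi)$, then $\calB^{g_2}_\xi(x,y)$ and $d^{g_2}(x,y_0)$ differ by at most $2d^{g_2}(y,y_0)+O(\Delta(g_2))$, while $d^{g_2}(x,y)$ and $d^{g_2}(x,y_0)+d^{g_2}(y_0,y)$ differ by at most $O(\Delta(g_2))$ because the interior-triangle picture for $(x,y_0,\xi)$ — recalled just before the lemma — pins the ``overlap'' of the two sides at $x$. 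Combining these two estimates and using $d^{g_2}(y,y_0)\le C_2(g_1,g_2)$ yields a bound $C_3(g_1,g_2)$ depending only on $C_2(g_1,g_2)$ and $\Delta(g_2)$, uniformly in $T$ and in $\tilde v$.

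The main obstacle is making the second step fully rigorous in the setting of an arbitrary CAT$(-b^2)$-type space rather than a tree: one must be careful that the ``nearest point projection'' onto a geodesic ray is coarsely well-defined, that the Gromov-product estimates are applied with the correct metric $g_2$ (not $g_1$), and that all error terms genuinely collapse into a single constant depending only on $\Delta(g_2)$ and $C_2(g_1,g_2)$ and not on the basepoint or on $T$. Once the comparison $|d^{g_2}(x,y)-\calB^{g_2}_\xi(x,y)|\le C_3(g_1,g_2)$ is established for all $x$, all $y$ on a $g_1$-geodesic ray toward $\xi$, and all $\xi\in\bd\tilde M$, the lemma follows by specializing $x=\pi\tilde v$, $\xi=v_+^{g_1}$, $y=\pi g_1^T\tilde v$ and invoking the integral identity above.
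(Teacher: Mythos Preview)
Your proposal is correct and follows essentially the same route as the paper. The only notable difference is the choice of auxiliary point on the $g_2$-geodesic ray $[x,\xi)$: you take the nearest-point projection $y_0$ of $y=\pi g_1^T\tilde v$, whereas the paper takes the point $z_T$ on $[x,\xi)$ lying on the $g_2$-horosphere through $x_T=y$ centered at $\xi$. The paper's choice is slightly cleaner because it makes $d^{g_2}(x,z_T)=\calB^{g_2}_\xi(x,x_T)$ hold \emph{exactly}, so the whole lemma reduces to the single estimate $d^{g_2}(x_T,z_T)\le 2C_2(g_1,g_2)+\Delta(g_2)$, which the paper gets directly from the interior-triangle picture for the ideal $g_2$-triangle $(x,x_T,\xi)$; this avoids the separate bookkeeping you flag as an ``obstacle'' (coarse well-definedness of the projection, two error terms to combine). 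Either way the constant comes out as $C_3=2C_2(g_1,g_2)+\Delta(g_2)$.
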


  \begin{proof}
 Let $\tilde v\in S^{g_1}\tilde M$ and $T>0$ be fixed. 
We write $x = \pi \tilde v$, $x_T = \pi g_1^T \tilde v$, 
and $z_T$ be the intersection between 
the $g_2$-geodesic $(x,v_+^{g_1})^{g_2}$ and the $g_2$-horosphere
centered at $v_+^{g_1}$ passing thorough $x_T$. 

We will need at several occasions the following estimate. 
\begin{fact}\label{fact} With above notations, $d^{g_2}(x_T,z_T)\le 2C_2(g_1,g_2)+ \Delta(g_2)$. 
\end{fact}

Let us first prove this fact. 
Consider the $g_2$-geodesic triangle $x,x_T,v_+^{g_1}$ and its interior triangle, 
say $p\in (x_T,v_+^{g_1}),q\in (x,x_T),r\in (x,v_+^{g_1})$. 
\begin{figure}[ht!]\label{fig:lemme26}
\begin{center}
 \input{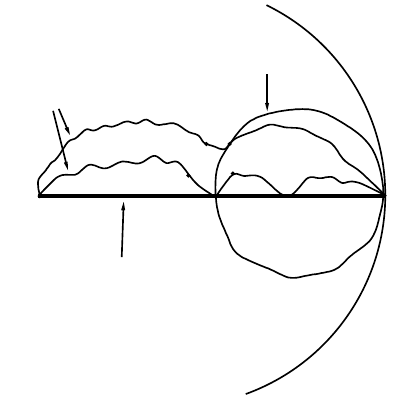_t } 
\caption{Proof of lemma \ref{lem:GeodStretch}} 
\end{center}
\end{figure} 

Then   by definition of $z_T$, $d^{g_2}(z_T,r)=d^{g_2}(x_T,q)$, 
so that $d^{g_2}(x_T,z_T)\le 2d^{g_2}(x_T,q)+d^{g_2}(q,r)$.  
Now, the definition of $(p,q,r)$ implies $d^{g_2}(x_T,q)\le d^{g_2}(x_T,(x,v_+^{g_1}))\le C_2(g_1,g_2)$. 
The fact follows. \\

 By  definition (\ref{eq:GeodStretch3}),
 $$
\int_0^T \calE^{g_1\to g_2}(g_1^t \tilde v) dt =
 \calB^{g_2}_{v_+^{g_1}}(\pi \tilde v, \pi g_1^T \tilde v) = d^{g_2} (x, z_T)\,.
$$
Thanks to the above fact, we get
$$
|d^{g_2}(x,x_T)-d^{g_2}(x,z_T)|\le d^{g_2}(x_T,z_T)\le 2C_2(g_1,g_2)+\Delta(g_2)\,.
$$
 The result of the lemma follows, with $C_3(g_1,g_2)=2C_2(g_1,g_2)+\Delta(g_2)$. 
   \end{proof}

  \begin{coro}\label{coro:GeodStretch}
  Let $m$ be an ergodic probability measure on $S^{g_1}M$, invariant by the geodesic flow $(g_1^t)$. Then 
  $$
\int_{S^{g_1}M} e^{g_1\to g_2}(v) dm(v) = \int_{S^{g_1}M} \calE^{g_1\to g_2}(v)dm(v)\,.
$$
  Moreover, for $m$-almost every $v\in S^{g_1}M$ and all lifts $\tilde v\in S^{g_1}\tilde M$ of $v$,
$$
\lim_{T\to +\infty} \frac{d^{g_2}(\pi \tilde v, \pi g_1^T \tilde v)}{T} = 
\lim_{T\to +\infty} \frac{1}{T} \int_0^T \calE^{g_1\to g_2}(g^t_1 \tilde v)dt
= \int_{S^{g_1}M} \calE^{g_1\to g_2}(w) dm(w)\,.
$$
  \end{coro}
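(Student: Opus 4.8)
The plan is to combine the deterministic estimate from Lemma~\ref{lem:GeodStretch} with the Birkhoff ergodic theorem applied to the H\"older (hence $m$-integrable) function $\calE^{g_1\to g_2}$. First I would recall that $\calE^{g_1\to g_2}$ is bounded on $S^{g_1}M$ (it lies between $1/C_1(g_1,g_2)$-type bounds coming from \eqref{eq:QIsom} and $\norm{v}^{g_2}\le C_1(g_1,g_2)$ by Lemma~\ref{lem:Ineq-Knieper}), so in particular $\calE^{g_1\to g_2}\in L^1(m)$ for any probability measure $m$. Since $m$ is $(g_1^t)$-invariant and ergodic, Birkhoff's theorem gives, for $m$-almost every $v$ and any lift $\tilde v$,
\begin{equation}
\lim_{T\to+\infty}\frac{1}{T}\int_0^T \calE^{g_1\to g_2}(g_1^t\tilde v)\,dt = \int_{S^{g_1}M}\calE^{g_1\to g_2}(w)\,dm(w).
\end{equation}
This establishes the second equality in the second displayed formula of the corollary.

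Next I would handle the first equality in that display, namely that $\frac{1}{T}d^{g_2}(\pi\tilde v,\pi g_1^T\tilde v)$ has the same limit. This is immediate from Lemma~\ref{lem:GeodStretch}: that lemma gives the uniform bound
\begin{equation}
\left|d^{g_2}(\pi\tilde v,\pi g_1^T\tilde v) - \int_0^T \calE^{g_1\to g_2}(g_1^t\tilde v)\,dt\right|\le C_3(g_1,g_2)
\end{equation}
for all $\tilde v$ and all $T>0$, so dividing by $T$ and letting $T\to+\infty$ shows the two Ces\`aro averages have the same limit wherever either exists. Combined with Knieper's identity \eqref{eq:GeodStretch2}, which identifies $\lim_T \frac{1}{T}d^{g_2}(\pi\tilde v,\pi g_1^T\tilde v)$ with $e^{g_1\to g_2}(v)$ for $m$-almost every $v$, this simultaneously proves the second display and shows that $e^{g_1\to g_2}(v)$ equals the constant $\int \calE^{g_1\to g_2}\,dm$ for $m$-almost every $v$.

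Finally, the first displayed equality of the corollary, $\int e^{g_1\to g_2}\,dm = \int \calE^{g_1\to g_2}\,dm$, follows by integrating the almost-everywhere identity $e^{g_1\to g_2}(v) = \int \calE^{g_1\to g_2}\,dm$ over $S^{g_1}M$ against the probability measure $m$ (the left side is integrable because $e^{g_1\to g_2}\le \int \norm{v}^{g_2}\,dm < \infty$ by Lemma~\ref{lem:Ineq-Knieper}, or simply because it is a.e.\ constant). I do not anticipate a serious obstacle here; the only point requiring a little care is justifying the interchange of limit and the use of Knieper's theorem on a full-measure set simultaneously with the full-measure set from Birkhoff — one just intersects the two conull sets. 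The substantive input, the uniform bound $C_3(g_1,g_2)$, has already been supplied by Lemma~\ref{lem:GeodStretch}, so the corollary is essentially a bookkeeping consequence.
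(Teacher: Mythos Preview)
Your proof is correct and follows essentially the same approach as the paper: both arguments combine the uniform bound $C_3(g_1,g_2)$ from Lemma~\ref{lem:GeodStretch} with Birkhoff's ergodic theorem applied to $\calE^{g_1\to g_2}$ and Knieper's identification~\eqref{eq:GeodStretch2} of $e^{g_1\to g_2}$ as the a.e.\ limit of $d^{g_2}(\pi\tilde v,\pi g_1^T\tilde v)/T$. Your write-up is in fact slightly more careful than the paper's (you explicitly note integrability of $\calE^{g_1\to g_2}$ and the intersection of conull sets), but the substance is identical.
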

  
  \begin{proof}
  It follows from the previous lemma that for all $\epsilon>0$, 
there exists $T_0>0$ such that for all $T\geq T_0$ and all $\tilde v\in S^{g_1}\tilde M$,
  $$
\frac{1}{T} \left| d^{g_2}(\pi \tilde v, \pi g_1^T \tilde v) - \int_0^T \calE^{g_1\to g_2}(g^t_1 \tilde v)dt\right| \leq \epsilon.
$$
  Since $m_\mu^{g_1}$ is ergodic, for $m_\mu^{g_1}$-almost all vector $v\in S^{g_1}$,
    $$
\int_{S^{g_1}M} \calE^{g_1\to g_2}(v) dm(v) = 
\lim_{T\to +\infty} \frac{1}{T} \int_0^T \calE^{g_1\to g_2}(g^t_1 \tilde v)dt
$$
    and
      $$
\int_{S^{g_1}M} e^{g_1\to g_2}(v) dm(v) =
 \lim_{T\to +\infty} \frac{d^{g_2}(\pi \tilde v, \pi g_1^T \tilde v)}{T},
$$
    which concludes the proof of the corollary.
  \end{proof}

 Let us emphasize the fact that the measures which we will consider will usually have finite mass, 
but may not be probability measures. We will denote by $\|m\|$ 
the mass of a finite measure $m$ on $TM$. 
 
 \begin{defi}[Geodesic stretch with respect to a geodesic current]
  Let $\mu$ be a geodesic current on $\bd^2 \tilde M$ such that $m^g_\mu$ is finite. 
We will call \emph{(average) geodesic stretch of $g_2$ relative to $g_1$ with respect to $\mu$} the quantity
$$
I_\mu(g_1, g_2)\, = \,
\frac{1}{\|m_\mu^{g_1}\|} \int_{S^{g_1} M} \calE^{g_1\to g_2}(v)dm_\mu^{g_1}(v)\, = \,
\frac{1}{\|m_\mu^{g_1}\|} \int_{S^{g_1} M} e^{g_1\to g_2}(v)dm_\mu^{g_1}(v)\,.
$$
 \end{defi}

  By Corollary \ref{coro:GeodStretch}, $I_\mu(g_1, g_2)$ coincides with the definition of the geodesic stretch studied in \cite{Kni95} (note that Knieper only considers invariant \emph{probability} measures).

 When $(M,g)$ has finite volume and $\mu$ is the Liouville geodesic current of $g_1$, then 
  $$
I_\mu(g_1, g_2). \mbox{Vol}(S^{g_1}M) = i(g_1,g_2),
$$ 
  where  $i(g_1, g_2)$ is the \emph{intersection} between the metrics $g_1$ and $g_2$ studied in \cite{FF93}.  

It follows from the definition that for all geodesic current $\mu$ such that $m^g_\mu$ is finite, $\ds I_\mu(g_1, g_1) = 1$.

\begin{rema}[Geodesic stretches and Thurston metric]\label{rem:Thurston} \rm Given two negatively curved metrics $g_1$ and $g_2$ on a compact surface $S$, the Thurston distance 
$d_{Th}(g_1,g_2)$ is defined as the supremum over all periodic orbits of the ratios of their lengths\,:
$$d_{Th}(g_1,g_2)=\sup_{\gamma}\left( \frac{\ell^{g_2}(\gamma)}{\ell^{g_1}(\gamma)},\frac{\ell^{g_1}(\gamma)}{\ell^{g_2}(\gamma)}\right)\,.
$$
With our notations, this distance could also be defined as the following supremum
$$
d_{Th}(g_1,g_2)=\sup_{\mu}\left(I_\mu(g_1,g_2),I_\mu(g_2,g_1)\right)
$$
over all currents $\mu$ associated to ergodic measures. 
Indeed,  considering periodic measures immediately shows that Thurston distance is smaller than the above supremum. In the other direction, the density of periodic measures in the set of ergodic measures gives the above equality. 
\end{rema}


\subsection{Morse correspondances and geodesic stretches}\label{subsec:Morse}

To compare dynamics of the geodesic flows on $S^{g_1}M$ and $S^{g_2}M$, 
it is natural to   consider their dynamics modulo the $\Gamma$-action on 
$S^{g_1}\tilde M$ and $S^{g_2}\tilde M$. Hopf coordinates are a good motivation to consider the   map 
$$
\widetilde{\Phi}^{g_1\to g_2}:=(H^{g_2})^{-1}\circ H^{g_1} : 
S^{g_1}\tilde M \to S^{g_2} \tilde M \,.
$$
It is a H\"older homeomorphism, but it is unfortunately not $\Gamma$-equivariant, as both $\Gamma$-actions on each unit tangent bundle $S^{g_i}\tilde M$ are different. 
In other words, as said earlier, on $\bd^2\tilde M\times \bbR$,
 these $\Gamma$-actions involve different cocycles on the $\bbR$ component. 

Despite its non-invariance, this map is sometimes useful, because it has the nice property to commute with both geodesic flows. But we need to find another map from 
$S^{g_1}\tilde M$ to $S^{g_2}\tilde M$ which will be $\Gamma$-
equivariant. We proceed  as follows. 
For all $v\in S^{g_1}\tilde{M}$, let $w=\tilde\Psi^{g_1\to g_2}(v)$ be the unique vector in $S^{g_2}\tilde{M}$ on the $g_2$-geodesic joining $v^{g_1}_-$ to $v^{g_1}_+$ satisfying $\calB^{g_2}_{v_+^{g_1}}(\pi(v),\pi(w))=0$.

\begin{figure}[ht!]\label{fig:morse}
\begin{center}
\input{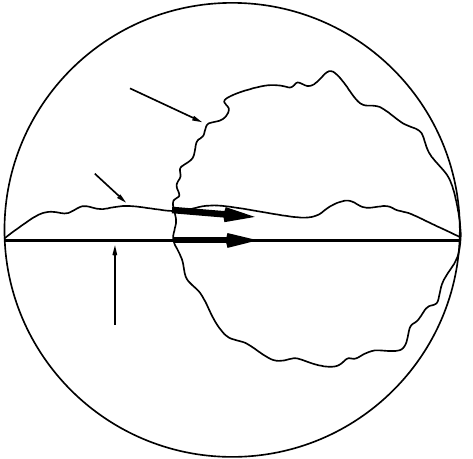_t} 
\caption{Morse correspondance} 
\end{center}
\end{figure} 

\begin{lemm} The map $\Psi^{g_1\to g_2}$ is H\"older continuous. 
Moreover, for all $v\in S^{g_1}\tilde{M}$, we have 
$$d^{g_2}(v,\Psi^{g_1\to g_2}(v))\le C_3(g_1,g_2)\,,
$$ where $C_3(g_1,g_2)$ is the constant given by Lemma 
\ref{lem:GeodStretch}.

\end{lemm}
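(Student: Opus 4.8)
The plan is to establish the two assertions in turn, the H\"older continuity and the displayed bound, both by comparing $\Psi^{g_1\to g_2}(v)$ with the ``naive'' map $\widetilde{\Phi}^{g_1\to g_2}=(H^{g_2})^{-1}\circ H^{g_1}$, whose H\"older continuity is already known (the Hopf parametrizations $H^{g_1}, H^{g_2}$ are H\"older homeomorphisms, and the identification $\bd\tilde M$ for $g_1$ versus $g_2$ is H\"older, so $\widetilde\Phi^{g_1\to g_2}$ is H\"older).

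For the distance bound, first I would unwind the definitions. Fix $v\in S^{g_1}\tilde M$ and set $w=\Psi^{g_1\to g_2}(v)$; by construction $w$ lies on the $g_2$-geodesic from $v_-^{g_1}$ to $v_+^{g_1}$ with $\calB^{g_2}_{v_+^{g_1}}(\pi v,\pi w)=0$, i.e. $\pi v$ and $\pi w$ lie on the same $g_2$-horosphere centered at $v_+^{g_1}$. To bound $d^{g_2}(v,w)$ it suffices to bound the $g_2$-distance between the basepoints $\pi v$ and $\pi w$ together with the angle between the two vectors; since both vectors point towards $v_+^{g_1}$ (one along a $g_1$-geodesic, one along a $g_2$-geodesic) and sit within bounded distance of each other, the angle term is controlled once the basepoint distance is. The key point is that $\pi v$ is at $g_2$-distance at most $C_2(g_1,g_2)$ from the $g_2$-geodesic $(v_-^{g_1},v_+^{g_1})^{g_2}$ (because the $g_1$-geodesic through $v$ is a $g_2$-quasigeodesic lying in a $C_2$-neighborhood of that $g_2$-geodesic). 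Then $\pi w$ is exactly the point where the $g_2$-horosphere through $\pi v$ meets that $g_2$-geodesic, and the argument of Fact~\ref{fact} in the proof of Lemma~\ref{lem:GeodStretch} — using the interior triangle of the $g_2$-geodesic triangle $(\pi v,\, z,\, v_+^{g_1})$ for a suitable nearby point $z$ on the $g_2$-geodesic — gives $d^{g_2}(\pi v,\pi w)\le 2C_2(g_1,g_2)+\Delta(g_2)=C_3(g_1,g_2)$. This is essentially the same computation as Fact~\ref{fact}, with the roles of $x_T$ and $z_T$ played by $\pi v$ and $\pi w$, so I would simply invoke it.

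For H\"older continuity, I would write $\Psi^{g_1\to g_2}(v)=g_2^{\,s(v)}\bigl(\widetilde\Phi^{g_1\to g_2}(v)\bigr)$, where $s(v)$ is the (signed) $g_2$-time needed to slide the vector $\widetilde\Phi^{g_1\to g_2}(v)$ — which has the same endpoints $v_\pm^{g_1}$ and basepoint on the $g_2$-geodesic — until its basepoint lands on the $g_2$-horosphere through $\pi v$; concretely $s(v)=\calB^{g_2}_{v_+^{g_1}}(o,\pi v)-\calB^{g_2}_{v_+^{g_1}}(o,\pi(\widetilde\Phi^{g_1\to g_2}(v)))$, which by the previous paragraph is bounded in absolute value by $C_3(g_1,g_2)$. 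Since the $g_2$-geodesic flow is (bi-)Lipschitz on bounded time intervals, it suffices to prove that $v\mapsto s(v)$ is H\"older; and $s$ is a difference of Busemann cocycles evaluated along the H\"older maps $v\mapsto v_+^{g_1}$, $v\mapsto \pi v$, $v\mapsto \widetilde\Phi^{g_1\to g_2}(v)$, so H\"older continuity of $s$ follows from the (local) H\"older regularity of $(\xi,x,y)\mapsto \calB^{g_2}_\xi(x,y)$ on $\bd\tilde M\times \tilde M\times\tilde M$ (restricted to bounded regions, which is all that is needed since we work $\Gamma$-equivariantly / on compact pieces of $S^{g_1}\tilde M$). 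Composing H\"older maps with the Lipschitz flow yields the claim.

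The main obstacle is the H\"older estimate for $s$: one must be a little careful that the relevant Busemann-cocycle regularity is the \emph{joint} regularity in the boundary point and both base points, and that the exponents combine correctly; but this is standard in pinched negative curvature (e.g.\ \cite[thm 7.3]{PPS}, \cite[Appendix of Brin]{Bal95}) and does not require the full strength of any deep input — once the uniform bound $|s|\le C_3(g_1,g_2)$ is in hand, everything reduces to H\"older regularity of already-identified H\"older objects composed with a flow that is Lipschitz on bounded time windows.
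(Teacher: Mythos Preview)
Your proposal is correct and follows essentially the same approach as the paper: write $\Psi^{g_1\to g_2}(v)=g_2^{s(v)}\circ\widetilde\Phi^{g_1\to g_2}(v)$ with a H\"older-varying time $s(v)$, and invoke the computation of Fact~\ref{fact} for the distance bound. One small inaccuracy: your claim that $|s(v)|\le C_3(g_1,g_2)$ does not follow from the preceding paragraph (indeed $s(v)=\tau^{g_1\to g_2}(v)=\calB^{g_2}_{v_+^{g_1}}(o,\pi v)-\calB^{g_1}_{v_+^{g_1}}(o,\pi v)$ is not $\Gamma$-invariant and is unbounded on $S^{g_1}\tilde M$), but this does not matter since, as you note, only local boundedness of $s$ is needed and $\Gamma$-equivariance of $\Psi^{g_1\to g_2}$ then upgrades local H\"older continuity to global.
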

\begin{proof} It is H\"older continuous as composition of the maps $\Phi^{g_1\to g_2}$ and 
some time $g_2^t$ of the geodesic flow, with $t=t(v)$ depending H\"older-continuously of $v$.

The bound on $d^{g_2}(v,\Psi^{g_1\to g_2}(v))$ has already been proved in Fact \ref{fact}. 
\end{proof}

By construction, the correspondance $\tilde\Psi^{g_1\to g_2}$ is $\Gamma$-invariant. 
We denote by $\Psi^{g_1\to g_2}$ the induced map from $S^{g_1}M$ to $S^{g_2}M$. 
It is a homeomorphism homotopic to identity sending $(g_1^t)$-orbits to $(g_2^t)$-orbits, i.e. a {\em $(g_1,g_2)$-Morse correspondance} in the sense of \cite{FF93}. 


By definition of both correspondances, the following lemma holds. 
 It says that the geodesic flows $(g_1^t)$ and $(g_2^s)$ on the unit tangent bundles $S^{g_i}\tilde M$ are conjugated by $\Phi^{g_1\to g_2}$, and conjugated 
up to reparametrization by the Morse correspondance $\Psi^{g_1\to g_2}$. 

\begin{lemm}\label{lem:Morse-Psi}\label{lem:inverses-Morse}  With the above notations, 
we have for all $v\in S^{g_1}\tilde{M}$ 
\begin{enumerate}
\item $\ds
\Phi^{g_1\to g_2}\circ g_1^t(v)=
g_2^t\circ\Phi^{g_1\to g_2}(v)\,.
$
\item $\ds
\Phi^{g_2\to g_1}=\left(\Phi^{g_1\to g_2}\right)^{-1}\,.
$
\item $\ds
\Psi^{g_1\to g_2}\circ g_1^t(v)=g_2^{s^{g_1\to g_2}(t,v)}\circ \Psi^{g_1\to g_2}(v),
$
with $\ds s^{g_1\to g_2}(t,v)=\calB_{v_+^{g_1}}^{g_2}(\pi(v),\pi(g_1^t v))$. 
\item 
$\ds \Psi^{g_1\to g_2}(v)=g_2^{\tau^{g_1\to g_2}(v)}\circ \Phi^{g_1\to g_2}(v)\,$, 
with $$\tau^{g_1\to g_2}(v)= \calB^{g_2}_{v_+^{g_1}}(o,\pi(v))-\calB^{g_1}_{v_+^{g_1}}(o,\pi(v)).$$
\item $\ds
\Psi^{g_2\to g_1}\circ \Psi^{g_1\to g_2}(v)=g_1^{\sigma^{g_1\to g_2}(v)}(v)\,,
$
with $\displaystyle\sigma^{g_1\to g_2}(v)=\calB_{v^{g_1}_+}^{g_1}\left(\pi(v),\pi(\Psi^{g_1\to g_2} v)\right)$.
\end{enumerate}
\end{lemm}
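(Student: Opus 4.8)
The plan is to verify all five identities by unwinding the definitions of the maps $\Phi^{g_1\to g_2} = (H^{g_2})^{-1}\circ H^{g_1}$ and $\Psi^{g_1\to g_2}$, working entirely in Hopf coordinates, and exploiting the cocycle property of Busemann functions
$$
\calB^{g}_\xi(x,z) = \calB^{g}_\xi(x,y) + \calB^{g}_\xi(y,z),\qquad \calB^{g}_\xi(x,y) = -\calB^{g}_\xi(y,x).
$$
First I would record the description of $\Phi^{g_1\to g_2}$ in coordinates: if $H^{g_1}(v) = (v_-^{g_1}, v_+^{g_1}, \calB^{g_1}_{v_+^{g_1}}(o,\pi v))$, then $\Phi^{g_1\to g_2}(v)$ is by definition the unique $w\in S^{g_2}\tilde M$ with $w_-^{g_2} = v_-^{g_1}$, $w_+^{g_2} = v_+^{g_1}$ and $\calB^{g_2}_{w_+^{g_2}}(o,\pi w) = \calB^{g_1}_{v_+^{g_1}}(o,\pi v)$. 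Since the geodesic flow acts by translation in the last Hopf coordinate in \emph{both} parametrizations, items 1 and 2 are immediate: applying $g_1^t$ changes only the third coordinate $\calB^{g_1}_{v_+^{g_1}}(o,\pi v)$ by $+t$, which is exactly the effect of $g_2^t$ on the third $g_2$-Hopf coordinate, giving 1; and applying the construction with the roles of $g_1,g_2$ reversed undoes it, giving 2.

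For item 3, fix $v$ and write $w = \Psi^{g_1\to g_2}(v)$, the unique vector on the $g_2$-geodesic from $v_-^{g_1}$ to $v_+^{g_1}$ with $\calB^{g_2}_{v_+^{g_1}}(\pi v,\pi w)=0$. Then $\Psi^{g_1\to g_2}(g_1^t v)$ lies on the \emph{same} $g_2$-geodesic (the endpoints $v_\pm^{g_1}$ are unchanged by the $g_1$-flow) and is characterized by $\calB^{g_2}_{v_+^{g_1}}(\pi g_1^t v,\,\pi \Psi^{g_1\to g_2}(g_1^t v))=0$; on the other hand $g_2^s w$ for $s = \calB^{g_2}_{v_+^{g_1}}(\pi v,\pi g_1^t v)$ is the vector on that geodesic at Busemann-height $\calB^{g_2}_{v_+^{g_1}}(\pi w,\cdot) = -s$ relative to $\pi w$, hence at height $0$ relative to $\pi g_1^t v$ by the cocycle relation. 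Since a vector on a fixed oriented $g_2$-geodesic is determined by its $\calB^{g_2}_{v_+^{g_1}}$-value, the two vectors coincide, which is 3 (and the $s^{g_1\to g_2}(t,v)$ appearing is the partial geodesic stretch integral of Lemma \ref{lem:GeodStretch}). For item 4, I would compare $\Psi^{g_1\to g_2}(v)$ with $\Phi^{g_1\to g_2}(v)$: both lie on the same $g_2$-geodesic from $v_-^{g_1}$ to $v_+^{g_1}$, so they differ by a flow time $\tau$, and $\tau$ is determined by matching the third $g_2$-Hopf coordinate. We have $\calB^{g_2}_{v_+^{g_1}}(o,\pi\Phi^{g_1\to g_2}(v)) = \calB^{g_1}_{v_+^{g_1}}(o,\pi v)$ by definition of $\Phi$, while $\calB^{g_2}_{v_+^{g_1}}(o,\pi\Psi^{g_1\to g_2}(v)) = \calB^{g_2}_{v_+^{g_1}}(o,\pi v)$ since $\calB^{g_2}_{v_+^{g_1}}(\pi v,\pi\Psi^{g_1\to g_2}(v))=0$; the difference of these two heights is exactly $\tau^{g_1\to g_2}(v) = \calB^{g_2}_{v_+^{g_1}}(o,\pi v)-\calB^{g_1}_{v_+^{g_1}}(o,\pi v)$, giving 4.

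Finally, item 5 follows by composing 4 with itself. One computes $\Psi^{g_2\to g_1}\circ\Psi^{g_1\to g_2}(v)$: since $\Psi^{g_1\to g_2}(v)$ has the same endpoints as $v$, the $g_1$-geodesic used by $\Psi^{g_2\to g_1}$ is the $g_1$-geodesic of $v$ itself, so the composition is a $g_1$-flow translate $g_1^{\sigma}(v)$; and $\sigma$ is pinned down by the defining condition $\calB^{g_1}_{v_+^{g_1}}(\pi(\Psi^{g_1\to g_2}v),\,\pi\Psi^{g_2\to g_1}\Psi^{g_1\to g_2}(v))=0$ together with the cocycle relation, yielding $\sigma^{g_1\to g_2}(v) = \calB^{g_1}_{v_+^{g_1}}(\pi v,\pi(\Psi^{g_1\to g_2}v))$. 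The only genuinely delicate point is bookkeeping the signs and base-point dependence in the Busemann cocycle — in particular making sure that "the vector at $\calB^{g_2}_{v_+^{g_1}}$-height $h$" is unambiguous, which it is because along a fixed oriented geodesic asymptotic to $\xi$ the function $t\mapsto \calB^{g}_\xi(o,\pi g^t(\cdot))$ is an increasing bijection of $\bbR$ with derivative $1$. Everything else is a direct substitution; there is no hard analytic obstacle, only the need to keep the cocycle conventions consistent throughout.
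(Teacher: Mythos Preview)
Your proof is correct and follows essentially the same route as the paper: direct verification in Hopf coordinates, using that both $\Phi^{g_1\to g_2}(v)$ and $\Psi^{g_1\to g_2}(v)$ lie on the $g_2$-geodesic with endpoints $(v_-^{g_1},v_+^{g_1})$, and pinning down their positions via the Busemann cocycle. The paper's argument for item~5 is the terse ``follows easily from the previous one,'' whereas you spell out the direct computation of $\sigma$ from the defining condition of $\Psi^{g_2\to g_1}$; this is a harmless and arguably clearer variant of the same idea.
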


Let us emphasize that $\Phi^{g_1\to g_2}$ and its inverse are not $\Gamma$-invariant, $\Psi^{g_1\to g_2}$ and its inverse are $\Gamma$-invariant, the map $\tau^{g_1\to g_2}$ is not $\Gamma$-invariant, whereas $\sigma^{g_1\to g_2}$ and the cocycle $s^{g_1\to g_2}(t,v)$ are $\Gamma$-invariant.  

\begin{proof} 
The fact that $\Phi^{g_1\to g_2}$ commutes with the geodesic flows 
of $g_1$ and $g_2$ is immediate by definition of Hopf coordinates. 
The property about its inverse is also obvious. 

By definition of $\Psi^{g_1\to g_2}$, the vectors $\Psi^{g_1\to g_2}(g_1^t v)$, for $t\in\bbR$,
 all lie on the $g_2$-geodesic joining $v_-^{g_1}$ to $v_+^{g_2}$. 
The only question is to compute 
$$
s^{g_1\to g_2}(t,v)=
\calB^{g_2}_{v_+^{g_1}}(\pi(\Psi^{g_1\to g_2}(v)),\pi(\Psi^{g_1\to g_2}(g_1^tv)))\,.
$$ 
By definition of $\Psi^{g_1\to g_2}$,
$$
\calB^{g_2}_{v_+^{g_1}}(\pi\Psi^{g_1\to g_2}(g_1^t v),\pi(g_1^tv))=0=
\calB^{g_2}_{v_+^{g_1}}(\pi \Psi^{g_1\to g_2}(v),\pi(v))\,.
$$ 
Using the cocycle properties of $\calB^{g_2}_{v_+^{g_1}}$, 
we deduce immediately that $s^{g_1\to g_2}(t,v)$ is the algebraic 
$g_2$-distance $\calB^{g_2}_{v_+^{g_1}}(\pi(v),\pi(g_1^t v))$.

The next   affirmation follows from the computation 
$$
\tau^{g_1\to g_2}(v)=\calB^{g_2}_{v_+^{g_1}}(\Phi^{g_1\to g_2}(v),\Psi^{g_1\to g_2}(v))=
\calB^{g_2}_{v_+^{g_1}}(o,\pi(v))-\calB^{g_2}_{v_+^{g_1}}(o,\Phi^{g_1\to g_2}(v))
$$
$$=
\calB^{g_2}_{v_+^{g_1}}(o,\pi(v))-\calB^{g_1}_{v_+^{g_1}}(o,v)\,.
$$ 
The last statement follows easily from the previous one. 
\end{proof} 



\subsection{Change of mass}

We will need the following variant of  Lemma \ref{lem:GeodStretch}, 
which shows once more that $\calE^{g_1\to g_2}$ behaves 
asymptotically as the infinitesimal reparametrization of the flow 
given by Morse correspondance $\Psi^{g_1\to g_2} : S^{g_1}\tilde M \to S^{g_2}\tilde M$.

\begin{prop}\label{prop:Change-Var-Flow}
 Let $G : S^{g_2}M\to \bbR$ be a continuous map and $\tilde G : S^{g_2}\tilde{M}\to \bbR$ be its ($\Gamma$-invariant) lift to $S^{g_2}\tilde M$. 
Then for all $v\in S^{g_1} \tilde M$, $T\ge 0$, 
and $w=\Psi^{g_1\to g_2}(v)$, we have
$$
\int_{0}^{s^{g_1\to g_2}(T,v)} \tilde G(g_2^s w)\,ds=
\int_0^T G\circ\Psi^{g_1\to g_2}(g_1^t v)\times \calE^{g_1\to g_2}(g_1^t v)\,dt\,, 
$$
with $s^{g_1\to g_2}(T,v)=\calB_{v_+^{g_1}}^{g_2}(\pi(v),\pi(g_1^T v))$ as in Lemma \ref{lem:Morse-Psi}. 

If moreover $G$ is bounded, then there exists $C=C(G,g_1,g_2)$ such that for 
all $v\in S^{g_1} \tilde M$, $T\ge 0$, and $w=\Psi^{g_1\to g_2}(v)$, we have
$$
\left|\int_{0}^{d^{g_2}(v,g_1^Tv)} \tilde G(g_2^sw),ds-
\int_0^{T} \tilde G\circ \Psi^{g_1\to g_2}(g_1^t v)\times\calE^{g_1\to g_2}(g^t_1 v)\,dt  \right|
\le C\,.
$$ 

If $G$ is not bounded,  then for all compact sets $K\subset S^{g_1}M$ 
there exists another constant $C'=C'(G,K,g_1,g_2)$ such that for all  $v\in S^{g_1}\tilde M$ and $T\in\bbR$ such that 
 both $v$ and $g_1^T v$ project inside $K$,  
$\tilde K\subset S^{g_1} \tilde M$, we have
$$
\left|\int_{0}^{d^{g_2}(v,g_1^Tv)} \tilde G(g_2^sw),ds-
\int_0^{T} \tilde G\circ \Psi^{g_1\to g_2}(g_1^t v)\times\calE^{g_1\to g_2}(g^t_1 v)\,dt  \right|
\le C'\,.
$$ 
\end{prop}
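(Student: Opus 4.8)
The plan is to prove the three statements in order, the first being an exact identity from which the other two follow by controlling error terms. For the first identity, I would apply the substitution $s = s^{g_1\to g_2}(t,v) = \calB^{g_2}_{v_+^{g_1}}(\pi v, \pi g_1^t v)$ in the left-hand integral. By Lemma \ref{lem:Morse-Psi}(3), $\Psi^{g_1\to g_2}(g_1^t v) = g_2^{s^{g_1\to g_2}(t,v)}\circ\Psi^{g_1\to g_2}(v) = g_2^s w$, so $\tilde G(g_2^s w) = \tilde G(\Psi^{g_1\to g_2}(g_1^t v)) = G\circ\Psi^{g_1\to g_2}(g_1^t v)$ (pushing back down to $S^{g_1}M$). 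The Jacobian of the change of variable is $\frac{ds}{dt} = \frac{d}{dt}\calB^{g_2}_{v_+^{g_1}}(\pi v, \pi g_1^t v)$, which by the cocycle property of Busemann functions equals $\frac{d}{du}\big|_{u=0}\calB^{g_2}_{v_+^{g_1}}(\pi v, \pi g_1^{t+u}v) = \frac{d}{du}\big|_{u=0}\calB^{g_2}_{(g_1^t v)_+^{g_1}}(\pi g_1^t v, \pi g_1^u(g_1^t v)) = \calE^{g_1\to g_2}(g_1^t v)$, using that $(g_1^t v)_+^{g_1} = v_+^{g_1}$ and definition (\ref{eq:GeodStretch3}). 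This needs the smoothness of the Busemann function along $g_1$-geodesics (\cite{HIH77}), and that $t\mapsto s^{g_1\to g_2}(t,v)$ is increasing (since it is the $g_2$-Busemann coordinate along a forward $g_1$-geodesic ray, which is strictly increasing because $g_1$-geodesics are $g_2$-quasigeodesics going to $v_+^{g_1}$); this monotonicity justifies treating it as a genuine change of variable with positive Jacobian.

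For the second statement, I would compare the two upper limits of integration: $d^{g_2}(\pi v, \pi g_1^T v)$ versus $s^{g_1\to g_2}(T,v) = \int_0^T \calE^{g_1\to g_2}(g_1^t v)\,dt$. By Lemma \ref{lem:GeodStretch}, these differ by at most $C_3(g_1,g_2)$. Since $G$ is bounded, say $|G|\le \|G\|_\infty$, and since along a $g_2$-geodesic segment of length $C_3$ starting near $\Psi^{g_1\to g_2}(g_1^T v)$ the integrand $|\tilde G(g_2^s w)|$ is bounded by $\|G\|_\infty$, the difference between $\int_0^{d^{g_2}(\pi v,\pi g_1^Tv)}\tilde G(g_2^s w)\,ds$ and $\int_0^{s^{g_1\to g_2}(T,v)}\tilde G(g_2^s w)\,ds$ is at most $\|G\|_\infty\cdot C_3(g_1,g_2)$. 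Combined with the first identity, this gives the estimate with $C = C(G,g_1,g_2) = \|G\|_\infty\, C_3(g_1,g_2)$.

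For the third statement, the integrand $\tilde G$ is unbounded, so the naive bound fails; instead I would use the compactness of $K$ to recover a local bound. If $v$ and $g_1^T v$ both project into $K$, then $\Psi^{g_1\to g_2}(v)$ and $\Psi^{g_1\to g_2}(g_1^T v)$ project into $\Psi^{g_1\to g_2}(K)$, which is compact (continuous image of a compact set); hence any point within $g_2$-distance $C_3(g_1,g_2)$ of $\pi\Psi^{g_1\to g_2}(g_1^Tv)$ lies in the fixed compact set $K' := \{w'\in S^{g_2}M : d^{g_2}(w', \Psi^{g_1\to g_2}(K))\le C_3(g_1,g_2)\}$. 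Since $G$ is continuous, $M_{K'} := \sup_{K'}|G| < \infty$, and the extra piece of integral (over the interval of length $\le C_3$ between the two limits of integration, whose image under the $g_2$-flow lies in $K'$) is bounded by $M_{K'}\, C_3(g_1,g_2)$. Taking $C'(G,K,g_1,g_2) = M_{K'}\,C_3(g_1,g_2)$ finishes the proof. The main obstacle I anticipate is the careful bookkeeping in the first step — namely verifying that $s^{g_1\to g_2}(\cdot,v)$ is a valid (monotone, $C^1$) change of variable and that its derivative is exactly $\calE^{g_1\to g_2}(g_1^t v)$ via the cocycle identity — everything after that is a matter of bounding a short integral over a compact region.
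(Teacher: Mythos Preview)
Your proposal is correct and follows essentially the same approach as the paper's proof: the first identity is obtained by the change of variable $s=s^{g_1\to g_2}(t,v)$ together with Lemma~\ref{lem:Morse-Psi}(3), and the second and third statements follow from Lemma~\ref{lem:GeodStretch} by bounding the extra piece of integral of length at most $C_3(g_1,g_2)$ using, respectively, the global bound $\|G\|_\infty$ and a local bound of $G$ on a fixed compact neighbourhood of the image of $K$. One minor remark: your concern about monotonicity of $t\mapsto s^{g_1\to g_2}(t,v)$ is unnecessary, since the substitution formula $\int_0^{\phi(T)} f(s)\,ds=\int_0^T f(\phi(t))\phi'(t)\,dt$ holds for any $C^1$ map $\phi$ with $\phi(0)=0$ (fundamental theorem of calculus plus chain rule), so no positivity of $\calE^{g_1\to g_2}$ is required at this step.
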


The geodesic stretch $\calE^{g_1\to g_2}$ can therefore be understood 
as the instantaneous reparametrization of the flow $(g_1^t)$ in the correspondance $\Psi^{g_1\to g_2}$. 

\begin{proof} 
The first equality is a simple change of variable using Lemma \ref{lem:Morse-Psi}. 
The second follows using Lemma \ref{lem:GeodStretch} and the fact that $F$ is bounded.  Indeed, 
$$
 \left|\int_{0}^{d^{g_2}(v,g_1^Tv)} G(g_2^sw),ds-\int_0^{T} F\circ \Psi^{g_1\to g_2}(g_1^t v)\times\calE^{g_1\to g_2}(g^t_1 v)\,dt \right| 
$$  
\begin{eqnarray*}
&=& \left|\int_{s(T,v)}^{d^{g_2}(v,g_1^Tv)}G(g_2^sw)\,ds\right|\\
&\le & \|G\|_\infty\times \left|d^{g_2}(v,g_1^Tv)-\calB^{g_2}_{v_+^{g_1}}(\pi(v),\pi(g_1^tv))\right|\\
&=& \|G\|_\infty\times \left|d^{g_2}(v,g_1^Tv)-\int_0^T \calE^{g_1\to g_2}(g_1^tv)\,dt\right|\\
&\le & C_1\|G\|_\infty\,.
\end{eqnarray*}

The last assertion is a variation on the second one. If $v$ and $g_1^Tv$ are in a compact set $K$, for any parameter $s$ such that $|s|\le C_3(g_1,g_2)$, $g_1^{T\pm s}v$ belongs to the $C_3(g_1,g_2)$-neighbourhood of $K$, on which $G$ is bounded. The above computation therefore applies verbatim. 
\end{proof}

\begin{rema}\rm Proposition \ref{exact-change-variable} follows immediately: 
given any $m_\mu^{g_2}$-measurable map $G : S^{g_2}M\to \bbR$, 
 the map $G\circ \Psi^{g_1\to g_2}$ is $m_\mu^{g_1}$-measurable and
$G$ on $S^{g_2}M$, and we have
$$
\int_{S^{g_2}M} G\,dm^{g_2}_\mu=\int_{S^{g_1}M} G\circ \Psi^{g_1\to g_2} \times \calE^{g_1\to g_2} \,dm^{g_1}_\mu\,.
$$
\end{rema}

The corollary below follows immediately from the above Remark. It gives a nice interpretation of the 
  geodesic stretch $I_\mu(g_1,g_2)$.
  


\begin{coro}[Mass transformation law]\label{prop:transformation-mass}
Let $\mu$ be a geodesic current such that $m_\mu^{g_1}$ is ergodic and has finite total mass, 
 denoted by $\|m_\mu^{g_1}\|$. 
Then
$$
\|m_\mu^{g_2}\| = 
I_\mu(g_1, g_2)\times\| m_\mu^{g_1}\|\,.
$$
   In particular $m_\mu^{g_1}$ has finite mass if and only if $m_\mu^{g_2}$ has finite mass. 
Moreover, when it is the case,
$$
I_\mu(g_1, g_2) = 
\frac{1}{I_\mu(g_2, g_1)} = 
\frac{\|m_\mu^{g_2}\|}{\|m_\mu^{g_1}\|}\,.
$$
  \end{coro}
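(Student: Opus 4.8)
The plan is to apply Proposition \ref{exact-change-variable} (equivalently, the Remark just above, which restates it) to the constant function $G \equiv 1$ on $S^{g_2}M$. This immediately gives
\[
\|m_\mu^{g_2}\| = \int_{S^{g_2}M} 1\, dm_\mu^{g_2} = \int_{S^{g_1}M} 1\circ \Psi^{g_1\to g_2}\times \calE^{g_1\to g_2}\, dm_\mu^{g_1} = \int_{S^{g_1}M}\calE^{g_1\to g_2}\, dm_\mu^{g_1}.
\]
By the definition of the average geodesic stretch $I_\mu(g_1,g_2)$ — and using that $m_\mu^{g_1}$ is finite with mass $\|m_\mu^{g_1}\|$ — the right-hand side equals $I_\mu(g_1,g_2)\times\|m_\mu^{g_1}\|$, which is the first claimed identity.

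Next I would establish finiteness in both directions. From the displayed identity, if $\|m_\mu^{g_1}\|<\infty$ then $\|m_\mu^{g_2}\| = I_\mu(g_1,g_2)\|m_\mu^{g_1}\|$; since $\frac1C g_1\le g_2\le C g_1$ (this is condition \eqref{eq:QIsom} applied to the pair $g_1,g_2$, after possibly taking $g_0$ to be $g_1$ itself, or directly since both are admissible), Lemma \ref{lem:Ineq-Knieper} gives $\calE^{g_1\to g_2}(v)\le \|v\|^{g_2}\le \sqrt{C}$ pointwise, hence $0 < I_\mu(g_1,g_2) \le \sqrt{C} < \infty$, so $\|m_\mu^{g_2}\|$ is finite. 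The positivity $I_\mu(g_1,g_2)>0$ comes from the reverse bound $\|v\|^{g_2}\ge 1/\sqrt C$ and the fact that $\calE^{g_1\to g_2}$ is comparable to $\|v\|^{g_2}$ on average — more directly, one can use Corollary \ref{coro:GeodStretch} together with $e^{g_1\to g_2}(v) = \inf_{t>0} d^{g_2}(\pi\tilde v,\pi g_1^t\tilde v)/t \ge \frac1{\sqrt C}\inf_{t>0} d^{g_1}(\pi\tilde v,\pi g_1^t\tilde v)/t = 1/\sqrt C$. The converse implication (finiteness of $m_\mu^{g_2}$ forces finiteness of $m_\mu^{g_1}$) follows by symmetry: exchange the roles of $g_1$ and $g_2$, noting that $m_\mu^{g_2}$ is ergodic for $(g_2^t)$ by Proposition \ref{easy-comparison}(2), so the hypotheses of the corollary are symmetric.

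Finally, for the last pair of identities, assuming now both measures are finite and applying the first identity twice — once to the pair $(g_1,g_2)$ and once to the pair $(g_2,g_1)$ — yields
\[
\|m_\mu^{g_2}\| = I_\mu(g_1,g_2)\,\|m_\mu^{g_1}\|, \qquad \|m_\mu^{g_1}\| = I_\mu(g_2,g_1)\,\|m_\mu^{g_2}\|.
\]
Multiplying these gives $I_\mu(g_1,g_2)\,I_\mu(g_2,g_1) = 1$, i.e. $I_\mu(g_1,g_2) = 1/I_\mu(g_2,g_1)$, and either identity rearranges to $I_\mu(g_1,g_2) = \|m_\mu^{g_2}\|/\|m_\mu^{g_1}\|$. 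The only subtle point — and the one I would be most careful about — is making sure Proposition \ref{exact-change-variable} legitimately applies with $G$ the constant function on a possibly infinite-mass measure space, i.e. that the change-of-variables formula is stated for nonnegative measurable $G$ without an a priori integrability hypothesis (it is, since $\calE^{g_1\to g_2}\ge 0$ and both sides are then well-defined in $[0,+\infty]$); the ergodicity hypothesis is used only to invoke Corollary \ref{coro:GeodStretch} for the two equivalent expressions of $I_\mu$, and to transfer ergodicity when swapping $g_1$ and $g_2$. Beyond that, the argument is purely formal.
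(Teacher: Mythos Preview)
Your proof is correct and follows the same approach as the paper, which simply says the corollary ``follows immediately from the above Remark'' (i.e., Proposition~\ref{exact-change-variable} with $G\equiv 1$). You have filled in more detail than the paper provides---in particular the explicit bounds ensuring $0<I_\mu(g_1,g_2)<\infty$ and the symmetric argument for the reciprocal identity---but the underlying argument is identical.
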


 \begin{rema}\label{rem:ChangeMass} \rm  
  The previous formula is very natural if $I_\mu(g_2, g_1)$ is interpreted as the average dilation of the reparametrization of the flow via the Morse correspondance $\Psi^{g_1\to g_2}$. 
Indeed, in the case where $(g_1^t)$ and $(g_2^t)$ are suspension flows over a (fixed) compact basis for distinct ceiling functions, the above formula is well known \cite{Abramov}.
  \end{rema}

\subsection{Periodic orbits and geodesic stretch}

In this section we relate geodesic stretch and lengths of periodic orbits. 
The results will not be useful in the sequel of the paper, but are enlightening about the geodesic stretch. 
 
 For $i = 1, 2$, for any hyperbolic element $\gamma\in \Gamma$, 
let $\gamma^{g_i}$ be the closed $g_i$-geodesic associated to the conjugacy class of $\gamma$. 
Let $\ell^{g_i}(\gamma)$ be its $g_i$-length, 
and $d\ell_{\gamma}^{g_i}$ be the Lebesgue measure along the geodesic $\gamma^{g_i}$. 
Observe that, up to normalizing constants, the periodic measure $d\ell_{\gamma}^{g_i}$, $i=1,2$, induce
the same current at infinity. 
 
 Since $m_\mu^{g_1}$ is finite and ergodic, there exists a sequence $(\gamma_k)_{k\in \bbN}$ 
of hyperbolic elements such that in the weak topology,
 $$
\lim_{k\to \infty} \frac{d\ell_{\gamma_k^{g_1}}}{\ell^{g_1}(\gamma_k)} = 
\frac{m_\mu^{g_1}}{\|m_\mu^{g_1}\|}\,,
$$
see for instance \cite[Lemma 2.2]{CS10}. 
This convergence holds a priori in the dual of continuous functions with compact support. 
But as all measures involved above are probability measures, 
this convergence also holds in the dual of bounded continuous functions of $S^{g_1}M$.
  
We can moreover suppose that 
$\ds \lim_{k\to \infty} \ell^{g_1}(\gamma_k) = +\infty$.

The following proposition shows that the same happens on $S^{g_2}M$, 
and that the ratio of lengths of periodic orbits in both metrics allows to recover the geodesic stretch. 

\begin{prop} Let $(M,g_i)$, $i=1,2$, be two admissible Riemannian structures with pinched negative curvature. 
Let $\mu$ be a geodesic current such that both measures $m^{g_i}_\mu$ are finite. 
Let $(\gamma_k)$ be a sequence of hyperbolic elements such that 
$\frac{d\ell^{g_1}_{\gamma_k}}{\ell^{g_1}(\gamma_k)}$ converges weakly to 
 $\frac{m^{g_1}_\mu}{\|m^{g_1}_\mu\|}$ in the dual of bounded continuous functions. 
Then $\frac{d\ell^{g_2}_{\gamma_k}}{\ell^{g_2}(\gamma_k)}$ converges weakly to  
$\frac{m^{g_2}_\mu}{\|m^{g_2}_\mu\|}$ in the dual of bounded continuous functions. 

Moreover, the ratios  of lengths satisfy 
$$
\lim_{k\to +\infty}\frac{\ell^{g_2}(\gamma_k)}{\ell^{g_1}(\gamma_k)}
 = I_\mu(g
_1,g_2)\,.
$$
\end{prop}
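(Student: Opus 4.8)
The plan is to push the weak convergence of periodic measures through the Morse correspondence, exactly as Proposition~\ref{exact-change-variable} (via Proposition~\ref{prop:Change-Var-Flow}) lets one transport integrals from $S^{g_1}M$ to $S^{g_2}M$. First I would fix a hyperbolic $\gamma\in\Gamma$ and compare the two periodic measures $d\ell^{g_1}_{\gamma}$ and $d\ell^{g_2}_{\gamma}$: both are carried by the same current at infinity (the $\Gamma$-orbit of the pair of fixed points of $\gamma$), so $d\ell^{g_2}_{\gamma}$ is, up to the global normalization, the image of $d\ell^{g_1}_{\gamma}$ under $\Psi^{g_1\to g_2}$ reparametrized by $\calE^{g_1\to g_2}$. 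Concretely, applying the first identity of Proposition~\ref{prop:Change-Var-Flow} along one period of the $g_1$-geodesic axis of $\gamma$ (which has $g_1$-length $\ell^{g_1}(\gamma)$ and is sent by $\Psi^{g_1\to g_2}$ to one turn of the $g_2$-axis, of $g_2$-length $\ell^{g_2}(\gamma)$, because $s^{g_1\to g_2}(\ell^{g_1}(\gamma),v)=\calB^{g_2}_{v_+}(\pi v,\pi(\gamma v))=\ell^{g_2}(\gamma)$ by the commutation Lemma~\ref{lem:Morse-Psi}.(3) and $\Gamma$-invariance), gives for every continuous bounded $G$ on $S^{g_2}M$
$$
\int G\, d\ell^{g_2}_{\gamma}=\int_0^{\ell^{g_1}(\gamma)} G\circ\Psi^{g_1\to g_2}(g_1^t v)\,\calE^{g_1\to g_2}(g_1^t v)\,dt
=\int \bigl(G\circ\Psi^{g_1\to g_2}\bigr)\,\calE^{g_1\to g_2}\, d\ell^{g_1}_{\gamma}\,.
$$

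Next I would take $\gamma=\gamma_k$ and divide by $\ell^{g_2}(\gamma_k)$. Taking $G\equiv 1$ yields $\ell^{g_2}(\gamma_k)=\int \calE^{g_1\to g_2}\,d\ell^{g_1}_{\gamma_k}$, hence
$$
\frac{\ell^{g_2}(\gamma_k)}{\ell^{g_1}(\gamma_k)}=\int \calE^{g_1\to g_2}\,\frac{d\ell^{g_1}_{\gamma_k}}{\ell^{g_1}(\gamma_k)}\xrightarrow[k\to\infty]{}\int_{S^{g_1}M}\calE^{g_1\to g_2}\,\frac{dm_\mu^{g_1}}{\|m_\mu^{g_1}\|}=I_\mu(g_1,g_2)\,,
$$
using the assumed weak-$*$ convergence of $d\ell^{g_1}_{\gamma_k}/\ell^{g_1}(\gamma_k)$ against the bounded continuous function $\calE^{g_1\to g_2}$ (which is globally bounded on $S^{g_1}M$ by Lemma~\ref{lem:Ineq-Knieper} and admissibility, and continuous). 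This already proves the limit of length ratios. For the first assertion, for general bounded continuous $G$ write
$$
\int G\,\frac{d\ell^{g_2}_{\gamma_k}}{\ell^{g_2}(\gamma_k)}=\frac{\ell^{g_1}(\gamma_k)}{\ell^{g_2}(\gamma_k)}\int \bigl(G\circ\Psi^{g_1\to g_2}\bigr)\,\calE^{g_1\to g_2}\,\frac{d\ell^{g_1}_{\gamma_k}}{\ell^{g_1}(\gamma_k)}\,;
$$
the prefactor tends to $1/I_\mu(g_1,g_2)$ by the previous step, and the integral tends to $\int (G\circ\Psi^{g_1\to g_2})\,\calE^{g_1\to g_2}\,dm_\mu^{g_1}/\|m_\mu^{g_1}\|$, since $G\circ\Psi^{g_1\to g_2}$ is bounded continuous (Morse correspondence is a homeomorphism) and $\calE^{g_1\to g_2}$ is bounded continuous. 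By Proposition~\ref{exact-change-variable} that last integral equals $\int G\,dm_\mu^{g_2}/\|m_\mu^{g_1}\|$, and dividing by $I_\mu(g_1,g_2)=\|m_\mu^{g_2}\|/\|m_\mu^{g_1}\|$ (Corollary~\ref{prop:transformation-mass}) we get exactly $\int G\,dm_\mu^{g_2}/\|m_\mu^{g_2}\|$. Hence $d\ell^{g_2}_{\gamma_k}/\ell^{g_2}(\gamma_k)\to m_\mu^{g_2}/\|m_\mu^{g_2}\|$ in the dual of bounded continuous functions.

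The main obstacle I anticipate is purely a bookkeeping one: making sure that one full period of the $g_1$-axis of $\gamma_k$ is sent by the Morse correspondence to exactly one full period of the $g_2$-axis, with the reparametrization time being precisely $\ell^{g_2}(\gamma_k)$, and that no boundary error term survives after dividing by $\ell^{g_1}(\gamma_k)\to\infty$. This is where one must invoke $\Gamma$-invariance of $\Psi^{g_1\to g_2}$ and the cocycle computation of Lemma~\ref{lem:Morse-Psi}.(3) carefully, rather than the bounded-error version of Proposition~\ref{prop:Change-Var-Flow}; alternatively one can use the bounded-error estimate and note the $O(1)$ error is killed in the limit. A secondary point is to justify testing weak-$*$ convergence against the (bounded, continuous but a priori noncompactly supported) functions $\calE^{g_1\to g_2}$ and $G\circ\Psi^{g_1\to g_2}$ — but this is granted by the hypothesis that the convergence already holds in the dual of \emph{bounded} continuous functions.
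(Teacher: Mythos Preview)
Your proof is correct and in fact cleaner than the paper's. The paper splits the result into two lemmas: for the length ratio it uses the \emph{bounded-error} version of Proposition~\ref{prop:Change-Var-Flow} (so $|\ell^{g_2}(\gamma_k)-\int_0^{\ell^{g_1}(\gamma_k)}\calE^{g_1\to g_2}|\le C_1$) and then divides by $\ell^{g_1}(\gamma_k)\to\infty$; for the measure convergence it first passes to currents on $\bd^2\tilde M$ to get convergence in the dual of compactly supported functions, and then spends most of the effort on a direct geometric tightness argument (comparing excursions outside large compact sets for the $g_1$- and $g_2$-orbits) to rule out escape of mass and upgrade to the dual of bounded continuous functions.

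Your route bypasses both the $O(1)$ error and the tightness argument by exploiting the exact period-to-period matching of the Morse correspondence on a closed orbit: since $\Psi^{g_1\to g_2}$ is $\Gamma$-equivariant and $g_1^{\ell^{g_1}(\gamma)}v=\gamma v$ on the axis, Lemma~\ref{lem:Morse-Psi}(3) forces $s^{g_1\to g_2}(\ell^{g_1}(\gamma),v)=\ell^{g_2}(\gamma)$ exactly, so the first (exact) identity of Proposition~\ref{prop:Change-Var-Flow} gives $\int G\,d\ell^{g_2}_{\gamma}=\int (G\circ\Psi^{g_1\to g_2})\,\calE^{g_1\to g_2}\,d\ell^{g_1}_{\gamma}$ with no error term. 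From there everything reduces to testing the assumed convergence against bounded continuous functions, together with Proposition~\ref{exact-change-variable} and the mass formula $I_\mu(g_1,g_2)=\|m_\mu^{g_2}\|/\|m_\mu^{g_1}\|$ (which follows from Proposition~\ref{exact-change-variable} with $G\equiv 1$ and does not need ergodicity). This is a genuinely more economical argument; the paper's tightness lemma is perhaps of some independent interest, but it is not needed once one notices the exact identity on periodic orbits.
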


The proof is separated in two lemmas. The first one asserts that viewed on $S^{g_2}M$, 
the sequence of periodic probability measures associated to $(\gamma_k)$  also converges to 
$\frac{m_\mu^{g_2}}{\|m_\mu^{g_2}\|}$ in the dual of bounded continuous functions. 
The second says that the ratio of lengths $\ell^{g_2}(\gamma_k)/\ell^{g_1}(\gamma_k)$ converges to the
average geodesic stretch $I_\mu(g_1,g_2)$.

 \begin{lemm}\label{lem:narrow}
 With the previous notations, for the same sequence $(\gamma_k)$, 
in the dual of continuous bounded functions of $S^{g_2} M$,
  $$
\lim_{k\to \infty} \frac{d\ell_{\gamma_k}^{g_2}}{\ell^{g_2}(\gamma_k) } = 
\frac{m_\mu^{g_2}}{\|m_\mu^{g_2}\|}\,.
$$
 \end{lemm}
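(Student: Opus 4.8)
\textbf{Proof plan for Lemma \ref{lem:narrow}.}

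The plan is to transport the convergence from $S^{g_1}M$ to $S^{g_2}M$ via the Morse correspondance $\Psi^{g_1\to g_2}$, using the change-of-variable formula of Proposition \ref{exact-change-variable} as the basic mechanism. The starting point is that the periodic measure $\frac{d\ell^{g_2}_{\gamma_k}}{\ell^{g_2}(\gamma_k)}$ is, up to normalization, the image under $\Psi^{g_1\to g_2}$ of a reparametrization of the periodic measure on $\gamma_k^{g_1}$: indeed, by Lemma \ref{lem:Morse-Psi}(3), $\Psi^{g_1\to g_2}$ sends the $g_1$-geodesic through $\gamma_k^{g_1}$ to the $g_2$-geodesic through $\gamma_k^{g_2}$, reparametrizing time via the cocycle $s^{g_1\to g_2}$, whose infinitesimal generator is $\calE^{g_1\to g_2}$. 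Concretely, for any bounded continuous $G : S^{g_2}M\to \bbR$, integrating $G$ along $\gamma_k^{g_2}$ and changing variables gives
$$
\int G\, d\ell^{g_2}_{\gamma_k} = \int_0^{\ell^{g_1}(\gamma_k)} G\circ \Psi^{g_1\to g_2}(g_1^t v_k)\,\calE^{g_1\to g_2}(g_1^t v_k)\,dt = \int G\circ\Psi^{g_1\to g_2}\cdot\calE^{g_1\to g_2}\, d\ell^{g_1}_{\gamma_k},
$$
where $v_k$ generates $\gamma_k^{g_1}$; taking $G\equiv 1$ gives $\ell^{g_2}(\gamma_k) = \int \calE^{g_1\to g_2}\,d\ell^{g_1}_{\gamma_k}$.

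Next I would divide by $\ell^{g_2}(\gamma_k)$ and pass to the limit. Since $G\circ\Psi^{g_1\to g_2}$ is bounded continuous on $S^{g_1}M$ (composition of a bounded continuous function with the homeomorphism $\Psi^{g_1\to g_2}$) and $\calE^{g_1\to g_2}$ is bounded continuous on $S^{g_1}M$ (it is globally H\"older by the references cited after \eqref{eq:GeodStretch3}, and bounded by Lemma \ref{lem:Ineq-Knieper} together with the uniform bound $\norm{v}^{g_2}\le \sqrt{C_1(g_1,g_2)}$), the product $G\circ\Psi^{g_1\to g_2}\cdot\calE^{g_1\to g_2}$ is a bounded continuous function on $S^{g_1}M$. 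Hence the hypothesis $\frac{d\ell^{g_1}_{\gamma_k}}{\ell^{g_1}(\gamma_k)}\to \frac{m^{g_1}_\mu}{\|m^{g_1}_\mu\|}$ in the dual of bounded continuous functions yields
$$
\frac{1}{\ell^{g_1}(\gamma_k)}\int G\circ\Psi^{g_1\to g_2}\cdot\calE^{g_1\to g_2}\, d\ell^{g_1}_{\gamma_k} \;\xrightarrow[k\to\infty]{}\; \frac{1}{\|m^{g_1}_\mu\|}\int_{S^{g_1}M} G\circ\Psi^{g_1\to g_2}\cdot\calE^{g_1\to g_2}\, dm^{g_1}_\mu = \frac{1}{\|m^{g_1}_\mu\|}\int_{S^{g_2}M} G\, dm^{g_2}_\mu,
$$
the last equality being exactly Proposition \ref{exact-change-variable}. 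Applying this with $G$ and with $G\equiv 1$ and taking the ratio, one gets $\frac{1}{\ell^{g_2}(\gamma_k)}\int G\, d\ell^{g_2}_{\gamma_k}\to \frac{1}{\|m^{g_2}_\mu\|}\int G\, dm^{g_2}_\mu$, which is the claim (note $\|m^{g_2}_\mu\|<\infty$ by Corollary \ref{prop:transformation-mass}, so the denominator is meaningful and nonzero).

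The main obstacle is purely bookkeeping about the reparametrization: one must check that, on a periodic orbit, $\Psi^{g_1\to g_2}$ really maps $\gamma_k^{g_1}$ onto the full closed orbit $\gamma_k^{g_2}$ (not just a piece of the corresponding $g_2$-geodesic line) and that the total $g_2$-length swept out is $\int \calE^{g_1\to g_2}\, d\ell^{g_1}_{\gamma_k}$ with no boundary defect. This follows from $\Gamma$-invariance of $\Psi^{g_1\to g_2}$: the period $\ell^{g_2}(\gamma_k)$ equals $\calB^{g_2}_{v_{k,+}^{g_1}}(o,\gamma_k^{-1}o)$ evaluated appropriately, which is precisely the value of the cocycle $s^{g_1\to g_2}(\ell^{g_1}(\gamma_k),v_k)$, so the change of variable closes up exactly over one period. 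Once this identification is in place, the rest is the soft limiting argument above, where the only subtlety — already handled in the paper's setup — is that convergence must be tested against \emph{bounded} continuous functions rather than only compactly supported ones, which is legitimate here because all measures in play are probability measures.
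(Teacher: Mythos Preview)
Your argument is correct and is genuinely different from the paper's proof. The paper first passes through geodesic currents on $\bd^2\tilde M$: lifting the $g_1$-periodic probabilities to $S^{g_1}\tilde M$, reading off convergence of the associated currents, and then pushing back down via $H^{g_2}$ gives weak-* convergence of $\frac{d\ell_{\gamma_k}^{g_2}}{\ell^{g_2}(\gamma_k)}$ to a multiple of $m_\mu^{g_2}/\|m_\mu^{g_2}\|$ in the dual of \emph{compactly supported} functions only. To upgrade to narrow convergence, the paper then runs a separate, hands-on tightness argument: it builds nested compact sets $\tilde K_1\subset\tilde K_2\subset\tilde K_3$ and, using that $g_1$- and $g_2$-geodesics with the same endpoints stay at bounded distance, bounds the proportion of $g_2$-length of $\gamma_k$ outside $\Gamma\cdot\tilde K_3$ by a constant times the proportion of $g_1$-length outside $\Gamma\cdot\tilde K_1$.

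Your route bypasses both steps. By identifying $d\ell_{\gamma_k}^{g_2}$ directly as $\Psi^{g_1\to g_2}_*(\calE^{g_1\to g_2}\,d\ell_{\gamma_k}^{g_1})$ via the $\Gamma$-equivariance of $\Psi^{g_1\to g_2}$ (so that the cocycle closes up exactly over one period, $s^{g_1\to g_2}(\ell^{g_1}(\gamma_k),v_k)=\ell^{g_2}(\gamma_k)$), you reduce the statement to testing the $g_1$-periodic probabilities against the single bounded continuous function $G\circ\Psi^{g_1\to g_2}\cdot\calE^{g_1\to g_2}$; narrow convergence on $S^{g_1}M$ then does all the work, and tightness on $S^{g_2}M$ comes for free. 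As a bonus, taking $G\equiv 1$ already yields $\ell^{g_2}(\gamma_k)/\ell^{g_1}(\gamma_k)\to\|m_\mu^{g_2}\|/\|m_\mu^{g_1}\|=I_\mu(g_1,g_2)$, i.e.\ Lemma~\ref{lem:lengths}. The paper's approach, in exchange, is more self-contained at this point of the text (it does not invoke Proposition~\ref{exact-change-variable}) and makes the geometric no-escape-of-mass mechanism explicit.
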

 
 \begin{proof} 
First, as the sequence of probability measures $\frac{d\ell_{\gamma_k}^{g_1}}{\ell^{g_1}(\gamma_k)}$ converges 
to the probability measure $\frac{m_\mu^{g_1}}{\|m_\mu^{g_1}\|}$, 
the $\Gamma$-invariant lift of 
$\frac{d\ell_{\gamma_k}^{g_1}}{\ell^{g_1}(\gamma_k)}$ to $S^{g_1}\tilde M$ 
converges in the dual of continuous functions with compact support towards
$\frac{\tilde m_\mu^{g_1}}{\|m_\mu^{g_1}\|}$. 
Using Hopf coordinates, we deduce that 
 the geodesic current on $\partial^2\widetilde{M}$ associated through $H^{g_1}$ to
 $\frac{d\ell_{\gamma_k^{g_1}}}{\ell^{g_1}(\gamma_k)}$ converges weakly 
(in the dual of continuous functions with compact support) to $\mu$. 
Using the same reasoning in the other direction, 
we obtain that  the sequence of probability measures $\frac{d\ell_{\gamma_k}^{g_2}}{\ell^{g_2}(\gamma_k)}$ converges weakly 
(in the dual of continuous functions with compact support) to some multiple of $m^{g_2}_\mu/\|m^{g_2}_\mu\|$. 

It is not exactly the desired result. To get the convergence towards the probability measure $m^{g_2}_\mu/\|m^{g_2}_\mu\|$, and in the dual of bounded continuous functions, we need to avoid a possible loss of mass at infinity. 
To establish this convergence, it is necessary and sufficient to prove that 
$\frac{d\ell_{\gamma_k}^{g_2}}{\ell^{g_2}(\gamma_k)}$ does not diverge.
In other words, we want to check that 
for all $\varepsilon>0$, there exists a compact set $\mathcal{K}_\varepsilon\subset S^{g_2}M$, 
such that for all $k\ge 0$ large enough, 
$$
\frac{d\ell_{\gamma_k}^{g_2}}{\ell^{g_2}(\gamma_k)}(\mathcal{K}_\varepsilon)\ge 1-\varepsilon\,.
$$
It  follows easily from the fact that there exists a constant $C=C(g_1,g_2)$ such that
 any $g_2$-geodesic of $\tilde M$ stays in a $C(g_1,g_2)$-neighbourhood of the $g_1$-geodesic with same 
endpoints at infinity. Let us write the detail of the argument. 

Choose first some $\varepsilon>0$, and some compact set $K_1\subset M$ such that 
$\frac{m^{g_1}_\mu(S^{g_1}K_1)}{\|m^{g_1}_\mu\|}\ge 1-\varepsilon/2$. 
By convergence of $\frac{d\ell_{\gamma_k}^{g_1}}{\ell^{g_1}(\gamma_k)}$, for all $k\ge k_0$ large enough, 
we also have $\frac{\ell^{g_1}_{\gamma_k}(S^{g_1}K_1)}{\ell^{g_1}(\gamma_k)}\ge 1-\varepsilon$. 

Now, choose a relatively compact preimage $\tilde K_1\in \tilde M$, its $g_2$-convex closure $\tilde K_2$
and $\tilde K_3\supset \tilde K_2$ a larger compact convex set of $\tilde M$ 
containing a $2C(g_1,g_2)$-neighbourhood of $\tilde K_2$ for both metrics $g_1$ and $g_2$.

Consider a lift $\tilde{\gamma}_k^{g_1}$ of the $g_1$-geodesic $\gamma_k^{g_1}$ which intersects $\tilde K_1$, 
and the associated lift $\tilde{\gamma}_k^{g_2}$
of the $g_2$-geodesic $\gamma_k^{g_2}$, at distance at most $C(g_1,g_2)$ from $\tilde{\gamma}_k^{g_1}$. 
Let $a,b$ be two points on $\tilde \gamma_k^{g_2}$ such that the length 
$\ell^{g_2}_{\gamma_k}\left((a,b)\right)=\ell^{g_2}(\gamma_k)$. 
We want to estimate the proportion of $g_2$-length of $[a,b]$ outside $\Gamma.\mathcal{K}=\Gamma.S^{g_2}K_3$.

\begin{figure}[ht!]\label{fig:periodic}
\begin{center}
 \input{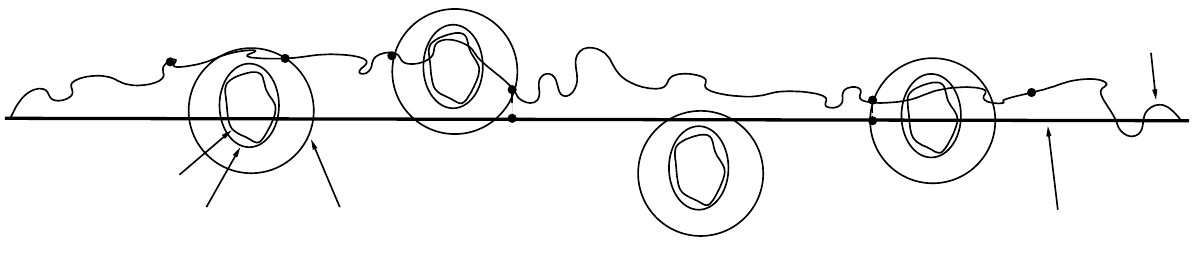_t} 
\caption{Proof of Lemma \ref{lem:narrow}} 
\end{center}
\end{figure} 

By convexity of $\tilde K_3$, we can write $(a,b)\cap(\Gamma.\tilde K_3)^c$ 
as the disjoint union $\sqcup (a_i,b_i)$ of finitely many intervals. 
Thus, we have to show
that 
$$
\frac{\ell^{g_2}_{\gamma_k}(\mathcal{K}^c)}{\ell^{g_2}(\gamma_k)}=
\frac{\sum_i\ell^{g_2}_{\gamma_k}(a_i,b_i)}{\ell^{g_2}(\gamma_k)}\le \varepsilon\,.
$$

Choose two points $c_i$ and $d_i$ on $\widetilde{\gamma}_k^{g_1}$ 
whose projections (for the metric $g_2$)
on the $g_2$-geodesic $\widetilde{\gamma}_k^{g_2}$ are exactly $a_i$ and $b_i$. 
Such  points are not necessarily unique but  always exist: take $c_i$ 
in the intersection of $\tilde\gamma_k^{g_1}$ with 
the hyperplane orthogonal to $\tilde\gamma_k^{g_2}$ at $a_i$. 
Denote by $(c_i,d_i)$ the $g_1$-geodesic segment on $\tilde\gamma_k^{g_1}$, 
and let $\lambda>0$ be such that
$\frac{1}{\lambda}g_2\le g_1\le \lambda g_2$. 
We have 
$$
\ell^{g_2}_{\gamma_k}(a_i,b_i)=d^{g_2}(a_i,b_i)\le 
d^{g_2}(c_i,d_i)\le \ell^{g_2}(c_i,d_i)\le \sqrt{\lambda}\ell^{g_1}(c_i,d_i)\,.
$$
We deduce that
 $$
\frac{\ell^{g_2}_{\gamma_k}(\mathcal{K}^c)}{\ell^{g_2}(\gamma_k)}\le 
\lambda \sum_i\frac{\ell^{g_1}(c_i,d_i)}{\ell^{g_1}(\gamma_k)}\le 
\lambda \frac{\ell^{g_1}((S^{g_1} \Gamma.\tilde K_1)^c)\cap\widetilde{\gamma}_k^{g_1})}{\ell^{g_1}(\gamma_k)}\,,
$$
the last inequality coming from the fact that,
 as $a_i$ and $b_i$ are in the boundary of $\Gamma.\tilde K_3$, and 
$c_i$ and $d_i$ are at distance at most $C(g_1,g_2)$ resp. from $a_i$ and $d_i$, 
they cannot belong to $\Gamma.\tilde K_2$, 
so that the  segment $(c_i,d_i)$ does not intersect $\Gamma.\tilde K_1$. 
This proves that 
$$
\frac{\ell^{g_2}_{\gamma_k}(\mathcal{K}^c)}{\ell^{g_2}(\gamma_k)}\le \lambda\varepsilon\,, 
$$
which concludes the proof (up to changing $\varepsilon$ in $\varepsilon/\lambda$). 
 \end{proof}

 Moreover, the lengths $\ell^{g_1}(\gamma_k)$ and $\ell^{g_2}(\gamma_k)$ are related as follows.
 
 \begin{lemm}\label{lem:lengths}
 With the previous notations,   for the same sequence $(\gamma_k)$, 
 $$
\lim_{k\to +\infty} \frac{\ell^{g_2}(\gamma_k)}{\ell^{g_1}(\gamma_k)} = 
I_\mu(g_1, g_2)\,.
$$
 \end{lemm}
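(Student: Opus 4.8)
The plan is to compare the $g_2$-length $\ell^{g_2}(\gamma_k)$ with the integral of the instantaneous geodesic stretch $\calE^{g_1\to g_2}$ along the $g_1$-closed geodesic $\gamma_k^{g_1}$, using the uniform estimate of Lemma \ref{lem:GeodStretch}, and then to pass to the limit using the weak convergence $\frac{d\ell^{g_1}_{\gamma_k}}{\ell^{g_1}(\gamma_k)}\to \frac{m_\mu^{g_1}}{\|m_\mu^{g_1}\|}$. Concretely, fix a lift $\tilde\gamma_k^{g_1}$ and a vector $\tilde v_k\in S^{g_1}\tilde M$ on it, and let $T_k=\ell^{g_1}(\gamma_k)$ be its $g_1$-period, so that $g_1^{T_k}\tilde v_k = \gamma_k\cdot\tilde v_k$. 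The key point is that $\ell^{g_2}(\gamma_k)$ is, up to an additive constant independent of $k$, equal to $d^{g_2}(\pi\tilde v_k,\pi g_1^{T_k}\tilde v_k)$: indeed both points lie on the $g_1$-axis of $\gamma_k$ which is a $g_2$-quasigeodesic within bounded Hausdorff distance $C_2(g_1,g_2)$ of the $g_2$-axis of $\gamma_k$, and the $g_2$-translation length of $\gamma_k$ is the $g_2$-distance between corresponding points on its $g_2$-axis; a triangle/quasigeodesic argument controls the discrepancy by a constant depending only on $C_2(g_1,g_2)$ and $\Delta(g_2)$.

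Next I would invoke Lemma \ref{lem:GeodStretch} directly:
$$
\left| d^{g_2}(\pi\tilde v_k,\pi g_1^{T_k}\tilde v_k) - \int_0^{T_k}\calE^{g_1\to g_2}(g_1^t\tilde v_k)\,dt\right|\le C_3(g_1,g_2).
$$
Combining the two displayed bounds gives a constant $C=C(g_1,g_2)$, \emph{independent of $k$}, with
$$
\left|\ell^{g_2}(\gamma_k) - \int_0^{T_k}\calE^{g_1\to g_2}(g_1^t\tilde v_k)\,dt\right|\le C.
$$
Since the integral $\int_0^{T_k}\calE^{g_1\to g_2}(g_1^t\tilde v_k)\,dt$ is exactly $\ell^{g_1}(\gamma_k)\cdot\int_{S^{g_1}M}\calE^{g_1\to g_2}\,d\!\left(\frac{d\ell^{g_1}_{\gamma_k}}{\ell^{g_1}(\gamma_k)}\right)$, dividing by $T_k=\ell^{g_1}(\gamma_k)\to\infty$ yields
$$
\frac{\ell^{g_2}(\gamma_k)}{\ell^{g_1}(\gamma_k)} = \int_{S^{g_1}M}\calE^{g_1\to g_2}\,d\!\left(\frac{d\ell^{g_1}_{\gamma_k}}{\ell^{g_1}(\gamma_k)}\right) + O(1/\ell^{g_1}(\gamma_k)).
$$
Because $\calE^{g_1\to g_2}$ is continuous and bounded on $S^{g_1}M$ (it is globally Hölder, and bounded by $\|v\|^{g_2}\le\sqrt{C_1(g_1,g_2)}$ by Lemma \ref{lem:Ineq-Knieper}), and the periodic probability measures converge to $\frac{m_\mu^{g_1}}{\|m_\mu^{g_1}\|}$ in the dual of bounded continuous functions, the right-hand side converges to $\frac{1}{\|m_\mu^{g_1}\|}\int_{S^{g_1}M}\calE^{g_1\to g_2}\,dm_\mu^{g_1} = I_\mu(g_1,g_2)$, which is the claim.

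The main obstacle I anticipate is the first step: justifying cleanly that $\ell^{g_2}(\gamma_k)$ differs from $d^{g_2}(\pi\tilde v_k,\pi g_1^{T_k}\tilde v_k)$ by a uniformly bounded amount. One has to be a little careful that the $g_1$-axis and the $g_2$-axis of $\gamma_k$ are both invariant under $\gamma_k$ and stay within bounded distance of each other, so that comparing the $g_2$-displacement of $\gamma_k$ measured along the $g_1$-axis versus along the $g_2$-axis only costs the $2C_2(g_1,g_2)+\Delta(g_2)$-type error already encountered in Fact \ref{fact}; this is essentially the same quasigeodesic/thin-triangle bookkeeping used in Lemma \ref{lem:GeodStretch}, applied to the translation length rather than to a single segment. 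Everything else is a soft limiting argument relying on results already established, so the estimate is uniform in $k$ precisely because all the constants $C_1, C_2, C_3, \Delta$ depend only on the pair $(g_1,g_2)$ and not on the particular $\gamma_k$.
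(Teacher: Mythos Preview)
Your proposal is correct and follows essentially the same route as the paper. The paper invokes Proposition~\ref{prop:Change-Var-Flow} with $F\equiv 1$, which for that choice reduces exactly to the estimate of Lemma~\ref{lem:GeodStretch} you use, and then divides by $\ell^{g_1}(\gamma_k)$ and passes to the limit via the bounded-continuous convergence of the periodic probabilities, just as you do. The step you flag as the ``main obstacle'' --- replacing $d^{g_2}(\pi\tilde v_k,\gamma_k\cdot\pi\tilde v_k)$ by $\ell^{g_2}(\gamma_k)$ up to a uniform additive constant --- is handled in the paper simply by introducing a point $\pi\tilde v_k^{g_2}$ on the $g_2$-axis at $g_2$-distance at most $C(g_1,g_2)$ from $\pi\tilde v_k$ and applying the triangle inequality (since $\gamma_k$ is a $g_2$-isometry and translates points on its $g_2$-axis by exactly $\ell^{g_2}(\gamma_k)$); your quasigeodesic/thin-triangle argument reaches the same conclusion, if anything more carefully than the paper spells out.
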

 
 \begin{proof}
For all $k\in \bbN$, let $v^{g_1}_k$ (resp. $v^{g_2}_k$) be a tangent vector to $\gamma^{g_1}_k$  
(resp.  $\gamma^{g_2}_k$) such that $\ds d^{g_2}(\pi v^{g_1}_k, \pi v^{g_2}_k)\leq C(g_1,g_2)$, 
where $C(g_1,g_2)>0$ is the  constant coming from (\ref{eq:QIsom}). 
Let $\tilde v^{g_1}_k\in S^{g_1}\tilde M$ and $\tilde v^{g_2}_k\in S^{g_2}\tilde M$ 
be lifts of $v^{g_1}_k$ and $v^{g_2}_k$ such that again, 
$\ds d^{g_2}(\pi \tilde v^{g_1}_k, \pi \tilde v^{g_2}_k)\leq C(g_1,g_2)$. 
It follows from Proposition \ref{prop:Change-Var-Flow} applied to $F\equiv 1$
 that there exists $C_1>0$, only depending on $C(g_1,g_2)$ and the bounds on the curvature, such that 
$$
 \left | \ell^{g_2}(\gamma_k) - \int_0^{\ell^{g_1}(\gamma_k)} \calE^{g_1\to g_2}(g_1^t \tilde v) dt \right|
 \leq C_1\,.
$$
Therefore,
$$
\left|\frac{\ell^{g_2}(\gamma_k)}{\ell^{g_1}(\gamma_k)} - 
\frac{1}{\ell^{g_1}(\gamma_k)} \int_0^{\ell^{g_1}(\gamma_k)} \calE^{g_1\to g_2}(g_1^t \tilde v) dt \right|
\leq \frac{C_1}{\ell^{g_1}(\gamma_k)}\,.
$$
By Lemma \ref{lem:narrow}, as $\calE^{g_1\to g_2}$ is bounded and continuous,  we know that 
$$ \frac{1}{\ell^{g_1}(\gamma_k)} \int_0^{\ell^{g_1}(\gamma_k)} \calE^{g_1\to g_2}(g_1^t \tilde v) dt \to I_\mu(g_1,g_2)\,,$$ so that the conclusion follows.  
 \end{proof}
 


\section{Entropy of finite measures}\label{sec:Entropy}

In this section, given two admissible metrics $g_1$ and $g_2$ as before, and a geodesic current
$\mu$ on $\partial^2\tilde M$, we wish 
to compare the entropies of the measures $m_\mu^{g_1}$ and $m_\mu^{g_2}$. 
Theorem \ref{entropy-transformation} establishes that their ratio is the average geodesic stretch between $g_1$ and $g_2$
w.r.t $\mu$, but in the reverse direction compared to the relation between their masses, which leads to 
Corollary \ref{coro:invariant}, 
which states that the product of the entropy of $m_\mu^{g_i}$ by its mass $\|m_\mu^{g_i}\|$ 
remains constant under an admissible change of metric. 

First, we will recall some definitions and relations between dynamical balls 
(subsection \ref{dyn-balls-shadows}).
 In subsection \ref{sub:entropies},  we compare two notions of entropy of a measure, 
the Kolmogorov-Sinai entropy and the local Brin-Katok entropy, recalling well and
 less known results of Brin-Katok and Riquelme. 
It allows us to prove Theorem \ref{entropy-transformation} 
and Corollary \ref{coro:invariant} in subsection \ref{sub:proof}.

\subsection{Dynamical balls and shadows}\label{dyn-balls-shadows}

If $(\phi^t)$ is a continuous dynamical system on a metric space $(X,d)$, 
a dynamical ball is a ball for the dynamical distance 
$d_T(x,y)=\sup_{0\le t \le T} d(\phi^t x,\phi^t y)$. 

We will restrict ourselves to  geodesic flows associated 
to a Riemannian metric $g$ on $S^gM$. 
For such geometric dynamical systems, it is more convenient to work 
with the Riemannian distance induced by 
the metric $g$ on $M$ or $\tilde{M}$  instead of the distance coming 
from the Sasaki metric on $TM$ or $T\tilde{M}$. 
We refer to  \cite[p.70]{Bal95} and \cite[p.19-20]{PPS} 
for a discussion about the fact that it is 
the good thing to do in this case. 

For all $\e,T>0$ and $v\in S^g\tilde{M}$, we will call \emph{dynamical ball} 
of center $v$, diameter $\e$ and length $T$ the set
$$
B^{g}(v,T,\varepsilon) =
\{w\in S^g \tilde{M},\,d^g(\pi(g^t v),\pi(g^t w))\le \varepsilon,\,\textrm{for all } 0\le t\le T\}\,.
$$
Note that $B^{g}(v,0,\varepsilon)$ is the $\e$-ball with center $v$ for the distance $d^g$ defined above. 

\begin{rema}\label{erratum}\rm On the quotient, for $v\in S^g M$, one can either consider the quotient dynamical ball $B^g(v,T,\varepsilon)=p_\Gamma(B^g(\tilde{v},T,\varepsilon))$, $\tilde{v}$ being any lift of $v$ to $S^g\tilde{M}$. There is also a more dynamical definition, as 
$$
B^g_{dyn}(v,T,\epsilon) =
\{w\in S^g  M ,\,d^g(\pi(g^t v),\pi(g^t w))\le \varepsilon,\,\textrm{for all } 0\le t\le T\}\,.
$$
Of course, if $\tilde{v}\in S^g\tilde{M}$ and $v=p_\Gamma(\tilde{v})\in S^gM$, 
one has the obvious inclusion 
\begin{equation}\label{inclusion}
B^g(v,T,\varepsilon))=p_\Gamma(B^g(\tilde{v},T,\varepsilon)\subset B^g_{dyn}(v,T,\varepsilon)\,.
\end{equation}
One can easily see
 that this inclusion is an equality when the injectivity radius of $M$ 
is uniformly bounded from below, 
as soon as $\varepsilon$ is small enough.
 However, when the injectivity radius of $M$ is not bounded from below, 
one can build examples where this inclusion is not an equality \cite{Bellis, Velozo}. 

It turns out that in many cases, the most natural dynamical ball 
to consider is the small ball $p_\Gamma(B^g(\tilde{v},T,\varepsilon)$. 
Therefore, we will call it the {\em small dynamical ball} and denote it by 
$B^g(v,T,\epsilon)$. 

This problem  has not been emphasized in \cite{PPS}, where only these small dynamical balls are considered
(see \cite[3.15]{PPS}).  However, in various definitions of local entropies, the large dynamical balls have to be considered. 
\end{rema}


We will also need the following variant, for $v\in S^g\tilde M$ and $T,T'>0$\,:
$$
B^g(v;T,T',\e)=\{w\in S^g \tilde M, d^g(\pi(v),\pi(w))\le \e,\,\,\textrm{for all }  -T'\le t\le T \}\,.
$$
Observe that $B^{g}(v;T,T',\e)=g^{T'}(B^g(g^{-T'}v,T+T',\e))$. As mentioned in the above remark \ref{erratum}, we consider on $S^g\tilde{M}$ the small dynamical balls $B^g(v;T,T',\e)=p_\Gamma(B^g(\tilde{v};T,T',\e))$. \\

Recall the following well known fact in negative curvature.

\begin{lemm}\label{uniform-negative} Let $(M,g)$ be a manifold with pinched negative curvature. 
For all $0<a<b$, there exists a constant $c=c(a,b)>0$ such that for all
vectors $v,w\in S^g\widetilde{M}$, and all $T>2c$, if $d^g(\pi(g_t v),\pi(g_t w))\le b $ for
all $0<t<T$, then $d^g(g_t v, g_t w)\le a$ for all $c<t<T-c$.
\end{lemm}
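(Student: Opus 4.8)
The plan is to derive this from the two classical facts about strictly negative curvature: convexity of the distance function between geodesics, and \emph{exponential} divergence of geodesics (via comparison with the model space of constant curvature). Write $\gamma_v(t)=\pi(g_tv)$ and $\gamma_w(t)=\pi(g_tw)$; these are unit speed $g$-geodesics of the Hadamard manifold $(\widetilde M,g)$, whose curvature satisfies $-b_0^2\le K_g\le-\kappa^2<0$ for some uniform $\kappa>0$ by pinching. First I would reduce the two-sided, uniform-in-$T$ statement to a single localized estimate: it suffices to produce $c=c(a,b)>0$ such that whenever $\sigma_1,\sigma_2:[-c,c]\to\widetilde M$ are unit speed $g$-geodesics with $d^g(\sigma_1(t),\sigma_2(t))\le b$ for all $|t|\le c$, then $d^g(\sigma_i(0),\sigma_{3-i})\le a$ for $i=1,2$ and the unit vectors $\sigma_1'(0),\sigma_2'(0)$ are $a$-close in $S^g\widetilde M$ (up to the bounded time reparametrization inherent in this comparison; in the offset case $w=g_sv$ one cannot avoid such a shift). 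Granting this, fix such a $c$; for $T>2c$ and any $t_0\in(c,T-c)$ the window $[t_0-c,t_0+c]$ lies in $(0,T)$, so applying the localized estimate to $\sigma_1(s)=\gamma_v(t_0+s)$, $\sigma_2(s)=\gamma_w(t_0+s)$ gives $d^g(g_{t_0}v,g_{t_0}w)\le a$, which is the claim.

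To prove the localized estimate I would set $h(t)=d^g(\sigma_1(t),\sigma_2)$, the distance from $\sigma_1(t)$ to the complete geodesic carrying $\sigma_2$. Nonpositive curvature makes $h$ convex, and $h(t)\le d^g(\sigma_1(t),\sigma_2(t))\le b$ on $[-c,c]$. The point is that the strict upper bound $K_g\le-\kappa^2$ prevents $h$ from being both small and flat: comparing $\sigma_1$ and $\sigma_2$ with their images in the surface of constant curvature $-\kappa^2$ (Rauch comparison, resp. the $\mathrm{CAT}(-\kappa^2)$ inequality; see \cite{HIH77,BH99}), a nonzero value $h(0)=d_0$ forces $h$ to grow at least at the model rate away from its minimum. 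Distinguishing the three model configurations (the two model geodesics ultraparallel, asymptotic, or intersecting) one gets in every case an inequality of the form $\sinh(\kappa\,h(c))\ge\rho(d_0)\,e^{\kappa c}$ with $\rho$ positive and increasing on $(0,\infty)$ and depending only on $\kappa$. Combined with $h(c)\le b$ this yields $d_0\le\rho^{-1}\!\bigl(\sinh(\kappa b)\,e^{-\kappa c}\bigr)$, which tends to $0$ as $c\to+\infty$; choosing $c=c(a,b)$ large makes $h(0)<a$, and the symmetric argument gives $d^g(\sigma_2(0),\sigma_1)<a$. Finally, $h$ convex and exponentially small at $0$ forces $|h'(0)|$ small, and one more comparison estimate bounds the angle between $\sigma_1'(0)$ and the direction of $\sigma_2$ (hence the $S^g\widetilde M$-closeness of $\sigma_1'(0),\sigma_2'(0)$, up to a parameter shift of size $O(b)$) by $a$ once $c$ is large.

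The main obstacle is purely the quantitative input: the exponential divergence estimate $\sinh(\kappa\,h(c))\ge\rho(d_0)e^{\kappa c}$ and the short case analysis behind it. This is classical for pinched negative curvature and can alternatively be quoted directly from \cite{HIH77}, or from the treatment of the geodesic flow in \cite{Bal95,PPS}, which is the spirit in which the lemma is stated (``well known''). It is worth noting that mere Gromov hyperbolicity — the interior-triangle bound $\Delta(g)$ recalled just above — does \emph{not} suffice, since it only yields $a$-closeness for $a$ of order $\Delta(g)$; the strict upper curvature bound is what allows $a$ to be taken arbitrarily small.
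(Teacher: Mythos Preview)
The paper's own proof is literally the sentence ``this is an exercise using standard comparison results'' together with the remark that $c(a,b)$ also depends on the upper curvature bound; so there is nothing to compare beyond noting that your approach --- convexity of the distance function and $\mathrm{CAT}(-\kappa^2)$/Rauch comparison giving exponential divergence --- is exactly the intended argument, and your remark that a strict negative upper curvature bound (not mere Gromov hyperbolicity) is what lets $a$ be taken arbitrarily small is correct and matches the paper's parenthetical.

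There is however a genuine mismatch in your reduction. Your localized estimate only yields $d^g(\sigma_i(0),\sigma_{3-i})\le a$, i.e.\ the distance from a point on one geodesic to the other geodesic \emph{line}, together with closeness of directions \emph{up to a time shift}; from these you cannot deduce the same-parameter bound $d^g(g_{t_0}v,g_{t_0}w)\le a$ that you assert in the sliding-window step. Indeed the offset case you yourself flagged, $w=g_sv$ with $a<|s|\le b$, already shows that no same-parameter conclusion can hold: the footpoint distance $d^g(\pi g_tv,\pi g_tw)$ is identically $|s|\le b$ for all $t$, yet stays $>a$ throughout. What your argument does establish, cleanly, is the Hausdorff statement $d^g(\pi g_tv,\gamma_w)\le a$ for $c<t<T-c$ (equivalently: $a$-closeness of footpoints after a time shift bounded by $b$), and this is what is actually needed in the next lemma on inclusions of dynamical balls, where the single-orbit exception has measure zero and is irrelevant for the downstream entropy computations. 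So keep your argument, but phrase the conclusion as the distance-to-line (Hausdorff) version rather than the same-parameter one.
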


\begin{proof} This is an exercise using standard comparison results. 
Note that the constant $c(a,b)$ also depends on the upper bound of the curvature. 
\end{proof}

\begin{lemm}\label{lem:dynamical-balls-inclusion}  
Let $(M,g)$ be a manifold with pinched negative curvature. 
For all $0<\e_1<\e_2$, there exists $C(g,\e_1,\e_2)>0$ such that
for all $v\in S^g\tilde{M}$ and $T, T'>0$, we have
$$
B^g(v;T+C(g,\e_1,\e_2),T'+C(g,\e_1,\e_2),\e_2)
\subset B^g(v;T, T',\e_1)\subset B^g(v;-T,T',\e_2)\,.
$$
\end{lemm}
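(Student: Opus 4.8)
The statement compares dynamical balls of different diameters, so the natural tool is Lemma \ref{uniform-negative}, which converts a crude bound on the separation of two orbits over a long interval into a sharper bound over a slightly shorter interval. The plan is to prove the two inclusions separately, the right-hand one being essentially trivial and the left-hand one being where the constant $C(g,\e_1,\e_2)$ is produced.

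\textbf{The easy inclusion.} For any $v$ and any $T,T'>0$, if $w\in B^g(v;T,T',\e_1)$ then by definition $d^g(\pi(g^t v),\pi(g^t w))\le \e_1$ for all $-T'\le t\le T$, hence a fortiori $\le \e_2$ on the same interval. Since $T\le T$ and $-T'\ge -T'$ trivially, the interval $[-T',T]$ used for $B^g(v;-T,T',\e_2)$ contains $[-T',T]$ — in fact here one should read $B^g(v;-T,T',\e_2)$ as the set of $w$ with $d^g(\pi g^t v,\pi g^t w)\le\e_2$ for $-T'\le t\le -T$ (a much weaker, possibly empty-interval, condition), so the inclusion $B^g(v;T,T',\e_1)\subset B^g(v;-T,T',\e_2)$ is immediate from monotonicity in $\e$ and from shrinking the interval of parameters; I would state this in one line. (One should double-check the intended reading of the notation $B^g(v;T,T',\e)$: it is the $w$ with $d^g(\pi g^t v,\pi g^t w)\le\e$ for $-T'\le t\le T$; with that reading the right inclusion is just "enlarge $\e$, shrink the time-interval''.)

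\textbf{The main inclusion.} Let $c=c(\e_1,\e_2)$ be the constant from Lemma \ref{uniform-negative} applied with $a=\e_1$, $b=\e_2$, and set $C(g,\e_1,\e_2)=c$. Take $w\in B^g(v;T+C,T'+C,\e_2)$, i.e. $d^g(\pi g^t v,\pi g^t w)\le\e_2$ for all $-T'-C\le t\le T+C$. Apply Lemma \ref{uniform-negative} on the parameter interval $[-T'-C,\,T+C]$ (of length $T+T'+2C>2c$ provided $T+T'>0$, which we may assume; the degenerate case is trivial): it yields $d^g(g^t v,g^t w)\le\e_1$ for all $t$ in the sub-interval obtained by trimming $c=C$ from each end, that is for $-T'\le t\le T$. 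Since $d^g$ on the unit tangent bundle dominates $d^g(\pi\cdot,\pi\cdot)$ on the base (the footpoint projection is $1$-Lipschitz), this gives in particular $d^g(\pi g^t v,\pi g^t w)\le\e_1$ for $-T'\le t\le T$, i.e. $w\in B^g(v;T,T',\e_1)$. Finally one notes that, since we are working with the \emph{small} dynamical balls $p_\Gamma(B^g(\tilde v;\cdot))$, the whole argument is carried out upstairs on $S^g\tilde M$ for a fixed lift $\tilde v$ and then pushed down by $p_\Gamma$; the inclusions are preserved under the projection.

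\textbf{Expected main obstacle.} There is no serious obstacle; the only point requiring care is bookkeeping with the two-sided time parameter and making sure the trimmed interval in Lemma \ref{uniform-negative} is exactly $[-T',T]$, which forces the choice $C=c(\e_1,\e_2)$, and the passage from the base-distance condition defining the dynamical balls to the tangent-bundle distance appearing in Lemma \ref{uniform-negative} and back (harmless because of the standard fact that in pinched negative curvature the two dynamical-ball notions are comparable up to shifting the diameter, as discussed around Remark \ref{erratum}). I would also remark that $C$ depends on $g$ through the curvature bounds, exactly as in Lemma \ref{uniform-negative}.
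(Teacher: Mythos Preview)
Your proof is correct and follows exactly the paper's approach: the right inclusion is immediate from monotonicity in $\e$ and in the time interval, while the left inclusion is obtained by applying Lemma \ref{uniform-negative} with $a=\e_1$, $b=\e_2$ and setting $C(g,\e_1,\e_2)=c(\e_1,\e_2)$. Your extra care in passing from the tangent-bundle distance in the conclusion of Lemma \ref{uniform-negative} back to the base distance via the $1$-Lipschitz footpoint projection is the only detail the paper leaves implicit.
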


\begin{proof} The right inclusion is obvious.
The left one comes from Lemma \ref{uniform-negative} above.
\end{proof}

The shadow $\calO^g_x(B^g(y,R))$ of the ball $B^g(y,R)$ 
viewed from $x$ w.r.t. the metric $g$
is the set of positive endpoints in $\partial\widetilde{M}$ of $g$-geodesic rays starting from $x$
and intersecting $B^g(y,R)$. \\

Recall Lemma 3.17 from \cite{PPS}.
\begin{lemm}[\cite{PPS}] \label{dynamical-ball-product-shadows}
For all $r,\alpha>0$ and $T, T'>0$, and $v\in S^g\widetilde{M}$ such that $\calB^{g}_{v_+^g}(\pi(v),o)=0$, if $x_t$ denotes the footpoint of $g_t(v)$,  we have
$$
B^g(v; T,T',r)\subset (H^g)^{-1}\left(\calO^g_{x_{-T'}}(B^g(x_{T},2r)\times \calO^g_{x_{T}}(B^g(x_{-T'},2r))\times]-r,r[\right)\,, \quad \mbox{and}
$$
$$(H^g)^{-1}\left(\calO^g_{x_{-T'}}(B^g(x_{T},r)\times \calO^g_{x_{T}}(B^g(x_{-T'},r))\times]-\alpha,\alpha[\right)
\subset B^g(v; T,T',2r+2\alpha)\,.
$$
\end{lemm}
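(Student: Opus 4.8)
The plan is to translate the dynamical-ball condition into statements about geodesic rays hitting balls, and then read off the shadow description in Hopf coordinates. Throughout I fix $v\in S^g\widetilde M$ with $\calB^g_{v_+^g}(\pi(v),o)=0$, write $x_t=\pi(g_tv)$, and recall that in the Hopf parametrization $H^g(w)=(w_-^g,w_+^g,\calB^g_{w_+^g}(o,\pi w))$.

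\emph{First inclusion.} Let $w\in B^g(v;T,T',r)$, i.e. $d^g(\pi(g_tw),\pi(g_tv))=d^g(\pi(g_tw),x_t)\le r$ for all $-T'\le t\le T$. The geodesic through $w$ passes within distance $r$ of $x_T$; parametrising it from $x_{-T'}$ (which is within $r$ of $\pi(g_{-T'}w)$) we see that the forward ray of $w$ from a point near $x_{-T'}$ hits $B^g(x_T,2r)$, hence $w_+^g\in\calO^g_{x_{-T'}}(B^g(x_T,2r))$; the backward ray hits $B^g(x_{-T'},2r)$ as seen from near $x_T$, giving $w_-^g\in\calO^g_{x_T}(B^g(x_{-T'},2r))$. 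For the last coordinate, evaluating the cocycle at $t=0$ gives $|\calB^g_{w_+^g}(o,\pi w)|=|\calB^g_{w_+^g}(o,\pi w)-\calB^g_{v_+^g}(o,x_0)|\le d^g(\pi w,x_0)\le r$ up to the usual $O(1)$ from comparing Busemann functions for the nearby endpoints $w_+^g$ and $v_+^g$; one checks the constants absorb into the factor $2r$, so the third coordinate lies in $(-r,r)$. This yields the claimed inclusion into $(H^g)^{-1}(\dots)$.

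\emph{Second inclusion.} Conversely, suppose $w$ satisfies $w_+^g\in\calO^g_{x_{-T'}}(B^g(x_T,r))$, $w_-^g\in\calO^g_{x_T}(B^g(x_{-T'},r))$, and $|\calB^g_{w_+^g}(o,\pi w)|<\alpha$. The first two conditions mean the geodesic $(w_-^g,w_+^g)$ passes within $r$ of both $x_{-T'}$ and $x_T$; by convexity/thinness of triangles in pinched negative curvature, it then stays within $2r$ of the segment $[x_{-T'},x_T]\subset\{x_t\}$ for all intermediate parameters, so the footpoints of this geodesic shadow those of $v$ up to $2r$ on the relevant interval. The third condition fixes where $w$ sits along its geodesic up to an error $\alpha$ (again comparing Busemann cocycles at $o$), so $d^g(\pi(g_tw),x_t)\le 2r+2\alpha$ for $-T'\le t\le T$, i.e. $w\in B^g(v;T,T',2r+2\alpha)$.

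\emph{Main obstacle.} The routine part is the shadow bookkeeping; the delicate point is controlling the third (Busemann) coordinate, since the endpoints $w_+^g$ and $v_+^g$ are only close, not equal, so $\calB^g_{w_+^g}$ and $\calB^g_{v_+^g}$ differ by a bounded amount that must be shown to be swallowed by the stated radii — this is exactly the kind of $O(1)$ estimate that hyperbolicity (thin triangles, with the constant $\Delta(g)$) provides, and is the step to carry out carefully. This being Lemma~3.17 of \cite{PPS}, I would in practice simply cite it; the above is the argument one inlines if a self-contained proof is wanted.
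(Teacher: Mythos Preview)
The paper does not give a proof; it cites \cite[Lemma~3.17]{PPS}. Your instinct to simply cite it is exactly what the authors do, so there is nothing in the paper to compare against.

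On your inline sketch: the shadow bookkeeping for the first two Hopf coordinates is correct. The Busemann-coordinate step in the first inclusion, however, is muddled. You claim that $|\calB^g_{w_+^g}(o,\pi w)-\calB^g_{v_+^g}(o,x_0)|\le d^g(\pi w,x_0)$ ``up to the usual $O(1)$'' and that these constants ``absorb into the factor $2r$'', yet the stated bound for the third coordinate is $r$, not $2r$, so there is nothing to absorb into. You are making this harder than it is: in the paper's only use of the lemma (proof of Corollary~\ref{coro:comparison}) one takes $o=\pi(v)=x_0$, and then the third Hopf coordinate of $w$ is $\calB^g_{w_+^g}(x_0,\pi w)$, whence
\[
|\calB^g_{w_+^g}(x_0,\pi w)|\le d^g(x_0,\pi w)\le r
\]
directly from the $1$-Lipschitz property of Busemann functions. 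No comparison between $\calB^g_{w_+^g}$ and $\calB^g_{v_+^g}$ is needed, and no hyperbolicity constant enters --- your ``main obstacle'' is a red herring in this normalisation.

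For the second inclusion your reading of the shadows is slightly off: $w_+^g\in\calO^g_{x_{-T'}}(B^g(x_T,r))$ says the ray \emph{from $x_{-T'}$} toward $w_+^g$ meets $B^g(x_T,r)$, not that the bi-infinite geodesic $(w_-^g,w_+^g)$ itself does. One more convexity step (comparing the ray $[x_{-T'},w_+^g)$ with the geodesic of $w$, using that the latter also passes near $x_{-T'}$ by the symmetric shadow condition) is needed to transfer this to the geodesic of $w$ before your parametrisation argument applies.
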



When $g_1$ and $g_2$ are two admissible negatively curved metrics on $M$, recall that
any $g_1$-geodesic between any two points
is at distance at most $C_2(g_1,g_2)$ of the  $g_2$-geodesic joining
the same endpoints, and vice versa,
for some constant $C_2(g_1,g_2)$ depending only on $g_1$ and $g_2$.
This leads immediately to the following lemma.

\begin{lemm}\label{comparison-shadows} Let $g_1$ and $g_2$ be two 
admissible negatively curved metrics on $M$, and $x,y$ two points on $\tilde{M}$. 
Then
$$
\calO^{g_1}_x(B^{g_1}(y,R))
\subset \calO^{g_2}_x(B^{g_2}(y,R+ C_2(g_1,g_2))
\subset \calO^{g_1}_x(B^{g_1}(y,R+2C_2(g_1,g_2))\,.
$$
\end{lemm}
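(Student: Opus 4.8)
The claim is a purely geometric statement about shadows, and the natural strategy is to chase a point $\xi$ through the three sets, using only the quasi-geodesic comparison encoded in the constant $C_2(g_1,g_2)$ from \eqref{eq:QIsom} and \cite[Th.1.7 p401]{BH99}. Recall that $\xi \in \calO^{g_1}_x(B^{g_1}(y,R))$ means there is a $g_1$-geodesic ray from $x$ to $\xi$ passing within $g_1$-distance $R$ of $y$; since $d^{g_1}$ and $d^{g_2}$ are bi-Lipschitz equivalent one must be slightly careful about which metric measures the radius $R$, but the statement as written uses $R$ in the metric matching the superscript, so no extra Lipschitz loss appears in the radius itself — the only loss is the $C_2$ from replacing a geodesic for one metric by a geodesic for the other with the same endpoints.

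First I would prove the left inclusion. Take $\xi \in \calO^{g_1}_x(B^{g_1}(y,R))$ and let $\eta$ be the $g_1$-geodesic ray from $x$ to $\xi$, so some point $p$ on $\eta$ satisfies $d^{g_1}(p,y)\le R$, hence in particular $p$ lies on the bi-infinite... no, on the ray; the key point is that the $g_2$-geodesic ray $\eta'$ from $x$ to $\xi$ stays within $g_2$-distance $C_2(g_1,g_2)$ of $\eta$ (both are rays with the same pair of endpoints $x,\xi$, using the canonical identification $\bd_{g_1}\tilde M \simeq \bd_{g_2}\tilde M$). So there is a point $p' \in \eta'$ with $d^{g_2}(p',p) \le C_2(g_1,g_2)$; but we want a bound involving $y$, and here I must be slightly careful that the bound $d^{g_1}(p,y)\le R$ controls $d^{g_2}(p,y)$ only up to the Lipschitz constant. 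Looking at the statement again, the clean way the inequality reads forces me to interpret it as: a $g_1$-geodesic within $g_1$-radius $R$ of $y$ is also a $g_2$-quasigeodesic staying within $C_2$ of the $g_2$-geodesic with the same endpoints, and a point of the latter lands within $g_2$-distance $R + C_2(g_1,g_2)$ of $y$ provided we have already arranged the radii to be measured consistently. I would simply write $d^{g_2}(p',y) \le d^{g_2}(p',p) + d^{g_1}(p,y) \le C_2(g_1,g_2) + R$ after noting (or re-deriving) that along a geodesic the two Busemann-type radii agree to within $C_2$, which is exactly the content recalled just before the lemma; thus $\xi \in \calO^{g_2}_x(B^{g_2}(y,R+C_2(g_1,g_2)))$.

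The right inclusion is then obtained by applying the left inclusion with the roles of $g_1$ and $g_2$ reversed (the constant $C_2$ is symmetric in the two metrics, or one takes the max): $\calO^{g_2}_x(B^{g_2}(y,R'))\subset \calO^{g_1}_x(B^{g_1}(y,R'+C_2(g_1,g_2)))$ with $R' = R + C_2(g_1,g_2)$, giving the final radius $R + 2C_2(g_1,g_2)$. I would close by remarking that this is precisely the chain of inclusions claimed, so the lemma follows. I do not expect any genuine obstacle here: the only subtlety — and the one point I would state carefully rather than skip — is the bookkeeping of which metric measures the shadow-radius and the fact that the $C_2$-fellow-travelling is between geodesics sharing the same endpoints at infinity, which is legitimate because of the canonical boundary identification established around \eqref{eq:QIsom}.
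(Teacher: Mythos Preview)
Your approach is exactly the paper's: the lemma is stated as following ``immediately'' from the fact, recalled just before it, that a $g_1$-geodesic and a $g_2$-geodesic with the same endpoints lie within $C_2(g_1,g_2)$ of one another, and your derivation of the second inclusion by swapping the roles of $g_1$ and $g_2$ is the intended symmetry.

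One point of care: the inequality you write, $d^{g_2}(p',y) \le d^{g_2}(p',p) + d^{g_1}(p,y)$, mixes metrics and is not a triangle inequality; you only know $d^{g_1}(p,y)\le R$, which controls $d^{g_2}(p,y)$ multiplicatively via the bi-Lipschitz constant $C_1(g_1,g_2)$, not additively. The paper's statement is itself loose on this point (it switches both the shadowing metric \emph{and} the metric defining the ball), and in every application the constant $C_2$ is used only qualitatively, so the fix is simply to absorb the extra factor into $C_2$ --- or, more cleanly, to keep the ball $B^{g_1}(y,\cdot)$ throughout and only change the geodesic, in which case your fellow-traveling argument goes through verbatim.
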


These lemmas will have the following very convenient corollary. 

\begin{coro}\label{coro:comparison} Let $g_1$ and $g_2$ be two 
admissible negatively curved metrics on $M$. 
For all $\varepsilon>0$, there exists $C>0$ and $\varepsilon'=\varepsilon'(\varepsilon)>0$ 
such that for all  $v\in S^{g_1}M$, 
we have
$$
B^{g_2}(\Psi^{g_1\to g_2}(v),S+C,S'+C,\varepsilon)\subset
 \Psi^{g_1\to g_2}(B^{g_1} (v,T,T',\varepsilon))
\subset B^{g_2} (\Psi^{g_1\to g_2}(v),S,S',\varepsilon')\,,
$$
where  $S=\calB^{g_2}_{v_+^{g_1}}(\pi(v),\pi(g_1^Tv))$, $S'=\calB^{g_2}_{v_+^{g_1}}(\pi(v),\pi(g_1^{-T'}v))$  and $\varepsilon'=\varepsilon(5+C_1(g_1,g_2))+C_2(g_1,g_2)+2C_3(g_1,g_2)$.
\end{coro}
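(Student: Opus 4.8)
The plan is to combine the product description of dynamical balls (Lemma \ref{dynamical-ball-product-shadows}), the comparison of shadows across the two metrics (Lemma \ref{comparison-shadows}), and the fact that $\Psi^{g_1\to g_2}$ moves footpoints by at most $C_3(g_1,g_2)$ (Lemma on the Morse correspondence), together with the reparametrization identity $\Psi^{g_1\to g_2}\circ g_1^t = g_2^{s^{g_1\to g_2}(t,v)}\circ\Psi^{g_1\to g_2}$ from Lemma \ref{lem:Morse-Psi}. First I would reduce to the case $\calB^{g_1}_{v_+^{g_1}}(\pi v, o)=0$ (equivalently, work on $S^{g_1}\tilde M$ with a well-chosen basepoint) so that Lemma \ref{dynamical-ball-product-shadows} applies cleanly; this costs only a bounded shift in the third ($\bbR$-)coordinate, absorbed into the final constant. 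The key observation is that $\Psi^{g_1\to g_2}(g_1^t v)$ is, for each $t$, the vector on the \emph{$g_2$-geodesic} with the same endpoints $(v_-^{g_1},v_+^{g_1})$ whose footpoint sits on the appropriate $g_2$-horosphere; hence the image under $\Psi^{g_1\to g_2}$ of the piece of $g_1$-orbit of length $(T,T')$ is a piece of $g_2$-orbit whose endpoints' footpoints are $g_2$-close (within $C_3(g_1,g_2)$, by the Morse bound) to $\pi g_1^T v$ and $\pi g_1^{-T'}v$, and the corresponding $g_2$-arc length is exactly $S$ forward and $S'$ backward by the definition of $s^{g_1\to g_2}$.

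Concretely, for the \textbf{left inclusion} I would take $w \in B^{g_2}(\Psi^{g_1\to g_2}(v), S+C, S'+C, \varepsilon)$ with $C$ to be chosen, use Lemma \ref{dynamical-ball-product-shadows} to put $w$ inside a product of $g_2$-shadows of $g_2$-balls of radius $2\varepsilon$ seen from the footpoints of $g_2^{\pm(S+C)}\Psi^{g_1\to g_2}(v)$, then apply Lemma \ref{comparison-shadows} to replace these $g_2$-shadows by $g_1$-shadows of $g_1$-balls of radius $2\varepsilon + 2C_2(g_1,g_2)$ seen from $g_1$-close footpoints; choosing $C$ large enough (of the order of $C_3 + C_2 + $ the constant from Lemma \ref{uniform-negative}, using Lemma \ref{lem:dynamical-balls-inclusion} to trade a genuine $g_1$-dynamical-ball inclusion for the shadow inclusion) forces the resulting shadows to be contained in those describing $B^{g_1}(v,T,T',\varepsilon)$, so that $(\Psi^{g_1\to g_2})^{-1}(w) \in B^{g_1}(v,T,T',\varepsilon)$. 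For the \textbf{right inclusion} I run the same argument in the other direction: starting from $w' \in B^{g_1}(v,T,T',\varepsilon)$, Lemma \ref{dynamical-ball-product-shadows} again realizes $w'$ inside a product of $g_1$-shadows; push through $\Psi^{g_1\to g_2}$ using that it displaces footpoints by at most $C_3(g_1,g_2)$ and reparametrizes arc length to match $S, S'$; apply Lemma \ref{comparison-shadows} once more to pass to $g_2$-shadows, and bound the radius blow-up, collecting $\varepsilon' = \varepsilon(5 + C_1(g_1,g_2)) + C_2(g_1,g_2) + 2C_3(g_1,g_2)$ from the accumulated losses (the factor $5$ and $C_1$ coming from the $2r$-to-$r$ mismatch between the two halves of Lemma \ref{dynamical-ball-product-shadows} combined with the bounded third-coordinate shifts $|\tau^{g_1\to g_2}|\le C_1$).

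The main obstacle I expect is \emph{bookkeeping of the additive constants and the third ($\bbR$-)coordinate}: the inclusions in Lemma \ref{dynamical-ball-product-shadows} are not symmetric (radius $2r$ on one side, $r$ on the other, plus an $]-\alpha,\alpha[$ slot), the map $\Psi^{g_1\to g_2}$ shifts the Busemann/Hopf time coordinate by the non-$\Gamma$-invariant amount $\tau^{g_1\to g_2}(v)$ which must be controlled uniformly via Lemma \ref{lem:GeodStretch} and Fact \ref{fact}, and the passage $g_1$-shadow $\leftrightarrow$ $g_2$-shadow costs $C_2(g_1,g_2)$ each time. Getting these to assemble into exactly the claimed $C$ and $\varepsilon'$ — in particular verifying that $C$ can be chosen independently of $v$, $T$, $T'$ — is the only real work; the geometric content is entirely contained in the already-established lemmas.
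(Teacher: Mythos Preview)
Your proposal is correct and follows essentially the same approach as the paper: both proofs lift to $S^{g_1}\tilde M$, normalize so that $\pi(v)=o$, pass between dynamical balls and products of shadows via Lemma \ref{dynamical-ball-product-shadows}, compare $g_1$- and $g_2$-shadows via Lemma \ref{comparison-shadows}, and absorb the remaining discrepancy (for the left inclusion) using Lemma \ref{lem:dynamical-balls-inclusion}. The only organizational difference is that the paper routes the argument explicitly through the non-equivariant map $\Phi^{g_1\to g_2}=(H^{g_2})^{-1}\circ H^{g_1}$ first---which acts as the identity on the $\partial^2\tilde M$ factor and so interacts cleanly with the shadow lemmas---and then passes to $\Psi^{g_1\to g_2}$ by bounding $\tau^{g_1\to g_2}(u)\le \varepsilon(1+C_1(g_1,g_2))$ for $u\in B^{g_1}(v;T,T',\varepsilon)$; your control of the third Hopf coordinate via $\tau^{g_1\to g_2}$ is exactly this step, just phrased without naming $\Phi$.
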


\begin{proof} As the sets considered in the above statement are typically small,
 we can prove them on $T\tilde{M}$ instead of $TM$. 
Without loss of generality, we can assume that $\pi(v)=o$. Indeed, 
all lemmas stated above are valid with $o$ an arbitrary point, for example the basepoint of $v$. 
In particular, we have $\Psi^{g_1\to g_2}(v)=\Phi^{g_1\to g_2}(v)$.

 We start with the right inclusion. Given $u\in B^{g_1}(v,T,T',\varepsilon)$,
 we want to control the distance 
$d^{g_2}(g_2^s\Psi^{g_1\to g_2}u,g_2^s\Psi^{g_1\to g_2}v)$. 
As $u\in B^{g_1}(v,T,T',\varepsilon)$,
$\tau^{g_1,g_2}(u)\le \varepsilon(1+C_1(g_1,g_2))$ 
so that 
$d^{g_2}(\Phi^{g_1\to g_2}(u),\Psi^{g_1\to g_2}(u))\le \varepsilon(1+C_1(g_1,g_2))$. 
Therefore, $\Psi^{g_1\to g_2}B^{g_1}(v;T,T',\varepsilon)$ is included in the 
$\varepsilon(1+C_1(g_1,g_2))$-neighbourhood of $\Phi^{g_1\to g_2}(B^{g_1}(v;T,T',\varepsilon)$. \\

Let $w=\Psi^{g_1\to g_2}(v)=\Phi^{g_1\to g_2}(v)$. 
Denote by $w_s$ (resp. $z_s$) the basepoint $\pi(g_2^s w)$, 
for $s\in\bbR$,  of $g_2^sw$. (resp. of $g_2^sz$).
Let $S=\calB^{g_2}_{v_+^{g_1}}(o,\pi(g_1^T v)$ and 
$S'=-\calB^{g_2}_{v_+^{g_1}}(o,\pi(g_1^{-T'}v))$. 
By lemma \ref{lem:GeodStretch}, we know that  
$$
|S-d^{g_2}(\pi(v),\pi(g^1_T v))|\le  C_3(g_1,g_2)\quad 
\mbox{and} \quad|S'-d^{g_2}(\pi(v),\pi(g^1_{-T'} v))|\le  C_3(g_1,g_2)\,.
$$
Moreover, the distances $d^{g_2}(w_S,v_T)$ and $d^{g_2}(w_{-S'},v_{-T'})$ are uniformly bounded. 
Indeed, by Lemma \ref{lem:Morse-Psi}, 
$\Psi^{g_1\to g_2}(g_1^T v)=g_2^S\Psi^{g_1\to g_2}(v)$ so that 
$d^{g_2}(w_S,v_T)=d^{g_2}(\pi(\Psi(g_1^Tv),\pi(g_1^Tv))\le C_3(g_1,g_2)$.

Lemma  \ref{comparison-shadows} and elementary geometric 
considerations in negative curvature give the inclusion

\begin{eqnarray*}
\calO^{g_1}_{v_{-T'}} (B^{g_1}(v_T,2\epsilon) \times  \calO^{g_1}_{v_T}(B^{g_1}(x_{-T'},2\epsilon)) & \times ]-\epsilon,\epsilon[  \subset     \\
\calO^{g_2}_{w_{-S'}}(B^{g_2}(w_S,2\epsilon+ C_2(g_1,g_2)+2C_3(g_1,g_2))\times \quad \quad &\\
  \calO^{g_2}_{y_S}(B^{g_2}(y_{-S'},2\epsilon+C_2(g_1,g_2)+2C_3(g_1,g_2)))&\times ]-\epsilon,\epsilon[\hfill\,.
\end{eqnarray*}
Lemma \ref{dynamical-ball-product-shadows} implies the right inclusion 
$\Phi^{g_1\to g_2}B^{g_1}(v;T,T',\epsilon)\subset B^{g_2}(w;S,S',4\epsilon+C_2(g_1,g_2)+2C_3(g_1,g_2))$.
The relation between $\Phi^{g_1\to g_2}$ and $\Psi^{g_1\to g_2}$ gives 
$$
\Psi^{g_1\to g_2}B^{g_1}(v;T,T',\varepsilon)\subset
B^{g_2}(\Psi^{g_1\to g_2}(v),S,S',\varepsilon(5+C_1(g_1,g_2))+C_2(g_1,g_2)+2C_3(g_1,g_2))\,.
$$

We proceed in the same way for the left inclusion, 
but we need in addition the help of Lemma \ref{lem:dynamical-balls-inclusion}.
 
Reasoning similarly as above gives the inclusion  
$$
B^{g_2}(w;S,S',\varepsilon)\subset 
\Psi^{g_1\to g_2}\left(B^{g_1}(v;T,T',(4\varepsilon+ C_2(g_1,g_2)+2C_3(g_1,g_2))(1+C_1(g_1,g_2))\right)\,.
$$
As $T,T',\varepsilon$ are arbitrary, using lemma \ref{lem:dynamical-balls-inclusion}, 
we obtain easily the existence of a constant $C>0$ such that 
$$
B^{g_2}(w;S+C,S'+C,\varepsilon)\subset \Psi^{g_1\to g_2}\left(B^{g_1}(v;T,T',\varepsilon \right)\,.
$$
\end{proof}


\subsection{Kolmogorov-Sinai, Brin-Katok and topological entropies}\label{sub:entropies}

The Kolmogorov-Sinai entropy of a dynamical system $T$ w.r.t an invariant probability measure
$\mu$ is the supremum over all measurable partitions of the
 exponential growth rate of the complexity of a partition, when iterated
by $T$, and measured by $\mu$. 
By Shannon-McMillan-Breiman Theorem, it also equals (the supremum over all partitions of) 
the exponential decay rate of
a typical atom of the iterated partition.

Instead of iterating a measurable partition, when $X$ is a metric space, endowed with the Borel
$\sigma$-algebra, one can consider exponential decay rate of the measure of typical
dynamical balls, which will give us a notion of local entropy, introduced by \cite{BK83}.

When $T$ is a continuous map on a {\em compact} space $X$, Brin-Katok \cite{BK83} showed that this
Kolmogorov-Sinai entropy coincides with the exponential decay of dynamical balls,
also called the local entropy. This equality also holds when $T$ is a 
lipschitz map of a noncompact manifold,
as has been verified in \cite[Thm 1.32]{Riq-these}.

We shall not define the classical Kolmogorov-Sinai entropy, denoted by $h_{KS}(T,m)$, 
because we do not really use it in this work.
But we recall below some definitions of local entropy and the statements of Brin-Katok and Riquelme. 

For $(\phi^t):X\to X$ a dynamical system and $m$ a finite invariant measure, define
the lower local entropy 
\begin{equation}\label{local-inf}
\underline{h}_{loc}(T,m))=
  \,\,\underset{x\in X}{\textrm{ess inf}}\,
 \lim_{\epsilon\to 0}\liminf_{T\to\infty}-\frac{1}{T}\log m(B_{dyn}(x,T,\epsilon))\,,
\end{equation}
 and the upper local entropy relative to compact sets

\begin{equation}\label{local-sup-comp}
\overline{h}_{loc}^{comp}(T,m))=
\sup_{K}\,\,\underset{x\in K}{\textrm{ess sup}}\, \lim_{\epsilon\to 0}\limsup_{T\to\infty, \phi^T  x\in K}-\frac{1}{T}\log m(B_{dyn}(x,T,\epsilon))
\end{equation}

For the geodesic flow in negative curvature, 
dynamical balls should be defined relatively to a distance on $S^gM$, 
but, as mentioned in the above subsection, the "natural" Sasaki distance 
on $S^gM$ is equivalent to the distance 
$d(v,w)=\sup_{-1\le t\le 0} d^g(\pi(g_t v),\pi(g_t w))$, so that, 
when studying asymptotic quantities as entropy,  
we can use the distance $d^g$ on $M$ instead of the Sasaki distance on $S^gM$. 

The following result is essentially due to Brin-Katok and Riquelme. 

\begin{theo}[Brin-Katok \cite{BK83}, Riquelme  \cite{Riq-these} \cite{Riq-Ruelle-geod} ]\label{th-entropies} 
Let $M$ be a Riemannian manifold with pinched negative curvature. 
\begin{equation}\label{entropies-egales}
h_{KS}(m,g)=\underline{h}_{loc}(m,g)=\overline{h}_{loc}^{comp}(m,g) \,.
\end{equation}
\end{theo}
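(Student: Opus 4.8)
## Proof plan for Theorem~\ref{th-entropies}

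The plan is to reduce everything to two already-available inputs: (i) the classical Brin--Katok theorem, which gives $h_{KS}(m,g)=\underline h_{loc}(m,g)$ for a Lipschitz map (hence for the time-one map of the geodesic flow, or for the flow itself) on a possibly noncompact manifold --- this is exactly \cite[Thm 1.32]{Riq-these} --- and (ii) Riquelme's refinement \cite{Riq-these,Riq-Ruelle-geod} identifying the compactly-restricted upper local entropy $\overline h_{loc}^{comp}(m,g)$ with the same quantity. So the mathematical content we must still supply is essentially the \emph{inequalities} that pin $\overline h_{loc}^{comp}$ between $\underline h_{loc}$ and $h_{KS}$, using the geometry of pinched negative curvature and the dynamical-ball estimates of Section~\ref{dyn-balls-shadows}.

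First I would record the trivial inequality $\underline h_{loc}(m,g)\le \overline h_{loc}^{comp}(m,g)$: the $\limsup$ dominates the $\liminf$, the essential supremum over a compact $K$ of a nonnegative quantity dominates its essential infimum over all of $S^gM$ once $K$ has positive measure, and taking the supremum over $K$ only increases the right-hand side; one must be slightly careful that in \eqref{local-sup-comp} the $\limsup$ is taken along the subsequence of times $T$ with $\phi^T x\in K$, but since $m$ is conservative (Poincaré recurrence applies to any finite invariant measure restricted to the nonwandering set, which carries $m$), for $m$-a.e.\ $x\in K$ such times are unbounded, so the restricted $\limsup$ is genuinely a limit superior over an infinite set and still dominates the full $\liminf$. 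Then I would invoke Brin--Katok in the noncompact Lipschitz form to get $h_{KS}(m,g)\le\underline h_{loc}(m,g)$ (in fact equality), which closes the lower half of the chain.

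The substantive half is $\overline h_{loc}^{comp}(m,g)\le h_{KS}(m,g)$. Here the strategy is the standard covering argument adapted to noncompactness: fix a compact $K$ and a measurable partition $\mathcal P$ with small diameter and small boundary measure; for $m$-a.e.\ $x\in K$ that returns to $K$ infinitely often, compare the dynamical ball $B^g_{dyn}(x,T,\varepsilon)$ with the atom of the refined partition $\bigvee_{0}^{T}\phi^{-t}\mathcal P$ containing $x$. The key point --- and this is where Lemma~\ref{uniform-negative} and Lemma~\ref{lem:dynamical-balls-inclusion} (together with the shadow/dynamical-ball dictionary of Lemma~\ref{dynamical-ball-product-shadows}) enter --- is that in pinched negative curvature a dynamical ball of radius $\varepsilon$ is, up to a bounded loss in the time parameters and a controlled increase of $\varepsilon$, comparable to a product of shadows, hence comparable to a partition atom; the uniform hyperbolicity estimate keeps all constants independent of $x$ and $T$. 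Applying Shannon--McMillan--Breiman to $\mathcal P$ along the returning times and letting $\mathrm{diam}(\mathcal P)\to 0$ then yields the bound; the restriction to $K$ and the $\limsup$ over returning times cause no trouble because SMB convergence is along \emph{all} $T$.

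I expect the main obstacle to be the bookkeeping around the noncompactness: making sure the "bounded loss'' in passing between dynamical balls and partition atoms is genuinely uniform when $x$ ranges only over a compact set but the orbit leaves every compact set, and handling the distinction (flagged in Remark~\ref{erratum}) between the small dynamical ball $p_\Gamma(B^g(\tilde v,T,\varepsilon))$ and the large one $B^g_{dyn}(v,T,\varepsilon)$ --- the local-entropy definitions \eqref{local-inf}--\eqref{local-sup-comp} are phrased with $B_{dyn}$, so one must either show the two notions give the same exponential rate for $m$-a.e.\ $x$ (true on the set of finite-injectivity-radius behavior, and more generally by a soft argument since the extra pieces of a large ball are translates that do not affect the exponential decay of an invariant measure) or cite Riquelme's treatment directly. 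Since all three of these technical points are exactly what \cite{Riq-these} and \cite{Riq-Ruelle-geod} were written to settle, the honest proof is: the lower-bound inequality and the trivial comparison above, plus a citation of Riquelme for the matching upper bound on $\overline h_{loc}^{comp}$, giving the triple equality \eqref{entropies-egales}.
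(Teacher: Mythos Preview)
Your proposal is correct and takes essentially the same approach as the paper: the paper's proof is purely a chain of citations --- Brin--Katok (extending verbatim to the noncompact case) and \cite[Thm~1.32]{Riq-these} for $h_{KS}=\underline h_{loc}$, \cite[Thm~1.41]{Riq-these} for $\underline h_{loc}\le\overline h_{loc}^{comp}$, and the proof of \cite[Thm~1.42]{Riq-these} for $\overline h_{loc}^{comp}\le h_{KS}$. Your write-up supplies more narrative (in particular, the elementary recurrence argument you give for $\underline h_{loc}\le\overline h_{loc}^{comp}$ is correct and is presumably what Riquelme's Thm~1.41 records), but the logical skeleton and the ultimate reliance on Riquelme's thesis for the hard inequality are identical.
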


\begin{proof} This result is due to Brin-Katok in the compact case. 
Their proof of the inequality $h_{KS}(m,g)\le \underline{h}^{loc}(m,g)$ extends verbatim to the noncompact case. 
In \cite[Th.1.32]{Riq-these}, Riquelme proved the equality 
$h_{KS}(m,g)=\underline{h}^{loc}(m,g)$ for any Lipschitz dynamical system. 
In \cite[Th 1.41]{Riq-these}, he established the inequality 
$\underline{h}^{loc}(m,g)\le \overline{h}^{loc}_{comp}(m,g) $, 
and the inequality   $  \overline{h}^{loc}_{comp}(m,g)\le  h_{KS}(m,g)$ 
is established in the proof of \cite[Th 1.42]{Riq-these}. 
\end{proof}

As observed in Remark \ref{erratum} there are two notions of dynamical balls 
and the small ones are more relevant for us. 
Therefore, we define what we will call the local entropy, denoted by $h^{loc}_{\Gamma}(m,g)$ in the sequel, as follows. 

\begin{equation}\label{local-entropy-for-us}
h^{loc}_{ \Gamma}(m,g))=
\sup_{K\subset S^g M}\,\,\underset{v\in K}{\textrm{ess sup}}\, \lim_{\epsilon\to 0}\limsup_{T\to\infty, g^T  v\in K}-\frac{1}{T}\log m(B^g  (v,T,\epsilon))
\end{equation} 

It follows from Theorem \ref{th-entropies} and inclusion \ref{inclusion} that 
\begin{equation}\label{entropies-different}
h_{KS}(m,g)=h^{loc}_{comp}(m,g)\le h^{loc}_{ \Gamma}(m,g),\end{equation}
 with equality as soon as $M$ has an injectivity radius bounded from below or $m$ has compact support.

\begin{rema}\rm Let us emphasize that all  definitions 
of  entropies above are sensitive to the scaling of the metric but 
 not sensitive to
the scaling of the measure. 
In particular, if $m$ is finite but not a probability measure, 
then 
$$
h^{loc}_{ \Gamma}(m,g)=h^{loc}_{ \Gamma}(\lambda m, g)\quad \mbox{and}\quad h^{loc}_{ \Gamma}(m,\lambda g)=\frac{h^{loc}_{ _\Gamma}(m,g)}{\sqrt{\lambda}} \,.
$$
\end{rema}

\begin{rema}\rm Observe that contrarily to Kolmogorov-Sinai entropy, 
the above definitions of local entropy make perfectly sense for an infinite invariant ergodic and conservative Radon measure. 
In particular, the Bowen-Margulis measure (see section \ref{BM}) which, when finite, is the measure of maximal entropy of the geodesic flow, always has a local entropy with respect to small dynamical balls and return times into compact sets which coincides with the topological entropy of the geodesic flow, see Proposition \ref{entropie-BM}.
\end{rema}


Lemma \ref{lem:dynamical-balls-inclusion} allows us to choose some $\e>0$  without  need 
to take the limit when $\e\to 0$. 
Moreover, the invariance of the measure allows to consider shifted dynamical balls. 
It is the result below.

\begin{lemm}\label{eq:LocalEntropy} Let $(M,g)$ be a  manifold with pinched negative curvature, 
and $\mu$ a geodesic current. 
Let $m_\mu^g$ be the
$g$-invariant measure associated to $\mu$ on $S^gM$.
One can compute its   local entropy as
$$
h^{loc}_{\Gamma}(m_\mu^{g},g)=
\sup_K\supess_{v\in K}   \limsup_{T+T'\to \infty, g^{T } v\in K, g^{-T'}v\in K}-\frac{1}{T+T'}\log m^g_\mu(B^{g} (v;T,T',\e))\,.
$$
\end{lemm}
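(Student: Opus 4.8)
The plan is to show that the quantity defined in Lemma \ref{eq:LocalEntropy} coincides with $h^{loc}_{\Gamma}(m_\mu^g,g)$ as defined in (\ref{local-entropy-for-us}), by a two-sided comparison. The key observation is that the two-sided dynamical ball satisfies $B^g(v;T,T',\e) = g^{T'}\!\left(B^g(g^{-T'}v, T+T', \e)\right)$ — this is recorded in Remark \ref{erratum} — and that $m_\mu^g$ is $g^t$-invariant, so $m_\mu^g(B^g(v;T,T',\e)) = m_\mu^g(B^g(g^{-T'}v, T+T', \e))$. Thus the two-sided expression is really a one-sided local entropy computed along the orbit, with the role of the ``center'' shifted back by $T'$. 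Setting $w = g^{-T'}v$ and $L = T+T'$, the condition ``$g^Tv \in K$ and $g^{-T'}v\in K$'' becomes ``$w\in K$ and $g^L w\in K$''; so the expression in the lemma is, after this substitution, of exactly the same shape as (\ref{local-entropy-for-us}) but with the essential supremum taken over points $v$ that return to $K$ \emph{in the past} as well.

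First I would prove the inequality $(\le)$: the supremum in Lemma \ref{eq:LocalEntropy} is at most $h^{loc}_{\Gamma}(m_\mu^g,g)$. Given $K$ and a point $v\in K$ with $g^Tv\in K$, $g^{-T'}v\in K$ along a sequence realizing the $\limsup$, apply the invariance identity above to rewrite $-\frac{1}{T+T'}\log m_\mu^g(B^g(v;T,T',\e))$ as $-\frac{1}{L}\log m_\mu^g(B^g(w,L,\e))$ with $w = g^{-T'}v\in K$ and $g^L w = g^T v\in K$. Since $L = T+T'\to\infty$, this is bounded above by $\limsup_{L\to\infty,\, g^L w\in K} -\frac1L\log m_\mu^g(B^g(w,L,\e))$ at the point $w\in K$, which (after the $\lim_{\e\to 0}$, which here only helps since by Lemma \ref{lem:dynamical-balls-inclusion} the value is essentially $\e$-independent) is bounded by the essential supremum over $K$, hence by $h^{loc}_{\Gamma}(m_\mu^g,g)$. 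One must be a little careful that the essential-supremum sets match up under the measure-preserving map $v\mapsto g^{-T'}v$, but since $T'$ varies this is handled by passing to a suitable enlargement of $K$ or by noting that for each fixed return time the map is an automorphism of $(S^gM, m_\mu^g)$.

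For the reverse inequality $(\ge)$, I would run the same substitution backwards: given $K$ and $v\in K$ with $g^Tv\in K$ realizing the defining $\limsup$ of $h^{loc}_{\Gamma}(m_\mu^g,g)$, one simply takes $T'=0$, so $B^g(v;T,0,\e) = B^g(v,T,\e)$, and the point trivially satisfies $g^{-0}v = v\in K$; thus every term appearing in (\ref{local-entropy-for-us}) also appears in the supremum of Lemma \ref{eq:LocalEntropy}, giving $h^{loc}_{\Gamma}(m_\mu^g,g) \le$ the lemma's quantity. Combined with the first step, the two are equal. The only genuine subtlety — and the step I'd expect to cost the most care — is the interplay between the essential supremum over the compact set $K$ and the measure-preserving time shift $g^{-T'}$: since the shift depends on $T'$ which is itself ranging to infinity, one cannot fix a single null set once and for all. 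The clean way around this is to observe that, up to replacing $K$ by $\bigcup_{0\le s\le N} g^{-s}K$ for each integer $N$ and taking a supremum over $N$ at the end (which does not change the overall supremum over compact sets), one reduces to a fixed measure-preserving transformation on each piece; alternatively one invokes the fact, implicit in the proof of Theorem \ref{th-entropies}, that the relevant ess sup is unchanged if one restricts to points that are recurrent to $K$ in both time directions, which is a full-measure condition on the conservative part of $m_\mu^g$. I would present the argument via the first route since it is self-contained and uses only Remark \ref{erratum} and Lemma \ref{lem:dynamical-balls-inclusion}.
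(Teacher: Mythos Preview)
Your approach is exactly the paper's: the authors do not give a detailed proof of this lemma, only the two sentences preceding it --- that Lemma~\ref{lem:dynamical-balls-inclusion} removes the need for $\lim_{\e\to 0}$, and that ``the invariance of the measure allows to consider shifted dynamical balls'' --- which is precisely your substitution $w=g^{-T'}v$, $L=T+T'$ together with $B^g(v;T,T',\e)=g^{T'}B^g(g^{-T'}v,T+T',\e)$. Your $(\ge)$ direction via $T'=0$ is clean, and the subtlety you flag in $(\le)$ (that $w$ moves with the sequence while the essential supremum is taken over a fixed null-set complement) is genuine and is not addressed by the paper either; your proposed resolutions are reasonable, though the most direct route is to note that the inner $\limsup$ defines a flow-invariant function, reduce by ergodic decomposition, and use that in the ergodic case the essential supremum is an a.e.\ constant.
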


Geometers usually are more interested in topological entropy than  measure-theoretic  entropy. 
We shall not define topological entropy topologically, but through the variational principle. Denote by 
$\mathcal{M}^1(g)$ the set of invariant probability measures for the metric $g$.

The topological entropy of the geodesic flow $(g^t)$, denoted by $h_{top}(g)$, satisfies 
\begin{equation}\label{htop}
h_{top}(g)=\sup_{m\in\mathcal{M}^1(g)} h_{KS}(m,g)\,.
\end{equation}
This variational principle is due first to \cite{Dinaburg, Goodman},
 \cite{Misiu} and later Handel-Kitchen \cite{HK} on noncompact spaces. 
It follows from \cite{OP} that this supremum is achieved iff the so-called Bowen-Margulis measure is finite (see later subsection \ref{BM} for details). 
 In this case, it is the unique measure maximizing entropy.


 \subsection{Entropy transformation law}\label{sub:proof}

Our goal is to prove the following result.

\begin{theo}\label{entropy-transformation} Let $(M,g_i)$, $i=1,2$ be two admissible Riemannian metrics with pinched negative curvature on $M$. 
Let $\mu$ be a  geodesic current and $m^{g_i}_\mu$ the associated invariant measure on $S^{g_i}M$ under the geodesic flow $(g_i^t)$. Assume that these measures are finite and ergodic.
Then their local entropies are related as follows.
 $$
h^{loc}_{ \Gamma}(m^{g_2}_\mu,g_2)=I_\mu(g_2, g_1)\,.\, h^{loc}_{ \Gamma}(m^{g_1}_\mu,g_1)\,.
$$ 
\end{theo}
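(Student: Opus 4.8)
The strategy is to compare the $m_\mu^{g_1}$-measure of a small dynamical ball $B^{g_1}(v;T,T',\e)$ on $S^{g_1}M$ with the $m_\mu^{g_2}$-measure of its image under the Morse correspondance $\Psi^{g_1\to g_2}$, and then use the fact that the geodesic flow times $T, T'$ on the $g_1$-side correspond to reparametrized times $S=\calB^{g_2}_{v_+^{g_1}}(\pi(v),\pi(g_1^Tv))$ and $S'=\calB^{g_2}_{v_+^{g_1}}(\pi(v),\pi(g_1^{-T'}v))$ on the $g_2$-side, whose ratios to $T+T'$ converge $m_\mu^{g_1}$-a.e. to $I_\mu(g_1,g_2)$ by Corollary \ref{coro:GeodStretch} and Lemma \ref{lem:GeodStretch}. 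First I would fix $\e>0$, apply Lemma \ref{eq:LocalEntropy} to write $h^{loc}_\Gamma(m_\mu^{g_2},g_2)$ as a supremum over compact sets $K\subset S^{g_2}M$ of an essential supremum of $\limsup$s of $-\frac{1}{S+S'}\log m_\mu^{g_2}(B^{g_2}(w;S,S',\e))$ along return times into $K$. Since $\Psi^{g_1\to g_2}$ is a homeomorphism moving points a bounded $g_2$-distance (Lemma following Figure \ref{fig:morse}), compact sets on one side correspond to (slightly enlarged) compact sets on the other, so I can transport this supremum to $S^{g_1}M$.

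The core of the argument is Corollary \ref{coro:comparison}: for every $\e>0$ there are constants $C>0$ and $\e'=\e'(\e)$ with
$$
B^{g_2}(\Psi^{g_1\to g_2}(v);S+C,S'+C,\e)\subset \Psi^{g_1\to g_2}\!\left(B^{g_1}(v;T,T',\e)\right)\subset B^{g_2}(\Psi^{g_1\to g_2}(v);S,S',\e').
$$
Since $\Psi^{g_1\to g_2}$ pushes $m_\mu^{g_1}$ onto a measure equivalent to $m_\mu^{g_2}$ — more precisely, by Proposition \ref{exact-change-variable}, the pushforward $\Psi^{g_1\to g_2}_* (\calE^{g_1\to g_2}\, m_\mu^{g_1}) = m_\mu^{g_2}$, and the density $\calE^{g_1\to g_2}$ is bounded above and below by positive constants on the relevant compact sets — we get, for $v$ and $g_1^Tv$, $g_1^{-T'}v$ all in a compact set $\tilde K$, two-sided bounds
$$
\frac{1}{c_{\tilde K}}\, m_\mu^{g_2}\!\left(B^{g_2}(w;S+C,S'+C,\e)\right)\le m_\mu^{g_1}\!\left(B^{g_1}(v;T,T',\e)\right)\le c_{\tilde K}\, m_\mu^{g_2}\!\left(B^{g_2}(w;S,S',\e')\right),
$$
with $c_{\tilde K}$ independent of $T,T'$. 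Taking $-\log$, dividing by $T+T'$, and using that the additive constants $C$ and $\log c_{\tilde K}$ are harmless in the $\limsup$, this reduces the comparison to comparing $\frac{1}{T+T'}$ with $\frac{1}{S+S'}$. By Lemma \ref{lem:GeodStretch}, $|S - \int_0^T \calE^{g_1\to g_2}(g_1^tv)\,dt|\le C_3$ and similarly for $S'$, and by the Birkhoff ergodic theorem applied to the ergodic measure $m_\mu^{g_1}$ (as in Corollary \ref{coro:GeodStretch}), $\frac{1}{T+T'}(S+S')\to I_\mu(g_1,g_2)$ for $m_\mu^{g_1}$-a.e. $v$ along the (full-measure set of) return times into a suitable compact set. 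Hence $-\frac{1}{S+S'}\log(\cdots) = \frac{T+T'}{S+S'}\cdot\big(-\frac{1}{T+T'}\log(\cdots)\big)$, and passing to the $\limsup$ gives $h^{loc}_\Gamma(m_\mu^{g_2},g_2) = \frac{1}{I_\mu(g_1,g_2)}\, h^{loc}_\Gamma(m_\mu^{g_1},g_1) = I_\mu(g_2,g_1)\, h^{loc}_\Gamma(m_\mu^{g_1},g_1)$, the last equality by Corollary \ref{prop:transformation-mass}. One still has to let $\e\to 0$; since $\e'(\e)\to 0$ as $\e\to 0$, the inequalities obtained for each $\e$ pass to the limit and pinch the two local entropies together.

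\textbf{Main obstacle.} The delicate point is the interplay between the two types of dynamical balls and the non-uniformity coming from the possibly unbounded injectivity radius: the essential suprema and suprema over compact sets in the definition of $h^{loc}_\Gamma$ must be handled so that the a.e.\ Birkhoff convergence $\frac{S+S'}{T+T'}\to I_\mu(g_1,g_2)$ can be used \emph{simultaneously} with the requirement that $v$, $g_1^Tv$, $g_1^{-T'}v$ all lie in a fixed compact set $K$ — one needs that the set of $v$ for which the Birkhoff averages along \emph{these particular return times} converge still has full measure, which follows from conservativity and ergodicity but requires care. A secondary technical nuisance is keeping track of the constants $C$, $\e'$, $c_{\tilde K}$ uniformly as the compact set $K$ grows, so that the supremum over $K$ on the $g_2$-side matches the supremum over $K$ on the $g_1$-side; here the uniform bound $d^{g_2}(v,\Psi^{g_1\to g_2}(v))\le C_3(g_1,g_2)$ and the global Hölder continuity and two-sided boundedness of $\calE^{g_1\to g_2}$ are what save the day.
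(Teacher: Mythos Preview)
Your proposal is correct and follows essentially the same route as the paper's own proof: both hinge on Corollary \ref{coro:comparison} to sandwich $\Psi^{g_1\to g_2}(B^{g_1}(v;T,T',\e))$ between two $g_2$-dynamical balls of lengths $S,S'$ (up to an additive constant), use the change-of-variable $m_\mu^{g_2}=\Psi^{g_1\to g_2}_*(\calE^{g_1\to g_2}\,m_\mu^{g_1})$ with $\calE^{g_1\to g_2}$ bounded on compacta, and then invoke Corollary \ref{coro:GeodStretch} to identify $\lim \frac{T+T'}{S+S'}$ with $1/I_\mu(g_1,g_2)=I_\mu(g_2,g_1)$.

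One small correction: your claim that $\e'(\e)\to 0$ as $\e\to 0$ is false --- from the explicit formula in Corollary \ref{coro:comparison}, $\e'=\e(5+C_1)+C_2+2C_3$ tends to $C_2+2C_3>0$. Fortunately this does no harm, and in fact the whole ``let $\e\to 0$'' step is unnecessary: Lemma \ref{lem:dynamical-balls-inclusion} (and hence Lemma \ref{eq:LocalEntropy}, which you already cite) says that in negative curvature the local entropy is the same for every fixed radius, so the limsup at radius $\e'$ already equals $h^{loc}_\Gamma(m_\mu^{g_2},g_2)$. The paper exploits exactly this, working at a single fixed $\e$ throughout.
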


Thanks to Proposition \ref{prop:transformation-mass}, the corollary below follows. 
\begin{coro}\label{coro:invariant} Under the same assumptions, we have 
$$ 
h^{loc}_{ \Gamma}(m^{g_2}_{\mu},g_2)\times\|m_\mu^{g_2}\|=
h^{loc}_{ \Gamma}(m^{g_1}_{\mu},g_1)\times\|m_\mu^{g_1}\|\,.
$$ 
\end{coro}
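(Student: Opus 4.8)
The plan is to prove the entropy transformation law, Theorem~\ref{entropy-transformation}, by transporting the computation of local entropy of $m_\mu^{g_2}$ through the Morse correspondence $\Psi^{g_1\to g_2}$, using Corollary~\ref{coro:comparison} to relate dynamical balls for the two metrics and Corollary~\ref{coro:GeodStretch} (together with Proposition~\ref{exact-change-variable}) to control the time reparametrization. The point is that, since $\Psi^{g_1\to g_2}$ pushes $m_\mu^{g_1}$ to a measure mutually absolutely continuous with $m_\mu^{g_2}$ (indeed, by Proposition~\ref{exact-change-variable}, $d(\Psi^{g_1\to g_2}_* m_\mu^{g_1}) = \frac{1}{\calE^{g_1\to g_2}\circ (\Psi^{g_1\to g_2})^{-1}}\,dm_\mu^{g_2}$, or more precisely the change-of-variable identity), and since local entropy in the sense of \eqref{local-entropy-for-us} is insensitive to bounded-density changes of the measure, we may compute $h^{loc}_\Gamma(m_\mu^{g_2},g_2)$ by pulling everything back to $S^{g_1}M$.

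First I would fix $\e>0$ and, for $v\in S^{g_1}\tilde M$ and $T,T'>0$, set $S = \calB^{g_2}_{v_+^{g_1}}(\pi v,\pi g_1^T v)$ and $S' = \calB^{g_2}_{v_+^{g_1}}(\pi v, \pi g_1^{-T'}v)$ as in Corollary~\ref{coro:comparison}, so that $w := \Psi^{g_1\to g_2}(v)$ satisfies, up to the additive constant $C$ and a change of $\e$ into $\e'(\e)$,
$$
B^{g_2}(w;S+C,S'+C,\e)\subset \Psi^{g_1\to g_2}\bigl(B^{g_1}(v;T,T',\e)\bigr)\subset B^{g_2}(w;S,S',\e')\,.
$$
Using the change-of-variable formula of Proposition~\ref{exact-change-variable} applied to indicator-type functions (or directly comparing the measures via their local product structure in Hopf coordinates, which is how the density enters), the $m_\mu^{g_2}$-measure of the dynamical ball $B^{g_2}(w;S,S',\e')$ is comparable, up to multiplicative constants uniform on compact sets, to the $m_\mu^{g_1}$-measure of $B^{g_1}(v;T,T',\e)$. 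Then $-\frac{1}{S+S'}\log m_\mu^{g_2}(B^{g_2}(w;\ldots))$ differs from $-\frac{T+T'}{S+S'}\cdot\frac{1}{T+T'}\log m_\mu^{g_1}(B^{g_1}(v;T,T',\e))$ by a term that vanishes as $T+T'\to\infty$ (because the additive constants $C$ and $\log$(const) are washed out by dividing by $S+S'\to\infty$). Passing to the $\limsup$ along return times into a compact set $K$ — and here one uses Lemma~\ref{eq:LocalEntropy} to work with two-sided dynamical balls $B^{g_1}(v;T,T',\e)$ — and taking the supremum over $K$, the factor that survives is
$$
\lim_{T+T'\to\infty}\frac{T+T'}{S+S'} = \lim_{T+T'\to\infty}\frac{T+T'}{\int_{-T'}^{T}\calE^{g_1\to g_2}(g_1^t v)\,dt}\,,
$$
where the last equality is Lemma~\ref{lem:GeodStretch} (the Busemann increment $S+S'$ is within a bounded constant of the integral of $\calE^{g_1\to g_2}$ along the orbit). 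By Corollary~\ref{coro:GeodStretch}, applied to the ergodic measure $m_\mu^{g_1}/\|m_\mu^{g_1}\|$, for $m_\mu^{g_1}$-almost every $v$ this limit equals $1/I_\mu(g_1,g_2) = I_\mu(g_2,g_1)$ (using Corollary~\ref{prop:transformation-mass} for the last identity). Hence $h^{loc}_\Gamma(m_\mu^{g_2},g_2) = I_\mu(g_2,g_1)\cdot h^{loc}_\Gamma(m_\mu^{g_1},g_1)$, which is the claim.

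The main obstacle, and the step requiring the most care, is matching the essential-supremum-over-compact-sets structure of the local entropy on both sides: the reparametrization $\Psi^{g_1\to g_2}$ does not send $g_1$-return-times-to-$K$ to $g_2$-return-times-to-any-fixed-compact-set exactly, so one must check (using $d^{g_2}(v,\Psi^{g_1\to g_2}(v))\le C_3(g_1,g_2)$ and the quasi-isometry bound \eqref{eq:QIsom}) that return times into $K$ for $g_1$ correspond, up to enlarging $K$ by a bounded amount, to return times into a compact set for $g_2$, and that the $\sup_K$ is unaffected in the limit. One also has to be slightly careful that the almost-everywhere statement from Corollary~\ref{coro:GeodStretch} is compatible with the essential supremum defining $h^{loc}_\Gamma$; this is fine because a null set can be discarded before taking $\supess$. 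The remaining estimates — controlling the density $\calE^{g_1\to g_2}$, which is bounded on compact sets and bounded below away from zero, and the additive constants from Corollary~\ref{coro:comparison} — are routine.

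Finally, Corollary~\ref{coro:invariant} is immediate: by Theorem~\ref{entropy-transformation}, $h^{loc}_\Gamma(m_\mu^{g_2},g_2) = I_\mu(g_2,g_1)\,h^{loc}_\Gamma(m_\mu^{g_1},g_1)$, while by the Mass transformation law (Corollary~\ref{prop:transformation-mass}), $\|m_\mu^{g_1}\| = I_\mu(g_2,g_1)\,\|m_\mu^{g_2}\|$; multiplying these gives $h^{loc}_\Gamma(m_\mu^{g_2},g_2)\,\|m_\mu^{g_2}\| = h^{loc}_\Gamma(m_\mu^{g_1},g_1)\,\|m_\mu^{g_1}\|$.
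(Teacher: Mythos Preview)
Your proposal is correct and follows essentially the same approach as the paper. Both the derivation of the corollary from Theorem~\ref{entropy-transformation} combined with the mass transformation law (Corollary~\ref{prop:transformation-mass}), and your sketch of Theorem~\ref{entropy-transformation} itself via Corollary~\ref{coro:comparison}, the reparametrization ratio $(T+T')/(S+S')$, and Corollary~\ref{coro:GeodStretch}, match the paper's argument; you have also correctly identified the main technical point (tracking return times into compact sets through $\Psi^{g_1\to g_2}$), which the paper handles in the same way by enlarging $K$ to a $C_3(g_1,g_2)$-neighbourhood.
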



Let us prove  Theorem \ref{entropy-transformation}. 

\begin{proof} Without loss of generality, we will assume $\mu$ to be ergodic.
It follow from Lemma \ref{eq:LocalEntropy}  that the entropy may be computed as
$$
h^{loc}_\Gamma(m_\mu^{g_2},g_2)=
\sup_K\supess_{v\in K}   \limsup_{T+T'\to \infty, g_2^T v\in K, \,g_2^{-T'}v\in K}
-\frac{1}{T+T'}\log m_\mu^{g_2}(B^{g_2} (v;-T,T',\e))
$$
for some fixed $\e>0$,
the essential supremum being relative to $m^{g_2}_\mu$.
The above limsup is constant along $(g_2)$-orbits, 
so that by ergodicity, it is $m_\mu^{g_2}$-almost surely constant. 
Observe also that when $K$ grows, the quantity on the right also grows.

Choose some large compact set $K\subset TM$ large enough to
contain an open subset of $\Omega^{g_i}\cap S^{g_i}M$ for $i=1,2$, and to have positive $m_\mu^{g_i}$ measure.
Choose it large enough so that
it allows to estimate entropies $h^{loc}_\Gamma(m_\mu^{g_i},g_i)$, up to some small arbitrary $\alpha$. 
In other words, 
$$
\left|h^{loc}_\Gamma(m_\mu^{g_i},g_i)-\supess_{v\in K\cap S^{g_i}M}  
 \limsup_{T+T'\to \infty, g_2^T v\in K, \,g_2^{-T'}v\in K}-\frac{1}{T+T'}\log m_\mu^{g_i}(B^{g_2} (v;-T,T',\e))\right|\le \alpha\,.
$$

Choose a typical $v\in S^{g_1}M\cap K$, which realizes the above essential supremum on $K$, and the almost sure conclusion of Corollary \ref{coro:GeodStretch} when $T\to \pm\infty$. 
With the notations of Corollary \ref{coro:comparison}, let $w=\Psi^{g_1\to g_2}(v)$. As observed in the preceding section, we have $m^{g_2}_\mu=\mathcal{E}^{g_1\to g_2}\times \Psi^{g_1\to g_2}_* m^{g_1}_\mu$. But $\mathcal{E}^{g_1\to g_2}$ is uniformly close to $1$ on $B^{g_1}(v,\varepsilon)$. 

Thus, up to some constants $e^{\pm c(v,\epsilon)}$, by corollary \ref{coro:comparison},  we have
\begin{eqnarray*}\label{eqn:comparison-measures}
  e^{-c(v,\varepsilon)} m_\mu^{g_2}(B^{g_2} (w;S+C,S'+C,\epsilon)\le m_\mu^{g_1}\left(B^{g_1}(v;T,T',\e )\right) \le e^{c(v,\varepsilon)}
m_\mu^{g_2}\left(B^{g_2} (w;S,S',\varepsilon'\right)   
\end{eqnarray*}
with $w=\Phi^{g_1\to g_2}(v)$, $S =d^{g_2}(\pi(w),\pi(g_2^{ T}w))\pm C_3(g_1,g_2)$ and 
$S'=d^{g_2}(\pi(v),\pi(g_2^{-T'}w))\pm C_3(g_1,g_2)$.

Observe also that the condition $g_1^T v\in K$ (resp. $g_1^{-T'}v\in K$ ) implies
that $g_2^S w$ (resp $g_2^{-S'} w $) belongs to the $C_3(g_1,g_2)$-neighbourhood of $K$ for any of 
the two metrics $g_1$ or $g_2$.  It remains true for $g_2^{S+C} w$ and $g_2^{-S'-C} w $ inside the $C_3(g_1,g_2)+C$-neighbourhood of $K$ for the metric $g_2$.
Set $K'=\mathcal{V}_{C_3(g_1,g_2)+C}(K)\supset K$.

By definition of $T, T', S, S'$, we also have $T+T'\to +\infty$ iff $S+S'\to \infty$.

Therefore, taking the limsup of $\frac{1}{S+S'}\log $ of the above quantity,
we get
 \begin{eqnarray*}
& \quad&\limsup_{S+S'\to \infty, g_2^S w\in K', \,g_2^{-S'}w\in K'}-\frac{1}{S+S'}\log m_\mu^{g_2}(B^{g_2}(w,S,S',\e')\\
&=&
\limsup_{T+T'\to \infty, g_1^T v\in K ,g_1^{-T'}v\in K }\frac{T+T'}{S+S'}
\times \frac{-1}{T+T'}\log m_\mu^{g_1}(B^{g_1}(v,T,T',\e))
\end{eqnarray*}

By Corollary \ref{coro:GeodStretch} we know that
$$
\frac{T+T'}{S+S'}=\frac{T+T'}{\calB^{g_2}_{v_+^{g_1}}(\pi(g_1^{-T'}v),\pi(g_1^Tv))}=
\frac{T+T'}{\int_{-T'}^T\mathcal{E}^{g_1\to g_2}(g_1^t v)\,dt}
 $$ 
converges when $T+T'\to +\infty$ to 
$$
\frac{1}{\int_{S^{g_1}M}\mathcal{E}^{g_1\to g_2} \,\frac{dm_\mu^{g_1}}{\|m_\mu^{g_1}\|}}=
\int_{S^{g_2}M}\mathcal{E}^{g_2\to g_1} \,\frac{dm_\mu^{g_2}}{\|m_\mu^{g_2}\|}\,.
$$
We deduce easily, by taking the supremum in $K$, that
$$
h^{loc}_\Gamma(m_\mu^{g_2},g_2)=
\int_{S^{g_2}M}\mathcal{E}^{g_2\to g_1} \,\frac{dm_\mu^{g_2}}{\norm{m^{g_2}_\mu}}\,\,h^{loc}_\Gamma(m_\mu^{g_1},g_1)=
I_\mu(g_2,g_1)\times h^{loc}_\Gamma(m_\mu^{g_1},g_1)\,.
$$
\end{proof}


\subsection{Bowen-Margulis measures and comparison of topological entropies}\label{BM}

We define now the so-called {\em Bowen-Margulis measure}, 
and use it to deduce from Theorem \ref{entropy-transformation} 
a corollary about the comparison of topological entropies of two metrics $g_1$ and $g_2$. 
The construction below is due to Patterson \cite{Patterson} for compact surfaces, 
to Sullivan \cite{Sull, Sull84} for geometrically finite hyperbolic manifolds, 
and Yue  \cite{Yue} extended Sullivan's work in variable negative curvature.  

Let $(M,g)$ be a negatively curved manifold, with pinched negative curvature.
Choose some point $o\in\tilde M$.
Consider the Poincar\'e series 
$$
P^g_\Gamma(s)=\sum_{\gamma\in \Gamma} e^{-sd^g(o,\gamma o )}\,.
$$
Let $\delta(g)$ be its critical exponent. 
This exponent is finite, and when $\Gamma$ is nonelementary, it is positive. 
The following lemma is immediate from the definition of $\delta$. 
\begin{lemm}\label{lem:continuity-exponent} Let $(g_\epsilon)_{-1\le \epsilon\le 1}$ be a family of negatively curved metrics on $M=\widetilde{M}/\Gamma$, such that 
$e^{-\epsilon} g_0\le g_\epsilon\le e^\epsilon g_0$. Then $e^{-\epsilon/2} \delta(g_0)\le \delta(g_\epsilon)\le e^{\epsilon/2} \delta(g_0)$.
\end{lemm}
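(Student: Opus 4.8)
The plan is to prove Lemma~\ref{lem:continuity-exponent} by directly comparing the two Poincar\'e series term by term, using only the definition of the critical exponent and the metric comparison hypothesis $e^{-\epsilon} g_0 \le g_\epsilon \le e^{\epsilon} g_0$. First I would observe that the pointwise inequality on the metrics $g_0$ and $g_\epsilon$, integrated along any $C^1$ curve, yields the corresponding inequality on the associated distance functions: for all $x,y\in\tilde M$,
$$
e^{-\epsilon/2}\, d^{g_0}(x,y) \le d^{g_\epsilon}(x,y) \le e^{\epsilon/2}\, d^{g_0}(x,y),
$$
since the $g_\epsilon$-length of a curve is at least $e^{-\epsilon/2}$ times its $g_0$-length and at most $e^{\epsilon/2}$ times it, and the distance is the infimum of lengths over curves joining $x$ to $y$. (Here $\sqrt{e^{\pm\epsilon}} = e^{\pm\epsilon/2}$ is where the factor $2$ in the statement comes from.)

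Next I would apply this to the orbit points $o$ and $\gamma o$ for $\gamma \in \Gamma$. For any $s>0$ and any $\gamma$ we get $e^{-s\, d^{g_\epsilon}(o,\gamma o)} \le e^{-s e^{-\epsilon/2} d^{g_0}(o,\gamma o)}$, and summing over $\Gamma$ gives $P^{g_\epsilon}_\Gamma(s) \le P^{g_0}_\Gamma(e^{-\epsilon/2} s)$. Hence if $e^{-\epsilon/2} s > \delta(g_0)$, the right-hand series converges, so $P^{g_\epsilon}_\Gamma(s)$ converges, which forces $\delta(g_\epsilon) \le s$. Letting $s \downarrow e^{\epsilon/2}\delta(g_0)$ yields $\delta(g_\epsilon) \le e^{\epsilon/2}\delta(g_0)$. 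The reverse bound is entirely symmetric: from $d^{g_\epsilon}(o,\gamma o) \le e^{\epsilon/2} d^{g_0}(o,\gamma o)$ one gets $P^{g_\epsilon}_\Gamma(s) \ge P^{g_0}_\Gamma(e^{\epsilon/2}s)$, so divergence of the $g_0$-series at exponent $e^{\epsilon/2} s$ (i.e. when $e^{\epsilon/2} s < \delta(g_0)$) forces divergence of the $g_\epsilon$-series, giving $\delta(g_\epsilon) \ge e^{-\epsilon/2}\delta(g_0)$. Combining the two inequalities gives the claim.

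There is essentially no obstacle here: the only point requiring a word of care is the passage from the metric comparison to the distance comparison, and even that is routine since both $g_0$ and $g_\epsilon$ are genuine Riemannian metrics and lengths scale monotonically under $\frac1C g_0 \le g \le C g_0$. One should also note that the behaviour of $\delta$ at the two thresholds (whether the series converges or diverges exactly at $s=\delta$) is irrelevant, since the argument only uses strict inequalities $e^{\mp\epsilon/2}s \gtrless \delta(g_0)$ and then takes limits; this is why the lemma is stated with non-strict inequalities. Since the statement is explicitly labelled "immediate from the definition of $\delta$", this term-by-term comparison is exactly the intended proof.
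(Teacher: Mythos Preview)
Your proof is correct and is exactly the ``immediate'' argument the paper has in mind: the paper provides no proof beyond the sentence ``This lemma is immediate from the definition of $\delta$'', and your term-by-term comparison of the Poincar\'e series via the distance inequality $e^{-\epsilon/2}d^{g_0}\le d^{g_\epsilon}\le e^{\epsilon/2}d^{g_0}$ is precisely that argument spelled out.
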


We need to ensure that the above series diverges at $s=\delta(g)$, 
which could be false. 
We will modify $P^g_\Gamma(s)$ into $\tilde P^g_\Gamma(s)$ as follows. 
The Patterson trick \cite{Patterson} is the following. 
Define a continuous map $h:(0,+\infty)\to (0,+\infty)$ as  
the exponential of continuous piecewise affine maps\,: 
$h(x)=\exp(\epsilon_k x)$ on the interval $I_k$, with $\epsilon_k\to 0$  
and $I_k$ a sequence of adjacent intervals of increasing length. 
It is possible to do it in such a way that $h$ is positive, increasing, 
continuous,  with slow growth, \,: $\frac{h(t_0+t)}{h(t_0)}$ is bounded 
by $\exp(\varepsilon_k t)$, and therefore converges to $1$ when $t\to +\infty$ 
uniformly in  $t_0>0$. 
Moreover, $\epsilon_k$ and $I_k$ can be chosen in order to ensure that 
$$
\tilde P^g_\Gamma(s) =\sum_{\gamma\in \Gamma} h(d^g(o,\gamma \cdot o)) e^{-sd^g(o,\gamma o )}
$$
 has exponent $\delta(g)$ but now diverges at $s=\delta(g)$.  

Define for all $x\in\tilde M$ and $s>\delta(g)$ a probability measure 
$$
\nu^s_x=
\frac{1}{\tilde P^g_\Gamma(s)} \sum_{\gamma\in \Gamma} h(d^g(o,\gamma \cdot o))
e^{-sd^g(x,\gamma o )}\Delta_{\gamma o }\,
$$
on $\overline{M}=\tilde M\cup \partial\tilde M$, 
where $\Delta_x$ denotes the Dirac mass at the point $x$. 
Choose a decreasing sequence $s_k\to \delta(g)$ such that 
$\nu^{s_k}_o$ converges to a probability measure 
$\nu_o^g$ on $\overline{M}$. 
Choose for all $x\in\tilde M$ a subsequence $s_{k_j}$ of $s_k$ 
such that $\nu_x^{s_{k_j}}$ converges to a measure $\nu^g_x$ on $\overline{M}$. 
By construction, as $\tilde P(\delta(g))$ diverges, 
all these measures are equivalent finite measures supported 
on $\Lambda_\Gamma\subset \partial\tilde M$, 
the measure $\nu^g_o$ is a probability measure, 
and this family $(\nu^g_x)_{x\in\tilde M}$ satisfies two crucial properties 
for all $x,y\in\tilde M$, almost all $\xi\in\partial\tilde M$ and all $\gamma\in \Gamma$\,:
$$
\frac{d\nu^g_x}{d\nu^g_y}(\xi)=\exp\left(-\delta(g)\calB_\xi(x,y)\right) 
\quad\mbox{and}\quad \gamma_*\nu^g_x=\nu^g_{\gamma x} \,.
$$
From these properties follows the Sullivan's Shadow Lemma. 

\begin{prop}[Sullivan \cite{Sull}]\label{ShadowLemma} 
Let $(\nu^g_x)$ be a family of measures on $\Lambda_\Gamma$ obtained as above. 
Then for all $R>0$ large enough, there exists a constant $c=c(R)>0$ such that 
$$
\frac{1}{c}\exp\left(-\delta(g)d^g(o,\gamma o)\right)\,\le\,\nu^g_o\left(\calO_o(B(\gamma o ,R))\right)\,
\le\, c \exp\left(-\delta(g)d^g(o,\gamma o)\right)\,.
$$
\end{prop}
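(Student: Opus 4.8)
The plan is to run Sullivan's classical argument, using the two properties of the family $(\nu^g_x)$ recalled just above: the equivariance $\gamma_*\nu^g_x=\nu^g_{\gamma x}$ and the conformal density relation $\frac{d\nu^g_x}{d\nu^g_y}(\xi)=\exp(-\delta(g)\calB^g_\xi(x,y))$. Throughout I abbreviate $\delta=\delta(g)$, $d=d^g$, $\nu_x=\nu^g_x$, $\calB_\xi=\calB^g_\xi$.

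\textbf{Reduction to a shadow based at $\gamma^{-1}o$.} Since $\Gamma$ acts by isometries, $\gamma^{-1}\calO_o(B(\gamma o,R))=\calO_{\gamma^{-1}o}(B(o,R))$, so by equivariance and the conformal density relation
$$
\nu_o\bigl(\calO_o(B(\gamma o,R))\bigr)=\nu_{\gamma^{-1}o}\bigl(\calO_{\gamma^{-1}o}(B(o,R))\bigr)=\int_{\calO_{\gamma^{-1}o}(B(o,R))}e^{-\delta\,\calB_\xi(\gamma^{-1}o,o)}\,d\nu_o(\xi)\,.
$$
On this shadow the Busemann cocycle is pinched: the triangle inequality gives $\calB_\xi(\gamma^{-1}o,o)\le d(\gamma^{-1}o,o)=d(o,\gamma o)$, while if the $g$-ray from $\gamma^{-1}o$ to $\xi$ meets $B(o,R)$ at a point $p$, then writing $d(\gamma^{-1}o,z)=d(\gamma^{-1}o,p)+d(p,z)$ and $d(o,z)\le R+d(p,z)$ for $z$ far out on the ray and letting $z\to\xi$ yields $\calB_\xi(\gamma^{-1}o,o)\ge d(o,\gamma o)-2R$. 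Hence $e^{-\delta d(o,\gamma o)}\le e^{-\delta\calB_\xi(\gamma^{-1}o,o)}\le e^{2\delta R}e^{-\delta d(o,\gamma o)}$ on the shadow, so
$$
e^{-\delta d(o,\gamma o)}\,\nu_o\bigl(\calO_{\gamma^{-1}o}(B(o,R))\bigr)\le\nu_o\bigl(\calO_o(B(\gamma o,R))\bigr)\le e^{2\delta R}e^{-\delta d(o,\gamma o)}\,\nu_o\bigl(\calO_{\gamma^{-1}o}(B(o,R))\bigr)\,.
$$
As $\nu_o$ is a probability measure, the right-hand side is at most $e^{2\delta R}e^{-\delta d(o,\gamma o)}$, which is the upper bound of the Proposition.

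\textbf{Uniform lower bound for the shadows.} It remains to find, for $R$ large, a constant $c_0>0$ with $\nu_o\bigl(\calO_{\gamma^{-1}o}(B(o,R))\bigr)\ge c_0$ for every $\gamma\in\Gamma$; the left inequality above then finishes with $c=\max(e^{2\delta R},1/c_0)$. If $d(o,\gamma o)\le R$ then $\gamma^{-1}o\in B(o,R)$, the shadow is all of $\partial\tilde M$, and has $\nu_o$-mass $1$; so only $\gamma$ with $\gamma^{-1}o$ far out (all but finitely many orbit points, by discreteness of $\Gamma o$) are at issue. Suppose no such $c_0$ existed: there would be $\gamma_n$ with $\nu_o\bigl(\calO_{\gamma_n^{-1}o}(B(o,R))\bigr)\to0$, and, since a fixed shadow has fixed mass, necessarily $d(o,\gamma_n o)\to\infty$; passing to a subsequence, $\gamma_n^{-1}o\to\eta$ for some $\eta\in\Lambda_\Gamma$. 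A compactness argument shows $R_1:=\sup_{\eta\in\Lambda_\Gamma}\inf_{\xi\in\Lambda_\Gamma\setminus\{\eta\}}d\bigl(o,(\eta,\xi)\bigr)<\infty$: if $d\bigl(o,(\eta_k,\xi)\bigr)$ blew up for $\eta_k\to\eta_\infty$, picking any $\xi_\infty\neq\eta_\infty$ in $\Lambda_\Gamma$ and using continuity of geodesics in their endpoints to compare $(\eta_k,\xi_\infty)$ with $(\eta_\infty,\xi_\infty)$ contradicts this. So if $R\ge R_1+1$ there is $\xi\in\Lambda_\Gamma\setminus\{\eta\}$ with the bi-infinite $g$-geodesic $(\eta,\xi)$ meeting the open ball $B(o,R)$; by continuity of geodesics there is an open neighbourhood $U\ni\xi$ in $\partial\tilde M$ such that $(\eta,\xi')$ meets $B(o,R)$ for every $\xi'\in U$, and then $[\gamma_n^{-1}o,\xi')$ meets $B(o,R)$ for all $\xi'\in\overline U$ once $n$ is large, i.e.\ $U\subseteq\calO_{\gamma_n^{-1}o}(B(o,R))$. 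Since $\xi\in\Lambda_\Gamma={\rm supp}(\nu_o)$ we have $\nu_o(U)>0$, contradicting $\nu_o\bigl(\calO_{\gamma_n^{-1}o}(B(o,R))\bigr)\to0$.

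\textbf{Main difficulty.} The cocycle estimate and the equivariance bookkeeping are routine; the genuine content is the uniform lower bound on $\nu_o\bigl(\calO_{\gamma^{-1}o}(B(o,R))\bigr)$ --- the fact that these shadows cannot all degenerate. This is where compactness of $\overline{M}=\tilde M\cup\partial\tilde M$, non-elementarity of $\Gamma$ (so that $\Lambda_\Gamma$ has no isolated point), and ${\rm supp}(\nu_o)=\Lambda_\Gamma$ enter, and where some care is needed both with the finitely many orbit points lying in an intermediate annulus and with the uniform convergence of the rays $[\gamma_n^{-1}o,\xi')$ to the limiting geodesics $(\eta,\xi')$.
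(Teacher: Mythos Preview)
The paper does not give its own proof of this proposition: it is stated with attribution to Sullivan \cite{Sull} and then used without argument. Your proof is precisely Sullivan's classical argument---equivariance plus conformality to reduce to shadows $\calO_{\gamma^{-1}o}(B(o,R))$, the elementary Busemann estimate $d(o,\gamma o)-2R\le\calB_\xi(\gamma^{-1}o,o)\le d(o,\gamma o)$ on that shadow, and then a compactness contradiction for the uniform lower bound---and it is correct. One small polish: when you pass from the open neighbourhood $U$ to $\overline U$ you should first shrink $U$ so that $(\eta,\xi')$ still meets the \emph{open} ball $B(o,R)$ for all $\xi'\in\overline U$ (otherwise the uniform convergence step for $[\gamma_n^{-1}o,\xi')$ over $\xi'\in\overline U$ is not quite clean), but this is a routine adjustment and does not affect the argument.
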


A {\em Bowen-Margulis measure} on $S^gM$ is a measure obtained 
from such a family $(\nu^g_x)$ by the following formula on 
$S^g\tilde M$, with $v=(H^g)^{-1}(v^g_-,v^g_+,t)$
\begin{equation}
d\tilde{m}^g_{BM}(v)=
\exp\left(\delta(g)\calB^g_{v^g_+}(o ,\pi(v) )+\delta(g)\calB^g_{v^{g}_-}(o,\pi(v)) \right) 
d\nu^g_o(v^g_+)d\nu^g_o(v^g_-)dt\,.
\end{equation}
This formula being $\Gamma$-invariant, 
it induces on the quotient a Bowen-Margulis measure $m^{g}_{BM}$ on $S^gM$. 

It is well known (see the above references, or Roblin \cite{Roblin} for the most general version)
 that $P^g_\Gamma$ diverges at $s=\delta(g)$ iff the Bowen-Margulis measure is ergodic and conservative, 
and in this case, the family of measures $(\nu^g_x)$ is in fact unique. 
In particular, when this measure $m^{g}_{BM}$ is finite, it is ergodic and conservative and $P^g_\Gamma$ diverges at $\delta(g)$. 

 Otal-Peign\'e proved the following result, due to Sullivan in the case of geometrically finite hyperbolic manifolds. 

\begin{theo}[Sullivan \cite{Sull84}, Otal-Peign\'e \cite{OP}]\label{th:OP} 
Let $(M,g)$ be a manifold with pinched negative curvature and bounded derivatives of the curvature. 
Then 
$$
\delta(g)=h_{top}(g)
$$ is the topological entropy of $g$.
 Moreover, when $m^{g}_{BM}$ is finite
and normalized into a probability measure,
 it is the unique measure maximizing entropy in the sense that 
$h_{KS}(\frac{m^g_{BM}}{\|m^{g}_{BM}\|},g)=h_{top}(g)$. 
When $m^g_{BM}$ is infinite, there is no probability measure maximizing entropy. 
\end{theo}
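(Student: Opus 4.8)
The plan is to prove separately that $\delta(g)\le h_{top}(g)$ and $h_{top}(g)\le\delta(g)$, and then to read off the statements about the measure of maximal entropy from the sharp local estimates produced on the way. The lower bound is comparatively routine. When $m^g_{BM}$ is finite, normalise it to a probability measure $\overline m^g_{BM}$; the conformality relation $\frac{d\nu^g_x}{d\nu^g_y}(\xi)=e^{-\delta(g)\calB^g_\xi(x,y)}$, the Shadow Lemma (Proposition~\ref{ShadowLemma}) and Lemma~\ref{dynamical-ball-product-shadows} together give $\overline m^g_{BM}(B^g(v,T,\e))\asymp e^{-\delta(g)T}$ on any compact set of positive mass, with multiplicative constants depending only on $\e$. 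Hence the Brin--Katok local entropy of $\overline m^g_{BM}$ equals $\delta(g)$, so by Theorem~\ref{th-entropies} $h_{KS}(\overline m^g_{BM},g)=\delta(g)$, and (\ref{htop}) yields $h_{top}(g)\ge\delta(g)$. The general, possibly infinite, case reduces to this by exhausting $\Gamma$ by convex-cocompact subgroups whose critical exponents tend to $\delta(g)$, each of which has finite Bowen--Margulis measure.

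For the upper bound, by the variational principle (\ref{htop}) and Theorem~\ref{th-entropies} it suffices to show $\underline{h}_{loc}(m,g)\le\delta(g)$ for every ergodic $g$-invariant probability measure $m$. Fix a compact $K\subset S^gM$ with $m(K)>\frac12$ and a scale $\e>0$. The combinatorial core is a standard Margulis-type estimate: using Lemma~\ref{dynamical-ball-product-shadows} to express dynamical balls in Hopf coordinates as products of shadows times short intervals, then using the Shadow Lemma (Proposition~\ref{ShadowLemma}) to evaluate the $\nu^g_o$-mass of these shadows together with the finiteness of $\nu^g_o$, one bounds by $C(\e)e^{\delta(g)T}$ the cardinality of any $(T,\e)$-separated subset of $\{v\in K:\ g^Tv\in K\}$. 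Hence this set, whose $m$-measure is $\ge 2m(K)-1>0$ by invariance, is covered by at most $C(\e)e^{\delta(g)T}$ dynamical $(T,2\e)$-balls, so a pigeonhole argument produces, for every $T$, a subset $G_{T,\e}$ of it with $m(G_{T,\e})\ge c_0>0$ independent of $T$ and on which $m(B^g(v,T,4\e))\ge c(\e)e^{-\delta(g)T}$. Applying the reverse Fatou lemma (valid since $m$ is finite) first to $\limsup_{T\to\infty}G_{T,\e}$, and then along a sequence $\e_n\to0$ to $\limsup_n(\limsup_T G_{T,\e_n})$, produces a set of positive $m$-measure on which, for arbitrarily small $\e$, $\liminf_{T\to\infty}-\frac1T\log m(B^g(v,T,\e))\le\delta(g)$, hence on which $\lim_{\e\to0}\liminf_{T\to\infty}-\frac1T\log m(B^g(v,T,\e))\le\delta(g)$. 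Since $\underline{h}_{loc}(m,g)$ is an essential infimum over $v$, this forces $\underline{h}_{loc}(m,g)\le\delta(g)$, and therefore $\delta(g)=h_{top}(g)$.

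These steps already show that $\overline m^g_{BM}$, when finite, is a measure of maximal entropy. For uniqueness, one shows that any ergodic probability measure $m$ with $h_{KS}(m,g)=\delta(g)$ has the Gibbs property for the zero potential: combining the two-sided form of the covering estimate above (which gives $m(B^g(v,T,\e))\gtrsim e^{-\delta(g)T}$ along a density-one set of times) with $\overline h^{comp}_{loc}(m,g)=\delta(g)$ from Theorem~\ref{th-entropies} (which gives the matching upper bound), one obtains $m(B^g(v,T,\e))\asymp e^{-\delta(g)T}$ for $m$-a.e.\ $v$. This pins down the conditional measures of $m$ on strong stable and unstable horospheres as being proportional to $\nu^g_o$, whence $m=\overline m^g_{BM}$ by the uniqueness of the Patterson--Sullivan density in the divergence case (which holds precisely because $m^g_{BM}$, being finite, makes $P^g_\Gamma$ diverge at $\delta(g)$). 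The same argument shows that any probability measure of maximal entropy would necessarily be a \emph{finite} Bowen--Margulis measure, which yields the non-existence statement when $m^g_{BM}$ is infinite.

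The main obstacle is the upper bound: converting a counting estimate on a fixed compact piece of $S^gM$ into a pointwise lower bound for the dynamical-ball measures of an \emph{arbitrary}, possibly very non-uniform, invariant measure. The pigeonhole-plus-reverse-Fatou mechanism — together with the care needed because the scale $\e$ cannot be fixed before taking the limit — is what makes this possible, and controlling, uniformly in $T$ but with $\e$-dependent constants, the number of separated dynamical balls through their shadows and the total mass of the Patterson--Sullivan density is the geometric heart of the argument.
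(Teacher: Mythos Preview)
The paper does not prove this theorem: it is stated as a result of Sullivan and Otal--Peign\'e, cited from \cite{Sull84} and \cite{OP}, and used as a black box. There is therefore no ``paper's own proof'' to compare with.

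Your sketch is broadly in the spirit of the original Otal--Peign\'e argument: the lower bound via the local entropy of the Bowen--Margulis measure, and the upper bound via a Margulis-type counting of separated sets in terms of shadows, converted into a pointwise lower bound on dynamical-ball masses by a pigeonhole/Borel--Cantelli mechanism. Two places deserve more care if you want this to stand on its own. First, the reduction of the lower bound in the infinite-mass case to convex-cocompact subgroups with critical exponents approaching $\delta(g)$ is a nontrivial fact (Bishop--Jones type), and its validity in pinched variable curvature needs a reference or an argument. Second, the uniqueness step is underspecified: going from two-sided estimates $m(B^g(v,T,\e))\asymp e^{-\delta(g)T}$ to identifying the conditionals on horospheres with the Patterson--Sullivan density is exactly where Otal--Peign\'e do substantial work (a Hopf-type argument combined with divergence of the Poincar\'e series), and ``pins down the conditional measures'' hides this.
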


It follows from \cite[Prop. 3.16]{PPS}  and \cite{OP} that, finite or not, 
the Bowen-Margulis measure satisfies the following equality. 
\begin{prop}\label{entropie-BM} Let $(M,g)$ be a negatively curved manifold  
with pinched negative curvature and bounded derivatives of the curvature.  Let $m^g_{BM}$ be a Bowen-Margulis measure. Then 
\begin{equation}\label{VP-loc}
h^{loc}_\Gamma(m^g_{BM},g)=\delta(g)=h_{KS}(m^g_{BM},g) \,.
\end{equation}\end{prop}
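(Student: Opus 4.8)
The statement has two halves. The equality $\delta(g)=h_{KS}(m^g_{BM},g)$ (when $m^g_{BM}$ is finite, $h_{KS}$ being then computed after normalizing to a probability measure, which is harmless since $h_{KS}$ does not see the scaling of the measure) is Theorem~\ref{th:OP} of Otal--Peign\'e, combined with Theorem~\ref{th-entropies} which identifies $h_{KS}$ with the compactly-supported local entropy. So the substantive part, valid whether or not $m^g_{BM}$ is finite, is the identity $h^{loc}_\Gamma(m^g_{BM},g)=\delta(g)$, and the plan is to obtain it from a precise estimate on the $m^g_{BM}$-measure of dynamical balls --- this is exactly the content of \cite[Prop.~3.16]{PPS}. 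Concretely, I would show that for every small $\e>0$ there is $C(v,\e)>0$, \emph{independent of $T,T'$}, such that for all $v\in S^g\tilde M$ (normalized so that $\calB^g_{v^g_+}(\pi v,o)=0$) and all $T,T'\ge 0$,
$$
\frac1{C(v,\e)}\;e^{-\delta(g)(T+T')}\ \le\ \tilde m^g_{BM}\!\bigl(B^g(v;T,T',\e)\bigr)\ \le\ C(v,\e)\;e^{-\delta(g)(T+T')}\,,
$$
and likewise for the small dynamical balls on $S^gM$. Granting this, $-\tfrac1{T+T'}\log m^g_{BM}(B^g(v;T,T',\e))$ tends to $\delta(g)$ as $T+T'\to\infty$ (the constant $C(v,\e)$ being swallowed in the limit), so the constrained limsup in Lemma~\ref{eq:LocalEntropy} equals $\delta(g)$ for every such $v$; taking the essential supremum over a compact $K$ with $m^g_{BM}(K)>0$ and then the supremum over $K$ yields $h^{loc}_\Gamma(m^g_{BM},g)=\delta(g)$.

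For the dynamical-ball estimate I would put $x_t=\pi g^t v$, so that $d^g(x_{-T'},x_T)=T+T'$, and use Lemma~\ref{dynamical-ball-product-shadows} to sandwich $B^g(v;T,T',\e)$ between $(H^g)^{-1}$ of products $\calO^g_{x_{-T'}}(B^g(x_T,c\e))\times \calO^g_{x_T}(B^g(x_{-T'},c\e))\times ]-c\e,c\e[$ for a suitable universal $c$. By definition of $\tilde m^g_{BM}$ through the Patterson density $(\nu^g_x)$ and by Sullivan's Shadow Lemma (Proposition~\ref{ShadowLemma}, in its uniform form, valid for arbitrarily small radii since, $\Gamma$ being non-elementary, $\nu^g_o$ charges every non-empty open subset of $\Lambda_\Gamma$; cf.\ \cite{Roblin}), one has $\nu^g_{x_{-T'}}\bigl(\calO^g_{x_{-T'}}(B^g(x_T,c\e))\bigr)\asymp_\e e^{-\delta(g)(T+T')}$; converting from basepoint $x_{-T'}$ to $o$ via $\tfrac{d\nu^g_o}{d\nu^g_{x_{-T'}}}=e^{-\delta(g)\calB^g_{\cdot}(o,x_{-T'})}$ and using that $\calB^g_\xi(o,x_{-T'})=\calB^g_\xi(o,x_0)+\calB^g_\xi(x_0,x_{-T'})=-T'+O_{v,\e}(1)$ along the forward shadow gives $\nu^g_o\bigl(\calO^g_{x_{-T'}}(B^g(x_T,c\e))\bigr)\asymp_{v,\e}e^{-\delta(g)T}$; symmetrically the backward shadow has $\nu^g_o$-mass $\asymp_{v,\e}e^{-\delta(g)T'}$. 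Finally the Busemann weight $e^{\delta(g)(\calB^g_{w_+}(o,\pi w)+\calB^g_{w_-}(o,\pi w))}$ of the product formula for $\tilde m^g_{BM}$ is, for every $w$ in the dynamical ball, comparable to the constant $e^{\delta(g)\calB^g_{v^g_-}(o,\pi v)}$, because the geodesic $(w_-,w_+)$ and its footpoint stay $O(\e)$-close to $x_0$. Multiplying the two shadow estimates, the bounded weight, and the length $\asymp\e$ of the $\bbR$-factor yields the two-sided bound; the passage to small dynamical balls on $S^gM$ is handled as in \cite{PPS}.

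The main obstacle is this last paragraph: one must push the Busemann cocycle carefully through the change of basepoint $o\rightsquigarrow x_{\pm T',T}$ so that all comparison constants depend only on $v$ and $\e$ and \emph{never} on $T,T'$, and one must use the Shadow Lemma for small radii $\e$. Once the uniform estimate is in hand, everything else --- the limit of $-\tfrac1{T+T'}\log(\cdot)$, hence $h^{loc}_\Gamma(m^g_{BM},g)=\delta(g)$, and then the full chain $h^{loc}_\Gamma(m^g_{BM},g)=\delta(g)=h_{KS}(m^g_{BM},g)$ via Theorems~\ref{th-entropies} and~\ref{th:OP} --- is immediate.
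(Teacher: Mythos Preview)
Your proposal is correct and follows exactly the route the paper takes: the paper's proof simply cites \cite[Prop.~3.16]{PPS} for $h^{loc}_\Gamma(m^g_{BM},g)=\delta(g)$ and \cite{OP} for $\delta(g)=h_{KS}(m^g_{BM},g)$, and what you have written is precisely an unpacking of the former reference --- the Gibbs estimate $m^g_{BM}(B^g(v;T,T',\e))\asymp e^{-\delta(g)(T+T')}$ via Lemma~\ref{dynamical-ball-product-shadows}, the product structure of $\tilde m^g_{BM}$, and the Shadow Lemma. One small remark: you need not invoke the Shadow Lemma for arbitrarily small radii, since Lemma~\ref{lem:dynamical-balls-inclusion} and Lemma~\ref{eq:LocalEntropy} already allow you to work at a single fixed $\e>0$ large enough for Proposition~\ref{ShadowLemma} to apply; and your appeal to Theorem~\ref{th-entropies} in the last line is superfluous, as Theorem~\ref{th:OP} gives $h_{KS}=\delta(g)$ directly.
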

\begin{proof} The first equality is a computation done in \cite[Prop. 3.16]{PPS}, 
the second is one of the main results of \cite{OP}. 
\end{proof}

This equality suggests that we could be able to prove a variational principle 
for infinite measures, using local entropies instead of Kolmogorov-Sinai entropies. We will not do it here.

\begin{coro} \label{coro:EntBM} Let $(M,g_i)$, $i=1,2$ be two admissible Riemannian metrics 
with pinched negative curvature on $M$. 
Let $\mu$ be a  geodesic current and $m^{g_i}_\mu$ the associated invariant 
measure on $S^{g_i}M$ under the geodesic flow $(g_i^t)$. 
Assume that these measures are finite and ergodic. Then 
\begin{eqnarray*}
 h_{top}(g_2)=\delta(g_2)=h^{loc}_\Gamma(m^{g_2}_{BM},g_2)&=&I_{\mu^{g_2}_{BM}}(g_2,g_1)\times h^{loc}_\Gamma(m^{g_1}_{\mu^{g_2}_{BM}},g_1)\\
&\le&
 I_{\mu^{g_2}_{BM}}(g_2,g_1)\,\times \, h_{top}(g_1)\,.
\end{eqnarray*}
\end{coro}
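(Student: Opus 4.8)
The plan is to chain together four ingredients already available above: the Otal--Peign\'e identity $\delta(g)=h_{top}(g)$ (Theorem \ref{th:OP}), the equality $h^{loc}_\Gamma(m^g_{BM},g)=\delta(g)$ (Proposition \ref{entropie-BM}), the entropy transformation law (Theorem \ref{entropy-transformation}) applied to the Bowen--Margulis current of $g_2$, and the variational principle (\ref{htop}). Throughout, write $\mu:=\mu^{g_2}_{BM}$ for the geodesic current such that $m^{g_2}_\mu=m^{g_2}_{BM}$.

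First I would establish the chain of equalities. By Theorem \ref{th:OP}, $h_{top}(g_2)=\delta(g_2)$; by Proposition \ref{entropie-BM}, $\delta(g_2)=h^{loc}_\Gamma(m^{g_2}_{BM},g_2)$, and since $h^{loc}_\Gamma$ is insensitive to rescaling the measure this equals $h^{loc}_\Gamma(m^{g_2}_\mu,g_2)$. To apply Theorem \ref{entropy-transformation} to the current $\mu$ I note that $m^{g_2}_\mu=m^{g_2}_{BM}$ is finite and ergodic by hypothesis, hence by the mass transformation law (Corollary \ref{prop:transformation-mass}) $m^{g_1}_\mu$ is finite, and by Proposition \ref{easy-comparison} it is ergodic. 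Theorem \ref{entropy-transformation} then yields $h^{loc}_\Gamma(m^{g_2}_\mu,g_2)=I_{\mu}(g_2,g_1)\,h^{loc}_\Gamma(m^{g_1}_\mu,g_1)$, which is the third equality in the statement.

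It remains to prove the inequality, and since $I_{\mu}(g_2,g_1)\ge 0$ this reduces to showing $h^{loc}_\Gamma\bigl(m^{g_1}_{\mu^{g_2}_{BM}},g_1\bigr)\le h_{top}(g_1)$. The natural move is to normalize $m^{g_1}_{\mu^{g_2}_{BM}}$ into an element of $\mathcal{M}^1(g_1)$ and invoke the variational principle (\ref{htop}): this gives $h_{KS}\bigl(m^{g_1}_{\mu^{g_2}_{BM}}/\|m^{g_1}_{\mu^{g_2}_{BM}}\|,g_1\bigr)\le h_{top}(g_1)$. By (\ref{entropies-different}) one has in general only $h_{KS}\le h^{loc}_\Gamma$, so the argument needs that these two quantities agree for $m^{g_1}_{\mu^{g_2}_{BM}}$. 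This holds because $m^{g_1}_{\mu^{g_2}_{BM}}$ is a Gibbs measure on $(M,g_1)$: the Bowen--Margulis measure $m^{g_2}_{BM}$ is, by its Patterson--Sullivan construction, the Gibbs measure of the null potential on $(M,g_2)$, and the Gibbs property is preserved by the correspondence $\mu\mapsto m^{g_i}_\mu$ (this is the content of Section \ref{sec:Gibbs}), so $m^{g_1}_{\mu^{g_2}_{BM}}$ is Gibbs for a suitable H\"older potential; for Gibbs measures $h^{loc}_\Gamma$ coincides with the Kolmogorov--Sinai entropy. Combining the two displays finishes the proof.

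I expect this last step to be the main obstacle, and to require some care about which dynamical balls are used. When $M$ has injectivity radius bounded below (or the measure is compactly supported), $h^{loc}_\Gamma=h_{KS}$ automatically by (\ref{entropies-different}) and no Gibbs input is needed; but in the general noncompact case small and large dynamical balls genuinely differ, and one must bound the $m^{g_1}_{\mu^{g_2}_{BM}}$-mass of small dynamical balls from below. If one prefers to avoid the forward reference to Section \ref{sec:Gibbs}, the same estimate can be obtained directly: $m^{g_1}_{\mu^{g_2}_{BM}}$ has a local product structure (Proposition \ref{easy-comparison}) built from $\nu^{g_2}$, so Lemma \ref{dynamical-ball-product-shadows} reduces a small dynamical ball to a product of $g_1$-shadows, which by Lemma \ref{comparison-shadows} contains a product of $g_2$-shadows of comparable balls, whose $\nu^{g_2}$-mass is controlled by the Shadow Lemma (Proposition \ref{ShadowLemma}); together with Corollary \ref{coro:GeodStretch} this is precisely the computation the Gibbs formalism of Section \ref{sec:Gibbs} packages, and once $h^{loc}_\Gamma=h_{KS}$ is known the variational principle on $(M,g_1)$ closes the argument.
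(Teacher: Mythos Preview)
Your proposal is correct and follows essentially the same route as the paper: the chain of equalities is obtained from Theorem \ref{th:OP}, Proposition \ref{entropie-BM} and Theorem \ref{entropy-transformation}, and the final inequality is reduced to $h^{loc}_\Gamma(m^{g_1}_{\mu^{g_2}_{BM}},g_1)=h_{KS}(m^{g_1}_{\mu^{g_2}_{BM}},g_1)$ via the Gibbs property of $m^{g_1}_{\mu^{g_2}_{BM}}$ (Theorem \ref{theo:Gibbs}), after which the variational principle applies. The paper's proof is terser but identical in substance, invoking the same forward reference to Section \ref{sec:Gibbs} and citing \cite[Thm 1.3]{PPS} for the equality of local and Kolmogorov--Sinai entropy for finite Gibbs measures; your alternative ``direct'' computation is precisely an unpacking of that Gibbs argument.
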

\begin{proof} 
Let us first note that by Theorem \ref{theo:Gibbs}, the measure $m^{g_1}_{\mu^{g_2}_{BM}}$ is a Gibbs measure. Moreover, \cite[Thm 1.3]{PPS} ensures that the Gibbs measure associated to a given potential, when finite, is the unique equilibrium measure of this potential. Therefore $h^{loc}_\Gamma(m^{g_1}_{\mu^{g_2}_{BM}})=h_{KS}(m^{g_1}_{\mu^{g_2}_{BM}})$, and the variational 
principle ensures that $h_{KS}(m^{g_1}_{\mu^{g_2}_{BM}})\le h_{top}(g_1)$, which gives the last inequality. 
\end{proof}

In the compact case, the inequality 
$h_{top}(g_2)\le I_{\mu^{g_2}_{BM}}(g_2,g_1)\times h_{top}(g_1)$ is due to 
Knieper \cite{Kni95}. 
Katok had a similar weaker inequality \cite{Katok82}, proving that 
$$
h_{top}(g_2)\le \int_{S^{g_2}M} \|v\|^{g_1}dm_{BM}^{g_2}\times h_{top}(g_1)\,.
$$
Our inequality above is valid on any manifold, compact or not, with finite Bowen-Margulis measure. 
 It follows from lemma \ref{lem:Ineq-Knieper} that it implies Katok's inequality. 
Let us mention however that it is this weaker version 
which is really used in the proof of our main theorem of differentiability of entropy.


\section{Gibbs measures}\label{sec:Gibbs}

This section, particularly Theorem \ref{theo:Gibbs}, is crucial in the proof of Corollary \ref{coro:EntBM}, and
therefore in our approach of Theorem \ref{theo:main}. 

Theorem \ref{theo:Gibbs} is new on noncompact manifolds, the explicit change of potential being new even on compact manifolds. 
Corollary \ref{coro:Gibbs} is new even on compact manifolds. 

Gibbs measures are, for a hyperbolic dynamical system, 
a family of measures with strong stochastic properties, 
each one associated to a weight, i.e. some H\"older continuous potential, 
describing somehow that all possible dynamical behaviours can happen. 
For the geodesic flow on the unit tangent bundle of a compact manifold, their geometric 
construction, adapted from the Patterson-Sullivan construction described in the above section, 
has been done by Ledrappier in \cite{Ledrappier}. 
He proved there, on compact manifolds, that being a Gibbs measure does not depend on the metric. 
In other words, 
if $g_1$ and $g_2$ are negatively curved metrics on $M$, 
an invariant measure $m^{g_1}_\mu$ on $S^{g_1}M$ is a
Gibbs measure iff the measure $m^{g_2}_\mu$ on $S^{g_2}M$ is also a Gibbs measure. 
However, his proof strongly relies on the compactness of $M$. 
Our goal in this section is to prove this result differently on noncompact manifolds. 


\subsection{Definitions}

We refer to \cite{PPS} for details on all notions presented here. 
Let $(M,g)$ be a negatively curved manifold, with pinched negative curvatures and bounded derivatives of the curvature. Let $F:S^gM\to \bbR$ be a H\"older continuous map. 
The {\em pressure of $F$} is the quantity 
\begin{equation}\label{pressure}
P^g(F)=\sup_{m\in\mathcal{M}^1(g)}\left( h_{KS}(m,g)+\int_{S^gM} F\,dm\right)\,, 
\end{equation}
the supremum being considered over all invariant probability measures $m\in\mathcal{M}^1(g)$. 
An invariant probability measure $m$ is {\em an equilibrium state for $F$} if it realizes the above supremum. 

Assume that $P^g(F)$ is finite. 
An invariant measure $m$ under the geodesic flow $(g^t)$ satisfies the {\em Gibbs property} 
for the potential $F$ if for all compact sets $K\subset S^gM$ and $\e>0$ there exists 
a constant $C(K,\epsilon)>0$ such that for all $v\in K$ and $T>0$ with $g^Tv\in K$, we have
\begin{eqnarray}\label{Gibbs}
 \frac{1}{C(K,\epsilon)}\,\exp\left(\int_0^T F(g^t v)\,dt -T P^g(F)\right)\, &\le \,
m\left(B^g (v,T,\e)\right) \nonumber\\
 \le  \,C(K,\epsilon)& \exp\left(\int_0^T F(g^t v)\,dt -T P^g(F)\right)\,.
\end{eqnarray}

A variant of the Patterson-Sullivan construction presented in subsection \ref{BM} provides a measure
$m_F$ which satisfies (\ref{Gibbs}) see \cite[Prop. 3.16]{PPS}. Moreover, when finite and normalized into a probability measure, it is the unique equilibrium state, i.e. the unique measure realizing the supremum in (\ref{pressure}) 
(see \cite[Th. 6.1]{PPS}). When this measure $m_F$ is infinite, there is no equilibrium state for $F$. 
Let us summarize what is useful in the present work in the following proposition. 

\begin{prop} Let $(M,g)$ be a negatively curved manifold  
with pinched negative curvature and bounded derivatives of the curvature. 
Let $F:S^g M \to \mathbb{R}$ be a H\"older potential. If the measure $m_F$ is finite and normalized, then 
$$
P^{g}(m_F)=h_{KS}(m_F,g)+\int_{S^gM} F\,dm_F=h^{loc}_\Gamma(m_F,g)+\int_{S^gM} F\,dm_F\,.
$$
\end{prop}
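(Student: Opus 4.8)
The plan is to treat the two equalities separately: the first is essentially a quotation, while the second, $h_{KS}(m_F,g)=h^{loc}_{\Gamma}(m_F,g)$, is the actual content. For the first equality I would simply invoke the facts recalled just above: by \cite[Prop.~3.16]{PPS} the measure $m_F$ satisfies the Gibbs inequality (\ref{Gibbs}) for the potential $F$, and by \cite[Th.~6.1]{PPS}, since $m_F$ is finite and normalized into a probability measure, it is the unique equilibrium state for $F$ and hence realizes the supremum defining the pressure in (\ref{pressure}); that is, $P^g(F)=h_{KS}(m_F,g)+\int_{S^gM}F\,dm_F$ (note that $\int_{S^gM}F\,dm_F$ is finite by hypothesis, i.e.\ $F\in L^1(m_F)$).

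It then remains to prove $h_{KS}(m_F,g)=h^{loc}_{\Gamma}(m_F,g)$. One inequality is immediate from (\ref{entropies-different}): $h_{KS}(m_F,g)=h^{loc}_{comp}(m_F,g)\le h^{loc}_{\Gamma}(m_F,g)$. For the reverse inequality I would use the \emph{lower} bound in the Gibbs property (\ref{Gibbs}). First I would record that a finite normalized Gibbs measure is ergodic and conservative --- this is part of the Patterson--Sullivan/Gibbs machinery of \cite{PPS} (compare the statement above for the Bowen--Margulis measure, together with \cite{Roblin}, \cite{OP}); the mechanism is that finiteness forces the relevant Poincaré-type series to diverge, and a Hopf-type argument then yields ergodicity and conservativity. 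Now fix $\e>0$ and a compact set $K\subset S^gM$ with $m_F(K)>0$. By conservativity, for $m_F$-a.e.\ $v\in K$ the orbit $(g^tv)_{t\geq 0}$ returns to $K$ at arbitrarily large times, and by Birkhoff's ergodic theorem applied to the flow $(g^t)$ and the function $F\in L^1(m_F)$ one has $\frac{1}{T}\int_0^T F(g^tv)\,dt\to\int_{S^gM}F\,dm_F$ for $m_F$-a.e.\ such $v$. Inserting the lower Gibbs bound for $B^g(v,T,\e)$ when $v,g^Tv\in K$ gives
$$
-\frac{1}{T}\log m_F\bigl(B^g(v,T,\e)\bigr)\ \le\ P^g(F)-\frac{1}{T}\int_0^T F(g^tv)\,dt+\frac{1}{T}\log C(K,\e)\,,
$$
so taking the limsup over the return times $T\to+\infty$ with $g^Tv\in K$, and then letting $\e\to 0$ (the right-hand side being independent of $\e$), one obtains for $m_F$-a.e.\ $v\in K$
$$
\lim_{\e\to 0}\ \limsup_{T\to+\infty,\ g^Tv\in K}\ -\frac{1}{T}\log m_F\bigl(B^g(v,T,\e)\bigr)\ \le\ P^g(F)-\int_{S^gM}F\,dm_F\ =\ h_{KS}(m_F,g)\,.
$$
Since the right-hand side is a constant, taking the essential supremum over $v\in K$ and then the supremum over compact $K$ yields $h^{loc}_{\Gamma}(m_F,g)\le h_{KS}(m_F,g)$, hence equality; adding $\int_{S^gM}F\,dm_F$ to both sides of $h_{KS}(m_F,g)=h^{loc}_{\Gamma}(m_F,g)$ and using the first equality finishes the proof.

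The step I expect to be the real obstacle is the reduction to the ergodic, conservative setting. Without ergodicity the Birkhoff averages converge only to the conditional expectation $\mathbb{E}[F\mid\mathcal{I}]$, whose essential supremum over $K$ has no reason to equal $\int F\,dm_F$, so the final estimate collapses; and one cannot repair this by an ergodic decomposition, since the ergodic components of a Gibbs measure need not themselves be Gibbs measures (the Gibbs inequality (\ref{Gibbs}) is a statement about the measures of the specific sets $B^g(v,T,\e)$ and does not descend to components). So the crux is to pin down that finiteness of $m_F$ genuinely implies ergodicity and conservativity. A minor, routine point --- already available from Lemma~\ref{eq:LocalEntropy} together with Lemma~\ref{lem:dynamical-balls-inclusion} --- is that one may fix $\e$ rather than carry the limit $\e\to 0$ throughout.
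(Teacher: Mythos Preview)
The paper does not actually give a proof of this proposition: it is introduced with ``Let us summarize what is useful in the present work in the following proposition'' and no proof block follows. The intended justification is purely by citation, exactly parallel to the two-line proof of the analogous Proposition~\ref{entropie-BM}: the equality $P^g(F)=h_{KS}(m_F,g)+\int F\,dm_F$ is \cite[Th.~6.1]{PPS} (equilibrium state), and the equality $h^{loc}_\Gamma(m_F,g)=P^g(F)-\int F\,dm_F$ is the computation in \cite[Prop.~3.16]{PPS} (Gibbs property for the small dynamical balls) combined with the ergodic theorem.

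Your argument is correct and is precisely the unpacking of that second citation. The lower Gibbs bound plus Birkhoff along return times to a compact set is exactly how one reads off $h^{loc}_\Gamma(m_F,g)\le P^g(F)-\int F\,dm_F$ from \cite[Prop.~3.16]{PPS}; the reverse inequality you take from (\ref{entropies-different}) is the right shortcut. Your flagged obstacle --- ergodicity and conservativity of a \emph{finite} $m_F$ --- is genuine but is part of the standard package: finiteness forces divergence of the Poincar\'e series with potential at its critical exponent, whence by the Hopf--Tsuji--Sullivan dichotomy for Gibbs states (\cite[Ch.~5]{PPS}, \cite{Roblin}) the measure is ergodic and conservative, and in particular unique. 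So nothing is missing; you have simply written out what the paper leaves to the references. (One cosmetic point: the displayed ``$P^g(m_F)$'' in the statement is a typo for $P^g(F)$, which you silently corrected.)
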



\subsection{Being a Gibbs measure does not depend on the metric}

\begin{theo}\label{theo:Gibbs} Let $(M,g_i)$ be two admissible metrics 
with pinched negative curvature and bounded derivatives of the curvature on $M$. 
Let $F:S^{g_1}M\to \bbR$ be a  
H\"older map, and $m_F^{g_1}$ the associated Gibbs measure. 
We assume 
$m_F^{g_1}$ ergodic and conservative. 
Let $\mu_F^{g_1}$ be the associated current on $\partial^2\widetilde{M}$. 
Let  $m^{g_2}_{\mu_F^{g_1}}$ be the $g_2$-invariant measure
associated to the same current. 

Then $m^{g_2}_{\mu_F^{g_1}}$ is also ergodic and conservative, and 
satisfies the Gibbs property (\ref{Gibbs}) for 
the H\"older potential 
$$
G=\left(F-P^{g_1}(F)\right)\circ  \Psi^{g_2\to g_1}  \times\mathcal{E}^{g_2\to g_1}  \,.
$$
Moreover, $P^{g_2}(G)=0$. 
In other words,   for all compact subsets $K\subset S^{g_2}M$ and $\e>0$
there exists $C>0$ such that for all $w\in K$   and $S>0$ with $g^Sw\in K$, we have
$$
\frac{1}{C}\,e^{\int_0^S G(g_2^s w)ds } \,\le \,m_{\mu_F^{g_1}}^{g_2}(B^{g_2}_\Gamma(w,S,\e)) 
\,\le \, C e^{\int_0^S G(g_2^s w)ds }\,.
$$  

If we assume moreover that the measure $m_F^{g_1}$ is finite, 
and is therefore the  equilibrium 
measure associated to $F$, then $m_{\mu_F^{g_1}}^{g_2}/\|m_{\mu_F^{g_1}}^{g_2} \|$ 
is the  equilibrium measure associated to $G$.  
\end{theo}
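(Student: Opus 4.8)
The plan is to transport the Gibbs inequality \eqref{Gibbs} from $S^{g_1}M$ to $S^{g_2}M$ via the Morse correspondence $\Psi^{g_2\to g_1}$, using the comparison of dynamical balls from Corollary \ref{coro:comparison} and the change-of-integral formula from Proposition \ref{prop:Change-Var-Flow}. First I would fix a compact set $K\subset S^{g_2}M$ and $\e>0$, take $w\in K$ with $g_2^Sw\in K$, and set $v=\Psi^{g_2\to g_1}(w)\in S^{g_1}M$; since $\Psi^{g_2\to g_1}$ moves footpoints a bounded amount (the bound $C_3(g_2,g_1)$), both $v$ and $g_1^{T}v$ lie in a fixed enlargement $K'$ of $K$, where $T$ is chosen so that $S=\calB^{g_1}_{w_+^{g_2}}(\pi w,\pi g_2^S w)$, i.e. $T$ and $S$ are related by the reparametrization cocycle of Lemma \ref{lem:Morse-Psi}. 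Corollary \ref{coro:comparison} (applied with the roles of $g_1,g_2$ exchanged) sandwiches $\Psi^{g_2\to g_1}(B^{g_2}(w,S,\e))$ between two $g_1$-dynamical balls $B^{g_1}(v,T,\e')$ and $B^{g_1}(v,T+C,\e)$ with comparable parameters; since the two measures correspond to the same current, $m^{g_2}_{\mu_F^{g_1}}(B^{g_2}(w,S,\e))$ is comparable, up to a factor $e^{\pm c(K',\e)}$ coming from the boundedness of $\calE^{g_2\to g_1}$ on small balls and from the Jacobian in Proposition \ref{exact-change-variable}, to $m_F^{g_1}$ of the corresponding $g_1$-ball.

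Then I would plug in the Gibbs property of $m_F^{g_1}$: up to the compact-set constant,
$$
m_F^{g_1}\bigl(B^{g_1}(v,T,\e)\bigr)\asymp \exp\Bigl(\int_0^T (F-P^{g_1}(F))(g_1^t v)\,dt\Bigr).
$$
By Proposition \ref{prop:Change-Var-Flow} applied to the (bounded on $K'$, Hölder) potential $F-P^{g_1}(F)$, and using $S=\calB^{g_1}_{w_+^{g_2}}(\pi w,\pi g_2^S w)=\int_0^S\calE^{g_2\to g_1}(g_2^s w)\,ds$ together with $v=\Psi^{g_2\to g_1}(w)$,
$$
\int_0^T (F-P^{g_1}(F))(g_1^t v)\,dt=\int_0^S \bigl(F-P^{g_1}(F)\bigr)\circ\Psi^{g_2\to g_1}(g_2^s w)\,\calE^{g_2\to g_1}(g_2^s w)\,ds+O(1)=\int_0^S G(g_2^s w)\,ds+O(1),
$$
the $O(1)$ depending only on $K'$, $\e$, $g_1$, $g_2$. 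Absorbing all bounded error terms into a single constant $C=C(K,\e)$ yields exactly the claimed two-sided bound, which says $m^{g_2}_{\mu_F^{g_1}}$ satisfies the Gibbs property for $G$ with $P^{g_2}(G)=0$ (the normalization $P^{g_2}(G)=0$ being forced: a measure satisfying such a two-sided bound with a constant pressure term $p$ in the exponent must have $p=P^{g_2}(G)$, by the characterization of Gibbs/equilibrium states in \cite{PPS}, and the shape above corresponds to $p=0$). Hölder continuity of $G$ is immediate since $F$, $\Psi^{g_2\to g_1}$ and $\calE^{g_2\to g_1}$ are all Hölder (the latter by \cite[thm 7.3]{PPS}).

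For ergodicity and conservativity of $m^{g_2}_{\mu_F^{g_1}}$, I would simply invoke Proposition \ref{easy-comparison}, parts 2 and 3: these properties depend only on the current $\mu_F^{g_1}$, not on the admissible metric. Finally, if $m_F^{g_1}$ is finite, then by Corollary \ref{prop:transformation-mass} so is $m^{g_2}_{\mu_F^{g_1}}$; a finite measure satisfying the Gibbs property \eqref{Gibbs} for a Hölder potential with pressure $0$ is, after normalization, the unique equilibrium state for that potential by \cite[Th.~6.1]{PPS}, giving the last assertion. The main obstacle I expect is bookkeeping: keeping the dependence of all the error constants uniform over $K$ (not over individual orbits) while juggling the three parameter changes $T\leftrightarrow S$, $\e\leftrightarrow\e'$, and the additive shift $C$ in Corollary \ref{coro:comparison}, and making sure the enlargement $K'$ on which $F$ is controlled is chosen once and for all before the limsup/compact-set supremum is taken. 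The identification $P^{g_2}(G)=0$ also deserves a clean argument rather than being asserted; the cleanest route is that any invariant probability measure $m$ satisfies $h_{KS}(m,g_2)+\int G\,dm\le 0$ with equality for $m^{g_2}_{\mu_F^{g_1}}/\|\cdot\|$, which follows by transporting the variational inequality for $P^{g_1}(F)$ through the same correspondence and Abramov's formula (Remark \ref{rem:ChangeMass}).
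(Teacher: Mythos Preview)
Your proposal is correct and follows essentially the same route as the paper: transport the Gibbs inequality through the Morse correspondence using Corollary~\ref{coro:comparison} and Proposition~\ref{prop:Change-Var-Flow}, invoke Proposition~\ref{easy-comparison} for ergodicity/conservativity, and deduce $P^{g_2}(G)=0$ by pushing the variational principle through the change-of-mass and change-of-entropy formulas (exactly your ``cleanest route'' at the end). Two small points: the paper swaps the roles of $g_1$ and $g_2$ at the outset purely to align notation with how Corollary~\ref{coro:comparison} and Proposition~\ref{prop:Change-Var-Flow} were stated, and in your write-up the relation between $S$ and $T$ is mistyped---you want $T=\calB^{g_1}_{w_+^{g_2}}(\pi w,\pi g_2^S w)=\int_0^S\calE^{g_2\to g_1}(g_2^s w)\,ds$, not $S=\ldots$.
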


\begin{rema}\rm Reversing the role of $g_1$ and $g_2$, we observe that 
the same result holds with the potential $H=\left((F-P^{g_1}(F) )\times (\mathcal{E}^{g_1\to g_2} )^{-1}\right) \circ \left(\Psi^{g_1\to g_2}\right)^{-1}$. 
Therefore, they must be cohomologous. 
\end{rema}
  
\begin{proof}  Conservativity and ergodicity depend only on the current at infinity and not on the (admissible) metric, as said in Proposition \ref{easy-comparison}.

Gibbs property for the potential $G$ follows from Corollary \ref{coro:comparison}. 
Let us explain it more in details. 
We stated Theorem \ref{theo:Gibbs} in the most natural way, 
starting from $g_1$ and going to $g_2$, but in view of all the
 statements proved above that we shall use, we will reverse the role of $g_1$ and $g_2$, 
$F$ and $G$, in the proof below. 
Assume that $m^{g_2}_G$ is a Gibbs measure w.r.t. the potential $G$ on $S^{g_2}M$, 
let $\mu=\mu^{g_2}_G$ be its current at infinity, and let us prove that $m^{g_1}_\mu$ 
is a Gibbs measure w.r.t. the potential $\ds F=\left(G- P^{g_2}(G)\right)\circ \Psi^{g_1\to g_2}\times \mathcal{E}^{g_1\to g_2}$.

First choose some compact set $K^{g_1}\subset S^{g_1}M$ and some $\epsilon>0$. 
Let $v\in K^{g_1}$ and $T>0$ such that $g^Tv\in K^{g_1}$. Define a compact set $K^{g_2}$ as the $C$-neighbourhood of $\Psi^{g_1\to g_2}(K^{g_1})\cup (\Psi^{g_2\to g_1})^{-1}K^{g_1}$, where $C$ is given by Corollary \ref{coro:comparison}. 

We will use Corollary \ref{coro:comparison} and first part of Proposition \ref{prop:Change-Var-Flow}, and the fact 
that $m^{g_2}_\mu=\Psi^{g_1\to g_2}_*(\mathcal{E}^{g_1\to g_2}\times m^{g_1}_\mu)$. 

As $\mathcal{E}^{g_1\to g_2}$ is continuous, it is uniformly continuous on $K^{g_1}$ so that for all 
$v\in K^{g_1}$ and $u\in B^{g_1}(v,\epsilon)$, $\mathcal{E}^{g_1\to g_2}(u)=e^{\pm c(K^{g_1},\epsilon)}\mathcal{E}^{g_1\to g_2}(v)$. We deduce that 
\begin{eqnarray*}
\frac{e^{-c(K^{g_1},\epsilon)}}{\mathcal{E}^{g_1\to g_2}(v)}\, m^{g_2}_\mu(\Psi^{g_1\to g_2}(B^{g_1}  (v,T,\epsilon))&\le & \
m^{g_1}_\mu\left(B^{g_1} (v,T,\epsilon) \right)\\
&\le & \frac{e^{  c(K^{g_1},\epsilon)}}{\mathcal{E}^{g_1\to g_2}(v) }\,m^{g_2}_\mu(\Psi^{g_1\to g_2}(B^{g_1}_\Gamma(v,T,\epsilon))\,.
\end{eqnarray*}

Now, using Corollary \ref{coro:comparison}, with $w=\Psi^{g_1\to g_2}v$, 
and $S=\calB^{g_2}_{v_+^{g_1}}(\pi(v),\pi(g_1^Tv))$, we get
\begin{eqnarray*}
\frac{e^{-c(K^{g_1},\epsilon)}}{\mathcal{E}^{g_1\to g_2}(v)}\,
 m^{g_2}_\mu( B^{g_2} (w:S+C,C,\epsilon)) &\le &
m^{g_1}_\mu(B^{g_1} (v,T,\epsilon)\\
&\le& \frac{e^{  c(K^{g_1},\epsilon)}}{\mathcal{E}^{g_1\to g_2}(v) }
m^{g_2}_\mu( B^{g_2} (w,S,\epsilon'))\,.
\end{eqnarray*}
As $m_\mu^{g_2}$ is a Gibbs measure, and $w$, $g_2^Sw$, but also $g_2^{-C}w$ 
and $g_2^{S+C}w$  belong to $K^{g_2}$, there exists a constant 
$C(G,K^{g_2},\epsilon')$ coming from the Gibbs property, such that 
\begin{eqnarray*}
\frac{e^{-c(K^{g_1},\epsilon)}}{\mathcal{E}^{g_1\to g_2}(v)}\,
\frac{e^{\int_{-C}^{S+C}(G-P^{g_2}(G))(g_2^s w)\,ds}}{C(G,K^{g_2},\epsilon')} &\le &
m^{g_1}_\mu(B^{g_1} (v,T,\epsilon)\\
&\le& \frac{e^{  c(K^{g_1},\epsilon)}}{\mathcal{E}^{g_1\to g_2}(v) }
C(G,K^{g_2},\epsilon') e^{\int_{0}^{S }(G-P^{g_2}(G))(g_2^s w)\,ds} \,.
\end{eqnarray*}
As $G$ is (H\"older) continuous, it is bounded on $K^{g_2}$, so that the integral 
$\int_{-C}^{S+C}(G-P^{g_2}(G))(g_2^s w)\,ds$ is, up to a constant $c$, uniformly close to 
$\int_0^S(G-P^{g_2}(G))(g_2^s w)\,ds$. 
The next ingredient is Proposition \ref{prop:Change-Var-Flow}, which gives 
\begin{eqnarray*}
\frac{e^{-c(K^{g_1},\epsilon)}}{\mathcal{E}^{g_1\to g_2}(v)}\,
\frac{e^{-c}}{C(G,K^{g_2},\epsilon')}e^{\int_{0}^{T}F(g_1^t w)\,dt}&\le&
m^{g_1}_\mu(B^{g_1} (v,T,\epsilon)\\
&\le& \frac{e^{  c(K^{g_1},\epsilon)}}{\mathcal{E}^{g_1\to g_2}(v) }
C(G,K^{g_2},\epsilon') e^{\int_{0}^{T } F(g_1^t w)\,dt} \,,
\end{eqnarray*}
with $F=(G-P^{g_2}(G))\circ\Psi^{g_1\to g_2}\times\mathcal{E}^{g_1\to g_2}$. 
It is exactly the Gibbs property for $m^{g_1}_\mu$ w.r.t. $F$. \\

It remains to show that $P^{g_1}(F)=0$. 
To simplify notations, let us assume that $P^{g_2}(G)=0$. 
Let $\rho$ be any geodesic current on $\partial^2\tilde M$. 
By definition, 
$$
P^{g_1}(F)=\sup_{\rho} \left(h_{KS}(m_\rho^{g_1},g_1)+\int_{S^{g_1}M} F dm^{g_1}_\rho\right)\,,$$
the supremum being taken over all currents $\rho$ such that $m^{g_1}_\rho$ is an invariant probability measure. 
The change of mass and change of entropy (Corollary \ref{prop:transformation-mass}  
and Theorem \ref{entropy-transformation}) give
$$
P^{g_1}(F)=
\sup_{\rho} I_\rho(g_2,g_1)\left(h(m_\rho^{g_2}/\|m_\rho^{g_2}\|,g_2)+\int Gdm_\rho^{g_2}/\|m_\rho^{g_2}\|\right)\le 0\,.
$$
The same computations with $\rho=\mu=\mu_G^{g_2}$ give $P^{g_1}(F)=0$. 
\end{proof}


\subsection{Length spectrum and change of metrics }

Let $g_1$ and $g_2$ be two quasi-isometric negatively curved metrics. 
There is a particular case where the above results have an easy but striking illustration 

\begin{coro}\label{coro:Gibbs} Let $(M,g_i)$ be two quasi-isometric complete 
negatively curved metrics on the same connected manifold $M$.
 Assume that the Bowen-Margulis measure of $g_1$ is ergodic and conservative, 
and let $\mu_{BM}^{g_1}$
be the associated geodesic current. 
Then the measure $m_{\mu_{BM}^{g_1}}^{g_2}$ is also ergodic and conservative. 
It is a Gibbs measure associated with the potential $G=-h_{top}(g_1)\mathcal{E}^{g_2\to g_1}$. 

Moreover, for all primitive hyperbolic elements $\gamma \in\Gamma$, if $w_\gamma$ is a
 periodic vector of $S^{g_2}M$ associated to $\gamma$, 
for all $\varepsilon>0$ there exists $C\ge 1$ such that for all $T>0$, we have
$$
\frac{1}{C} e^{-h_{top}(g_1)T \frac{\ell^{g_1}(\gamma)}{\ell^{g_2}(\gamma)} }\,\le \,
 m_{\mu_{BM}^{g_1}}^{g_2}\left(B^{g_2}(w_\gamma,T,\varepsilon)\right) \,\le\, C e^{-h_{top}(g_1)T \frac{\ell^{g_1}(\gamma)}{\ell^{g_2}(\gamma)} }\,.
$$
\end{coro}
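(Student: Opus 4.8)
The plan is to deduce Corollary~\ref{coro:Gibbs} directly from Theorem~\ref{theo:Gibbs} applied to the Bowen--Margulis potential, together with the geodesic stretch identities from Lemma~\ref{lem:GeodStretch} and the periodic orbit length comparison of Proposition~\ref{prop:Change-Var-Flow}. First I would take $F\equiv 0$ on $S^{g_1}M$: since the Bowen--Margulis measure $m^{g_1}_{BM}$ is the equilibrium state for the zero potential, with pressure $P^{g_1}(0)=h_{top}(g_1)=\delta(g_1)$ by Theorem~\ref{th:OP}, it is (by the Patterson--Sullivan construction recalled in Section~\ref{BM}) a Gibbs measure for $F\equiv 0$. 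Its associated current is by definition $\mu_{BM}^{g_1}$, and we assume it ergodic and conservative. Theorem~\ref{theo:Gibbs}, with the roles of $g_1$ and $g_2$ as stated there, then tells us that $m^{g_2}_{\mu^{g_1}_{BM}}$ is ergodic and conservative and satisfies the Gibbs property for the potential
$$
G=\bigl(F-P^{g_1}(F)\bigr)\circ\Psi^{g_2\to g_1}\times\mathcal{E}^{g_2\to g_1}=-h_{top}(g_1)\,\mathcal{E}^{g_2\to g_1},
$$
with $P^{g_2}(G)=0$. This is exactly the first assertion of the corollary.

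Next I would unwind the Gibbs inequality \eqref{Gibbs} for $G$ along a periodic orbit. Let $\gamma\in\Gamma$ be primitive hyperbolic and $w_\gamma\in S^{g_2}M$ a tangent vector to the closed $g_2$-geodesic of period $\ell^{g_2}(\gamma)$; fix $\varepsilon>0$ and a compact set $K$ containing the orbit. For $T>0$ write $T=n\ell^{g_2}(\gamma)+r$ with $0\le r<\ell^{g_2}(\gamma)$; then $g_2^T w_\gamma$ stays in a fixed compact set (the orbit itself is compact), so the Gibbs property gives a constant $C_1=C_1(\gamma,\varepsilon)$ with
$$
\frac1{C_1}\exp\!\Bigl(\int_0^T G(g_2^s w_\gamma)\,ds\Bigr)\le m^{g_2}_{\mu^{g_1}_{BM}}\bigl(B^{g_2}(w_\gamma,T,\varepsilon)\bigr)\le C_1\exp\!\Bigl(\int_0^T G(g_2^s w_\gamma)\,ds\Bigr).
$$
It then remains to evaluate $\int_0^T G(g_2^s w_\gamma)\,ds=-h_{top}(g_1)\int_0^T\mathcal{E}^{g_2\to g_1}(g_2^s w_\gamma)\,ds$. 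Over one period, the integral $\int_0^{\ell^{g_2}(\gamma)}\mathcal{E}^{g_2\to g_1}(g_2^s w_\gamma)\,ds$ equals $\ell^{g_1}(\gamma)$: indeed, by Lemma~\ref{lem:GeodStretch} (or the first identity in Proposition~\ref{prop:Change-Var-Flow} applied to $F\equiv1$ along the closed orbit), $\int_0^{\ell^{g_2}(\gamma)}\mathcal{E}^{g_2\to g_1}(g_2^s w_\gamma)\,ds=\calB^{g_1}_{(w_\gamma)_+^{g_2}}(\pi w_\gamma,\pi g_2^{\ell^{g_2}(\gamma)}w_\gamma)=\calB^{g_1}_{(w_\gamma)_+^{g_2}}(\pi w_\gamma,\gamma\cdot\pi w_\gamma)$, and since $w_\gamma$ is periodic and $\gamma$ translates along the common $g_1$- and $g_2$-axis, this Busemann displacement is exactly the $g_1$-translation length $\ell^{g_1}(\gamma)$ (no error term, because the endpoints lie on the same geodesic). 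Summing over the $n$ full periods and bounding the fractional part $r$-contribution by a constant $C_2=\ell^{g_2}(\gamma)\sup_{S^{g_2}M}|\mathcal{E}^{g_2\to g_1}|$ (the geodesic stretch is bounded, cf.\ Lemma~\ref{lem:Ineq-Knieper}), I get
$$
\Bigl|\int_0^T\mathcal{E}^{g_2\to g_1}(g_2^s w_\gamma)\,ds-T\,\frac{\ell^{g_1}(\gamma)}{\ell^{g_2}(\gamma)}\Bigr|\le C_2+\Bigl|n\ell^{g_2}(\gamma)\tfrac{\ell^{g_1}(\gamma)}{\ell^{g_2}(\gamma)}-T\tfrac{\ell^{g_1}(\gamma)}{\ell^{g_2}(\gamma)}\Bigr|\le C_3(\gamma),
$$
a constant independent of $T$. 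Feeding this into the displayed Gibbs bounds and absorbing $e^{\pm h_{top}(g_1)C_3(\gamma)}$ into the constant yields
$$
\frac1C e^{-h_{top}(g_1)T\frac{\ell^{g_1}(\gamma)}{\ell^{g_2}(\gamma)}}\le m^{g_2}_{\mu^{g_1}_{BM}}\bigl(B^{g_2}(w_\gamma,T,\varepsilon)\bigr)\le C\,e^{-h_{top}(g_1)T\frac{\ell^{g_1}(\gamma)}{\ell^{g_2}(\gamma)}},
$$
which is the claimed estimate.

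The only genuinely delicate point is the exactness of the identity $\int_0^{\ell^{g_2}(\gamma)}\mathcal{E}^{g_2\to g_1}\,ds=\ell^{g_1}(\gamma)$ along a closed orbit, i.e.\ that the $O(1)$ error in Lemma~\ref{lem:GeodStretch} disappears here. I expect this to be the main obstacle, or rather the step requiring the most care: it works precisely because $w_\gamma$ is a periodic vector whose $g_2$-orbit and $g_1$-orbit share the same pair of endpoints at infinity (the fixed points of $\gamma$), so the triangle inequality slack in Fact~\ref{fact} collapses — the points $x_T$ and $z_T$ coincide once $T$ is an integer multiple of the period, as both the $g_1$- and $g_2$-geodesic and the relevant horosphere all pass through the $\Gamma$-translate $\gamma\cdot\pi w_\gamma$. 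Alternatively one can avoid Lemma~\ref{lem:GeodStretch} entirely and argue via the weak convergence of periodic measures (Lemma~\ref{lem:narrow} and Lemma~\ref{lem:lengths}), but the direct cocycle computation is cleaner and self-contained. Everything else is a routine bookkeeping of constants.
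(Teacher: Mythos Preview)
Your proof is correct and follows essentially the same approach as the paper: apply Theorem~\ref{theo:Gibbs} with $F\equiv 0$, then evaluate $\int_0^T\mathcal{E}^{g_2\to g_1}(g_2^s w_\gamma)\,ds$ by decomposing $T=n\ell^{g_2}(\gamma)+r$ and computing the integral over one period. One small caveat: your geometric explanation that ``$x_T$ and $z_T$ coincide'' is not literally true (the basepoint $\pi w_\gamma$ lies on the $g_2$-axis, not on the $g_1$-axis of $\gamma$), but your Busemann cocycle computation is the right argument---the identity $\calB^{g_1}_{\xi}(\pi w_\gamma,\gamma\cdot\pi w_\gamma)=\ell^{g_1}(\gamma)$ holds for \emph{any} basepoint by $\Gamma$-equivariance of the Busemann function and the fact that $\gamma$ fixes $\xi$, which is exactly what the paper encodes by writing $\int_0^{\ell^{g_2}(\gamma)}\mathcal{E}^{g_2\to g_1}\,ds=\ell^{g_2}(\gamma)\cdot e^{g_2\to g_1}(\gamma)$.
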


\begin{proof} It is an immediate application of Theorem \ref{theo:Gibbs} with $F=0$. 
First write $T$ as $T=n \ell^{g_2}(\gamma)+r$, with $0\le r<\ell^{g_2}(\gamma)$. 
The only thing to notice is that 
$\int_{0}^{\ell^{g_2}(\gamma)} \mathcal{E}^{g_2\to g_1}(g_2^s w_\gamma)\,ds=
 \ell^{g_2}(\gamma)\times e^{g_2\to g_1}(\gamma)$
so that $$
-\int_0^T h_{top}(g_1)\mathcal{E}^{g_2\to g_1}(g_2^s v_\gamma)\,ds= 
-h_{top}(g_1) \times T\times \frac{\ell^{g_1}(\gamma)}{\ell^{g_2}(\gamma)}\pm \quad\mbox{constant}\,,
$$
the error term in the above inequality being smaller than $h_{top}(g_1)\ell^{g_2}(\gamma)\|\mathcal{E}^{g_2\to g_1}\|_\infty$. 
\end{proof}


\section{Convergence of geodesics, Busemann functions  and invariant measures}\label{sec:ConvGeod}

In this section, we study the continuity of geodesics, Busemann functions, 
 and Bowen-Margulis measures under a Lipschitz perturbation 
of the metric with uniform negative curvatures. 

Let $(g_\epsilon)_{-1 \leq \epsilon \leq 1}$ be a family of metrics on $\tilde M$ with sectional curvatures satisfying $K_{g_\epsilon}\leq -a^2$, such that $\forall \epsilon>0$, at all $x\in \tilde M$, $\ds e^{-\epsilon} g_0 \leq g_\epsilon \leq e^\epsilon g_0$.

We first show that  the $g_\epsilon$-geodesic between two points at infinity converge uniformly in the Hausdorff topology of $\tilde M$ to the $g_0$-geodesic with same extremities, and that the Busemann functions of $g_\epsilon$ converge uniformly on compact sets to the Busemann functions of $g_0$.

When the variation of metrics is continuous in $\calC^1$-topology, 
this also implies that the Morse-correspondances $\Phi^{g_0\to g_\epsilon}$ and $\Psi^{g_0\to g_\epsilon}$ converge to the identity uniformly on compact sets in the $\calC^0$-topology of $S^g\tilde M$, and that the geodesic stretch $\mathcal{E}^{g_0\to g_\e}$ converges to $1$.  

Eventually, we show that under suitable assumptions, 
the Bowen-Margulis measures vary continuously in the weak-* topology.

\subsection{Convergence of geodesics and Busemann functions }

The following lemma is a   classical and very useful consequence of the uniform upper bound on the  curvature.

\begin{lemm}\label{lem:Control-Hyperb}
Let $a>0$ and $(\tilde M,g)$ be a complete simply connected manifold 
with sectional curvatures satisfying $K_g\leq -a^2$. 
\begin{enumerate}
\item For all $C>0$, all $\xi \in \bd \tilde M$,
  $x,y\in \tilde M$ with $d^g(x,y)\leq C$, and $t\geq C$, if $x_t = \gamma_{x,\xi}(t)$, we have
$$
\left|\calB^g_\xi(x,y) - \left(d^g(x,x_t) - d^g(y,x_t)\right)\right|\le 2Ce^{-at}\,.
$$ 
\item For all $T, K, \alpha>0$,  
 for all $R\geq R_0=T-\frac{1}{a}\ln\frac{\alpha}{4Ke^K }$, 
if $(\gamma_1(t))_{t\in \bbR}$ and $(\gamma_2(t))_{t\in \bbR}$ are $g$-geodesics with 
$$
d^g(\gamma_1(-R), \gamma_2(-R))\leq K \quad \mbox{and} \quad d^g(\gamma_1(R), \gamma_2(R))\leq K\,,
$$
then  
for all $t\in [-T, T]$,
$$
d^g(\gamma_1(t), \gamma_2 ) \leq \alpha\,.
$$
\end{enumerate}
\end{lemm}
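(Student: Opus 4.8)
The plan is to treat both statements as standard exercises in comparison geometry under the upper bound $K_g\le -a^2$, the common engine being the exponential contraction of asymptotic geodesic rays (and, for (2), its finite form: fellow-traveling of geodesics with nearby endpoints).

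For (1), I would first rewrite the quantity to be estimated. Since $x_t$ lies on the ray $\gamma_{x,\xi}$ one has $\calB^g_\xi(x,x_t)=d^g(x,x_t)=t$, hence $\calB^g_\xi(x,y)-\bigl(d^g(x,x_t)-d^g(y,x_t)\bigr)=\calB^g_\xi(x_t,y)+d^g(y,x_t)=d^g(y,x_t)-\calB^g_\xi(y,x_t)$. This is $\ge 0$ by the triangle inequality in the definition of the Busemann function (equivalently, $s\mapsto s-d^g(y,\gamma_{x,\xi}(s))$ is nondecreasing and converges to $\calB^g_\xi(x,y)$). For the upper bound I would introduce the point $y'$ on the ray $\gamma_{y,\xi}$ lying on the horosphere centered at $\xi$ through $x_t$; the cocycle property of $\calB^g_\xi$ gives $d^g(y,y')=t-\calB^g_\xi(x,y)\in[t-C,t+C]$, so since $t\ge C$ the point $y'$ is genuinely on the forward ray, and the triangle inequality reduces everything to $d^g(x_t,y')\le 2Ce^{-at}$. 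This last bound is precisely the exponential contraction of the asymptotic rays $\gamma_{x,\xi}$ and $\gamma_{y,\xi}$: at the horosphere through $x$ the corresponding points of the two rays are at distance $\le d^g(x,y)+\abs{\calB^g_\xi(x,y)}\le 2C$, and $K_g\le -a^2$ contracts this distance exponentially at rate $a$ over the Busemann-time $t$ separating the two horospheres, which is the classical Heintze--Im Hof estimate (see e.g.\ \cite{HIH77}).

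For (2), set $f(t)=d^g(\gamma_1(t),\gamma_2(t))$ for $t\in[-R,R]$. Since $K_g\le 0$, $f$ is convex, so $f(t)\le\max(f(-R),f(R))\le K$ throughout. To upgrade this to exponential decay I would use the strict bound $K_g\le -a^2$: cut the geodesic quadrilateral $\gamma_1(-R),\gamma_1(R),\gamma_2(R),\gamma_2(-R)$ along the diagonal $[\gamma_1(-R),\gamma_2(R)]$, and in each of the two resulting triangles compare with $\mathbb{H}^2$ of curvature $-a^2$; an elementary hyperbolic trigonometry computation shows that a point on a side at distance $t+R$ (resp.\ $R-t$) from one endpoint, the two opposite vertices being $\le K$ apart, lies within $2Ke^{K}e^{-a\min(t+R,\,R-t)}$ of the opposite side (the factor $e^K$ absorbing the $\cosh/\sinh$ of the $\le K$ separations). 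Adding the two contributions and using $d^g(\gamma_1(\pm R),\gamma_2(\pm R))\le K$ yields $f(t)\le 4Ke^{K}e^{-a(R-\abs{t})}$; for $\abs{t}\le T$ and $R\ge R_0=T-\frac1a\ln\frac{\alpha}{4Ke^K}$ one has $R-\abs{t}\ge R-T\ge R_0-T=\frac1a\ln\frac{4Ke^K}{\alpha}$, whence $f(t)\le 4Ke^Ke^{-a(R_0-T)}=\alpha$.

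Neither part is genuinely deep; the only care needed is in pinning down the comparison estimates with the stated constants. In (1) the sole subtlety is checking that the hypothesis $t\ge C$ is exactly what makes $y'$ land on the forward ray. In (2) the point to watch is that $\gamma_1,\gamma_2$ need not be asymptotic, so one cannot directly invoke ``asymptotic rays converge''; this is why the argument must go through the quadrilateral and two hyperbolic comparison triangles (alternatively, one can integrate the Jacobi-field/Rauch comparison estimates for the distance-to-a-geodesic function directly, which I would regard as the cleanest route to explicit constants). I expect this bookkeeping --- producing the clean $e^{-a(R-\abs{t})}$ decay with an honest constant --- to be the main, though routine, obstacle.
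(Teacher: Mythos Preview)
Your argument for part (1) is correct and essentially identical to the paper's: both reduce to the Heintze--Im Hof contraction of asymptotic rays, differing only in whether one first normalizes via the point $x'$ on $[x,\xi)$ with $\calB_\xi(x',y)=0$ (the paper) or goes straight to the point $y'$ on $[y,\xi)$ on the horosphere through $x_t$ (you).

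For part (2) your strategy --- cutting the quadrilateral along the diagonal $[\gamma_1(-R),\gamma_2(R)]$ and comparing in each of the two triangles --- is exactly the paper's. The paper introduces points $y_s$ on the diagonal and $z_s$ on $\gamma_2$, bounds $d^g(x_s,y_s)$ and $d^g(y_s,z_s)$ separately by the triangle comparison you describe, and concludes $d^g(\gamma_1(t),\gamma_2)\le d^g(x_{R+t},z_{R+t})\le 2Ke^K\bigl(e^{-a(R+t)}+e^{-a(R-t)}\bigr)$.

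However, your final claim that $f(t)=d^g(\gamma_1(t),\gamma_2(t))\le 4Ke^{K}e^{-a(R-\abs{t})}$ is \emph{false}. Take $\gamma_2(t)=\gamma_1(t+c)$ with $0<c\le K$: then $f(\pm R)=c\le K$, but $f(t)\equiv c$ does not decay as $R\to\infty$. The two triangle estimates tell you that $\gamma_1(t)$ and $\gamma_2(t)$ are each exponentially close to the diagonal, but not to the \emph{same point} on the diagonal; their diagonal-projections can remain a distance of order $K$ apart. The lemma only asks for $d^g(\gamma_1(t),\gamma_2)$, the distance to the \emph{set} $\gamma_2$, and this is what the diagonal argument actually delivers: $\gamma_1(t)$ is close to a specific diagonal point, which in turn is close to a specific point of $\gamma_2$ (generally not $\gamma_2(t)$). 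Replace your conclusion ``$f(t)\le\ldots$'' by ``$d^g(\gamma_1(t),\gamma_2)\le\ldots$'' and the proof goes through unchanged.
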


\begin{proof} We will omit the subscript $g$ in the proof.
Let us first prove 1.  

Assume $d(x,y)\leq C$. We can also assume that $\calB_\xi(x,y)\geq 0$. Denote by $x'$ the unique point on $[x,\xi)$ such that $\calB_\xi(x', y)=0$. 
By convexity of the horoball, $d(x,x')\leq C$ and $d(x',y)\leq C$. 
Let $x_s$ (resp. $y_s$) be the points on $ [x',\xi)$ (resp. $[y,\xi)$ 
at distance $s$ of $x'$ (resp. $y$).
 It follows from \cite{HIH77} that for all $s\ge C$, 
$$
d(x_s, y_s) \leq d(x',y) e^{-a s}\le Ce^{-as}\,.
$$
Observe also that $\left|\calB^g_\xi(x,y) - \left(d^g(x,x_s) - d^g(y, y_s )\right)\right|=|\calB_\xi(x_s,y_s)|\le d(x_s,y_s)$, 
so that $\left|\calB^g_\xi(x,y) - \left(d^g(x,x_s) - d^g(y, x_s )\right)\right|\le 2d(x_s,y_s)\le 2Ce^{-as}$. 
 
To prove 2, denote by $x_s$ the point of $[\gamma_1(-R),\gamma_1(R)]$ at distance $s$ from $\gamma_1(-R)$, 
$y_s$ the point of $ [\gamma_1(-R),\gamma_2(R)]$ at distance $t$ 
from $\gamma_1(-R)$ and distance say $d_s$ from $\gamma_2(R)$ and $z_s$ 
the point of $ [\gamma_2(-R),\gamma_2(R)]$ at distance  $d_s$ from $\gamma_2(R)$. 
Observe immediately that $|d_s-2R+s|\le K$. 

By the above, we have $d(x_s,y_s)\le  d(x_{2R},y_{2R})e^{-as}$. 
But elementary considerations in the triangle $(x_{2R},y_{2R},\gamma_{2}(R))$ lead to 
$$
d(x_{2R},y_{2R})=d(\gamma_1(R),y_{2R})\le d(\gamma_1(R),\gamma_2(R))+d(\gamma_2(R),y_{2R})\le 2 K\,.
$$
Thus $d(x_s,y_s)\le 2Ke^{-as}$. 

Similarly we get $d(y_s,z_s)\le 2Ke^{-ad_s}\le 2Ke^Ke^{-a(2R-s)}$.
We deduce that 
$$
d(x_s,\gamma_2)\le d(x_s,z_s)\le 2Ke^{K}(e^{-as}+e^{-a(2R-s)}\,.
$$
Now, choose $R_0=T-\frac{1}{a}\ln\frac{\alpha}{4Ke^K}$. 
For $t\in [-T,T]$, we have $\gamma_1(t)=x_{R+t}$ and $R+t\ge R_0-T$ and $2R-(R+t)\ge R_0-T$, 
so that 
$$
d(\gamma_1(t),\gamma_2)\le d(\gamma_1(t),z_{R+t})\le 4Ke^K e^{-a(R_0-T)}\le \alpha\,.
$$
\begin{figure}[ht!]\label{fig:dist }
\begin{center}
 \input{ 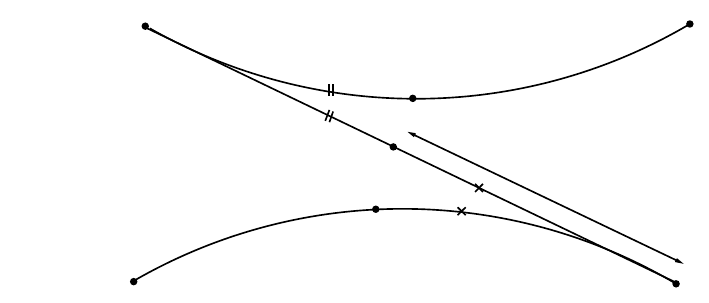_t} 
\caption{Proof of Lemma \ref{lem:Control-Hyperb} } 
\end{center}
\end{figure} 
\end{proof}

Let us now show that the $g_\e$-geodesic segments converge to 
the $g_0$-geodesic segments in the Hausdorff topology of $\tilde M$. 

\begin{prop}\label{prop:Conv-Geod1}
Let $g_0$ be a complete metric on $\tilde M$ with $K_{g_0}\leq 0$. 
For all $0< \e \le 1$ small enough let $g_\e$ be a complete metric on $\tilde M$ 
such that at all $x\in \tilde M$,  $\ds e^{-\epsilon} g_0 \leq g_\epsilon \leq e^\epsilon g_0$.

Then for all $x,y\in \tilde M$, the  minimizing $g_\e$-geodesic $\gamma_\e$ 
joining $x$ to $y$  is contained in the $D_\epsilon$-neighbourhood 
of the $g_0$-geodesic $[x,y]_0$ from $x$ to $y$, 
with $\ds D_\e \leq \min(\sqrt{\e }d^{g_0}(x,y),C(g_1,g_0))$.
\end{prop}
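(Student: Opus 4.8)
The plan is to bound the Hausdorff distance between $\gamma_\e$ and $[x,y]_0$ by two separate arguments, one giving the quasi-isometry bound $C(g_1,g_0)$ and one giving the bound $\sqrt{\e}\,d^{g_0}(x,y)$ that vanishes as $\e\to 0$. The first bound is immediate: since $e^{-\e}g_0\le g_\e\le e^\e g_0$, the identity map is a quasi-isometry between $(\tilde M,g_0)$ and $(\tilde M,g_\e)$, so $g_\e$-geodesics are $g_0$-quasi-geodesics with constants depending only on $\e\le 1$ (hence on $C_1(g_1,g_0)$), and by the Morse Lemma \cite[Th.1.7 p401]{BH99} they lie within a uniform distance $C(g_1,g_0)$ of the $g_0$-geodesic joining the same endpoints. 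This part requires no real work.

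For the $\e$-small bound, I would argue by comparing lengths. Let $L_0 = d^{g_0}(x,y) = \mathrm{length}_{g_0}([x,y]_0)$ and let $\gamma_\e$ be the $g_\e$-minimizing geodesic from $x$ to $y$. Since $g_\e \le e^\e g_0$ we have $\mathrm{length}_{g_\e}(\gamma_\e)\le \mathrm{length}_{g_\e}([x,y]_0)\le e^{\e/2}\,\mathrm{length}_{g_0}([x,y]_0)= e^{\e/2}L_0$; and since $g_\e\ge e^{-\e}g_0$ we get $\mathrm{length}_{g_0}(\gamma_\e)\le e^{\e/2}\,\mathrm{length}_{g_\e}(\gamma_\e)\le e^{\e}L_0$. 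So $\gamma_\e$ is a $g_0$-curve from $x$ to $y$ whose $g_0$-length exceeds the $g_0$-geodesic length by a factor at most $e^\e$, i.e. by an additive excess at most $(e^\e-1)L_0\le 2\e L_0$ for $\e$ small. The remaining step is the purely geometric fact that in a CAT(0) space (here $(\tilde M,g_0)$ with $K_{g_0}\le 0$), a curve from $x$ to $y$ whose length is at most $L_0+\eta$ stays within distance roughly $\sqrt{\eta L_0}$ of the geodesic $[x,y]_0$. Applying this with $\eta\le 2\e L_0$ yields a neighbourhood bound of order $\sqrt{\e}\,L_0$, and after adjusting the universal constant (absorbing it by shrinking $\e$, or noting that the precise statement only claims $D_\e\le \sqrt{\e}\,d^{g_0}(x,y)$ which can be arranged by reparametrizing the $\e$-scale) one obtains the claim.

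The main obstacle is the CAT(0) ``thin detour'' estimate: quantifying how far a nearly-length-minimizing path can stray from the geodesic. The clean way is to use convexity of the distance function $t\mapsto d^{g_0}(p,[x,y]_0)$ along $[x,y]_0$ together with a first-variation/Pythagorean comparison — if a point $p$ on $\gamma_\e$ is at distance $r$ from $[x,y]_0$, let $p'$ be its nearest-point projection; then $d^{g_0}(x,p)+d^{g_0}(p,y)\ge d^{g_0}(x,p')+d^{g_0}(p',y) + (\text{a definite gain of order } r^2/L_0)$ by the CAT(0) quadrilateral comparison, forcing the excess $\eta\gtrsim r^2/L_0$, hence $r\lesssim\sqrt{\eta L_0}\lesssim\sqrt{\e}\,L_0$. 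One must check this comparison is uniform (it is, since it only uses $K_{g_0}\le 0$, not a lower curvature bound), and take the maximum over $p\in\gamma_\e$. Everything else in the proof is routine length bookkeeping with the factors $e^{\pm\e/2}$.
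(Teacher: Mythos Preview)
Your proposal is correct and follows essentially the same approach as the paper: bound $\mathrm{length}_{g_0}(\gamma_\e)\le e^\e L_0$ by the metric comparison, then use the CAT(0) property of $(\tilde M,g_0)$ to convert the length excess $(e^\e-1)L_0$ at the farthest point $z\in\gamma_\e$ into the distance bound $D_\e\le\sqrt{\e}\,L_0$. The only cosmetic difference is that the paper carries out the CAT(0) step by directly computing the height of the Euclidean comparison triangle with base $L_0$ and legs $l_1+l_2\le e^\e L_0$ (maximized at $l_1=l_2=e^\e L_0/2$, giving $D_\e^2\le \tfrac{e^{2\e}-1}{4}L_0^2\le \e L_0^2$), whereas you phrase the same inequality as a Pythagorean projection estimate; both are the same computation.
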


\begin{proof} 
Let $g_0$ and $g_\e$ as above, and   $x,y\in \tilde M$. 
Set $L_0 = d^{g_0}(x,y)$ and $L_\e = d^{g_\e}(x,y)$. 
Let $\gamma_0 : [0, L_0]\to \tilde M$ and $\gamma_\e : [0, L_\e]\to \tilde M$ be minimizing geodesics 
from $x$ to $y$ respectively for $g_0$ and $g_\e$, 
parametrized with unit speed. Note that $\gamma_0$ is unique. 
Let $l\in [0, L_\e]$ be  such that
 $$\ds d^{g_0}(\gamma_\e(l), [x,y]_0) = \max_{t\in [0,L_\e]}d^{g_0}(\gamma_\e(l), [x,y]_0) = D_\e \,.$$ 
We call $z = \gamma_\e(l)\in \tilde M$. 
Consider the $g_0$-geodesic triangle with vertices $x, y, z$. 
Set $l_1 = d^{g_0}(x,z)$ and $l_2 = d^{g_0}(z,y)$. 


We have
\begin{equation}\label{eq:ContGeod}
l_1 \leq \int_0^l \norm{ \dot \gamma_\e(t)}_{g_0} dt \quad 
\mbox{and} \quad l_2 \leq \int_l^{L_\e} \norm{\dot \gamma_\e(t)}_{g_0} dt.
\end{equation}
Since $\ds e^{-\epsilon} g_0 \leq g_\epsilon \leq e^\epsilon g_0$, 
we have $\norm{\dot \gamma_\e(t)}_{g_0}\leq e^{\e/2}$ for all $t\in [0, L_\e]$ 
and $\norm{\dot \gamma_0(t)}_{g_\e}\leq e^{\e/2}$ for all $t\in [0, L_0]$. 
Therefore, by Equation (\ref{eq:ContGeod}),
$$
L_\e \leq \int_0^L \norm{\dot\gamma_0(t)}_{g_\e}dt \leq e^{\e/2} L \quad \mbox{ and } 
\quad l_1 + l_2 \leq \int_0^{L_\e}\norm{\gamma_\e(t)}_{g_0} dt \leq e^\e L.
$$

Since $K_{g_0}\leq 0$, the distance $d^{g_0}$ satisfies $CAT(0)$-triangle comparison property 
(cf \cite{BH99} p161)\,: $D_\e$ is less than the height $\bar D$ from 
$\bar z$ of the comparison triangle $(\bar x, \bar y, \bar z)$ in the Euclidean plane 
with side lengths $d^{eucl}(\bar x, \bar y) = L_0$, $d^{eucl}(\bar x, \bar z) = l_1$ 
and $d^{eucl}(\bar y, \bar z) = l_2$. Moreover, for all such Euclidean triangles 
with $l_1+l_2 \leq e^\e L_0$, the height $\bar D$ is maximal 
if and only if $\ds l_1 = l_2 = \frac{e^\e L_0}{2}$. 
Therefore,
$$
D_\e^2 \leq \bar D^2  \leq \frac{e^{2\e} L_0^2}{4} - \frac{L_0^2}{4} \leq \e L_0^2
$$
as soon as $e^{2\e}-1\leq 4\e$, which ends the proof of Proposition \ref{prop:Conv-Geod1}.
\end{proof}

Proposition \ref{prop:Conv-Geod1} together with Lemma \ref{lem:Control-Hyperb} 
imply that when the curvatures have a uniform negative upper bound, 
the complete geodesics on $\tilde M$ converge uniformly for 
the $g_0$-Hausdorff topology under a variation of the metric. 
Let $a>0$ be fixed.

\begin{prop}\label{prop:Conv-Geod2}
Let $(g_\epsilon)_{-1 <\epsilon <1}$ be a family of metrics on $\tilde M$ 
with sectional curvatures satisfying $K_{g_\epsilon}\leq -a^2$, 
such that $\forall \epsilon\in (-1, 1)$, at all $x\in \tilde M$, 
$\ds e^{-\epsilon} g_0 \leq g_\epsilon \leq e^\epsilon g_0$.
Then there exists $\alpha : (-1, 1)\to [0, +\infty)$, 
with $\ds \lim_{\e \to 0} \alpha(\e) = 0$, such that for all $\e\in (-1, 1)$ 
and all $(\eta, \xi)\in \bd^2 \tilde M$, the $g_\e$-geodesic 
with extremities $\eta$ and $\xi$ is contained in the $\alpha(\e)$-neighbourhood 
of the $g_0$-geodesic with extremities $\eta$ and $\xi$.
\end{prop}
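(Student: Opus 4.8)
The plan is to reduce the statement about complete geodesics with endpoints $(\eta,\xi)\in\bd^2\tilde M$ to the already-proven statement about geodesic \emph{segments} (Proposition \ref{prop:Conv-Geod1}) combined with the fellow-traveller control coming from the uniform upper curvature bound $K_{g_\e}\leq -a^2$ (part 2 of Lemma \ref{lem:Control-Hyperb}). The point is that a complete geodesic is an increasing union of segments, and segments that agree near their far endpoints must agree in the middle, quantitatively, in negative curvature.

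First I would fix $(\eta,\xi)\in\bd^2\tilde M$ and parametrize the $g_0$-geodesic $\gamma_0^{\eta\xi}$ with unit $g_0$-speed. For each $R>0$, pick points $x_R\in\tilde M$ on $\gamma_0^{\eta\xi}$ converging to $\eta$ and $y_R$ converging to $\xi$ as $R\to\infty$, say $x_R=\gamma_0^{\eta\xi}(-R)$, $y_R=\gamma_0^{\eta\xi}(R)$. By Proposition \ref{prop:Conv-Geod1}, the minimizing $g_\e$-geodesic $[x_R,y_R]_{\e}$ stays in the $C(g_1,g_0)$-neighbourhood (for $g_0$) of the segment $[x_R,y_R]_0\subset\gamma_0^{\eta\xi}$; here we only use the \emph{uniform} bound $D_\e\le C(g_1,g_0)$ independent of $R$, not the $\sqrt\e$ refinement. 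In particular the endpoints $x_R,y_R$ lie within bounded $g_0$-distance, hence within bounded $g_\e$-distance (by $e^{-\e}g_0\le g_\e\le e^{\e}g_0$), of the complete $g_\e$-geodesic $\gamma_\e^{\eta\xi}$ with endpoints $\eta,\xi$, uniformly in $R$.

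Next I would let $R\to\infty$. Fix $T>0$. Apply part 2 of Lemma \ref{lem:Control-Hyperb} with the metric $g_\e$, with $\gamma_1$ the complete $g_\e$-geodesic $\gamma_\e^{\eta\xi}$ and $\gamma_2$ the $g_\e$-geodesic carrying the segment $[x_R,y_R]_\e$ (extended to a complete geodesic): since $\gamma_\e^{\eta\xi}$ and $[x_R,y_R]_\e$ both have endpoints tending to $\eta$ and to $\xi$, for $R$ large their points at $g_\e$-parameter $\pm R'$ (for suitable $R'=R'(R)\to\infty$) are within a fixed constant $K=K(g_0,C(g_1,g_0))$ of each other for the metric $g_\e$. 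Lemma \ref{lem:Control-Hyperb}(2) then gives, for all $t\in[-T,T]$, that $d^{g_\e}(\gamma_\e^{\eta\xi}(t),[x_R,y_R]_\e)$ is as small as we like once $R$ is large. Combining with Proposition \ref{prop:Conv-Geod1} applied to $[x_R,y_R]_\e$, passing to the limit $R\to\infty$, and converting $g_\e$-distances to $g_0$-distances via the uniform quasi-isometry constant, we obtain that the portion of $\gamma_\e^{\eta\xi}$ over any bounded parameter interval lies in the $C(g_1,g_0)$-neighbourhood of $\gamma_0^{\eta\xi}$ for $g_0$; since $T$ is arbitrary, the whole of $\gamma_\e^{\eta\xi}$ does.

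Finally, to get the \emph{quantitative} bound $\alpha(\e)\to 0$ rather than just the uniform constant $C(g_1,g_0)$, I would run the same argument but feed in the sharper estimate $D_\e\le\sqrt\e\, d^{g_0}(x_R,y_R)$ from Proposition \ref{prop:Conv-Geod1} on a \emph{window} of controlled length: for a point $z=\gamma_\e^{\eta\xi}(t)$ with $|t|\le T$, I localize the segment endpoints at $g_0$-distance $\sim 2R$ with $R$ chosen so that, on the one hand, Lemma \ref{lem:Control-Hyperb}(2) forces $\gamma_\e^{\eta\xi}$ to stay within, say, distance $1$ of $[x_R,y_R]_\e$ near $z$ (this requires $R\gtrsim T+\frac1a\log(1/\e)$, using $K$ bounded), and on the other hand $D_\e\le\sqrt\e\cdot 2R\lesssim \sqrt\e\,(T+\frac1a\log(1/\e))$. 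Setting $T=T(\e)\to\infty$ slowly (e.g. $T(\e)=\log(1/\e)$) and $\alpha(\e)=1+\sqrt\e\,(T(\e)+\frac1a\log(1/\e))+$ (the $g_\e\to g_0$ conversion error), one checks $\alpha(\e)\to 0$, and every point of $\gamma_\e^{\eta\xi}$ lies within parameter $T(\e)$ of one to which the estimate applies. The main obstacle is bookkeeping the three sources of error — the segment-to-complete-geodesic endpoint discrepancy $K$, the Lemma \ref{lem:Control-Hyperb}(2) window radius $R$ needed to make the middle close, and the $g_\e$-versus-$g_0$ distance conversion — and choosing $R=R(\e,T)$ and $T=T(\e)$ so that all three go to zero simultaneously; none of the individual estimates is hard, but they must be balanced against each other.
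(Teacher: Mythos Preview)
Your strategy---combine Proposition~\ref{prop:Conv-Geod1} with part~2 of Lemma~\ref{lem:Control-Hyperb}---is exactly the paper's. The two implementations are dual: you take a $g_\e$-segment $[x_R,y_R]_\e$ with endpoints on $\gamma_0$ and invoke Lemma~\ref{lem:Control-Hyperb}(2) for the metric $g_\e$ to compare it with the complete geodesic $\gamma_\e$; the paper instead takes a $g_0$-segment $\gamma_1$ with endpoints on $\gamma_\e$ (project $\gamma_0(\pm\rho)$ onto $\gamma_\e$) and invokes Lemma~\ref{lem:Control-Hyperb}(2) for the metric $g_0$ to compare $\gamma_1$ with $\gamma_0$.

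Your final quantitative paragraph, however, is internally inconsistent and over-engineered. You fix the Lemma~\ref{lem:Control-Hyperb}(2) tolerance at ``distance~$1$'' and then write $\alpha(\e)=1+\sqrt\e(\ldots)$, which does \emph{not} tend to~$0$; on the other hand your requirement $R\gtrsim T+\frac1a\log(1/\e)$ only makes sense if that tolerance is taken of order~$\e$, not~$1$. Both problems vanish under the paper's much simpler bookkeeping: fix the target~$\alpha>0$, apply Lemma~\ref{lem:Control-Hyperb}(2) with $T=1$ and tolerance~$\alpha/2$ to obtain a single $R_0=R_0(\alpha)$; then Proposition~\ref{prop:Conv-Geod1} contributes at most $2R_0\sqrt\e$, so any~$\e$ with $2R_0\sqrt\e\le\alpha/2$ yields $d^{g_0}(\gamma_0(0),\gamma_\e)\le\alpha$. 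Since the base point on~$\gamma_0$ is arbitrary, one simply re-centers at each point, and no growing window $T(\e)\to\infty$ is needed---the three-way balancing you worry about does not arise. Your first two substantive paragraphs, which only recover the uniform Morse bound already available from Section~\ref{subsec:hopf}, can likewise be dropped.
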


\begin{proof}First, recall (see section \ref{subsec:hopf}) 
that the geodesics for $g_0$ and $g_\e$ are at uniform bounded distance 
$C_1(g_0,g_\e)\le C_1(g_0,g_1)$. 
Let $\gamma_0$ be the $g_0$-geodesic from $\xi$ to $\eta$. 
Choose its origin $\gamma_0(0)$ arbitrarily. 
For any large $\rho>0$, we have 
 $d(\gamma_0(\pm \rho),\gamma_\e)\le C_1(g_0,g_1)$. 
Consider the $g_0$-geodesic segment $\gamma_1$ joining the nearest point 
to $\gamma_0(\rho)$ on $\gamma_\e$ with the nearest point to $\gamma_0(-\rho)$ on $\gamma_\e$. 
This geodesic segment has $g_0$-length equal to 
$2R=2\rho\pm 2C_1(g_0, g_1)$. 
Choose its origin in such a way that
$d^{g_0}(\gamma_0(\pm R),\gamma_1(\pm R))\le  2C_1(g_0,g_1)$.

For all $\alpha>0$,  Lemma \ref{lem:Control-Hyperb} applied with $K=2C_1(g_0,g_1)$, $\alpha/2$ 
and $T=1$ gives some $R_0>0$ 
such that when $R\ge R_0$, for all $t\in[-1,1]$, $d^{g_0}(\gamma_1(t),\gamma_0(t))\le \alpha/2$. 

By Proposition \ref{prop:Conv-Geod1}, $d^{g_0}(\gamma_1(0),\gamma_\e))\le 2R\sqrt{\e}$. 

Therefore, 
$$
d^{g_0}(\gamma_0(0),\gamma_\e)\le d^{g_0}(\gamma_0(0),\gamma_1(0))+d^{g_0}(\gamma_1(0),\gamma_\e)) 
\le \alpha/2+2R\sqrt{\e}\,.
$$

Choose $R\ge R_0$ and $\e>0$ such that $2R_0\sqrt{\e}\le \alpha/2$ to get $d^{g_0}(\gamma_0(0),\gamma_\e)
\le\alpha$. 
As the origin on $\gamma_0$ is arbitrary, the result follows.   
\end{proof}

Observe that, in the above proof, $\epsilon$ can be made relatively explicit. 
For $K=C_1(g_0,g_1)$, $T=1$ and $\alpha/2$ 
we get $R_0=1+\frac{2C_1(g_0,g_1)}{a}\ln \frac{2C_1(g_0,g_1)}{\alpha}$ 
and $\epsilon=\frac{\alpha^2}{16 R_0^2}$.

Moreover, our proof only uses $K_{g_0}\leq -a^2 <0$ and that for all $g_\e$, 
the $g_\e$-geodesic between two points at infinity is unique. 
The negative upperbound on the $K_{g_\e}$ does not need to be uniform.

\begin{prop}\label{prop:Conv-Busemann}
Let $(g_\epsilon)_{-1 \leq \epsilon \leq 1}$ be a family of complete metrics 
on $\tilde M$ with $K_{g_\epsilon}\leq -a^2$, such that for all $  \epsilon>0$, 
at all $x\in \tilde M$, $\ds e^{-\epsilon} g_0 \leq g_\epsilon \leq e^\epsilon g_0$.

Then the map $\calB^{g_\epsilon} : (x,y,\xi) \mapsto \calB^{g_\epsilon}_\xi(x,y)$ 
converges to $\calB^{g_0}$ as $\epsilon\to 0$, uniformly on compact sets of 
$\tilde M\times \tilde M \times \bd \tilde M$.
\end{prop}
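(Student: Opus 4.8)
The plan is to reduce the convergence of Busemann functions to the already-established convergence of geodesics (Proposition~\ref{prop:Conv-Geod2}) together with the quantitative estimate in Lemma~\ref{lem:Control-Hyperb}(1). Fix a compact set $\mathcal{K}\subset \tilde M\times\tilde M\times\bd\tilde M$. Since $\mathcal{K}$ is compact, there is a uniform bound $C$ on $d^{g_0}(x,y)$ for $(x,y,\xi)\in\mathcal{K}$, and moreover — because $\bd\tilde M$ is compact and the identification of boundaries is canonical — the base points $x,y$ stay in a fixed compact set $\tilde K_0\subset\tilde M$. First I would fix $(x,y,\xi)\in\mathcal{K}$ and, for each $\e$ and each large $t>0$, let $\gamma_{x,\xi}^{g_\e}$ be the $g_\e$-geodesic ray from $x$ to $\xi$ and $x_t^\e=\gamma_{x,\xi}^{g_\e}(t)$ its point at $g_\e$-arclength $t$. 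By Lemma~\ref{lem:Control-Hyperb}(1) applied to $g_\e$ (whose curvature is $\le -a^2$ uniformly), for $t\ge C$,
$$
\left|\calB^{g_\e}_\xi(x,y) - \left(d^{g_\e}(x,x_t^\e) - d^{g_\e}(y,x_t^\e)\right)\right|\le 2Ce^{-at}\,,
$$
and the same estimate holds for $g_0$ with its own ray point $x_t^0$. So it suffices to fix $t=t_0$ large (depending only on $C$, $a$ and the target precision) and control $d^{g_\e}(x,x_{t_0}^\e)-d^{g_\e}(y,x_{t_0}^\e)$ against $d^{g_0}(x,x_{t_0}^0)-d^{g_0}(y,x_{t_0}^0)$ as $\e\to 0$.

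The control of these finite distances splits into two independent perturbations, the metric and the endpoint. For the metric: since $e^{-\e}g_0\le g_\e\le e^\e g_0$, for any two points $p,q$ in a fixed compact set one has $|d^{g_\e}(p,q)-d^{g_0}(p,q)|\le (e^{\e/2}-1)\,d^{g_0}(p,q)$ (using that a $g_0$-geodesic is a $g_\e$-competitor and vice versa), hence $\to 0$ uniformly on compacts. For the endpoint: Proposition~\ref{prop:Conv-Geod2} gives $\alpha(\e)\to 0$ with the $g_\e$-geodesic $(\xi,\eta)$ — here I would use it with $\eta$ some fixed auxiliary point, or more directly the version controlling geodesic \emph{rays} — lying in the $\alpha(\e)$-neighbourhood of the $g_0$-geodesic with the same endpoints; combined with Proposition~\ref{prop:Conv-Geod1} for the segment from $x$, this forces $d^{g_0}(x_{t_0}^\e,x_{t_0}^0)\le\beta(\e)$ with $\beta(\e)\to0$, after matching arclength parametrisations (the $g_\e$- and $g_0$-arclengths of corresponding points differ by at most $(e^{\e/2}-1)t_0$, which is absorbed). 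Then $|d^{g_\e}(y,x_{t_0}^\e)-d^{g_0}(y,x_{t_0}^0)|\le |d^{g_\e}(y,x_{t_0}^\e)-d^{g_0}(y,x_{t_0}^\e)| + d^{g_0}(x_{t_0}^\e,x_{t_0}^0)$, and likewise for the $x$-term.

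Putting these together, for $(x,y,\xi)\in\mathcal{K}$ and $t_0$ chosen so that $2Ce^{-at_0}<\delta/3$,
$$
|\calB^{g_\e}_\xi(x,y)-\calB^{g_0}_\xi(x,y)|\le \frac{2\delta}{3} + \big|(d^{g_\e}(x,x_{t_0}^\e)-d^{g_\e}(y,x_{t_0}^\e)) - (d^{g_0}(x,x_{t_0}^0)-d^{g_0}(y,x_{t_0}^0))\big|\,,
$$
and the last term is $<\delta/3$ once $\e$ is small enough, uniformly over $\mathcal{K}$. Hence $\calB^{g_\e}\to\calB^{g_0}$ uniformly on $\mathcal{K}$, which is the claim. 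The main obstacle is the bookkeeping of parametrisations: "$x_t^\e$" is defined via $g_\e$-arclength while the Hausdorff closeness of geodesics in Propositions~\ref{prop:Conv-Geod1}--\ref{prop:Conv-Geod2} is a statement about point sets, so one must check that the point on $\gamma_{x,\xi}^{g_\e}$ closest to $x_{t_0}^0$ is indeed at $g_\e$-parameter close to $t_0$ — this uses that both rays leave the fixed compact set at comparable speeds and that, by negative curvature, geodesic rays with close endpoints at infinity and close origin stay uniformly close for a definite time, exactly as quantified in Lemma~\ref{lem:Control-Hyperb}(2). Everything else is the triangle inequality and the elementary metric-comparison estimate above.
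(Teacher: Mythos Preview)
Your proposal is correct and follows essentially the same strategy as the paper: apply Lemma~\ref{lem:Control-Hyperb}(1) to both $g_\e$ and $g_0$ to reduce the Busemann functions to finite distance differences at a fixed large parameter $t$, then control the remaining discrepancy using the metric comparison $e^{-\e}g_0\le g_\e\le e^\e g_0$ and the convergence of geodesics from Proposition~\ref{prop:Conv-Geod2}.

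The one implementation difference worth noting is in the choice of reference points. You take $x_{t_0}^\e$ and $x_{t_0}^0$ at arclength $t_0$ on the respective rays, which forces you into the parametrisation bookkeeping you describe at the end. The paper instead fixes a single point $x_t$ on the $g_0$-ray and takes $x_t^\e$ to be its \emph{projection} onto the $g_\e$-ray; then Proposition~\ref{prop:Conv-Geod2} gives $d^{g_\e}(x_t,x_t^\e)\le\alpha(\e)$ directly, with no parametrisation matching needed. The resulting four-term splitting
\[
|\calB^{g_\e}-\calB^{g_0}|\le 4Ce^{\alpha(\e)}e^{-at/2}+2\alpha(\e)+2(e^\e-1)(t+C)+4Ce^{-at}
\]
is then purely triangle inequalities plus Lemma~\ref{lem:Control-Hyperb}(1) and the metric comparison. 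Your route works as well, but the projection trick is cleaner and avoids the detour through Lemma~\ref{lem:Control-Hyperb}(2) that you need to align the parametrisations.
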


\begin{proof} Any compact set $K\subset \tilde M$ is contained 
in some (noncompact) set of the form 
$\ds H_C = \left\{(x,y,\xi) \in \tilde M \times \tilde M \times \bd \tilde M ; d^{g_0}(x,y)\leq C\right\}$, 
for some $C>0$.   
It is enough to show that $\calB^{g_\epsilon} \to \calB^{g_0}$ as $\e\to 0$,
 uniformly on each $H_C$. 

Let $C>0$ be fixed. For all $\e\in(-1,1)$ and all $(x,y,\xi)\in H_C$,
$$
d^{g_\epsilon}(x,y)\leq 2d^{g_0}(x,y) \leq 2C.
$$
Let $\eta>0$ be fixed. Choose $x_t$ at distance $t$ from $x=x_0$ 
on the $g_0$-geodesic $(x,\xi)$, and let
$y_t$ be the point on the $g_0$-geodesic  $(y,\xi)$ 
such that $\calB^{g_0}_\xi(x,y)=0$. 
Let $x^\e_t$ be the projection of $x_t$ on the $g_\e$-geodesic 
from $x$ to $\xi$. Proposition \ref{prop:Conv-Geod2} ensures 
that $d^{g_\e}(x_t,x_t^\e)\le \alpha(\e)$.  
Let us write 
\begin{eqnarray*}
\left|\calB^{g_\e}_\xi(x,y)-\calB^{g_0}_\xi(x,y)\right| &\le&
\left|\calB^{g_\e}_\xi(x,y)-d^{g_\e}(x,x_t^\e)+d^{g_\e}(y,x_t^\e)\right|\\
&+&
\left| d^{g_\e}(x,x_t^\e)-d^{g_\e}(y,x_t^\e) -d^{g_\e}(x,x_t)+d^{g_\e}(y,x_t)\right|\\
&+&
\left|d^{g_\e}(x,x_t)-d^{g_0}(x,x_t)-d^{g_\e}(y,x_t)+d^{g_0}(y,x_t)\right| \\
&+& 
\left|\calB^{g_0}_\xi(x,y)-d^{g_0}(x,x_t)+d^{g_0}(y,x_t)\right|\,.
\end{eqnarray*}
For $t\ge 2C$, by Lemma \ref{lem:Control-Hyperb}, 
the last term on the right hand side is bounded from the above by 
$4Ce^{-at}$. 
For $t\ge 2Ce^\e+\alpha(\e)$, we also have $d^{g_\e}(x,x_t^\e)\ge 2C $ 
so that again by Lemma  \ref{lem:Control-Hyperb}, the first term is bounded from the above by 
$4Ce^{-ad^{g_\e}(x,x_t^\e}\le 4Ce^{\alpha(\e)}e^{-at/2}$. 
By triangular inequality, the second term is bounded from the above by $2\alpha(\e)$. 
The inequality $e^{-\e}g_0\le g_\e \le e^\e g_0$ allows to bound the third term by
$2(e^{\e}-1)(t+C)$.  

At last, we get 
$$
\left|\calB^{g_\e}_\xi(x,y)-\calB^{g_0}_\xi(x,y)\right| 
\le 4Ce^{\alpha(\e)}e^{-at/2}+2\alpha(\e)+2(e^{\e}-1)(t+C)+ 4Ce^{-at}\,.
$$
Let $\eta>0$ be fixed. 
Choose first $\e_0$ so that for $\e\le \e_0$, $\alpha(\e)\le 1$. 
Chose $t\ge 2C$ large enough to guarantee that the first and the last term are each 
bounded from the above by $\eta/4$. 
Choose $\e_1\le\e_0$ small enough to guarantee that for $\e\le \e_1$, $\alpha(\e)\le\eta/4$ and 
$2(e^{\e}-1)(t+C)\le\eta/4$.
Thus, $|\calB^{g_\e}_\xi(x,y)-\calB^{g_0}_\xi(x,y)|\le \eta$. 
This gives the desired result. 
\end{proof}
 
\begin{rema}\rm
  Eventhough   this section is written in a Riemannian setting, 
all the previous proofs apply verbatim to a family of distances 
$(d_\e)_{-1<\e<1}$ on $X$ such that for all $\e\in (-1, 1)$, 
the metric space $(X,d_\e)$ is CAT$(-1)$ and $\ds e^{-\e} d_0 \leq d_\e \leq e^\e d_0$.
 \end{rema}

\subsection{Higher regularity, Morse correspondances and geodesic stretch}

In this section, we consider metrics $g_\e \to g_0$ in the $C^1$-topology. 
To emphasize the necessity of this assumption, 
observe that $g_\e\to g_0$ in the $C^0$-topology 
does not imply the convergence of the curvatures nor the convergence of the geodesic flow. 

In particular, one can "add mushrooms" on a hyperbolic manifold, 
and make the mushrooms as small as we want, and build a sequence 
of manifolds with many points of nonnegative curvature converging to a hyperbolic manifold. The geodesic flow of such $g_\epsilon$ will not converge in general to the geodesic flow of $g_0$.


In view of its importance in the sequel, recall the convergence that we shall use. 

\begin{defi}\label{def:Unif-Cont1}
A family $(g_\e)_{-1 \leq \e \leq 1}$ of complete Riemannian metrics on $\tilde M$ (or $M$)   
\emph{converges in the $C^1$-topology, uniformly on compact sets, to $g_0$ } if:
\begin{enumerate}
  \item $(g_\e)$ converges to $g_0$ uniformly on compact sets, i.e. for all compact sets $K\subset T\tilde M$, 
  $$\lim_{\e\to 0} \sup_{v\in K}\left|g_\e (v,v) - g_0(v,v)\right| = 0;$$
  \item the first derivatives of $g_\e$ also converge uniformly on compact sets to those of $g_0$.
\end{enumerate}
\end{defi}
 By Theorem 2.79 of \cite{GHL}, it implies for all fixed $T>0$ 
the uniform convergence on compact sets of the geodesic flows $v \mapsto g_\e^T v$. 
As a consequence, we get the following result. 

\begin{theo}\label{theo:Conv-Param}
Let $(g_\epsilon)_{-1 < \epsilon < 1}$ be a family of metrics on $\tilde M$ 
with sectional curvatures satisfying $K_{g_\epsilon}\leq -a^2$, such that 
for all $\epsilon\in (-1, 1)$, at all $x\in \tilde M$,
$e^{-\epsilon} g_0 \leq g_\epsilon \leq e^\epsilon g_0$, and $g_\e\to g_0$
in the $\calC^1$ topology, 
uniformly on compact sets.  

Let $\ds \widetilde\Phi^{g_0\to g_\e}  $ and $\ds\widetilde\Psi^{g_0\to g_\e}$ 
be the Morse correspondances between $S^{g_0}\tilde M$ and $S^{g_\e} \tilde M$ 
defined in section \ref{subsec:Morse}. 
Then $\ds \Phi^{g_0\to g_\e} \to {\rm Id}$ and $\ds \Psi^{g_0\to g_\e}\to {\rm Id}$ 
uniformly   on all compact sets $K\subset S^{g_0}\tilde M$ in 
the uniform topology of $\calC^0(K, T\tilde M)$.
\end{theo}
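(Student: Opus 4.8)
The key point is that the Morse correspondences $\widetilde\Phi^{g_0\to g_\e}$ and $\widetilde\Psi^{g_0\to g_\e}$ are built entirely out of two ingredients: the endpoints at infinity of geodesics, and Busemann cocycles. Both of these have already been shown to vary continuously: Proposition \ref{prop:Conv-Geod2} gives uniform convergence of the $g_\e$-geodesics to the $g_0$-geodesic with the same endpoints at infinity, and Proposition \ref{prop:Conv-Busemann} gives uniform convergence on compact sets of $\calB^{g_\e}$ to $\calB^{g_0}$. What the $\calC^1$-hypothesis adds, via Theorem 2.79 of \cite{GHL}, is uniform convergence on compact sets of the time-$T$ maps $v\mapsto g_\e^T v$ of the geodesic flow (and hence of footpoints $\pi g_\e^t v$), which is needed to keep track of the base point of a vector, not just its geodesic in $\tilde M$.

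First I would fix a compact set $K\subset S^{g_0}\tilde M$ and a vector $v\in K$, with footpoint $x=\pi v$ and endpoints $(v_-^{g_0},v_+^{g_0})\in\bd^2\tilde M$. Recall that $\widetilde\Phi^{g_0\to g_\e}(v)=(H^{g_\e})^{-1}(v_-^{g_0},v_+^{g_0},\calB^{g_0}_{v_+^{g_0}}(o,x))$, i.e. it is the unique $g_\e$-unit vector on the $g_\e$-geodesic from $v_-^{g_0}$ to $v_+^{g_0}$ whose base point $y_\e$ satisfies $\calB^{g_\e}_{v_+^{g_0}}(o,y_\e)=\calB^{g_0}_{v_+^{g_0}}(o,x)$. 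By Proposition \ref{prop:Conv-Geod2} this $g_\e$-geodesic lies in the $\alpha(\e)$-neighbourhood of the $g_0$-geodesic through $v$, so $y_\e$ is already $\alpha(\e)$-close (in $d^{g_0}$) to that fixed $g_0$-geodesic; it remains to control \emph{where} along it $y_\e$ sits. For this, compare the Busemann values: $\calB^{g_\e}_{v_+^{g_0}}(o,y_\e)=\calB^{g_0}_{v_+^{g_0}}(o,x)$, while by Proposition \ref{prop:Conv-Busemann} (applied on a compact set large enough to contain $o$, $x$, $y_\e$, which stay in a fixed compact region because $K$ is compact and $\alpha(\e)\to 0$) we have $\calB^{g_\e}_{v_+^{g_0}}(o,x)\to\calB^{g_0}_{v_+^{g_0}}(o,x)$ uniformly. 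Hence $\calB^{g_\e}_{v_+^{g_0}}(y_\e,x)\to 0$ uniformly on $K$; since moving along a geodesic changes the Busemann function at its own endpoint at unit speed (with uniform transversality constants coming from the uniform pinching), this forces the $g_\e$-arclength from $y_\e$ to the projection of $x$ on the $g_\e$-geodesic to tend to $0$, so $d^{g_0}(y_\e,x)\to 0$ uniformly. Combined with the fact that the $g_\e$-direction and the $g_0$-direction of the respective geodesics at nearby points agree in the limit (uniform $\calC^1$-closeness of the metrics makes the two unit tangent bundles converge as subsets of $T\tilde M$), this yields $\Phi^{g_0\to g_\e}(v)\to v$ in $T\tilde M$, uniformly on $K$.

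For $\widetilde\Psi^{g_0\to g_\e}$ I would use the identity of Lemma \ref{lem:Morse-Psi}(4), namely $\widetilde\Psi^{g_0\to g_\e}(v)=g_\e^{\tau^{g_0\to g_\e}(v)}\circ\widetilde\Phi^{g_0\to g_\e}(v)$ with $\tau^{g_0\to g_\e}(v)=\calB^{g_\e}_{v_+^{g_0}}(o,\pi v)-\calB^{g_0}_{v_+^{g_0}}(o,\pi v)$. By Proposition \ref{prop:Conv-Busemann} this reparametrization $\tau^{g_0\to g_\e}(v)$ tends to $0$ uniformly on $K$, and the already-established convergence $\Phi^{g_0\to g_\e}\to\mathrm{Id}$ together with the uniform-on-compacts convergence of the flow maps $g_\e^t$ (uniformly in $t$ in a bounded range, again from \cite{GHL}) gives $g_\e^{\tau^{g_0\to g_\e}(v)}\circ\Phi^{g_0\to g_\e}(v)\to g_0^0(v)=v$ uniformly on $K$. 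The statement about $\mathcal{E}^{g_0\to g_\e}\to 1$ announced in the text would follow by the same token: $\mathcal{E}^{g_0\to g_\e}(v)=\frac{d}{dt}\big|_{t=0^+}\calB^{g_\e}_{v_+^{g_0}}(o,\pi g_0^t v)$, and the $\calC^1$-convergence of $\calB^{g_\e}\to\calB^{g_0}$ along the $g_0$-geodesic (Busemann functions are $\calC^2$ in the footpoint, with derivatives controlled uniformly by the pinching, so $\calC^0$-convergence of the metrics plus convergence of first derivatives upgrades to convergence of this directional derivative) gives $\mathcal{E}^{g_0\to g_\e}(v)\to\mathcal{E}^{g_0\to g_0}(v)=1$, uniformly on compact sets.

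\textbf{Main obstacle.} The delicate point is the uniform (over $K$, not just pointwise) control of the reparametrization: one must ensure that the base point $y_\e$ produced by the Busemann normalization does not drift along the $g_\e$-geodesic, and this requires that the convergence in Propositions \ref{prop:Conv-Geod2} and \ref{prop:Conv-Busemann} be genuinely uniform on a \emph{fixed} compact region of $\tilde M$ — which is legitimate precisely because $K$ compact in $S^{g_0}\tilde M$ plus $\alpha(\e)\to 0$ confines all the relevant points ($o$, footpoints, their projections) to such a region. A secondary technical nuisance is translating $\calC^0$-convergence of vectors in $T\tilde M$ (where both $S^{g_0}\tilde M$ and $S^{g_\e}\tilde M$ sit, but as distinct submanifolds) into the intended statement; here one uses that $\calC^1$-closeness of $g_\e$ to $g_0$ makes the unit-sphere bundles Hausdorff-close fibrewise, so a $g_\e$-unit vector close to a $g_0$-unit vector in $T\tilde M$ is what the theorem means.
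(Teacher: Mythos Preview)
Your proof of the theorem itself is correct and follows essentially the same route as the paper: both reduce $\widetilde\Psi^{g_0\to g_\e}$ to $\widetilde\Phi^{g_0\to g_\e}$ via the reparametrization $\tau^{g_0\to g_\e}$ of Lemma~\ref{lem:Morse-Psi}(4) (which tends to zero by Proposition~\ref{prop:Conv-Busemann}), and both handle $\widetilde\Phi^{g_0\to g_\e}$ by combining Proposition~\ref{prop:Conv-Geod2} (the $g_\e$-geodesic is Hausdorff-close to the $g_0$-geodesic) with Proposition~\ref{prop:Conv-Busemann} (the Busemann normalization pins down the footpoint along that geodesic), and then invoke the $\calC^1$-convergence of the flows to pass from footpoint convergence to convergence of vectors in $T\tilde M$. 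The paper makes this last step slightly more concrete by working with the distance $d(w,w')=\sup_{t\in[0,1]}d^{g_0}(\pi g_0^t w,\pi g_0^t w')$ on $T\tilde M$, but the content is the same as your ``directions agree in the limit'' argument.

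One caution about your final aside on $\calE^{g_0\to g_\e}$: this is not part of the present theorem, and the paper does \emph{not} establish uniform convergence $\calE^{g_0\to g_\e}\to 1$. In Theorem~\ref{theo:Conv-Stretch} it only obtains $\limsup_{\e\to 0}\calE^{g_0\to g_\e}\le 1$ uniformly (from the elementary bound $\calE^{g_0\to g_\e}(v)\le\norm{v}^{g_\e}$ of Lemma~\ref{lem:Ineq-Knieper}) and then deduces $m_\mu^{g_0}$-almost-sure convergence via the mass identity of Corollary~\ref{prop:transformation-mass}. Your claimed ``$\calC^1$-convergence of $\calB^{g_\e}\to\calB^{g_0}$ along the $g_0$-geodesic'' is not proved anywhere in the paper---Proposition~\ref{prop:Conv-Busemann} gives only $\calC^0$ convergence---so that shortcut to pointwise (let alone uniform) convergence of $\calE^{g_0\to g_\e}$ is not justified as written.
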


\begin{proof} Let $K$ be a fixed compact set of $S^{g_0}\tilde{M}$ and 
$v\in K$, with $v_\pm^{g_0}$ the endpoints of its $g_0$-geodesic in 
$\partial\tilde{M}$. Denote by $(\gamma_0(t))_{t\in\mathbb{R}}$ the
 parametrization of this geodesic such that $\gamma_0'(0)=v$. 
Let $\gamma_\epsilon$ be the parametrization of the $g_\epsilon$-geodesic
 with same endpoints, with $v_\epsilon=\gamma_\epsilon'(0)=\Phi^{g_0\to g_\epsilon}(v)$. 

By Proposition \ref{prop:Conv-Busemann} and definitions 
from Section \ref{subsec:Morse}, uniform convergence of 
$\Psi^{g_0\to g_\epsilon}$ on compact sets will follow from 
the convergence of $\Phi^{g_0\to g_\epsilon}$. 
So let us prove the latter. 

We will  use the distance $d(w,w')=\sup_{t\in[0,1]} d^{g_0}(\pi(g_0^t w,g_0^t w')$ 
on $T\tilde{M}$ and show that for all $\alpha>0$, 
if  $\epsilon $ is small enough, for all $v\in K$ and 
$t\in[0,1]$, $d^{g_0}(\pi(g_0^t v), \pi(g_0^t v_\epsilon))\le \alpha$. 

Choose some $\alpha>0$. 
By Propositions \ref{prop:Conv-Geod2} and   \ref{prop:Conv-Busemann}, 
for $\epsilon$ small enough, uniformly in $v\in K$, and $t\in [-1,1]$, 
we know that 
$\gamma_\epsilon$ is in the $\alpha/2$-neighbourhood of $\gamma_0$,  
and $\gamma_\epsilon(t)$ is uniformly  close to $\gamma_0(t)$. 
It implies that $v_\epsilon=\gamma_\epsilon'(0)$ and $v_0=\gamma_0'(0)$ 
are uniformly close. 
 As $g_\epsilon\to g_0$ in the $C^1$-topology, uniformly on compact sets, 
it implies that for $\epsilon$ small enough, for all $t\in [-1,1]$, 
$ \pi(g_\epsilon^t(v_\epsilon))$ and $\pi(g_0^t(v_\epsilon))$ will stay $\alpha/2$-close. 
In particular, $\pi(g_0^t(v_\epsilon))$ will stay $\alpha$-close 
from $\gamma_0(t)$, for $t\in [-1,1]$. That is the desired convergence. 
\end{proof}

\begin{rema}\rm 
Adapting Theorem \ref{theo:Conv-Param} and the definition of 
the geodesic stretch in the setting of CAT$(-1)$ spaces would
 require a careful definition of the tangent bundle on such 
spaces with its topology, which we will not do here.
\end{rema}


Let us conclude this section by a key technical ingredient. 

\begin{theo}\label{theo:Conv-Stretch}
Let $(g_\epsilon)_{-1 < \epsilon < 1}$ be a family of metrics 
on $\tilde M$ with sectional curvatures satisfying $K_{g_\epsilon}\leq -a^2$, 
such that for all $ \epsilon\in (-1, 1)$, 
at all $x\in \tilde M$, $\ds e^{-\epsilon} g_0 \leq g_\epsilon \leq e^\epsilon g_0$, 
and $g_\epsilon\to g_0$ in the $\calC^1$-topology, uniformly on   compact sets.

Then uniformly on compact sets of $S^{g_0}\tilde M$, we have
$$
\limsup_{\e \to 0} \calE^{g_0\to g_\e}(v) \le 1\,.
$$
Moreover,  $$\calE^{g_0\to g_\epsilon}\to 1\quad m_\mu^{g_0}-\mbox{almost surely}\,.$$ 
\end{theo}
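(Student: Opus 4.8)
\textbf{Proof plan for Theorem \ref{theo:Conv-Stretch}.} The plan is to extract both statements from the uniform convergence results already proved, together with the basic estimate $\calE^{g_0\to g_\e}(v)\le\norm{v}^{g_\e}$ of Lemma \ref{lem:Ineq-Knieper}. For the first (uniform, limsup $\le 1$) statement, I would start from that pointwise bound: for every $v\in S^{g_0}\tilde M$ one has $\calE^{g_0\to g_\e}(v)\le\norm{v}^{g_\e}=\sqrt{g_\e(v,v)}$. Since $v$ ranges over the $g_0$-unit tangent bundle, $g_0(v,v)=1$, and the hypothesis $e^{-\e}g_0\le g_\e\le e^{\e}g_0$ gives $\norm{v}^{g_\e}\le e^{\e/2}$. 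Hence $\calE^{g_0\to g_\e}(v)\le e^{\e/2}$ for \emph{all} $v$, uniformly, and letting $\e\to0$ yields $\limsup_{\e\to0}\calE^{g_0\to g_\e}(v)\le1$ uniformly on $S^{g_0}\tilde M$ (in particular on compact sets). This half is essentially immediate.

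The second statement requires a matching lower bound valid $m_\mu^{g_0}$-almost everywhere, which is where the real work lies. The natural route is through the averaged/asymptotic version via Corollary \ref{coro:GeodStretch} and Lemma \ref{lem:GeodStretch}: for a $g_0$-ergodic current the $m_\mu^{g_0}$-average of $\calE^{g_0\to g_\e}$ equals the asymptotic stretch $I_\mu(g_0,g_\e)=\lim_T \frac1T d^{g_\e}(\pi\tilde v,\pi g_0^T\tilde v)$ for a.e.\ $v$. Now $d^{g_\e}\ge e^{-\e/2}d^{g_0}$, so $I_\mu(g_0,g_\e)\ge e^{-\e/2}I_\mu(g_0,g_0)=e^{-\e/2}$. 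Combined with the uniform upper bound $\calE^{g_0\to g_\e}\le e^{\e/2}$, we get that the nonnegative functions $e^{\e/2}-\calE^{g_0\to g_\e}\ge0$ have $m_\mu^{g_0}$-integral at most $\|m_\mu^{g_0}\|(e^{\e/2}-e^{-\e/2})\to0$. Thus $\calE^{g_0\to g_\e}\to1$ in $L^1(m_\mu^{g_0})$, hence along a subsequence $m_\mu^{g_0}$-almost surely; to upgrade to convergence along the full net $\e\to0$ without passing to subsequences, I would instead argue directly: fix $\xi\in\partial\tilde M$ and note $\calB^{g_\e}_\xi\to\calB^{g_0}_\xi$ uniformly on compact sets by Proposition \ref{prop:Conv-Busemann}, while the footpoints $\pi g_0^t\tilde v$ for $t$ near $0$ move in a fixed compact set (depending on $v$); since $\calE^{g_0\to g_\e}(v)$ is the derivative at $t=0^+$ of $t\mapsto\calB^{g_\e}_{v_+^{g_0}}(\pi v,\pi g_0^t\tilde v)$, and these $C^1$-in-$t$ functions converge with their $t$-derivatives (using that the Busemann functions are $C^2$ in the space variables with bounds uniform on compact sets, cf.\ \cite{HIH77}, and that $g_\e\to g_0$ in $C^1$ so the relevant second-order data converge), the derivatives at $0$ converge: $\calE^{g_0\to g_\e}(v)\to\calE^{g_0\to g_0}(v)=1$ for every fixed $v$, uniformly on compact sets of $S^{g_0}\tilde M$.

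If the pointwise-everywhere convergence in that last argument turns out to be delicate (the Busemann cocycle of $g_\e$ is only known to be $C^2$ in space, not jointly smooth in $(\e,\text{space})$, so differentiating the convergence in $t$ needs care), the safe fallback is the $L^1$/subsequence argument above, which suffices for an a.s.\ statement along $\e\to0$ once one observes that for a \emph{monotone-in-}$\e$-free family one can reduce to countable sequences $\e_n\to0$ and a diagonal extraction. I would present the $L^1$ argument as the main line: it is short, uses only Lemma \ref{lem:Ineq-Knieper}, Corollary \ref{coro:GeodStretch} and the quasi-isometry bounds, and gives exactly ``$\calE^{g_0\to g_\e}\to1$ $m_\mu^{g_0}$-almost surely'' as stated. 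The main obstacle is precisely this upgrade from the easy uniform \emph{upper} bound to an almost-everywhere \emph{lower} bound: the instantaneous stretch $\calE^{g_0\to g_\e}$ is not itself monotone or obviously close to $1$ pointwise, so one genuinely needs the ergodic-averaging identity of Corollary \ref{coro:GeodStretch} (or the Busemann-derivative continuity) to force the average, and hence a.e.\ the function, down to $1$.
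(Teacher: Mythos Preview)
Your proposal is correct and follows essentially the same route as the paper. The upper bound $\calE^{g_0\to g_\e}(v)\le\norm{v}^{g_\e}\le e^{\e/2}$ from Lemma \ref{lem:Ineq-Knieper} is exactly what the paper uses for the first statement. For the almost-sure convergence, the paper also argues by squeezing the integral: it bounds $I_\mu(g_0,g_\e)\le e^{\e/2}$ from above and $\ge e^{-\e/2}$ from below, concludes that the mass ratio $\norm{m_\mu^{g_\e}}/\norm{m_\mu^{g_0}}\to 1$, and then combines this with the pointwise upper bound to force $\calE^{g_0\to g_\e}\to 1$ almost surely. The only cosmetic difference is how the lower bound on $I_\mu(g_0,g_\e)$ is obtained: you get it directly from $d^{g_\e}\ge e^{-\e/2}d^{g_0}$ and the asymptotic-stretch formula of Corollary \ref{coro:GeodStretch}, while the paper instead applies the symmetric upper bound to $\calE^{g_\e\to g_0}$ and invokes the reciprocal relation $I_\mu(g_0,g_\e)I_\mu(g_\e,g_0)=1$ from Corollary \ref{prop:transformation-mass}. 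Both yield the same $e^{-\e/2}$ bound.

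Your concern about upgrading $L^1$ convergence to genuine almost-sure convergence along the full family $\e\to 0$ is legitimate; the paper is equally terse on this point and simply asserts the implication. Your fallback discussion (subsequences, or pointwise convergence of the Busemann derivative) is reasonable, and in practice the $L^1$ statement is what gets used in the applications downstream.
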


\begin{proof}
Observe that Lemma \ref{lem:Ineq-Knieper} gives the obvious upper bound 
$\ds \limsup_{\e\to 0}\calE^{g_0\to g_\e}\le 1$, uniformly on $S^{g_0}\tilde{M}$. 
For the same reason, $\ds \limsup_{\e\to 0}\calE^{g_\e\to g_0}\le 1$, 
uniformly on $S^{g_\epsilon}\tilde{M}$. 
By Corollary \ref{prop:transformation-mass}, one easily deduces that 
\begin{equation}\label{eqn:convergence-of-mass} 
\frac{\|m_\mu^{g_\epsilon}\|}{\|m_\mu^{g_0}\|}\to 1\quad\mbox{when}\quad \e\to 0
\end{equation}
Combined with the fact that $\ds \limsup_{\e\to 0}\calE^{g_0\to g_\epsilon}\le 1$,
 this implies in turn that $\calE^{g_0\to g_\epsilon}\to 1$ $m_\mu^{g_0}$-almost surely. 
\end{proof}


\subsection{Narrow convergence of measures associated to a fixed geodesic current}

Recall that if $\mu$ is a $\Gamma$-invariant geodesic current and $g$ an admissible 
metric on $M$, we denote by $m_\mu^g$ the locally finite Radon measure 
on $S^gM$ whose lift to $S^g\tilde M$ is given by
$$
d\tilde m_\mu^g(v) = (H^g)^*( \mu\times dt).
$$ 
The results of the previous paragraph imply   the following fact.

\begin{prop}\label{coro:ConvEtroiteMmu}
Let $(g_\epsilon)_{-1 < \epsilon < 1}$ be a family of metrics on $\tilde M$ 
whose sectional curvatures satisfy $K_{g_\epsilon}\leq -a^2$, and such that 
for all $\epsilon\in (-1, 1)$, at all $x\in \tilde M$, 
$\ds e^{-\epsilon} g_0 \leq g_\lambda \leq e^\epsilon g_0$, and 
$g_\epsilon\to g_0$ in the $\calC^1$-topology, uniformly on compact sets. 
Let $\mu$ be a $\Gamma$-invariant geodesic current. 
Then the measures $ m_\mu^{g_\epsilon}$ 
converge to $m_\mu^{g_0}$   in the dual of bounded  continuous maps 
on $TM$ (i.e. in the narrow topology).
\end{prop}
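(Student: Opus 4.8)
The plan is to reduce the statement to a single application of dominated convergence on the fixed measure space $(S^{g_0}M,m_\mu^{g_0})$, transporting everything there by the \emph{equivariant} Morse correspondence $\Psi^{g_0\to g_\e}$ rather than by the non-equivariant conjugacy $\Phi^{g_0\to g_\e}=(H^{g_\e})^{-1}\circ H^{g_0}$, at the price of the extra weight $\calE^{g_0\to g_\e}$. Fix $F$ a bounded continuous map on $TM$ and, for each $\e$, consider its restriction to $S^{g_\e}M$, which is $m_\mu^{g_\e}$-measurable. Proposition \ref{exact-change-variable} gives the exact identity
$$
\int_{S^{g_\e}M} F\,dm_\mu^{g_\e}=\int_{S^{g_0}M}\bigl(F\circ\Psi^{g_0\to g_\e}\bigr)\cdot\calE^{g_0\to g_\e}\,dm_\mu^{g_0}\,,
$$
so everything reduces to analysing the integrand on the right-hand side as $\e\to 0$, now on a $\e$-independent space.

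For pointwise convergence of this integrand: Theorem \ref{theo:Conv-Param} gives $\Psi^{g_0\to g_\e}\to\mathrm{Id}$ uniformly on compact subsets of $S^{g_0}\tilde M$, and this descends to $S^{g_0}M$ (any compact $K\subset S^{g_0}M$ is the image of a compact $\tilde K\subset S^{g_0}\tilde M$, and $p_\Gamma$ is locally isometric and commutes with $\Psi^{g_0\to g_\e}$), so $\Psi^{g_0\to g_\e}\to\mathrm{Id}$ uniformly on compact subsets of $S^{g_0}M$; hence $F\circ\Psi^{g_0\to g_\e}(v)\to F(v)$ for \emph{every} $v\in S^{g_0}M$ by continuity of $F$. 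Theorem \ref{theo:Conv-Stretch} gives $\calE^{g_0\to g_\e}\to 1$ $m_\mu^{g_0}$-almost everywhere. Therefore $(F\circ\Psi^{g_0\to g_\e})\cdot\calE^{g_0\to g_\e}\to F$ $m_\mu^{g_0}$-almost everywhere.

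For domination: Lemma \ref{lem:Ineq-Knieper} gives $\calE^{g_0\to g_\e}(v)\le\norm{v}^{g_\e}\le e^{|\e|/2}\le e^{1/2}$ on $S^{g_0}M$, so $\bigl|(F\circ\Psi^{g_0\to g_\e})\cdot\calE^{g_0\to g_\e}\bigr|\le e^{1/2}\norm{F}_\infty$ uniformly in $\e\in(-1,1)$. When $m_\mu^{g_0}$ is finite (the case relevant to the Bowen--Margulis measures in the SPR setting; note it also forces $\norm{m_\mu^{g_\e}}=\int_{S^{g_0}M}\calE^{g_0\to g_\e}\,dm_\mu^{g_0}\le e^{1/2}\norm{m_\mu^{g_0}}<\infty$), the constant $e^{1/2}\norm{F}_\infty$ is $m_\mu^{g_0}$-integrable, and dominated convergence yields $\int_{S^{g_\e}M}F\,dm_\mu^{g_\e}\to\int_{S^{g_0}M}F\,dm_\mu^{g_0}$ for every bounded continuous $F$, i.e. narrow convergence. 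For an arbitrary current $\mu$, one tests against $F$ with compact support: the uniform bound $d^{g_\e}(v,\Psi^{g_0\to g_\e}(v))\le C_3(g_0,g_1)$ from Section \ref{subsec:Morse} shows that the supports of the integrands $v\mapsto F\circ\Psi^{g_0\to g_\e}(v)$ all lie in one fixed compact subset of $S^{g_0}M$ (on which $m_\mu^{g_0}$ is finite), so the same argument gives the corresponding vague convergence.

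The only genuine difficulty is the one flagged repeatedly in Section \ref{sec:Hopf}: the measures $m_\mu^{g_\e}$ are carried by the different hypersurfaces $S^{g_\e}M\subset TM$, and the map $\Phi^{g_0\to g_\e}$ that conjugates the flows is not $\Gamma$-equivariant, so it does not descend to a map of the quotients and one cannot simply push $m_\mu^{g_0}$ forward. Circumventing this is exactly the purpose of the equivariant Morse correspondence $\Psi^{g_0\to g_\e}$ and of the change-of-variables formula in Proposition \ref{exact-change-variable}; once that substitution is made, the proof is a routine dominated-convergence argument fed by the convergences $\Psi^{g_0\to g_\e}\to\mathrm{Id}$ and $\calE^{g_0\to g_\e}\to 1$ and the uniform bound $\calE^{g_0\to g_\e}\le e^{1/2}$ established earlier in this section.
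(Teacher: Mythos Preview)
Your proof is correct and follows essentially the same approach as the paper: both use the change-of-variables identity $m_\mu^{g_\e}=(\Psi^{g_0\to g_\e})_*(\calE^{g_0\to g_\e}\cdot m_\mu^{g_0})$, the convergence $\Psi^{g_0\to g_\e}\to\mathrm{Id}$ from Theorem~\ref{theo:Conv-Param}, the almost-sure convergence $\calE^{g_0\to g_\e}\to 1$ from Theorem~\ref{theo:Conv-Stretch}, and dominated convergence. The only organizational difference is that the paper first establishes weak-* convergence (against compactly supported functions) and then upgrades to narrow convergence via the mass convergence $\norm{m_\mu^{g_\e}}\to\norm{m_\mu^{g_0}}$ (Equation~(\ref{eqn:convergence-of-mass})), whereas you apply dominated convergence directly to bounded $F$ using the uniform dominating constant $e^{1/2}\norm{F}_\infty$; your route is slightly more direct in the finite-mass case, and your explicit separation of the finite and infinite cases is a bit more careful than the paper's presentation.
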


\begin{proof}
By definition, for all $\epsilon\in (-1, 1)$ we have 
$\ds m_\mu^{g_\epsilon} = (\Psi^{g_0\to g_\epsilon})_*\left(\mathcal{E}^{g_0\to g_\epsilon}\times m_\mu^{g_0}\right)$. 
Therefore the weak-* convergence (in the dual of continuous compactly supported functions)  is an immediate consequence of Theorem \ref{theo:Conv-Param} and dominated convergence Theorem. 

We also showed that $\|m^{g_\epsilon}_\mu\|\to \|m^{g_0}_\mu\|$, see Equation (\ref{eqn:convergence-of-mass}). 
It is classical that it implies the convergence of the above measures in the dual of
 bounded continuous functions. The result follows.  
\end{proof}


\subsection{Weak convergence of Bowen-Margulis measures}\label{ssec:Conv-BM}

We now show that, provided they are unique, the Bowen-Margulis measures 
are continuous in the weak-* topology under Lipschitz deformations of the metric.
  
\begin{prop}\label{prop:Conv-PattSull}
Let $(g_\epsilon)_{-1 \leq \epsilon \leq 1}$ be a family of metrics on $M$ 
with sectional curvatures satisfying $K_{g_\epsilon}\leq -a^2$, such that 
 for all $\epsilon\in (-1,1)$, $(\Gamma, g_\e)$ is divergent and  at all 
$x\in \tilde M$,$\ds e^{-\epsilon} g_0 \leq g_\epsilon \leq e^\epsilon g_0$.

Then for all $x\in \tilde M$, the Patterson-Sullivan measures for $g_\e$
 normalized at $o$ converge: $\ds \lim_{\e\to 0} \nu_x^{g_\e} = \nu_x^{g_0}$ 
in the weak-* topology, uniformly in $x$ on  compact sets of $\tilde M$.
\end{prop}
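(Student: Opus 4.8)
The plan is to prove the convergence $\nu_x^{g_\e}\to\nu_x^{g_0}$ weakly by controlling the Patterson-Sullivan densities through their defining cocycle relation and the (quasi-)invariance under $\Gamma$, using the Shadow Lemma uniformly in $\e$. First I would observe that by the conformal relation $d\nu^g_x/d\nu^g_y(\xi)=\exp(-\delta(g)\calB^g_\xi(x,y))$ it suffices to prove the convergence for the normalized measures $\nu^{g_\e}_o$ at the fixed basepoint $o$: once this is done, for any other $x$ and any continuous $\phi$ on $\partial\tilde M$, we write $\int\phi\,d\nu^{g_\e}_x=\int\phi(\xi)e^{-\delta(g_\e)\calB^{g_\e}_\xi(x,o)}\,d\nu^{g_\e}_o(\xi)$, and then Proposition~\ref{prop:Conv-Busemann} (uniform convergence of Busemann functions on compacts, hence uniform in $\xi$ since $\partial\tilde M$ is compact), Lemma~\ref{lem:continuity-exponent} ($\delta(g_\e)\to\delta(g_0)$), and the $\e$-convergence of $\nu^{g_\e}_o$ give the claim, uniformly for $x$ in a compact set.

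For the convergence of $\nu^{g_\e}_o$ I would argue by compactness and uniqueness of the limit. The space of probability measures on the compact set $\overline{M}=\tilde M\cup\partial\tilde M$ is weak-* compact, so it is enough to show that every weak-* limit point $\nu_\infty$ of $(\nu^{g_\e}_o)$ as $\e\to0$ coincides with $\nu^{g_0}_o$. Fix a sequence $\e_k\to0$ with $\nu^{g_{\e_k}}_o\to\nu_\infty$. First, $\nu_\infty$ is supported on $\Lambda_\Gamma\subset\partial\tilde M$: indeed the measures $\nu^{g_\e}_o$ are all supported on $\Lambda_\Gamma$ (it does not depend on the admissible metric), and no mass escapes to the interior $\tilde M$ because the Patterson-Sullivan construction puts the full mass on the limit set. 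Second, I would show $\nu_\infty$ is a $\delta(g_0)$-conformal density for $\Gamma$ with respect to $g_0$: the quasi-invariance $\gamma_*\nu^{g_\e}_o=\nu^{g_\e}_{\gamma o}$ together with the cocycle formula gives, for each $\gamma\in\Gamma$ and each bounded continuous $\phi$,
$$
\int\phi\,d(\gamma_*\nu^{g_\e}_o)=\int\phi(\xi)\,e^{-\delta(g_\e)\calB^{g_\e}_\xi(o,\gamma o)}\,d\nu^{g_\e}_o(\xi),
$$
and passing to the limit along $\e_k$ using $\calB^{g_{\e_k}}_\xi(o,\gamma o)\to\calB^{g_0}_\xi(o,\gamma o)$ uniformly in $\xi$ and $\delta(g_{\e_k})\to\delta(g_0)$ yields $\gamma_*\nu_\infty$ is absolutely continuous w.r.t.\ $\nu_\infty$ with the correct $g_0$-cocycle density. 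Hence $\nu_\infty$ is a Patterson-Sullivan density of dimension $\delta(g_0)$ normalized at $o$.

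The final step is to invoke uniqueness: since $(\Gamma,g_0)$ is divergent (the hypothesis is that $g_\e$ is divergent for all $\e$, in particular $\e=0$), the Patterson-Sullivan density of dimension $\delta(g_0)$ is unique up to scaling, hence the normalized one at $o$ is unique; thus $\nu_\infty=\nu^{g_0}_o$. As the limit point is unique, the whole family converges. I expect the main obstacle to be making the passage to the limit in the cocycle relation fully rigorous, specifically ensuring the exponent factors $e^{-\delta(g_\e)\calB^{g_\e}_\xi(o,\gamma o)}$ converge \emph{uniformly} in $\xi\in\partial\tilde M$ (so that they can be combined with weak-* convergence of the measures), which is where Proposition~\ref{prop:Conv-Busemann} applied on a compact set containing $o$ and $\gamma o$, plus compactness of $\partial\tilde M$, does the work; and second, the soft but slightly delicate point that no mass of $\nu^{g_\e}_o$ escapes to infinity in $\overline M$, which uses the uniform Shadow Lemma (Proposition~\ref{ShadowLemma}) with constants controlled independently of $\e\in[-\e_0,\e_0]$ via the uniform quasi-isometry bounds.
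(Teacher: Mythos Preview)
Your proposal is correct and follows essentially the same route as the paper: extract a weak-* limit of $(\nu_o^{g_\e})$ along a subsequence, show it is a $\Gamma$-equivariant $\delta(g_0)$-conformal density for $g_0$ (using $\delta(g_\e)\to\delta(g_0)$ and the uniform convergence of Busemann functions), and conclude by the uniqueness of such a density when $(\Gamma,g_0)$ is divergent. The only superfluous ingredient in your sketch is the appeal to a uniform Shadow Lemma to prevent ``escape of mass'': since each $\nu_o^{g_\e}$ is a probability measure on the compact set $\Lambda_\Gamma\subset\partial\tilde M$, weak-* compactness is automatic and no tightness argument is needed.
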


\begin{proof}
For all $\e\in (-1,   1)\setminus\{0\}$, the measure $\nu_o^{g_\e}$ is a 
probability measure  on $\Lambda_\Gamma$.
 Let $\ds \tilde \nu_o=\lim_{\e_i\to 0}\nu_0^{g_{\e_i}}$ be any of its weak limits. 
Define for all $x\in \tilde M$ a measure  $\tilde \nu_x$  on $\Lambda_\Gamma$ by 
$$
\frac{d\tilde \nu_x}{d\tilde \nu_o}(\xi) = e^{-\delta(g_0)\calB^{g_0}_\xi(o,x)}.
$$
It is a $\Gamma$-invariant, $\delta(g_0)$ conformal family of measures, 
normalized at $o$. By uniqueness of such a family, 
it coincides with $(\nu^{g_0}_x)_{x\in\tilde{M}}$. 
\end{proof}

Recall that $\mu_{BM}^g$ denotes the \emph{$g$-Bowen-Margulis geodesic current} 
on $\bd^2 \tilde M$ given by
$$
d\mu_{BM}^g(\eta, \xi) = 
d\nu^g_{x}(\eta)d\nu^g_{x}(\xi) = e^{-\delta(g)(\calB^g_\xi(o,x) + 
\calB^g_\eta(o,x))}d\nu^g_o(\eta)d\nu_o^g(\xi)\,,
$$
where $x$ is any point on the $g$-geodesic with endpoints $(\eta, \xi)$. 
We get the immediate corollary of Propositions \ref{prop:Conv-Busemann} 
and \ref{prop:Conv-PattSull}.

\begin{coro}\label{coro:Conv-MBM1} Under the same assumptions, 
in the weak-* topology of $\bd^2 \tilde M$, 
$\ds \lim_{\e\to 0} \mu_{BM}^{g_\e} = \mu_{BM}^{g_0}$.
\end{coro}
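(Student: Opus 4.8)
The plan is to test the weak-$*$ convergence against an arbitrary $\varphi\in C_c(\partial^2\tilde M)$ and to exploit the explicit product formula for the Bowen--Margulis currents. For $(\eta,\xi)\in\partial^2\tilde M$ set
$$
\beta^g(\eta,\xi) := \calB^g_\xi(o,x) + \calB^g_\eta(o,x),
$$
where $x$ is any point on the $g$-geodesic from $\eta$ to $\xi$; the cocycle relation for Busemann functions shows this does not depend on the choice of $x$, and with the basepoint $o$ one has $d\mu^g_{BM}(\eta,\xi) = e^{-\delta(g)\beta^g(\eta,\xi)}\,d\nu^g_o(\eta)\,d\nu^g_o(\xi)$. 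It therefore suffices to prove that the densities $e^{-\delta(g_\e)\beta^{g_\e}}$ converge to $e^{-\delta(g_0)\beta^{g_0}}$ uniformly on compact subsets of $\partial^2\tilde M$, and that the product measures $(\nu^{g_\e}_o)^{\otimes 2}$ converge weak-$*$ to $(\nu^{g_0}_o)^{\otimes 2}$ on $\partial\tilde M\times\partial\tilde M$.

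First, $\delta(g_\e)\to\delta(g_0)$ by Lemma \ref{lem:continuity-exponent}. Next, fix a compact $C\subset\partial^2\tilde M$. For $(\eta,\xi)\in C$ the $g_0$-geodesic stays in a bounded region of $\tilde M$, so its point $x_0=x_0(\eta,\xi)$ nearest to $o$ ranges in a fixed compact $K_0\subset\tilde M$; by Proposition \ref{prop:Conv-Geod2} one can pick a point $x_\e$ on the $g_\e$-geodesic with $d^{g_0}(x_\e,x_0)\le\alpha(\e)\to 0$, so that $x_\e$ also remains in a fixed compact. Evaluating $\beta^{g_\e}$ at $x_\e$ and $\beta^{g_0}$ at $x_0$, the difference is bounded by $|\calB^{g_\e}_\xi(o,x_\e)-\calB^{g_0}_\xi(o,x_\e)| + |\calB^{g_0}_\xi(o,x_\e)-\calB^{g_0}_\xi(o,x_0)|$ plus the same two terms with $\eta$ in place of $\xi$; the first summand tends to $0$ uniformly by Proposition \ref{prop:Conv-Busemann} (applied on a fixed compact of $\tilde M\times\tilde M\times\partial\tilde M$), and the second is $\le d^{g_0}(x_\e,x_0)\le\alpha(\e)$ since Busemann functions are $1$-Lipschitz. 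Hence $\beta^{g_\e}\to\beta^{g_0}$ uniformly on $C$; combined with $\delta(g_\e)\to\delta(g_0)$ and the boundedness of $\beta^{g_0}$ on $C$, this gives the claimed uniform convergence of the densities.

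For the measures, Proposition \ref{prop:Conv-PattSull} gives $\nu^{g_\e}_o\to\nu^{g_0}_o$ weak-$*$ on the compact space $\partial\tilde M$, hence $(\nu^{g_\e}_o)^{\otimes 2}\to(\nu^{g_0}_o)^{\otimes 2}$ weak-$*$ on $\partial\tilde M\times\partial\tilde M$ (using density of sums $\sum f_i\otimes g_i$ in $C(\partial\tilde M\times\partial\tilde M)$). Given $\varphi\in C_c(\partial^2\tilde M)$, put $\psi_\e=\varphi\cdot e^{-\delta(g_\e)\beta^{g_\e}}$, a bounded continuous function supported in the fixed compact $\mathrm{supp}\,\varphi$; then $\psi_\e\to\psi_0$ uniformly by the previous paragraph, so
$$
\int\varphi\,d\mu^{g_\e}_{BM} - \int\varphi\,d\mu^{g_0}_{BM}
= \int\psi_\e\,d(\nu^{g_\e}_o)^{\otimes 2} - \int\psi_0\,d(\nu^{g_0}_o)^{\otimes 2}
$$
tends to $0$: split the right-hand side into $\int(\psi_\e-\psi_0)\,d(\nu^{g_\e}_o)^{\otimes 2}$, whose absolute value is $\le\|\psi_\e-\psi_0\|_\infty$, and $\int\psi_0\,d(\nu^{g_\e}_o)^{\otimes 2}-\int\psi_0\,d(\nu^{g_0}_o)^{\otimes 2}\to 0$ by weak-$*$ convergence. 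This proves $\mu^{g_\e}_{BM}\to\mu^{g_0}_{BM}$ weak-$*$.

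The only genuine obstacle is the second paragraph: one must arrange the auxiliary points $x_\e$ on the $g_\e$-geodesics so that they stay inside a fixed compact set and uniformly close to the $g_0$-reference points $x_0$, which is precisely what Proposition \ref{prop:Conv-Geod2} supplies and what makes the uniform-on-compacts statement of Proposition \ref{prop:Conv-Busemann} applicable. Everything else is the routine combination of weak-$*$ convergence of probability measures with uniform convergence of bounded continuous integrands.
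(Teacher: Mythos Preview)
Your proof is correct and is precisely the detailed unpacking of what the paper declares an ``immediate corollary of Propositions \ref{prop:Conv-Busemann} and \ref{prop:Conv-PattSull}'' without giving any argument. You have correctly identified the two extra ingredients the paper leaves implicit: Lemma \ref{lem:continuity-exponent} for the convergence $\delta(g_\e)\to\delta(g_0)$, and Proposition \ref{prop:Conv-Geod2} to keep the auxiliary basepoints $x_\e$ on the $g_\e$-geodesics inside a fixed compact so that Proposition \ref{prop:Conv-Busemann} applies uniformly.
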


\begin{rema}\rm 
Once again,  Proposition \ref{prop:Conv-PattSull} and Corollary \ref{coro:Conv-MBM1} 
are still valid if we consider a family of $\Gamma$-invariant distances 
$(d_\e)_{\e\in(-1, 1)}$ on $\tilde M$ such that $(\tilde M, d_\e)$ 
is a CAT$(-1)$ and $\ds e^{-\e}d_0 \leq d_\e \leq e^\e d_0$ forall $\e\in (-1, 1)$.
\end{rema}

We end this section by  the convergence of  Bowen-Margulis measures. 

  \begin{theo}[Convergence of Bowen-Margulis measures]\label{theo:ContMBM}
Let $(g_\epsilon)_{-1 < \epsilon < 1}$ be a family of metrics on $\tilde M$ 
with sectional curvatures satisfying $K_{g_\epsilon}\leq -a^2$, 
such that for all $\epsilon\in (-1, 1)$, 
at all $x\in \tilde M$, $\ds e^{-\epsilon} g_0 \leq g_\epsilon \leq e^\epsilon g_0$ 
and $g_\epsilon\to g_0$ in the $\calC^1$-topology, uniformly on  compact sets. 
Assume that $\Gamma$ is divergent for all metrics $g_\e$. Then in the weak-* topology of $TM$,
$$
\lim_{\e \to 0} m_{BM}^{g_\e} = m_{BM}^{g_0}.
$$
  \end{theo}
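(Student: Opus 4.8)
The plan is to combine the convergence of the Bowen-Margulis geodesic currents (Corollary \ref{coro:Conv-MBM1}) with the machinery of Section \ref{sec:ConvGeod} relating measures attached to a \emph{fixed} current across different metrics. The subtle point is that $m_{BM}^{g_\e}$ is the measure $m_{\mu_{BM}^{g_\e}}^{g_\e}$ attached to a current $\mu_{BM}^{g_\e}$ that itself varies with $\e$; so one cannot directly invoke Proposition \ref{coro:ConvEtroiteMmu}, which handles a fixed current. The idea is to interpolate: compare $m_{\mu_{BM}^{g_\e}}^{g_\e}$ first with $m_{\mu_{BM}^{g_\e}}^{g_0}$ (same current, varying metric) and then deal with $m_{\mu_{BM}^{g_\e}}^{g_0}$ as $\e\to 0$ (varying current, fixed metric $g_0$).

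First I would fix a bounded continuous $G : S^{g_0}M \to \bbR$ and write, using the transfer formula $m_\mu^{g_\e} = (\Psi^{g_0\to g_\e})_*(\calE^{g_0\to g_\e}\times m_\mu^{g_0})$ from Proposition \ref{exact-change-variable}, that for every geodesic current $\mu$,
$$
\int_{S^{g_\e}M} G\circ (\Psi^{g_0\to g_\e})^{-1}\, dm_\mu^{g_\e} = \int_{S^{g_0}M} G\cdot \calE^{g_0\to g_\e}\, dm_\mu^{g_0}\,.
$$
Applying this with $\mu = \mu_{BM}^{g_\e}$ and noting that $m_{\mu_{BM}^{g_\e}}^{g_0}$ is, in $g_0$-Hopf coordinates, the product of $\mu_{BM}^{g_\e}$ with Lebesgue, Corollary \ref{coro:Conv-MBM1} gives $m_{\mu_{BM}^{g_\e}}^{g_0}\to m_{\mu_{BM}^{g_0}}^{g_0} = m_{BM}^{g_0}$ in the weak-* topology (locally finite integration against compactly supported functions), because the currents converge weak-* on $\bd^2\tilde M$ and the Hopf parametrization is a homeomorphism. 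Meanwhile Theorem \ref{theo:Conv-Stretch} gives $\calE^{g_0\to g_\e}\to 1$ uniformly on compact sets and $\limsup_\e \calE^{g_0\to g_\e}\le 1$ everywhere, while Theorem \ref{theo:Conv-Param} gives $\Psi^{g_0\to g_\e}\to \mathrm{Id}$ uniformly on compact sets. Combining these three facts shows that for compactly supported continuous $G$,
$$
\int G\, dm_{BM}^{g_\e} = \int_{S^{g_0}M}(G\circ\Psi^{g_0\to g_\e})\cdot\calE^{g_0\to g_\e}\, dm_{\mu_{BM}^{g_\e}}^{g_0} \xrightarrow[\e\to 0]{} \int G\, dm_{BM}^{g_0}\,,
$$
which is the asserted weak-* convergence. (The support-tracking is harmless since $\Psi^{g_0\to g_\e}$ moves points a uniformly bounded $g_0$-distance by Lemma in Section \ref{subsec:Morse}, so the preimage of a compact set stays in a fixed compact set for all $\e$.)

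The main obstacle, and the step requiring the most care, is justifying the passage to the limit in $\int_{S^{g_0}M}(G\circ\Psi^{g_0\to g_\e})\cdot\calE^{g_0\to g_\e}\, dm_{\mu_{BM}^{g_\e}}^{g_0}$, where \emph{both} the integrand and the measure vary with $\e$. One cannot simply split it, since neither factor converges in isolation against a fixed limit measure. The clean way is: on a fixed large compact set $K$ (chosen to contain the relevant supports), $\calE^{g_0\to g_\e}$ converges \emph{uniformly} to $1$ and $G\circ\Psi^{g_0\to g_\e}$ converges \emph{uniformly} to $G$ (uniform continuity of $G$ plus uniform convergence of $\Psi^{g_0\to g_\e}$); so the integrand converges uniformly on $K$ to $G$, and the measures $m_{\mu_{BM}^{g_\e}}^{g_0}$ converge weak-*, hence are uniformly bounded on $K$ — a standard $\epsilon/3$ argument then closes the gap. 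Since the Bowen-Margulis measure $m_{BM}^{g_0}$ may be infinite, I would only claim weak-* convergence against compactly supported functions here, which is exactly what the statement asserts; the stronger narrow convergence at $\e=0$ under the SPR hypothesis is precisely what Theorem \ref{theo:SPRStable} (equivalently Theorem \ref{theo:SPR-FinBM}) will later add, using finiteness plus mass convergence.
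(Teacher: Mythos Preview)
Your overall decomposition---comparing $m_{\mu_{BM}^{g_\e}}^{g_\e}$ first with $m_{\mu_{BM}^{g_\e}}^{g_0}$ and then letting the current vary---is exactly the paper's strategy, and your handling of the second step via Corollary \ref{coro:Conv-MBM1} is fine. The gap is in the first step: you assert that Theorem \ref{theo:Conv-Stretch} gives $\calE^{g_0\to g_\e}\to 1$ \emph{uniformly on compact sets}, and build your $\epsilon/3$ argument on this. But Theorem \ref{theo:Conv-Stretch} only establishes $\limsup_{\e\to 0}\calE^{g_0\to g_\e}\le 1$ uniformly on compacts (this is just Lemma \ref{lem:Ineq-Knieper}: $\calE^{g_0\to g_\e}(v)\le\|v\|^{g_\e}\le e^{\e/2}$), together with almost-sure convergence with respect to a \emph{fixed} measure $m_\mu^{g_0}$. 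Neither yields a uniform lower bound, and the almost-sure statement is useless here because your integrating measure $m_{\mu_{BM}^{g_\e}}^{g_0}$ varies with $\e$. So as written, you only get $\limsup_\e\int\varphi\,dm_{BM}^{g_\e}\le\int\varphi\,dm_{BM}^{g_0}$ for $\varphi\ge 0$, not the full limit.

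The paper closes this gap by a symmetric squeeze: after obtaining the $\limsup$ inequality exactly as you would, it rewrites the \emph{same} difference as
\[
-\int_{TM}\left(\varphi\circ\Psi^{g_\e\to g_0}\times\calE^{g_\e\to g_0}-\varphi\right)\,dm_{\mu_{BM}^{g_\e}}^{g_\e},
\]
and applies the one-sided bound $\limsup_\e\calE^{g_\e\to g_0}\le 1$ in the reverse direction, together with the convergence of mass ratios, to get the $\liminf$ inequality. This two-sided use of the asymmetric bound is the missing ingredient in your argument; once you insert it, your proof becomes essentially the paper's.
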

   
\begin{proof}
Let $ \varphi$ be a continuous map with compact support on $TM$.
Write the difference
 $\ds \int_{TM}\varphi dm_{\mu_{BM}^{g_\epsilon}}^{g_\epsilon}-\int_{TM}\varphi dm_{\mu_{BM}^{g_0}}^{g_0}$ as 
$$
\left(\int_{TM}\varphi dm_{\mu_{BM}^{g_\epsilon}}^{g_\epsilon}-\int_{TM}\varphi dm_{\mu_{BM}^{g_\e}}^{g_0}\right)+
\left(\int_{TM}\varphi dm_{\mu_{BM}^{g_\epsilon}}^{g_0}-\int_{TM}\varphi dm_{\mu_{BM}^{g_0}}^{g_0}\right)
$$
By Corollary \ref{coro:Conv-MBM1}, the second difference converges to $0$. 

 Proposition \ref{prop:Change-Var-Flow} allows to rewrite the first difference as
 $$
\int_{TM}\varphi dm_{\mu_{BM}^{g_\epsilon}}^{g_\epsilon}-\int_{TM}\varphi dm_{\mu_{BM}^{g_\e}}^{g_0}
=\int_{TM}\left(\varphi\circ\Psi^{g_0\to g_\epsilon}\times{\cal E}^{g_0\to g_\epsilon}-\varphi\right)\, dm_{\mu_{BM}^{g_\epsilon}}^{g_0} \,.
$$ 

 By Corollary \ref{coro:Conv-MBM1}, 
$ m_{\mu_{BM}^{g_\epsilon}}^{g_0}$ converges weakly to 
$ m_{\mu_{BM}^{g_0}}^{g_0}$ in the dual of continuous functions with compact support. 

 By Theorem \ref{theo:Conv-Param}, as  $\ds \limsup_{\epsilon\to 0} {\cal E}^{g_0\to g_\epsilon}\le 1$, 
if $\varphi\ge 0$, we have 
$\ds \limsup_{\varepsilon\to 0}\varphi\circ\Psi^{g_0\to g_\epsilon}\times{\cal E}^{g_0\to g_\epsilon}-\varphi\le 0$. 
As the support of these maps is included in a fixed compact set,  
we deduce that 
$$
\limsup_{\epsilon\to 0}\int_{TM}\left(\varphi\circ\Psi^{g_0\to g_\epsilon}\times{\cal E}^{g_0\to g_\epsilon}-\varphi\right)\, dm_{\mu_{BM}^{g_\epsilon}}^{g_0} 
\le 0\,.
$$

Now, rewrite this first difference as 
$$
-\left(\int_{TM}\varphi dm_{\mu_{BM}^{g_\epsilon}}^{g_0}-\int_{TM}\varphi dm_{\mu_{BM}^{g_\e}}^{g_\epsilon}\right)
=-\int_{TM}\left(\varphi\circ\Psi^{g_\epsilon\to g_0}\times{\cal E}^{g_\epsilon\to g_0}-\varphi\right)\, dm_{\mu_{BM}^{g_\epsilon}}^{g_\epsilon} \,.
$$ 

Observe first that, by the same arguments used in the proof 
of Equation (\ref{eqn:convergence-of-mass}), 
the ratios of masses $\frac{\| m_{\mu_{BM}^{g_\epsilon}}^{g_0}\|}{\| m_{\mu_{BM}^{g_\epsilon}}^{g_\epsilon}\|}$ goes to $1$ when $\epsilon\to 0$.

For the same reason as above, $\limsup_{\epsilon\to 0}{\cal E}^{g_\epsilon\to g_0}\le 1$, 
so that for $\varphi\ge 0$, by Theorem \ref{theo:Conv-Param}, uniformly on $TM$, 
the limsup of $\varphi\circ\Psi^{g_\epsilon\to g_0}\times{\cal E}^{g_\epsilon\to g_0}-\varphi$ 
is nonpositive. By convergence of the ratio of masses mentioned above, 
and by convergence of $ m_{\mu_{BM}^{g_\epsilon}}^{g_0}$ to $ m_{\mu_{BM}^{g_0}}^{g_0}$, 
its integral also has a nonpositive limsup, and the sign minus in the above expression gives 
$$
\liminf_{\epsilon\to 0}\int_{TM}\varphi dm_{\mu_{BM}^{g_\epsilon}}^{g_\epsilon}-\int_{TM}\varphi dm_{\mu_{BM}^{g_\e}}^{g_0} \ge 0\,.
$$
The result follows. 
\end{proof}


 \section{Differentiability of the metric and topological entropies}\label{sec:DiffEnt}
  
In this section, we   show  differentiability of  topological and 
measure theoretic entropies  at $\e=0$, when 
along a variation   $(g_\e)_{\e\in(-1, 1)}$ of metrics of a negatively curved 
Riemannian manifold $(M = \tilde M/\Gamma, g_0)$. 
We will focus on two distinct situations. 

First, let $\mu$ be a $\Gamma$-invariant geodesic current on $\bd^2 \tilde M$, 
and for all $\e\in(-1, 1)$, let $m_\mu^{g_\e}$ be the associated invariant measure 
for the geodesic flow $(g_\e^t)$ (see Section \ref{sec:Hopf}). 
Assume that the total mass of $m_\mu^{g_0}$ is finite. 
We will show that the measure theoretic entropy 
$\e \mapsto h(m_\mu^{g_\e}, g_\e)$ is $\calC^1$, with explicit derivatives. 

We then focus on the topological entropy. 
Provided that  Bowen-Margulis measures of each geodesic flow $(g_\e^t)$ are finite, 
and that their masses vary  continuously, 
we show that  the topological entropy is also $\calC^1$, 
with a similar  formula for its derivative. 
The proofs are  similar in both situations, and  inspired from \cite{KKW91} and \cite{Tap11}.

\begin{defi}\label{def:unifC1}
Let $M$ be a (non-compact) manifold. We   say that a family of complete 
Riemannian metrics $(g_\e)_{\e\in (-1,1)}$ on $M$ converges to $g_0$ \emph{uniformly in the $\calC^1$-topology} 
  if\,:
\begin{enumerate}
\item $g_\e \to g_0$ in the $\calC^1$ topology, uniformly on compact sets, 
as in Definition \ref{def:Unif-Cont1};
\item there exists $\kappa>0$ such that for all $\e\in (-1, 1)$ 
and all $v\in TM$ with $\norm{v}_{g_0}\leq 1$,
$$
\left| \left.\frac{d}{ds}\right|_{s=\e}g_s(v,v)\right|\leq \kappa ;
$$
\end{enumerate}
\end{defi}

A $\calC^2$ variation of metric with compact support, 
or with non-compact support but uniformly bounded second derivative, 
is a typical example of such uniformly $\calC^1$ family. 
If $(g_\epsilon)_{\epsilon\in (-1, 1)}$ is such a uniformly $\calC^1$ family 
of complete metrics on $M$, one immediately see that there 
exists $B = B(C_1, \e)>0$ such that at all $x\in M$ and for all $\epsilon\in (-1, 1)$,
$$
e^{-B\epsilon}g_0 \leq g_\epsilon \leq e^{B\epsilon}g_0\,,
$$
which allows us to apply the results shown in the previous section.


\subsection{Variation of metric entropy}

This paragraph is devoted to the proof of the following result, 
which seems to us new even in the compact case. 

\begin{theo}\label{theo:VarEntMet}
Let $b>a>0$, $\e>0$ and let $(g_\epsilon)_{\epsilon\in (-1, 1)}$ 
be a  family of complete metrics on $M = \tilde M/\Gamma$ 
 whose curvatures and derivatives of curvatures are uniformly bounded, and moreover such that 
for all $\epsilon\in (-1, 1)$ and at all points, $-b^2\leq K_{g_\epsilon}\leq -a^2$. 
Assume that
$g_\e\to g_0$ uniformly $\calC^1$. 
Let $\mu$ be a $\Gamma$-invariant geodesic current on $\bd^2 \tilde M$ 
such that $m_\mu^{g_0}$ is finite.
 
 Then the local  entropy $\ds \e\mapsto h_\Gamma^{loc}(m_\mu^{g_\e}, g_\e)$ 
of the $(g_\e^t)$-invariant measures $m_\mu^{g_\e}$ is differentiable at $\e=0$ 
with derivative given by
 $$
\left.\frac{d}{d\e}\right|_{\e = 0} h_\Gamma^{loc}(m_\mu^{g_\e}, g_\e) = 
-h_\Gamma^{loc}(m_\mu^{g_0}, g_0)\times 
\int_{S^{g_0}M} \left.\frac{d}{d\e}\right|_{\e = 0} \norm{v}^{g_\e}\frac{dm_\mu^{g_0}(v)}{\norm{m_\mu^{g_0}}}\,.
$$
\end{theo}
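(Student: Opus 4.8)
The plan is to combine the entropy transformation law (Theorem \ref{entropy-transformation}) with the regularity properties of the geodesic stretch established in Section \ref{sec:ConvGeod}, and then extract differentiability from an interpolation/sandwiching argument in the style of \cite{KKW91,Tap11}. Concretely, for each $\e$ we apply Theorem \ref{entropy-transformation} with $g_1 = g_0$ and $g_2 = g_\e$ to the fixed geodesic current $\mu$ (note $m_\mu^{g_0}$ is finite by hypothesis, hence so is $m_\mu^{g_\e}$ by Corollary \ref{prop:transformation-mass}, and ergodicity reduces to the ergodic case by an ergodic decomposition of $\mu$), obtaining
$$
h_\Gamma^{loc}(m_\mu^{g_\e}, g_\e) = I_\mu(g_\e, g_0)\, h_\Gamma^{loc}(m_\mu^{g_0}, g_0)\,.
$$
So the whole problem reduces to showing that $\e \mapsto I_\mu(g_\e, g_0)$ is differentiable at $\e = 0$ with
$$
\left.\frac{d}{d\e}\right|_{\e=0} I_\mu(g_\e, g_0) = -\int_{S^{g_0}M} \left.\frac{d}{d\e}\right|_{\e=0}\norm{v}^{g_\e}\, \frac{dm_\mu^{g_0}}{\norm{m_\mu^{g_0}}}\,.
$$
The sign here is the subtle point and it should come out correctly: $I_\mu(g_\e, g_0)$ is the average stretch of $g_0$ relative to $g_\e$, measured on $S^{g_\e}M$, so shrinking $g_\e$ (increasing $\e$) makes $g_0$-lengths relatively larger.

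The first real step is to convert $I_\mu(g_\e, g_0)$, which is an integral over $S^{g_\e}M$ against $m_\mu^{g_\e}$, into an integral over the \emph{fixed} space $S^{g_0}M$ against the fixed measure $m_\mu^{g_0}$. This is exactly what Proposition \ref{exact-change-variable} (equivalently Proposition \ref{prop:Change-Var-Flow}) does: using the Morse correspondence $\Psi^{g_0\to g_\e}$,
$$
\norm{m_\mu^{g_\e}} = \int_{S^{g_0}M} \calE^{g_0\to g_\e}\, dm_\mu^{g_0}\,, \qquad
\int_{S^{g_\e}M} \calE^{g_\e\to g_0}\, dm_\mu^{g_\e} = \int_{S^{g_0}M}\calE^{g_\e\to g_0}\circ\Psi^{g_0\to g_\e}\times \calE^{g_0\to g_\e}\, dm_\mu^{g_0} = \norm{m_\mu^{g_0}}\,,
$$
since $\calE^{g_\e\to g_0}\circ\Psi^{g_0\to g_\e}\times\calE^{g_0\to g_\e}$ is the stretch of a round trip, which one checks equals the derivative of $s\mapsto\calB^{g_0}_{v_+}(\pi v,\pi g_0^s v)$, i.e. $1$. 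Hence $I_\mu(g_\e,g_0) = \norm{m_\mu^{g_0}}/\norm{m_\mu^{g_\e}} = \|m_\mu^{g_0}\| \big/ \int_{S^{g_0}M}\calE^{g_0\to g_\e}\, dm_\mu^{g_0}$, and differentiability of $I_\mu(g_\e,g_0)$ follows from differentiability of $\e\mapsto\int_{S^{g_0}M}\calE^{g_0\to g_\e}\,dm_\mu^{g_0}$ together with the fact that this integral tends to $\norm{m_\mu^{g_0}}\neq 0$ (Equation (\ref{eqn:convergence-of-mass})), by the quotient rule.

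The heart of the argument is therefore the differentiability of $\e\mapsto\int_{S^{g_0}M}\calE^{g_0\to g_\e}\,dm_\mu^{g_0}$ with derivative $\int_{S^{g_0}M}\frac{d}{d\e}\big|_{\e=0}\norm{v}^{g_\e}\,dm_\mu^{g_0}$. Here I would proceed pointwise: fix $v\in S^{g_0}\tilde M$ and show $\e\mapsto\calE^{g_0\to g_\e}(v)$ is differentiable at $\e = 0$ with derivative $\frac{d}{d\e}\big|_{\e=0}\norm{v}^{g_\e}$. Since $\calE^{g_0\to g_\e}(v) = \frac{d}{dt}\big|_{t=0^+}\calB^{g_\e}_{v_+^{g_0}}(o,\pi g_0^t \tilde v)$, this is a statement about the joint regularity of Busemann functions in $(\e, t)$ near $(0,0)$; one knows $\calE^{g_0\to g_0}(v) = \norm{v}^{g_0}$, and the first-order behaviour in $\e$ of $\calB^{g_\e}$ at the \emph{footpoint} is governed by the first-order behaviour of $g_\e$ itself (the higher-order geometry of the horosphere contributes only at second order in $t$, hence disappears after $\frac{d}{dt}\big|_{t=0^+}$). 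The uniformly $\calC^1$ hypothesis (Definition \ref{def:unifC1}) supplies a uniform Lipschitz bound $|\calE^{g_0\to g_\e}(v) - \calE^{g_0\to g_0}(v)| \leq C|\e|$ (via Lemma \ref{lem:Ineq-Knieper} applied to both $g_0\to g_\e$ and $g_\e\to g_0$, and $e^{-B\e}g_0\le g_\e\le e^{B\e}g_0$), which is exactly what is needed to pass the pointwise derivative under the finite integral $\int_{S^{g_0}M}\cdots\,dm_\mu^{g_0}$ by dominated convergence. \textbf{The main obstacle} I anticipate is making the pointwise differentiability $\frac{d}{d\e}\big|_{\e=0}\calE^{g_0\to g_\e}(v) = \frac{d}{d\e}\big|_{\e=0}\norm{v}^{g_\e}$ fully rigorous: one must exchange the $t$-derivative (defining $\calE$) with the $\e$-derivative, and control the $g_\e$-Busemann function to first order in $\e$ uniformly for small $t$. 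I would handle this using Lemma \ref{lem:Control-Hyperb}(1), which bounds $\calB^{g_\e}_\xi(x,y)$ by a finite-distance expression up to an error $2Ce^{-at}$ that is \emph{uniform in $\e$} (since the curvature upper bound $-a^2$ is uniform), reducing everything to the $\calC^1$-dependence of $d^{g_\e}$ on $\e$ for nearby points — which is immediate from Definition \ref{def:unifC1}(2). Once that pointwise statement and the uniform Lipschitz bound are in hand, the quotient rule and dominated convergence finish the proof.
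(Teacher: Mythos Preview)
Your reduction via Theorem \ref{entropy-transformation} and Corollary \ref{prop:transformation-mass} to differentiability of
\[
\e \longmapsto \int_{S^{g_0}M} \calE^{g_0\to g_\e}\,dm_\mu^{g_0}
\]
is correct and shared with the paper's argument. The divergence comes at the next step, and the obstacle you flag is genuine and, under the stated hypotheses, very likely fatal to your route.

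\medskip

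\textbf{Why the pointwise differentiability of $\calE^{g_0\to g_\e}$ is problematic here.}
Write $\calE^{g_0\to g_\e}(v)=g_\e(w_\e,v)$, where $w_\e\in S^{g_\e}_{\pi v}\tilde M$ is the $g_\e$-unit vector pointing at $v_+^{g_0}$. Differentiating in $\e$ requires differentiability of $\e\mapsto w_\e$, i.e.\ first-order dependence of $g_\e$-geodesic rays on $\e$. Geodesics solve an ODE whose coefficients are Christoffel symbols, built from \emph{spatial} derivatives of $g_\e$; Definition \ref{def:unifC1} gives a bound on $\partial_\e g_\e$ but says nothing about $\partial_\e\partial_x g_\e$, so there is no reason the Christoffel symbols are $\calC^1$ in $\e$, hence no reason $w_\e$ is. Your proposed workaround through Lemma \ref{lem:Control-Hyperb}(1) replaces $\calB^{g_\e}$ by finite distances $d^{g_\e}(x,x_t)-d^{g_\e}(y,x_t)$ with $x_t$ on the $g_\e$-ray; but $x_t$ itself depends on $\e$ through the geodesic, so the same regularity issue reappears. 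The paper explicitly comments on this (see the Remark immediately following the proof of Theorem \ref{theo:VarEntMet}): the expansion $\calE^{g_0\to g_\e}(v)=1+\e\,\partial_\e|_{\e=0}\|v\|^{g_\e}+o(\e)$ ``would be long and technical, and would require additional assumptions of regularity on the variation of metric.''

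A secondary gap: the claimed uniform Lipschitz bound $|\calE^{g_0\to g_\e}(v)-1|\le C|\e|$ does \emph{not} follow from Lemma \ref{lem:Ineq-Knieper}. That lemma only gives the upper bound $\calE^{g_0\to g_\e}(v)\le \|v\|^{g_\e}\le e^{B\e/2}$; there is no pointwise relation $\calE^{g_0\to g_\e}(v)\cdot\calE^{g_\e\to g_0}(\Psi^{g_0\to g_\e}v)=1$ (Lemma \ref{lem:inverses-Morse}(5) shows the roundtrip is a time-shift, not the identity), so applying the lemma ``both ways'' does not produce a matching lower bound. The convergence results in Section \ref{sec:ConvGeod} (Proposition \ref{prop:Conv-Geod2}, Theorem \ref{theo:Conv-Stretch}) give $\calE^{g_0\to g_\e}\to 1$ with no quantitative rate.

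\medskip

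\textbf{How the paper bypasses this.}
Instead of differentiating $\calE^{g_0\to g_\e}$, the paper uses only the one-sided inequality $\calE^{g_1\to g_2}(v)\le \|v\|^{g_2}$ (Lemma \ref{lem:Ineq-Knieper}) inside the entropy transformation law to produce the two-sided sandwich
\[
h_\Gamma^{loc}(m_\mu^{g_0},g_0)\,\frac{1-\int_{S^{g_0}M}\|v\|^{g_\e}\,d\bar m_\mu^{g_0}}{\int_{S^{g_0}M}\|v\|^{g_\e}\,d\bar m_\mu^{g_0}}
\;\le\;
h_\Gamma^{loc}(m_\mu^{g_\e},g_\e)-h_\Gamma^{loc}(m_\mu^{g_0},g_0)
\;\le\;
h_\Gamma^{loc}(m_\mu^{g_0},g_0)\Bigl(\int_{S^{g_\e}M}\|v\|^{g_0}\,d\bar m_\mu^{g_\e}-1\Bigr),
\]
obtained by applying the inequality once with $(g_1,g_2)=(g_0,g_\e)$ and once with $(g_1,g_2)=(g_\e,g_0)$. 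After dividing by $\e$, the integrands involve only $\|v\|^{g_\e}$, which \emph{is} $\calC^1$ in $\e$ with a uniform bound directly from Definition \ref{def:unifC1}; the limits then follow from dominated convergence on the left and narrow convergence $\bar m_\mu^{g_\e}\to\bar m_\mu^{g_0}$ (Proposition \ref{coro:ConvEtroiteMmu}) on the right. No differentiability of the geodesic stretch is ever needed --- only $\limsup_{\e\to 0}\calE^{g_0\to g_\e}\le 1$ and the mass convergence (\ref{eqn:convergence-of-mass}), which feed into the narrow convergence.
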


\begin{proof}
Let $\mu$ be a $\Gamma$-invariant geodesic current on $\bd^2 \tilde M$ 
such that $m_\mu^{g_0}$ is finite. 
It follows from Proposition \ref{prop:transformation-mass} that for all $\e\in (-1, 1)$, 
the measures $m_\mu^{g_\e}$ are finite. 
Moreover, by Corollary \ref{coro:ConvEtroiteMmu}, 
$\ds \lim_{\e\to 0} m_\mu^{g_\e} = m_\mu^{g_0}$ and 
$\ds \lim_{\e\to 0}  \frac{m_\mu^{g_\e}}{\norm{m_\mu^{g_\e}}}=
\frac{m_\mu^{g_0}}{\norm{m_\mu^{g_0}}}$  in the narrow topology.

By Theorem \ref{th-entropies},   if $g_1$ and $g_2$ are admissible metrics on $M$, 
we know that 
\begin{equation}\label{eq:ChangeEnt}
h_\Gamma^{loc}(m_\mu^{g_2}, g_2)= \int_{S^{g_2}M}\calE^{g_1\to g_2}(v)d\bar m_\mu^{g_2}(v) . h_\Gamma^{loc}(m_\mu^{g_1}, g_1).
\end{equation}

By Theorem \ref{theo:Conv-Stretch}, this implies that the local entropy 
$\ds   h_\Gamma^{loc}(m_\mu^{g_\e}, g_\e)$ converges to 
$h_\Gamma^{loc}(m_\mu^{g_0},g_0 )$ when $\e\to 0$.
 Moreover, (\ref{eq:ChangeEnt}) and Lemma \ref{lem:Ineq-Knieper} also imply that 
 $$
h_\Gamma^{loc}(m_\mu^{g_2}, g_2)\leq 
\int_{S^{g_2}M}\norm{v}_{g_1}d\bar m_\mu^{g_2}(v). h_\Gamma^{loc}(m_\mu^{g_1}, g_1).
$$
Applying it with $g_1=g_0$ and $g_2=g_\e$ first, and second with $g_1=g_\e$ and $g_2=g_0$, we get 
\begin{eqnarray*}
  h_\Gamma^{loc}(m_\mu^{g_0}, g_0)\left(\frac{1}{\int_{S^{g_0}M}\norm{v}^{g_\e} d\bar m_\mu^{g_0}(v)}-1\right)
&\leq&h_\Gamma^{loc}(m_\mu^{g_\e}, g_\e) - h_\Gamma^{loc}(m_\mu^{g_0}, g_0))\\
&\leq& h_\Gamma^{loc}(m_\mu^{g_0}, g_0)\left(\int_{S^{g_\e} M}\norm{v}^{g_0}d\bar m_\mu^{g_\e}(v)-1\right),
\end{eqnarray*}
which yields to 
\begin{eqnarray*}
h_\Gamma^{loc}(m_\mu^{g_0}, g_0)\frac{\int_{S^{g_0}M} \frac{\norm{v}^{g_0} - \norm{v}^{g_\e}}{\e} d\bar m_\mu^{g_0}(v)}{\int_{S^{g_0}M} \norm{v}^{g_\e}d\bar m_\mu^{g_0}(v)}
 &\leq & \frac{h_\Gamma^{loc}(m_\mu^{g_\e}, g_\e) - h_\Gamma^{loc}(m_\mu^{g_0}, g_0)}{\e} \\
& \leq & h_\Gamma^{loc}(m_\mu^{g_0}, g_0)\int_{S^{g_\lambda}M} \frac{\norm{v}^{g_0} - \norm{v}^{g_\e}}{\e} d\bar m_\mu^{g_\e}(v).
\end{eqnarray*}
Now, dominated convergence theorem, continuity of $\e\to h_\Gamma^{loc}(m_\mu^{g_\e},g_\e)$ at $\e=0$, and narrow convergence of $\frac{m^{g_\e}_\mu}{\|m^{g_\e}_\mu\|}$ towards $\frac{m^{g_0}_\mu}{\|m^{g_0}_\mu\|}$
give  
  $$
\left.\frac{d}{d\e}\right|_{\e = 0} h_\Gamma^{loc}( m_\mu^{g_\e}, g_\e) = 
-h_\Gamma^{loc}( m_\mu^{g_0}, g_0)\times \int_{S^{g_0}M} \left.\frac{d}{d\e}\right|_{\e = 0} \norm{v}^{g_\e}d\bar m_\mu^{g_0}(v)\,.
$$
This is the desired result. 
\end{proof}

\begin{rema}\rm In the above approach, we used lemma \ref{lem:Ineq-Knieper} 
to bound from the above $\calE^{g_1\to g_2}$ by the
smooth quantity $\|v\|^{g_2}$. A slightly more direct approach could have 
been to use in the above computations the fact that
 $$
\calE^{g_0\to g_\e}(v) = 
1 + \e \left.\frac{d}{d\e}\right|_{\e = 0} \norm{v}^{g_\e} + \e\alpha(\e, v),
$$
where $\alpha$ is uniformly bounded on $(-1, 1)\times S^{g_0}M$ and 
for all $v\in S^{g_0}M$, $\ds \lim_{\e\to 0} \alpha(v, \e) = 0$.
However, the proof of such estimate would be  long and technical, 
and would require additional assumptions of regularity on the variation of metric. 
\end{rema}  

\subsection{Variation of topological entropy}

We now show differentiability of the topological entropy $h_{top}(g_\e)$ at $\e=0$. 
It  is not a corollary of Theorem \ref{theo:VarEntMet} since we have to consider 
 Bowen-Margulis geodesic currents $\mu_{BM}^{g_\e}$ depending on the metric $g_\e$. 
However, the strategy of proof is very similar, as by   Theorem \ref{theo:ContMBM}, 
$m_{BM}^{g_\e}\to m_{BM}^{g_0}$ in the weak-* topology. 
The only missing ingredient is the convergence of 
Bowen-Margulis measures in the dual of bounded  continuous functions. 
It is therefore required in the assumptions of Theorem \ref{theo:VarEnt}. 
We refer to Section \ref{sec:SPR} for the study of the large class 
of the so-called SPR manifolds, which will
satisfy this assumption.

\begin{theo}\label{theo:VarEnt}
Let $b>a>0$, and let $(g_\e)_{\e\in (-1, 1)}$ be a  family of complete metrics on $M$ such that
 \begin{enumerate}
  \item for all $\e\in (-1,1)$ and at all point, $-b^2\leq K_{g_\e}\leq -a^2$ ;
  \item $g_\e\to g_0$ uniformly in the $\calC^1$ topology as in Definition \ref{def:unifC1} ;
  \item for all $\e\in (-1, 1)$, the Bowen-Margulis measure $m_{BM}^{g_\e}$ of 
the geodesic flow $(g_\e^t)_{t\in \bbR}$ on $S^{g_\e} M$ has finite mass;
 \item the map $\e\to \norm{m_{BM}^{g_\e}}$ is continuous at $\e=0$.
 \end{enumerate}
 
 Then the entropy $\ds \e\mapsto h_{top}(g_\e)$ is $\calC^1$ at $\e=0$ with derivative given by
 $$
 \left.\frac{d}{d\e}\right|_{\e = 0} h_{top}(g_\e) =
 -h_{top}(g_0)\int_{S^{g_0}M} \left.\frac{d}{d\e}\right|_{\e = 0} \norm{v}^{g_\e}\frac{dm_{BM}^{g_0}(v)}{\norm{m_{BM}^{g_0}}}.
 $$
\end{theo}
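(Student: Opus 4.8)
The plan is to mimic the argument of Theorem \ref{theo:VarEntMet} but with the Bowen--Margulis currents $\mu_{BM}^{g_\e}$, which now vary with $\e$. The starting point is Corollary \ref{coro:EntBM}, which gives, for admissible metrics $g_1,g_2$ with finite Bowen--Margulis measures, the inequality
$$
h_{top}(g_2)\le \int_{S^{g_2}M}\calE^{g_2\to g_1}(v)\,\frac{dm_{BM}^{g_2}(v)}{\|m_{BM}^{g_2}\|}\;h_{top}(g_1),
$$
and, by Lemma \ref{lem:Ineq-Knieper}, the weaker but smoother version with $\calE^{g_2\to g_1}$ replaced by $\|v\|^{g_1}$. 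I would apply this twice: once with $(g_1,g_2)=(g_0,g_\e)$ and once with $(g_1,g_2)=(g_\e,g_0)$, obtaining
$$
h_{top}(g_0)\!\left(\frac{1}{\int_{S^{g_0}M}\|v\|^{g_\e}\,d\bar m_{BM}^{g_0}}-1\right)
\le h_{top}(g_\e)-h_{top}(g_0)
\le h_{top}(g_0)\!\left(\int_{S^{g_\e}M}\|v\|^{g_0}\,d\bar m_{BM}^{g_\e}-1\right),
$$
where $\bar m_{BM}^{g}$ denotes the normalized Bowen--Margulis measure. (Here one uses $\delta(g)=h_{top}(g)$ from Theorem \ref{th:OP} and the fact that finiteness of $m_{BM}^{g_\e}$ is exactly hypothesis (3), so $\Gamma$ is divergent for every $g_\e$ and Corollary \ref{coro:EntBM} applies to the current $\mu_{BM}^{g_\e}$.)

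Next I would establish continuity of $\e\mapsto h_{top}(g_\e)$ at $\e=0$. From Lemma \ref{lem:continuity-exponent} and $\delta(g)=h_{top}(g)$ one already gets $h_{top}(g_\e)\to h_{top}(g_0)$; alternatively it follows by sandwiching as above once one knows $\int\|v\|^{g_\e}\,d\bar m_{BM}^{g_0}\to 1$ and $\int\|v\|^{g_0}\,d\bar m_{BM}^{g_\e}\to 1$. The first of these is immediate by dominated convergence since $\|v\|^{g_\e}\to\|v\|^{g_0}=1$ uniformly on $S^{g_0}M$ by the $\calC^0$-convergence of the metrics. For the second I would invoke Theorem \ref{theo:ContMBM}, which gives weak-$*$ convergence $m_{BM}^{g_\e}\to m_{BM}^{g_0}$, together with hypothesis (4), $\|m_{BM}^{g_\e}\|\to\|m_{BM}^{g_0}\|$; these two facts upgrade the weak-$*$ convergence of the normalized measures to convergence in the dual of bounded continuous functions (narrow convergence), and since $v\mapsto\|v\|^{g_0}$ is bounded continuous on $S^{g_\e}M$ uniformly in $\e$ (by $e^{-B\e}g_0\le g_\e\le e^{B\e}g_0$), we get $\int\|v\|^{g_0}\,d\bar m_{BM}^{g_\e}\to 1$.

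Finally I would divide the sandwich inequality by $\e$ (treating $\e>0$ and $\e<0$ separately, which only swaps the roles of the two bounds). The left side becomes
$$
h_{top}(g_0)\,\frac{\int_{S^{g_0}M}\frac{1-\|v\|^{g_\e}}{\e}\,d\bar m_{BM}^{g_0}(v)}{\int_{S^{g_0}M}\|v\|^{g_\e}\,d\bar m_{BM}^{g_0}(v)},
$$
and the denominator tends to $1$ while, by the uniform $\calC^1$-bound (2) in Definition \ref{def:unifC1}, the difference quotient $\frac{1-\|v\|^{g_\e}}{\e}$ is uniformly bounded and converges pointwise to $-\frac{d}{d\e}\big|_{0}\|v\|^{g_\e}$, so dominated convergence gives the limit $-h_{top}(g_0)\int_{S^{g_0}M}\frac{d}{d\e}\big|_0\|v\|^{g_\e}\,d\bar m_{BM}^{g_0}(v)$. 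For the right side one needs the same difference quotient integrated against the moving measure $\bar m_{BM}^{g_\e}$; here I would combine the uniform bound on $\frac{1-\|v\|^{g_\e}}{\e}$ with the narrow convergence $\bar m_{BM}^{g_\e}\to\bar m_{BM}^{g_0}$ and the uniform convergence $\frac{1-\|v\|^{g_\e}}{\e}\to -\frac{d}{d\e}\big|_0\|v\|^{g_\e}$ to conclude the same limit. Both bounds thus have the same limit, which forces differentiability at $\e=0$ with the stated derivative; the fact that the same computation works at every $\e$ near $0$ (with $g_0$ replaced by $g_{\e_0}$, using that all hypotheses persist) gives that the derivative is continuous, i.e. $h_{top}$ is $\calC^1$ near $0$. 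The main obstacle is the right-hand side: controlling the difference quotient integrated against the $\e$-dependent measure $\bar m_{BM}^{g_\e}$, which is why the narrow (not merely weak-$*$) convergence of the normalized Bowen--Margulis measures — guaranteed by Theorem \ref{theo:ContMBM} plus hypothesis (4) — is essential, and also why the uniform-$\calC^1$ assumption (rather than mere pointwise differentiability of $\e\mapsto\|v\|^{g_\e}$) is needed to get a uniform dominating function.
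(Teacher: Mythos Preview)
Your proposal is correct and follows essentially the same route as the paper: start from Corollary \ref{coro:EntBM} combined with Lemma \ref{lem:Ineq-Knieper} to get the two-sided bound, divide by $\e$, and pass to the limit using Theorem \ref{theo:ContMBM} together with hypothesis (4) to upgrade weak-$*$ convergence of $m_{BM}^{g_\e}$ to narrow convergence of the normalized measures. Your write-up is in fact more detailed than the paper's, which simply asserts that ``the assumptions of the Theorem are now exactly done to make the above integrals converge''; one small caveat is that the \emph{uniform} convergence of the difference quotient you invoke for the right-hand side is not literally guaranteed by Definition \ref{def:unifC1} (which gives a uniform bound on $\partial_s g_s$ but not its continuity in $s$), though the paper glosses over the same point.
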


\begin{proof} As the preceding one, our strategy of proof is inspired 
from \cite{KKW91} and \cite{Tap11}.  Corollary \ref{coro:EntBM} 
shows that if $g_1$ and $g_2$ are admissible metrics $M$ with finite 
Bowen-Margulis measures, then 
 $$
 h_{top}(g_2)\leq
 \int_{S^{g_2}M}\calE^{g_1\to g_2}(v)\,\frac{d  m_{BM}^{g_2}(v)}{\|m_{BM}^{g_2}\|} \cdot h_{top}(g_1)\leq
\int_{S^{g_2}M}\norm{v}_{g_1}\frac{d  m_{BM}^{g_2}(v)}{\|m_{BM}^{g_2}\|}\cdot h_{top}(g_1)\,,
$$
where the last inequality follows from Lemma \ref{lem:Ineq-Knieper}. 
Applying it to $g_\e$ and $g_0$ on both sides, we get   for all $\e\in (-1, 1)$,
\begin{eqnarray*}
h_{top}(g_0)\times \frac{\int_{S^{g_0}M} \frac{\norm{v}^{g_0} - \norm{v}^{g_\e}}{\lambda} \frac{d m_{BM}^{g_0}(v)}{\|m_{BM}^{g_0}\|}}{\int_{S^{g_0}M} \norm{v}^{g_\e}\frac{d m_{BM}^{g_0}(v)}{\|m_{BM}^{g_0}\|}}
& \leq & \frac{h_{top}(g_\lambda) - h_{top}(g_0)}{\lambda}\\
 &\leq& h_{top}(g_0)\times \int_{S^{g_\e}M} \frac{\norm{v}^{g_0} - \norm{v}^{g_\e}}{\e} \frac{d m_{BM}^{g_\e}(v)}{\|m_{BM}^{g_\e}\|} \,.
\end{eqnarray*}
 The assumptions of the Theorem are now exactly done to make the above integrals converge. 
We deduce that topological entropy is differentiable at $\e=0$, with 
  $$
\left.\frac{d}{d\e}\right|_{\e = 0} h_{top}(g_\e) =
 -h_{top}(g_0)\times \int_{S^{g_0}M} \left.\frac{d}{d\e}\right|_{\e = 0} \norm{v}^{g_\e}d\bar m_{BM}^{g_0}(v)\,.
$$ 
\end{proof}


\section{Entropy at infinity and Strongly Positively Recurrent groups}
\label{sec:SPR}

In this section, our goal is to propose a wide class of manifolds 
and metrics to which Theorem \ref{theo:VarEnt}
will apply. In view of this goal, proving differentiability of entropy, 
this section is apparently technical. 
However, the definition of this class of manifolds, and the related concepts studied here, 
is  probably one of the main novelties in our paper. 
We refer to \cite{STdufutur} for further results on these manifolds. 

We define the
  \emph{entropy at infinity} $\delta_\infty(M,g)$ of a 
negatively curved manifold $(M,g)$ 
(see Definition \ref{def:entropy-at-infini}),  
as the maximal exponential growth of the dynamics away
from any given (large) compact set. 
In particular, it is invariant under any $\calC^2$  compact 
perturbation of a negatively curved metric. 

We introduce the class of \emph{strongly positively recurrent} manifolds 
$(  M, g)$, defined as those negatively curved manifolds whose
 entropy at infinity is strictly smaller than the total topological 
entropy of the geodesic flow. 

As said in the introduction, the notion of {\em strong positive recurrence} 
appeared in \cite{Sarig01} in the
context of symbolic dynamics over an infinite alphabet, and has
been used later by some other authors among which \cite{BBG}. 
A former terminology  due to \cite{GS} was {\em stable positive recurrence}. 
This terminology could be more adapted to the kind of results that we prove here. 
In any case, as will be seen below and in \cite{STdufutur}, 
the acronyme SPR is perfectly adapted to the concept.  

The simplest nontrivial examples are 
geometrically finite hyperbolic manifolds, but this class also 
includes most  known examples of non-compact manifolds 
with negative curvature whose geodesic flow has a finite 
Bowen-Margulis measure, and many new ones (see section \ref{ssec:SPR-Ex}).  

Our main result is the following. 

\begin{theo}\label{theo:SPR-FinBM}\label{theo:ConvMassSPR} 
Let $(M,g_0)$ be a manifold with pinched negative curvature 
and bounded derivatives of the curvature. 

If $(M,g_0)$ is a strongly positively recurrent manifold, 
then the Bowen-Margulis measure of its geodesic flow is finite. 

Moreover, if $(g_\epsilon)_{\epsilon\in (-1,1)}$ is a uniformly 
$\mathcal{C}^1$-variation of smooth complete metrics on $M$ 
with pinched negative curvature and bounded derivatives of metrics, 
satisfying all $K_{g_\epsilon}\le -a^2<0$, then 
\begin{enumerate} 
\item For $\epsilon\in (-\epsilon_0,\epsilon_0)$ small enough,
all metrics $g_\epsilon$ are strongly positively recurrent. 
\item The mass of the associated (finite) Bowen-Margulis $m_{BM}^{g_\epsilon}$ 
varies continuously on $
(-\epsilon_0,\epsilon_0)$. 
\end{enumerate}
\end{theo}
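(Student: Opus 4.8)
The plan is to reduce everything to a single quantitative statement comparing the entropy at infinity $\delta_\infty(M,g)$ with the topological entropy $\delta(g)=h_{top}(g)$, and to control both quantities under the perturbation. First I would recall (or establish, in the body of Section \ref{sec:SPR}) the precise definition of $\delta_\infty(M,g)$ as the infimum over large compact sets $K\subset M$ of the exponential growth rate of geodesic arcs that stay outside $K$ (equivalently, a Gurevic-type pressure of the ``escaping'' part of the flow). The key finiteness criterion I would prove is the geometric analogue of Sarig's theorem for countable Markov shifts: if $\delta_\infty(M,g)<\delta(g)$, then the Patterson-Sullivan series $P^g_\Gamma(s)$ diverges at $s=\delta(g)$ \emph{and} the resulting Bowen-Margulis measure $m^g_{BM}$ has finite total mass. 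The mechanism is a Dirichlet-series / renewal argument: decompose geodesics through a fixed compact core into excursions into the cusps (the complement of $K$); each excursion contributes a factor whose exponential growth rate is governed by $\delta_\infty$, while returns to the core contribute the full rate $\delta(g)$; the strict gap forces the tail of the excursion-length distribution with respect to $m^g_{BM}$ to decay exponentially, hence $\int \ell_{\mathrm{exc}}\,dm^g_{BM}<\infty$, which is exactly finiteness of the Bowen-Margulis measure (this is the standard ``positive recurrence $\Rightarrow$ finite measure'' step once one has the spectral gap). This gives the first assertion.

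For part (1), stability of the SPR property, I would combine two continuity statements under the uniformly $\calC^1$ variation $(g_\e)$. On one hand, by Lemma \ref{lem:continuity-exponent} we already have $\delta(g_\e)\to\delta(g_0)$ (indeed $e^{-B\e/2}\delta(g_0)\le\delta(g_\e)\le e^{B\e/2}\delta(g_0)$ since the family is uniformly $\calC^1$). On the other hand I would prove an \emph{upper semicontinuity} for the entropy at infinity: $\limsup_{\e\to 0}\delta_\infty(M,g_\e)\le\delta_\infty(M,g_0)$. This is the geometric heart of the argument. The point is that the entropy at infinity is computed from geodesic arcs staying outside a \emph{fixed} large compact set $K$, and for such arcs the geodesic stretch $\calE^{g_0\to g_\e}$ can be made uniformly close to $1$ (Theorem \ref{theo:Conv-Stretch} gives $\limsup_{\e\to 0}\calE^{g_0\to g_\e}\le 1$ uniformly on compact sets, and outside $K$ one still has the crude bound $e^{-B\e/2}\le\calE^{g_0\to g_\e}\le e^{B\e/2}$ from the pinching $e^{-B\e}g_0\le g_\e\le e^{B\e}g_0$); pushing the escaping dynamics of $g_\e$ forward to $g_0$ via the Morse correspondence $\Psi^{g_0\to g_\e}$ (which is uniformly close to the identity by Theorem \ref{theo:Conv-Param}) shows that the escaping pressure for $g_\e$ is at most $e^{B\e/2}$ times that for $g_0$, up to an additive error vanishing with $\e$. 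Combining, for $\e$ small enough $\delta_\infty(M,g_\e)\le e^{B\e/2}\delta_\infty(M,g_0)+o(1) < \delta(g_0)-c+o(1) < \delta(g_\e)$, where $c=\delta(g_0)-\delta_\infty(M,g_0)>0$ is the gap; hence $g_\e$ is SPR for $|\e|<\e_0$.

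For part (2), continuity of the mass $\e\mapsto\|m^{g_\e}_{BM}\|$ at $\e=0$, I would argue as follows. By Theorem \ref{theo:ContMBM} we already know $m^{g_\e}_{BM}\to m^{g_0}_{BM}$ in the weak-$*$ (vague) topology; weak-$*$ convergence plus lower semicontinuity of mass gives $\|m^{g_0}_{BM}\|\le\liminf_\e\|m^{g_\e}_{BM}\|$. The reverse inequality $\limsup_\e\|m^{g_\e}_{BM}\|\le\|m^{g_0}_{BM}\|$ requires ruling out escape of mass to infinity, and this is exactly where the \emph{uniform} spectral gap proved in part (1) is used: since all $g_\e$ with $|\e|<\e_0$ have $\delta_\infty(M,g_\e)\le\delta(g_\e)-c/2$ with the \emph{same} $c/2$, the exponential tail bound for the excursion-length distribution from the finiteness proof is uniform in $\e$, i.e. there is a fixed compact set $\calK_\eta\subset TM$ and $\e_1>0$ with $m^{g_\e}_{BM}(TM\setminus\calK_\eta)\le\eta\,\|m^{g_\e}_{BM}\|$ for all $|\e|<\e_1$; together with the normalization $\nu^{g_\e}_o$ being probability measures and the comparability of the Patterson-Sullivan densities (Proposition \ref{prop:Conv-PattSull}), this prevents loss of mass and upgrades vague convergence to narrow convergence, hence $\|m^{g_\e}_{BM}\|\to\|m^{g_0}_{BM}\|$. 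Note this establishes exactly hypotheses (3) and (4) of Theorem \ref{theo:VarEnt}, which is the stated purpose of the section.

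The main obstacle, I expect, will be the uniform exponential-tail estimate for excursions --- both proving it for a single metric (the ``strong positive recurrence $\Rightarrow$ finite Bowen-Margulis measure'' implication, which needs a careful renewal/Dirichlet-series argument comparing the divergence rates of the full Poincar\'e series and of its ``outside $K$'' sub-series) and, more delicately, making the constants in this estimate uniform in $\e$. The uniformity is not automatic from the $\e$-by-$\e$ statements and is precisely what forces us to first prove the \emph{uniform} gap $\delta(g_\e)-\delta_\infty(M,g_\e)\ge c/2$ before attacking the mass continuity; the upper semicontinuity of $\delta_\infty$ under the $\calC^1$ perturbation, via the Morse correspondence and the stretch estimates of Section \ref{sec:ConvGeod}, is the technical linchpin that makes both part (1) and part (2) work.
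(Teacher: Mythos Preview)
Your overall architecture is right and matches the paper's: finiteness from the gap, stability of the gap under perturbation, then uniform tightness to upgrade vague to narrow convergence of $m_{BM}^{g_\e}$. The technical implementations differ in two places, and the paper's are more direct.

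For the finiteness step, the paper does not run a renewal argument on $m_{BM}^g$ itself. Instead it works on the Patterson--Sullivan density: a shadow-type estimate using only quasi-invariance gives $\nu_o^g(U_T)\le C e^{-(\delta_\Gamma-\delta_{W^c}-\eta)T}$, where $U_T$ is the set of directions whose ray avoids $\Gamma\cdot\tilde W$ for time $T$ after leaving $\tilde W$ (Proposition \ref{prop:MassPatSul}). Letting $T\to\infty$ yields $\nu_o^g(\Lambda_{\tilde W})=0$, hence full mass on the radial limit set, hence divergence by Hopf--Tsuji--Sullivan. Finiteness is then immediate from the Pit--Schapira criterion (Theorem \ref{theo:Fin-BM-PS16}): divergence plus $\sum_{\gamma\in\Gamma_{\tilde W}} d(o,\gamma o)e^{-\delta_\Gamma d(o,\gamma o)}<\infty$, the latter being automatic when $\delta_{W^c}<\delta_\Gamma$. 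Your renewal picture is the correct intuition behind that criterion, but invoking it saves you from having to build the excursion decomposition and prove divergence by hand.

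For the stability of $\delta_\infty$, the paper avoids the Morse correspondence and the stretch entirely. It proves the purely combinatorial inclusion $\Gamma_{\tilde W_r}(g_\e)\subset\bigcup_{\alpha,\beta\in F}\alpha\,\Gamma_{\tilde W}(g_0)\,\beta$ for a finite set $F$ (Lemma \ref{lem:ContGammaW}), using only that $g_\e$-geodesics lie in an $r$-neighbourhood of $g_0$-geodesics with the same endpoints (Proposition \ref{prop:Conv-Geod2}). This gives $\delta_{W_r^c}(g_\e)\le e^{\e/2}\delta_{W^c}(g_0)$ directly and in fact yields full continuity of $\delta_\infty$, not just upper semicontinuity. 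Your route via $\Psi^{g_0\to g_\e}$ and the crude stretch bound $e^{\pm B\e/2}$ would also give the inequality, but is heavier than needed.

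For the mass continuity, you and the paper agree: make the $\nu_o^{g_\e}(U_T^\e)$ bound uniform in $\e$ (Lemma \ref{lem:UnifMassPatSul}), then decompose $S^{g_\e}(M\setminus W_R)$ by last visit time to $W$ and use flow-invariance plus the product structure of $m_{BM}^{g_\e}$ to control each slice by $\nu_o^{g_\e}(U_{n/2}^\e)$; this is exactly the uniform exponential-tail estimate you anticipate as the main obstacle.
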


The first part of this theorem (finiteness of Bowen-Margulis) has been 
proven independently and simultaneously by
A. Velozo \cite{Velozo} by a different approach. 

As a corollary, all assumptions of Theorem \ref{theo:VarEnt} hold for 
such a variation of metrics, so that we get the following result, 
which answers positively the question at the origin of this work. 

\begin{coro} 
Let $(g_\epsilon)_{\epsilon\in (-1, 1)}$ be a uniformly $\mathcal C^1$ 
family of complete metrics on the manifold $M$ with pinched negative curvature 
and bounded derivatives of the curvature. Assume that for all $\epsilon\in (-1, 1)$, 
$-b^2 \leq K_{g_\epsilon} \leq  -a^2$ for some $b>a>0$.
Then the entropy $\epsilon \mapsto h_{top}(g_\epsilon)$ is 
$\mathcal C^1$ around $\epsilon = 0$, and its derivative is given by 
 $$
 \left.\frac{d}{d\epsilon}\right|_{\epsilon = 0} h_{top}(g_\epsilon) =
 -h_{top}(g_0) \times 
\int_{S^{g_0}M} \left.\frac{d}{d\epsilon}\right|_{\epsilon = 0} \norm{v}^{g_\epsilon}\,
\frac{dm_{BM}^{g_0}(v)}{\norm{m_{BM}^{g_0}}}\,.
 $$
\end{coro}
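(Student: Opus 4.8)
The plan is to combine the stability results of Section \ref{sec:SPR} with the differentiability result of Section \ref{sec:DiffEnt}, so the Corollary is essentially a bookkeeping statement once Theorems \ref{theo:SPR-FinBM} and \ref{theo:VarEnt} are in hand. First I would observe that a uniformly $\mathcal{C}^1$ family $(g_\e)$ with $-b^2\le K_{g_\e}\le -a^2$ automatically satisfies $e^{-B\e}g_0\le g_\e\le e^{B\e}g_0$ for some constant $B$ (as remarked just after Definition \ref{def:unifC1}), and that all the $g_\e$ have pinched negative curvature and bounded derivatives of the curvature, hence are admissible metrics in the sense of Section \ref{sec:Hopf}. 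In particular the hypotheses of Theorem \ref{theo:SPR-FinBM} are met.

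Next I would apply Theorem \ref{theo:SPR-FinBM}: since $(M,g_0)$ is SPR (which is part of the standing hypothesis, via Theorem \ref{theo:main}'s assumption that $(M,g_0)$ is an SPR manifold — here I would note that the Corollary as stated should be read with that hypothesis, matching Theorem \ref{theo:main}), there is $\e_0>0$ such that for $|\e|<\e_0$ each $g_\e$ is SPR, each Bowen-Margulis measure $m_{BM}^{g_\e}$ is finite, and $\e\mapsto\|m_{BM}^{g_\e}\|$ is continuous at $\e=0$. This verifies assumptions (3) and (4) of Theorem \ref{theo:VarEnt}. Assumptions (1) and (2) of Theorem \ref{theo:VarEnt} are literally assumptions (1) and (2) of the Corollary. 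Therefore Theorem \ref{theo:VarEnt} applies on $(-\e_0,\e_0)$ and yields that $\e\mapsto h_{top}(g_\e)$ is $\mathcal{C}^1$ near $\e=0$ with
$$
\left.\frac{d}{d\e}\right|_{\e=0} h_{top}(g_\e)=-h_{top}(g_0)\int_{S^{g_0}M}\left.\frac{d}{d\e}\right|_{\e=0}\norm{v}^{g_\e}\,d\bar m_{BM}^{g_0}(v),
$$
which is exactly the claimed formula once one writes $d\bar m_{BM}^{g_0}=dm_{BM}^{g_0}/\|m_{BM}^{g_0}\|$.

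I do not expect any genuine obstacle here, since all the analytic work has been done in the preceding sections; the only point requiring a little care is to make sure that Theorem \ref{theo:VarEnt} is being invoked on the restricted interval $(-\e_0,\e_0)$ rather than on all of $(-1,1)$, and that ``$\mathcal{C}^1$ at $\e=0$'' is understood as ``$\mathcal{C}^1$ on a neighbourhood of $\e=0$'' — the continuity of the derivative near $0$ follows because the same argument applies at every $\e_1\in(-\e_0,\e_0)$ in place of $0$, using that $g_{\e_1}$ is again SPR and the family is uniformly $\mathcal{C}^1$ around $\e_1$, together with the narrow continuity of $\e\mapsto \bar m_{BM}^{g_\e}$ from Theorem \ref{theo:SPR-FinBM} and Theorem \ref{theo:ContMBM}. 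The derivative expression being an integral of the continuous map $v\mapsto\frac{d}{d\e}|_{\e=\e_1}\norm{v}^{g_\e}$ against the narrowly-varying probability measures $\bar m_{BM}^{g_\e}$, it depends continuously on $\e_1$, which gives the $\mathcal{C}^1$ regularity on the whole neighbourhood.
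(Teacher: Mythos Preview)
Your proposal is correct and is exactly the approach the paper takes: the Corollary is stated immediately after Theorem \ref{theo:SPR-FinBM} with the one-line justification that ``all assumptions of Theorem \ref{theo:VarEnt} hold for such a variation of metrics'', and you have spelled out precisely that verification. Your observation that the SPR hypothesis on $(M,g_0)$ is implicit from context is accurate, and your extra paragraph on why the derivative is continuous on a full neighbourhood (by reapplying the argument at each $\e_1\in(-\e_0,\e_0)$ and using narrow continuity of $\bar m_{BM}^{g_\e}$) is a welcome clarification that the paper leaves to the reader.
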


In view of the length of this section, let us present the strategy of the proof. 

Heuristically, the SPR assumption allows to neglect the dynamical contribution of the complement of a large compact set to the dynamics.
 We develop this idea in  two introductive parts  
\ref{ssec:entropy-at-infinity} and \ref{subsec:entropy-at-infinity}, 
defining the growth of the fundamental group outside a compact set, 
the entropy at infinity and 
the class of Strongly Positively Recurrent manifolds.

In Subsection \ref{ssec:SPR-Ex} we provide an illustration of this concept, 
by describing different families of examples of SPR manifolds. 

A criterion of finiteness of the Bowen-Margulis measure from \cite{PS16} 
is used to prove the first part of Theorem \ref{theo:SPR-FinBM}. 
Subsection \ref{ssec:finite-bm} is devoted to this proof.

All entropies considered here are continuous for a negatively curved perturbation $(g_\epsilon)_{-1\leq \e\le 1}$ satisfying $e^{-\e} g_0 \leq g_\e \leq e^{\e}g_0$. 
Thus, the SPR assumption, which is the existence of a critical gap 
between the entropy at infinity and the topological entropy 
is stable under such small perturbations. 
And the existence of a large compact set concentrating the most part of the dynamics allows to prove that
its complement is of small Bowen-Margulis measure, uniformly in the perturbation. 
These ideas are developped in subsection \ref{ssec:entropy-variation}, where we prove that 
for a variation of a SPR metric as above, 
the mass of the Bowen-Margulis measures varies continuously. 

As said in the introduction, these results imply all Theorems stated in the introduction.
Theorem \ref{examples-SPR}  is an immediate consequence of section \ref{ssec:SPR-Ex} 
and the first part of Theorem \ref{theo:SPR-FinBM}. 
Theorem \ref{theo:SPRStable} is a reformulation of the second part of
Theorem \ref{theo:SPR-FinBM}. At last, our main result, Theorem \ref{theo:main}, 
follows from Theorems \ref{theo:VarEnt} and \ref{theo:SPRStable} (or \ref{theo:SPR-FinBM}).

\subsection{Fundamental group outside a given compact set}
\label{ssec:entropy-at-infinity}

Let $(M,g)$ be a complete Riemannian manifold with pinched negative 
curvature $-b^2 \leq K_g \leq -a^2 <0$, 
whose fundamental group $\Gamma = \pi_1(M)$ is non-elementary.
 Let $p_\Gamma:\tilde M \to M$ be the universal covering map. 
Let $o\in \tilde M$ be a point, fixed once for all. 
For any set $W\subset M$, we will write $W^c = M\backslash W$.
 
\begin{defi}\label{def:nicepreimage}
Let $W\subset M$ be a compact pathwise connected set 
which is the closure of its interior, and
 whose boundary is piecewise $\calC^1$. 
A {\em nice preimage of $W$} is a 
 compact set   $\tilde W\subset \tilde M$  such that 
\begin{enumerate}
\item $p_\Gamma(\tilde W) = W$ and the restriction of $p_\Gamma$ 
to the interior of $\tilde W$ is injective ;
\item $\tilde W$   has a piecewise $\calC^1$ boundary.
\end{enumerate}
\end{defi}

\begin{rema}\rm We will often refer to and use results of \cite{PS16}. 
In this reference, ${\cal W}$ is a subset of $S^gM$ and $\tilde{\cal W}$ 
is an open set inside $p_\Gamma^{-1}({\cal W})$ such that 
$p_\Gamma:\tilde{\cal W}\to {\cal W}$ is onto. 
As we deal with several metrics and several unit tangent bundle, 
it is better  here to work with $W\subset M$.
 The reader can think to ${\cal W}$ as $S^gW$. 
The fact that $W$ is compact here, and ${\cal W}$ open in \cite{PS16} 
is  just a matter of taste in some arguments. 
\end{rema}

We gather in the following lemma elementary useful facts. 
\begin{lemm} Let $W$ be a compact pathwise connected set 
with piecewise $\calC^1$ boundary, 
which is the closure of its interior. 
\begin{enumerate}
\item A nice preimage $\tilde{W}$ of $W$  exists. 
\item If $W_2\supset W_1$, 
then they admit nice preimages $\tilde{W}_2\supset\tilde{W}_1$. 
\item  If $\gamma\neq {\rm id}$ then 
$\ds \gamma \cdot \inter{\tilde W}\cap \inter{\tilde W} = \emptyset$
\item The set 
$\{\gamma\in \Gamma \; ; \; \gamma \cdot \tilde W \cap \tilde W \neq \emptyset\}$ 
is finite. 
We call such $\gamma \tilde W$  the  \emph{adjacent elements} of $\tilde W$. 
\end{enumerate}
\end{lemm}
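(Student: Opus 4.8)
The plan is to obtain every nice preimage by intersecting $p_\Gamma^{-1}(W)$ with one fixed Dirichlet fundamental domain, which will settle all four parts uniformly. Fix $o\in\tilde M$ and set $D=\{x\in\tilde M : d^g(x,o)\le d^g(x,\gamma o)\ \text{for all }\gamma\in\Gamma\}$, the closed Dirichlet domain for the free, properly discontinuous action of $\Gamma$ on $(\tilde M,g)$, with $\inter D=\{x:d^g(x,o)<d^g(x,\gamma o)\ \forall\gamma\ne{\rm id}\}$. I will use the standard facts that $p_\Gamma(D)=M$, that $p_\Gamma$ is injective on $\inter D$, and that each point of $\partial D$ lies on a bisector $\{x:d^g(x,o)=d^g(x,\gamma o)\}$ with $\gamma\ne{\rm id}$. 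In pinched negative curvature every such bisector is a $C^\infty$ hypersurface: $d^g(\cdot,o)$ and $d^g(\cdot,\gamma o)$ are smooth off their centres, and at an equidistant point their gradients are distinct, since equal gradients would force the geodesics to $o$ and to $\gamma o$ to have the same velocity there, hence to coincide, forcing $o=\gamma o$ and $\gamma={\rm id}$ by freeness. Consequently $\partial D$, being locally a finite union of such hypersurfaces, is piecewise $C^1$, $\inter D$ is the topological interior of $D$, and every point of $\partial D$ is a limit of points of $\tilde M\setminus D$.

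For (1), given $W$ I would take $\tilde W:=p_\Gamma^{-1}(W)\cap D$. This is closed, and it is bounded: for $x\in\tilde W$, as $x\in D$ one has $d^g(x,o)=\min_\gamma d^g(x,\gamma o)=d^g(p_\Gamma x,p_\Gamma o)\le\sup_{w\in W}d^g(w,p_\Gamma o)<\infty$, so $\tilde W$ is compact by Hopf--Rinow. Since $p_\Gamma(D)=M$, we get $p_\Gamma(\tilde W)=W$. No point of $\partial D$ lies in $\inter{\tilde W}$, each being a limit of points of $\tilde M\setminus D\subseteq\tilde M\setminus\tilde W$, so $\inter{\tilde W}\subseteq D\setminus\partial D=\inter D$, on which $p_\Gamma$ is injective. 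Finally $\partial\tilde W\subseteq p_\Gamma^{-1}(\partial W)\cup\partial D$, a union of two piecewise $C^1$ hypersurfaces, so $\partial\tilde W$ is piecewise $C^1$; hence $\tilde W$ is a nice preimage of $W$.

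For (2), I would run the same construction with the same $D$, setting $\tilde W_i:=p_\Gamma^{-1}(W_i)\cap D$; each is a nice preimage of $W_i$ by (1), and $W_1\subseteq W_2$ gives $p_\Gamma^{-1}(W_1)\subseteq p_\Gamma^{-1}(W_2)$, hence $\tilde W_1\subseteq\tilde W_2$. For (3), if $x\in\gamma\cdot\inter{\tilde W}\cap\inter{\tilde W}$ with $\gamma\ne{\rm id}$, then $\gamma^{-1}x$ and $x$ both lie in $\inter{\tilde W}$ and satisfy $p_\Gamma(\gamma^{-1}x)=p_\Gamma(x)$, so injectivity of $p_\Gamma$ on $\inter{\tilde W}$ forces $\gamma x=x$, contradicting freeness of the action. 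For (4), $\tilde W$ is compact and the $\Gamma$-action is proper, so $\{\gamma\in\Gamma:\gamma\cdot\tilde W\cap\tilde W\ne\emptyset\}$ is finite by definition of a proper action.

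The one point needing a little care is the piecewise $C^1$ regularity of $\partial\tilde W$ in (1): one must know that $\partial D$ is piecewise $C^1$ and that intersecting $p_\Gamma^{-1}(W)$ with $D$ keeps the boundary piecewise $C^1$ (using $\partial(A\cap B)\subseteq\partial A\cup\partial B$). In negative curvature the bisectors are genuinely smooth hypersurfaces, so this is routine; everything else is formal, and the pathwise connectedness and closure-of-interior hypotheses on $W$ are not actually used by this construction.
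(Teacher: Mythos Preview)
Your proof is correct and takes essentially the same approach as the paper: both construct $\tilde W$ as the intersection of $p_\Gamma^{-1}(W)$ with a Dirichlet domain, the paper choosing the centre as a lift of a point of $W$ while you use a fixed $o\in\tilde M$. You supply considerably more detail than the paper's short sketch (in particular the smoothness of bisectors, the compactness argument via $d^g(x,o)=d^g(p_\Gamma x,p_\Gamma o)$, and the explicit derivation of items (3)--(4) from injectivity and properness), and your closing remark that path-connectedness and the closure-of-interior hypothesis are not needed for this construction is accurate.
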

\begin{proof} 
 Choose some $w\in W$, lift it to $\tilde w\in p_\Gamma^{-1}(W)$ 
and construct the \emph{Dirichlet domain}  
$$\tilde W=\{z\in p_\Gamma^{-1}(W), \,\, 
\forall \gamma\in\Gamma, \,\,d^{g}(z,\tilde{w})\le d^{g}(z,\gamma \tilde{w})\}\,.
$$ 
It is a compact set with $\calC^1$-boundary which satisfies the properties stated in the lemma. 
If $W_1\subset W_2$, choose some 
$w\in p_\Gamma^{-1}(W_1)\subset p_\Gamma^{-1}(W_2)$. 
For $i = 1, 2$   the Dirichlet domains $\tilde{W}_1\subset \tilde{W}_2
\subset p_\Gamma^{-1}(W_i)$ satisfy Fact 2.  
\end{proof}

The following notion was introduced in  \cite{PS16}.

\begin{defi}\label{def:GammaW} Let $W\subset M$ be a 
compact set and $\tilde{W}$ a nice preimage of $W$. 
The \emph{fundamental group of $M$ out of $\tilde W$} is the set
$\ds \Gamma^g_{\tilde W} $ of elements $\gamma\in \Gamma$ 
such that there exists  $x,y\in \tilde W$ and a $g$-geodesic 
segment $c_{\gamma}$ joining $x$ to $\gamma y$ such that for all $h\in \Gamma$ ,
$$
c_\gamma\cap p_\Gamma^{-1}W=
c_\gamma\cap\Gamma.\tilde W \subset \tilde W \cup \gamma \cdot \tilde W\,.
$$
By compactness of $\tilde W$ we will always assume that $x,y\in \partial \tilde W$. 
\end{defi}

Heuristically, as explained in \cite{PS16}, $\Gamma_{\tilde W}^g$ 
represents loops $p_\Gamma([x,\gamma y])$ which go outside $W$ at the beginning, 
and come back to $W$ only at the end. 
This heuristics does not work perfectly, depending on the topology of $W$, 
for example when it has holes. 

The set $\Gamma^g_{\tilde W}$ will  help controlling 
what happens far at infinity. 
In particular it follows immediately from the definition 
that it is not sensitive to small compact perturbations 
of the metric $g$, as stated in the proposition below.  

\begin{prop}\label{prop:GammaWPerturb}
Let $(M,g_0)$ be a complete negatively curved metric 
and $W\subset M$ be a compact set,
with nice preimage $\tilde W$. 
For any proper compact subset $K\subset \inter{W}$ and 
any metric $g$ such that $g_1 = g_2$ outside $K$, 
we have $\Gamma_{\tilde W}(g_1) = \Gamma_{\tilde W}(g_2)$.
\end{prop}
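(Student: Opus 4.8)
The plan is to unwind the definition of $\Gamma_{\tilde W}^g$ and observe that it only involves geodesic segments that leave $\tilde W$ and come back, together with the combinatorial pattern of which translates $\Gamma.\tilde W$ they meet; since $g_1$ and $g_2$ differ only on a compact set $K$ strictly inside $W$, such segments are forced to coincide for the two metrics. First I would fix a nice preimage $\tilde W$ of $W$ and choose the compact set $K \subset \inter W$ with $g_1 = g_2$ on $\tilde M \setminus p_\Gamma^{-1}(K)$ (lifting $K$). Let $\gamma \in \Gamma_{\tilde W}(g_1)$, so there are $x, y \in \partial\tilde W$ and a $g_1$-geodesic segment $c_\gamma$ from $x$ to $\gamma y$ with
$$
c_\gamma \cap p_\Gamma^{-1} W = c_\gamma \cap \Gamma.\tilde W \subset \tilde W \cup \gamma\cdot\tilde W.
$$

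The key geometric observation is the following: the segment $c_\gamma$, apart from its two endpoints $x \in \partial\tilde W$ and $\gamma y \in \partial(\gamma\tilde W)$, must lie entirely in $p_\Gamma^{-1}(W^c)$. Indeed by the displayed inclusion $c_\gamma$ meets $\Gamma.\tilde W$ only inside $\tilde W \cup \gamma\tilde W$, and since $c_\gamma$ is a geodesic for a negatively curved metric it is an embedded arc; because $\tilde W$ is the closure of its interior with piecewise $\calC^1$ boundary and $x, y \in \partial\tilde W$, the intersections $c_\gamma \cap \tilde W$ and $c_\gamma \cap \gamma\tilde W$ are the (possibly degenerate) initial and terminal subarcs only — here one uses convexity-type arguments, or one can simply replace $c_\gamma$ by the sub-segment between the last exit point of $\tilde W$ and the first entry point of $\gamma\tilde W$, which witnesses membership just as well. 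In particular the interior of (this trimmed) $c_\gamma$ avoids $\Gamma.K \subset \Gamma.\inter{\tilde W} \cup (\text{something in } W^c)$; more precisely $K \subset \inter W$ and $W$ pulls back to $\Gamma.\tilde W$, so $p_\Gamma^{-1}(K) \subset \Gamma.\tilde W$, hence $c_\gamma$ meets $p_\Gamma^{-1}(K)$ only within $\tilde W \cup \gamma\tilde W$. Then I would argue that $x, \gamma y$ can be taken on $\partial W$-parts where the metrics agree and that the whole trimmed segment lies in the region $\{g_1 = g_2\}$, so it is simultaneously a $g_1$-geodesic and a $g_2$-geodesic with the same endpoints; hence $\gamma \in \Gamma_{\tilde W}(g_2)$. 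By symmetry $\Gamma_{\tilde W}(g_1) = \Gamma_{\tilde W}(g_2)$.

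The main obstacle, and the part that needs care rather than hand-waving, is the claim that one may arrange the witnessing segment $c_\gamma$ to stay inside the region where the two metrics coincide — i.e. that $c_\gamma$ (or a sub-segment still witnessing $\gamma \in \Gamma_{\tilde W}$) never enters $p_\Gamma^{-1}(K)$ except possibly inside $\tilde W \cup \gamma\tilde W$, combined with controlling the behaviour near the endpoints $x, y \in \partial\tilde W$, where $c_\gamma$ does touch $\tilde W$ and a priori $\tilde W \cap p_\Gamma^{-1}(K)$ could be nonempty. The clean way around this is to require, as one may, that $K$ be a \emph{proper} compact subset of $\inter W$ and to shrink the witnessing segment to its portion outside a neighbourhood of $K$: the definition of $\Gamma_{\tilde W}^g$ is insensitive to whether the endpoints sit exactly on $\partial\tilde W$ or slightly inside, because one can always extend along the boundary; here one invokes the remark following Definition~\ref{def:GammaW} that the heuristic "loops going out and coming back" is what matters. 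Once the witnessing data is pushed entirely into $\{g_1 = g_2\}$, the geodesic equation is identical for both metrics (same Christoffel symbols on that open set), uniqueness of geodesics between two points in negative curvature applies, and the two conditions defining $\Gamma_{\tilde W}(g_1)$ and $\Gamma_{\tilde W}(g_2)$ become literally the same. I would finish by noting this equality is the whole content of Proposition~\ref{prop:GammaWPerturb}, so no further estimate is needed.
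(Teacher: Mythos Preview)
Your approach is correct and is exactly the fleshing-out the paper has in mind; the paper itself offers no proof beyond the sentence ``it follows immediately from the definition'', and your trimming argument (replace $c_\gamma$ by its sub-arc between the last exit from $\tilde W$ and the first subsequent entry into $\gamma\tilde W$, note this sub-arc lies in $\tilde M\setminus p_\Gamma^{-1}(K)$ because $K\subset\inter W$, and conclude it is simultaneously a $g_2$-geodesic) is the natural way to make this precise.

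Two small points worth tightening. First, the passage from ``same Christoffel symbols on the open set $\tilde M\setminus p_\Gamma^{-1}(K)$'' to ``same geodesic between $x'$ and $\gamma y'$'' is really the statement that a \emph{local} $g_2$-geodesic in $\tilde M$ is the \emph{global} one; this uses that $(\tilde M,g_2)$ is Cartan--Hadamard, i.e.\ that $g_2$ is also a complete metric of nonpositive (here pinched negative) curvature. That hypothesis is implicit in the paper's standing assumptions on admissible metrics, but your write-up should name it rather than appeal to ``uniqueness in negative curvature'' without saying whose curvature. Second, your discussion of endpoints is slightly tangled: once you trim, the new endpoints $x',y'$ lie on $\partial\tilde W\subset\tilde W$, so they are legitimate witnesses in Definition~\ref{def:GammaW} with no need to ``extend along the boundary'' or invoke any informal remark; you can drop that paragraph.
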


By definition, $id_\Gamma\in \Gamma_{\tilde W}^g$, 
and $\gamma\in \Gamma_{\tilde W}^g$ 
iff $\gamma^{-1}\in \Gamma_{\tilde W}^g$. 
When $(M,g)$ is a geometrically finite manifold,
for suitable choice of $W$, 
$\Gamma^g_{\tilde W}$ is a union of 
groups. 
But in general, $\Gamma_{\tilde W}^g$ is not a group at all, 
as shown in the following proposition. 

\begin{prop}\label{prop:Basic-GammaW}
With the previous notations, let $W\subset M$ be a compact 
pathwise connected set with piecewise $\calC^1$ boundary and 
$\tilde W$ be a nice preimage of $W$.
  If $\gamma\in \Gamma_{\tilde W}$ is a hyperbolic element 
whose axis $A_\gamma$ intersects the interior of $\tilde{W}$, 
then there exists $N = N(\gamma)>0$ such that 
for all $n\geq N$, $\gamma^n\notin \Gamma^g_{\tilde W}$.
\end{prop}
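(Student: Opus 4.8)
The plan is to show that a high power $\gamma^n$ cannot satisfy the defining condition of $\Gamma^g_{\tilde W}$, because any geodesic segment from $\partial\tilde W$ to $\gamma^n\cdot\partial\tilde W$ is forced to fellow-travel the axis $A_\gamma$ for a long time, and hence must enter translates $\gamma^k\cdot\tilde W$ for intermediate values $0<k<n$ — which are not among the allowed sets $\tilde W\cup\gamma^n\cdot\tilde W$. First I would fix a point $p\in A_\gamma\cap\inter{\tilde W}$ and let $\ell=\ell^g(\gamma)=d^g(p,\gamma p)$ be the translation length, so that $\gamma^k p$ lies on $A_\gamma$ at distance $k\ell$ along the axis, and $\gamma^k p\in\inter{\gamma^k\tilde W}$ for every $k$. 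Since $\tilde W$ is compact, there is $D>0$ with $\tilde W\subset B^g(p,D)$, hence $\gamma^k\tilde W\subset B^g(\gamma^k p, D)$ for all $k$.

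Next I would invoke the standard hyperbolic fellow-traveling estimate (the Morse lemma / convexity of the distance to a geodesic in pinched negative curvature, as already used repeatedly in the paper, e.g.\ Lemma~\ref{lem:Control-Hyperb}): any $g$-geodesic segment $c$ joining a point $x$ within distance $D$ of $p=\gamma^0 p$ to a point within distance $D$ of $\gamma^n p$ stays within a bounded distance $R=R(D,a,b)$ of the axis $A_\gamma$ on the ``middle'' portion, say for parameters between $c_0+D'$ and (length)$-c_0-D'$ where $c_0$ depends only on $a,b,D$. Concretely, because $x,\gamma^n y$ are each within $D$ of the axis and the axis is a geodesic, $c$ lies in the $R$-neighbourhood of $A_\gamma$ except near its two endpoints. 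Now choose $N$ large enough that $N\ell > 2(c_0+D')+ \mathrm{diam}$-type terms; then for $n\ge N$ there is some intermediate $k$ with $0<k<n$ such that the point $\gamma^k p$ on the axis is ``deep'' in the middle portion, and the nearest point of $c$ to $\gamma^k p$ is within $R$. Using that the injectivity-type separation of the translates $\gamma^j\tilde W$ along the axis grows linearly in $\ell$, for $N$ large (depending on $\gamma$, i.e.\ on $\ell$, $D$ and the curvature bounds) this forces $c$ to actually meet $\inter{\gamma^k\tilde W}$: indeed $c$ passes within $R$ of $\gamma^k p$, and a short comparison argument shows that a geodesic passing that close to the center $\gamma^k p$ of the ball $B^g(\gamma^k p,D)$ — while being nearly tangent to the axis direction there — must intersect $\gamma^k\tilde W$ itself, since $\tilde W$ is a neighbourhood of $p$ in $\tilde M$ (it is the closure of its interior and $p$ is interior). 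Care is needed here: $\tilde W$ need not be a metric ball, so I would instead argue that $c$ enters $\Gamma\cdot\tilde W$ at a parameter corresponding to $\gamma^k p$, hence meets some $\gamma^j\tilde W$; and by the linear-in-$k$ ordering along the axis together with the bound $\mathrm{diam}(\tilde W)\le 2D$, for $n$ large the only candidate is $j=k$ with $0<k<n$. This contradicts $c\cap\Gamma\cdot\tilde W\subset\tilde W\cup\gamma^n\tilde W$.

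To make the ``$c$ enters $\Gamma\cdot\tilde W$ near $\gamma^k p$'' step rigorous I would use that $p\in\inter{\tilde W}$, so there is $r>0$ with $B^g(p,r)\subset\tilde W$, hence $B^g(\gamma^k p,r)\subset\gamma^k\tilde W$; then it suffices to choose the parameters so that $c$ passes within $r$ (not merely $R$) of $\gamma^k p$. This stronger closeness is obtained by taking the intermediate index $k\approx n/2$, where the geodesic $c$ is closest to the axis: by convexity of $t\mapsto d^g(c(t),A_\gamma)$ and the endpoint bounds $d^g(x,p),\,d^g(\gamma^n y,\gamma^n p)\le D$, the maximum of $d(c(\cdot),A_\gamma)$ over the middle is controlled, but near the midpoint one in fact gets $d^g(c(t_0),A_\gamma)\le \epsilon_n\to 0$ as $n\to\infty$ — this is the exponential contraction in Lemma~\ref{lem:Control-Hyperb}(2). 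So for $n\ge N(\gamma)$ with $\epsilon_n<r$, the point of $c$ realizing this is within $r$ of some $\gamma^k p$ with $0<k<n$, giving $c\cap\gamma^k\tilde W\ne\emptyset$, the desired contradiction.

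The main obstacle, as flagged above, is purely geometric bookkeeping: $\tilde W$ is an arbitrary nice preimage, not a metric ball, so one cannot directly say ``$c$ passes within $r$ of the axis point $\Rightarrow$ $c$ meets $\gamma^k\tilde W$'' — one must pass through the interior ball $B^g(p,r)\subset\tilde W$. Once that is set up, the heart of the argument is the exponential fellow-traveling estimate near the midpoint, which is exactly Lemma~\ref{lem:Control-Hyperb} applied to the two geodesics $c$ and $A_\gamma$ with $K=D$ and $T$ of order $1$; one then only needs $N\ell$ large compared to the constants $c_0(D,a)$ and to $-\frac{1}{a}\log(r/(4De^D))$. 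I would present the proof by (i) fixing $p,D,r$; (ii) recording the axis combinatorics $\gamma^k p$, $\gamma^k\tilde W\subset B^g(\gamma^k p,D)$; (iii) applying Lemma~\ref{lem:Control-Hyperb}(2) to get $\epsilon_n\to0$ closeness at the midpoint; (iv) choosing $N(\gamma)$ so that $\epsilon_n<r$ for $n\ge N$, extracting the intermediate index $0<k<n$, and concluding $\gamma^n\notin\Gamma^g_{\tilde W}$.
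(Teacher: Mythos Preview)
Your proposal is correct and follows essentially the same route as the paper: fix $p\in A_\gamma\cap\inter{\tilde W}$ and $r>0$ with $B^g(p,r)\subset\tilde W$, then apply Lemma~\ref{lem:Control-Hyperb}(2) to force the geodesic from $x\in\tilde W$ to $\gamma^n y\in\gamma^n\tilde W$ within $r$ of $A_\gamma$ on a middle stretch, hence through some $\gamma^k\tilde W$ with $0<k<n$. The paper's proof is the same argument, only more tersely stated: it takes $x_0=p$, $d_0=r$, applies Lemma~\ref{lem:Control-Hyperb}(2) with $K=\mathrm{diam}(\tilde W)$, $\alpha=d_0/2$, $2T=\ell^g(\gamma)$, and concludes directly that the middle interval of length $\ell^g(\gamma)$ must meet some intermediate $\gamma^k\tilde W$; your more careful discussion of why $\tilde W$ not being a ball requires passing through $B^g(p,r)$ is exactly the point the paper leaves implicit.
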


\begin{proof} Let $\gamma\in \Gamma_{\tilde W}^g$ be such an hyperbolic element. 
Its axis $A_\gamma$ intersects $\tilde{W}$, and therefore also $\gamma\tilde{W}$ 
and all iterates $\gamma^n\tilde W$. 
Choose some $x_0\in A_\gamma\cap \inter{\tilde W}$ 
and let $d_0=d^g(x_0,\partial\tilde W)>0$. 
Let $x,y\in \tilde W$. 
By lemma \ref{lem:Control-Hyperb} (2), with $K=diam(\tilde W)$, $\alpha=d_0/2$, 
 we know that if 
$d^g(x,\gamma^n y)=n\ell^g(\gamma)\pm 2diam\tilde W \ge 2R_0$, 
all points in the middle interval of length $2T=\ell^g(\gamma)$ of the 
$g$-geodesic segment from $x$ to $\gamma^n y$ would be at distance
 less than $d_0/2 $ from $A_\gamma$, and therefore some of them would
 be inside $\gamma^k \tilde W$, for some $1\le k\le n-1$. 
This implies $n\ell^g(\gamma)\le 2R_0+2diam(\tilde W)$, 
which proves the proposition. 
\end{proof}

The set $\Gamma_{\tilde W}$ depends on $W$ and the choice of its preimage $\tilde W$, 
but not too strongly as illustrated by the following proposition. 

\begin{prop}\label{prop:GammaWPreimage}
\begin{enumerate}
\item Let $W\subset M$ be a compact set (with piecewise $\calC^1$ boundary), 
and $\tilde{W}$ be a nice preimage. let $\alpha\in\Gamma$.  
Then $\ds \Gamma^g_{\alpha \tilde W}=\alpha \Gamma^g_{\tilde W}\alpha^{-1}$. 
\item  If $W_1$ and $W_2$ are compact sets of $M$ 
(with piecewise $\calC^1$ boundary) 
such that $W_1 \subset \inter{W_2}$ with respective nice preimages
$\tilde W_1\subset \tilde W_2$, 
there exists $k\geq 1$ and $\alpha_1,..., \alpha_k\in \Gamma$ such that 
$$
\Gamma_{\tilde W_2}\subset \bigcup_{i, j = 1}^k \alpha_i\Gamma_{\tilde W_1}(\alpha_j)^{-1}\,.
$$
\item If $\tilde W_1$ and $\tilde W_2$ are nice preimages of $W$, 
then there exists a finite set 
$\{\alpha_1,..., \alpha_k\}\subset \Gamma$ such that
$$
\Gamma_{\tilde W_2}\subset \bigcup_{i,j = 1}^p \alpha_i \Gamma_{\tilde W_1}(\alpha_j)^{-1}.
$$
\end{enumerate}
\end{prop}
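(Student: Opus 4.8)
The three statements are all consequences of the same observation: the notion $\Gamma_{\tilde W}^g$ only depends on the choice of the nice preimage $\tilde W$ through a ``bounded ambiguity'' encoded by finitely many elements of $\Gamma$. I would prove the items in the order (1), (3), (2), since (1) is a direct unwinding of the definition, (3) reduces to (1) via a covering-space argument, and (2) combines (3) with an inclusion of preimages.

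\emph{Step 1 (item 1).} I would simply transport the defining data under $\alpha$. If $\gamma\in\Gamma_{\tilde W}^g$, there are $x,y\in\partial\tilde W$ and a $g$-geodesic segment $c_\gamma$ from $x$ to $\gamma y$ with $c_\gamma\cap\Gamma\cdot\tilde W\subset \tilde W\cup\gamma\tilde W$. Apply $\alpha$: then $\alpha x,\alpha\gamma y=(\alpha\gamma\alpha^{-1})(\alpha y)\in\partial(\alpha\tilde W)$, the segment $\alpha c_\gamma$ is a $g$-geodesic (since $\alpha$ is a $g$-isometry of $\tilde M$) joining $\alpha x$ to $(\alpha\gamma\alpha^{-1})(\alpha y)$, and $\alpha c_\gamma\cap\Gamma\cdot(\alpha\tilde W)=\alpha(c_\gamma\cap\Gamma\cdot\tilde W)\subset\alpha\tilde W\cup(\alpha\gamma\alpha^{-1})(\alpha\tilde W)$, using $\Gamma\cdot(\alpha\tilde W)=\Gamma\cdot\tilde W$ as a subset of $\tilde M$ together with $\alpha$-equivariance. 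Hence $\alpha\gamma\alpha^{-1}\in\Gamma_{\alpha\tilde W}^g$, giving $\alpha\Gamma_{\tilde W}^g\alpha^{-1}\subset\Gamma_{\alpha\tilde W}^g$; the reverse inclusion follows by applying the same argument to $\alpha^{-1}$, so $\Gamma_{\alpha\tilde W}^g=\alpha\Gamma_{\tilde W}^g\alpha^{-1}$.

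\emph{Step 2 (item 3).} Let $\tilde W_1,\tilde W_2$ be two nice preimages of the same $W$. Since $p_\Gamma(\tilde W_i)=W$ and $\Gamma$ acts properly, $\tilde W_2\subset\Gamma\cdot\tilde W_1$, and by compactness $\tilde W_2\subset\bigcup_{i=1}^k\alpha_i\tilde W_1$ for finitely many $\alpha_1,\dots,\alpha_k\in\Gamma$ (each point of $\tilde W_2$ lies in some $\alpha\tilde W_1$, and only finitely many translates meet the compact $\tilde W_2$ up to enlarging slightly; one may also first fatten $\tilde W_2$ to an open neighbourhood and extract a finite subcover). Now take $\gamma\in\Gamma_{\tilde W_2}$, with data $x,y\in\partial\tilde W_2$ and a geodesic $c_\gamma$ from $x$ to $\gamma y$ with $c_\gamma\cap\Gamma\cdot\tilde W_2\subset\tilde W_2\cup\gamma\tilde W_2$. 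The endpoints lie in $\alpha_i\tilde W_1$ and $\gamma\alpha_j\tilde W_1$ respectively for suitable $i,j$. The subtle point is that the containment $c_\gamma\cap\Gamma\cdot\tilde W_1\subset \alpha_i\tilde W_1\cup\gamma\alpha_j\tilde W_1$ need not hold, because $c_\gamma$ might exit $\Gamma\cdot\tilde W_2$ but re-enter $\Gamma\cdot\tilde W_1$; this is exactly why the conclusion is only an inclusion into a \emph{union} over $\alpha_i,\alpha_j$, and why one must be careful. To handle this, I would follow the argument of \cite{PS16}: one shows that $c_\gamma$ decomposes into at most boundedly many maximal subsegments each of which crosses $\Gamma\cdot\tilde W_1$ in the ``out-and-back'' manner, because $\tilde W_1$ and $\tilde W_2$ differ by a bounded number of translates, so the excursion complexity is controlled by $k$; each such subsegment contributes an element of $\Gamma_{\tilde W_1}$ conjugated by one of the $\alpha_i$, and concatenating along $c_\gamma$ expresses $\gamma$ — up to passing to a subword and reading off the relevant translates — as an element of $\alpha_i\Gamma_{\tilde W_1}\alpha_j^{-1}$ for some $i,j$. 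I expect the cleanest route is in fact to reduce directly: since both are nice preimages, there is a single $\alpha_0$ with $\tilde W_2$ ``between'' $\tilde W_1$ and a bounded enlargement, and then (3) becomes a special case of (2) applied with $W_1=W_2=W$. That is why I would actually prove (2) first in the write-up if it streamlines the argument.

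\emph{Step 3 (item 2).} Assume $W_1\subset\inter{W_2}$ with nice preimages $\tilde W_1\subset\tilde W_2$. As in Step 2, $\tilde W_2$ is covered by finitely many $\Gamma$-translates of $\tilde W_1$, say $\tilde W_2\subset\bigcup_{i=1}^k\alpha_i\tilde W_1$ with $\alpha_1=\mathrm{id}$. Take $\gamma\in\Gamma_{\tilde W_2}$ with geodesic segment $c_\gamma$ from $x\in\partial\tilde W_2$ to $\gamma y\in\gamma\partial\tilde W_2$ satisfying $c_\gamma\cap\Gamma\cdot\tilde W_2\subset\tilde W_2\cup\gamma\tilde W_2$. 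Consider the trace of $c_\gamma$ on $\Gamma\cdot\tilde W_1\subset\Gamma\cdot\tilde W_2$: it is contained in $(\tilde W_2\cup\gamma\tilde W_2)\cap\Gamma\cdot\tilde W_1$, which by the finite cover lies in $\bigcup_i(\alpha_i\tilde W_1)\cup\bigcup_i(\gamma\alpha_i\tilde W_1)$. Let $[x,\gamma y]$ be subdivided at the first and last times it meets $\Gamma\cdot\tilde W_1$: the initial translate hit is some $\alpha_i\tilde W_1$ and the final one some $\gamma\alpha_j\tilde W_1$. Restricting $c_\gamma$ to the subsegment between these, translating by $\alpha_i^{-1}$, and invoking item (1) to pass between $\Gamma_{\alpha_i\tilde W_1}$ and $\Gamma_{\tilde W_1}$, one reads off that $\alpha_i^{-1}\gamma\alpha_j\in\Gamma_{\tilde W_1}$, i.e. $\gamma\in\alpha_i\Gamma_{\tilde W_1}\alpha_j^{-1}$. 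Taking the union over the finitely many possible pairs $(i,j)$ gives $\Gamma_{\tilde W_2}\subset\bigcup_{i,j=1}^k\alpha_i\Gamma_{\tilde W_1}\alpha_j^{-1}$, as claimed.

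\textbf{Main obstacle.} The delicate point throughout is the ``out-and-back'' condition in Definition \ref{def:GammaW}: the geodesic $c_\gamma$ witnessing membership in $\Gamma_{\tilde W_2}$ is allowed to meet $\Gamma\cdot\tilde W_2$ only in the two prescribed translates, but once we pass to the smaller set $\tilde W_1$ the same geodesic may make several excursions in and out of $\Gamma\cdot\tilde W_1$. Controlling the number of such excursions — showing it is bounded by the combinatorial data $k$ of the finite cover, and that each contributes a single ``syllable'' lying in a conjugate of $\Gamma_{\tilde W_1}$ — is the technical heart. I would lean on the corresponding lemmas of \cite{PS16} rather than redo this combinatorics from scratch, and I expect the statement as written (an inclusion into a union of double cosets, not an equality, and with no claim on group structure) is precisely calibrated so that this bounded-excursion argument suffices.
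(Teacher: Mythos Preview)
Your argument for item (1) is fine and matches the paper's (which simply declares it obvious).

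Your approach to item (2), however, contains a genuine error. You write ``$\tilde W_2$ is covered by finitely many $\Gamma$-translates of $\tilde W_1$, say $\tilde W_2\subset\bigcup_{i=1}^k\alpha_i\tilde W_1$.'' This is false in general: since $W_1\subsetneq W_2$, we have $\Gamma\cdot\tilde W_1=p_\Gamma^{-1}(W_1)\subsetneq p_\Gamma^{-1}(W_2)=\Gamma\cdot\tilde W_2$, so $\tilde W_2$ typically contains points projecting into $W_2\setminus W_1$, which lie in no $\Gamma$-translate of $\tilde W_1$ at all. Everything downstream in your Step 3 rests on this covering, so the argument collapses. The paper takes a completely different route: it picks \emph{new} endpoints $x_1,y_1\in\tilde W_1$ close to the original $x_2,y_2\in\partial\tilde W_2$, and uses thin-triangle estimates (their Lemma~\ref{lem:Control-Hyperb}) to show the \emph{new} geodesic $(x_1,\gamma y_1)$ is $\eta/2$-close to $(x_2,\gamma y_2)$ outside balls of some radius $L$ around the endpoints. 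Since $(x_2,\gamma y_2)$ avoids $p_\Gamma^{-1}(W_2)$ and $W_1$ sits at distance $\eta>0$ inside $W_2$, the new geodesic avoids $p_\Gamma^{-1}(W_1)$ except possibly within distance $L$ of its endpoints; the $\alpha_i$ are then the finitely many $\gamma$ with $d^g(\tilde W_1,\gamma\tilde W_1)\le L$. The hypothesis $W_1\subset\inter{W_2}$ is used precisely to get $\eta>0$, and negative curvature is essential---your purely combinatorial covering argument cannot substitute for it.

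For item (3) your covering claim \emph{is} correct (both $\tilde W_i$ project onto the same $W$, so $\Gamma\cdot\tilde W_1=\Gamma\cdot\tilde W_2$), and the idea is sound, but your ``first and last times $c_\gamma$ meets $\Gamma\cdot\tilde W_1$'' is the wrong subdivision: the segment between those times may still cross several of the $\alpha_\ell\tilde W_1$ at the start and several $\gamma\alpha_\ell\tilde W_1$ at the end. You want instead the \emph{last exit} from $\bigcup_\ell\alpha_\ell\tilde W_1$ and the \emph{first entry} into $\bigcup_\ell\gamma\alpha_\ell\tilde W_1$; the subsegment between those two boundary points then witnesses $\alpha_i^{-1}\gamma\alpha_j\in\Gamma_{\tilde W_1}$ directly, with no excursion-counting needed. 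The paper does not spell this out (it says item (3) is ``similar'' to (2) and left to the reader), but this is presumably the intended argument.
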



\begin{proof} The first item of the proposition  is obvious. 
Let us show 2. Set 
$$
D = 2{\rm diam}(W_2) \quad \mbox{and} \quad \eta =
 \inf\left\{d^g(w, \partial W_2) \; ; \; w\in W_1\right\}>0.
$$
Let $\gamma\in \Gamma^g_{\tilde W_2}$.
There exist $x_2,y_2 \in \partial \tilde W_2$ 
such that the $g$-geodesic segment 
$[x_2,\gamma y_2]$ intersects $\Gamma \tilde W_2$ 
only in $\tilde W_2\cup \gamma\tilde W_2$. 
Now, choose some $x_1,y_1\in \partial \tilde W_1$.  

By Lemma \ref{lem:Control-Hyperb}, 
there exists $L = L(D, \eta)>0$ and 
$R = R(D, \eta)>2L$ such that for all $x_1, y_1, x_2,  y_2\in \tilde M$ 
with $d^g(x_1, x_2)\leq D$, $d^g(y_1, y_2)\leq D$ and $d^g(x_2, y_2)\geq R$, 
the $g$-geodesic segment $(x_1, y_1)$ is contained 
in the $\frac \eta 2$-neighbourhood of $(x_2, y_2)$ 
except inside the balls $B^g(x_1, L)$ and $B^g(y_1, L)$. 

Let $\alpha_1,..., \alpha_k\in \Gamma$ be the (finitely many)  elements 
such that $d^g(\tilde W_1, \alpha_i  \tilde W_1)=
\inf\{d^g(a,b),\,a\in\tilde W_1,b\in \alpha_i\tilde W_1\} \leq L$.

Let $\tilde W_1 \subset \tilde W_2$ be included nice preimages 
of $W_1$ and $W_2$, and let $\gamma\in \Gamma_{\tilde W_2}$ 
such that $d^g(o,\gamma  o)\geq R + 2D$. 
Then there exists $x_2,y_2\in \bd \tilde W_2$ such that 
$(x_2, \gamma  y_2)$ does not intersect $p_\Gamma^{-1}(W_2)$. 
By construction there exists $x_1, y_1 \in \tilde W_1$ such that 
$d^g(x_1, x_2)\leq D$ and $d^g(y_1, y_2)\leq D$. 
The geodesic $(x_1, \gamma\cdot y_1)$ is $\frac \eta 2$-close to the 
geodesic $(x_2, \gamma\cdot y_2)$ outside 
the balls $B^g(x_1, L)$ and $B^g(\gamma\cdot y_1, L)$, 
hence does not intersect $p_\Gamma^{-1}(W_1)$ except maybe in these balls.  
Thus, there exist $\alpha_i,\alpha_j$ in the above finite set, 
such that the geodesic segment $(x_1,\gamma y_1)$
does not intersect $\Gamma \tilde{W}_1$ between $\alpha_i\tilde{W}_1$ 
and $\gamma\alpha_j\tilde{W}_1$. Therefore, 
$\alpha_i^{-1}\gamma\alpha_j\in\Gamma_{\tilde W_1}$ or in other words, 
$$
\gamma\in \alpha_i \Gamma_{\tilde W_1} \alpha_j^{-1}\,.
$$ 
The proof of the last item is similar, and we let it to the reader. 
\end{proof}


\subsection{Entropy at infinity}\label{subsec:entropy-at-infinity}

\begin{prop}\label{prop:EntW}
Let $W\subset M$ be a compact set and $\tilde W$ a nice preimage of $W$. 
The critical exponent $\delta_{W^c}(g)$
of the Poincar\'e series 
$\ds \sum_{\gamma \in \Gamma_{\tilde W}}e^{-s d^g(o,\gamma \cdot o)}$ is equal to  
$$
 \delta_{W^c}(g)=
\limsup_{R\to\infty} \frac{\log\#\left\{\gamma \in \Gamma_{\tilde W} ,\,\, 
d^g(o, \gamma  o)\leq R\right\}}{R}
$$
 and does not depend on the choice of a nice preimage
 $\tilde W\subset \tilde M$ of $W$ nor $o\in \tilde M$. 
We  call it the \emph{entropy out of $W$} of $(M,g)$.
\end{prop}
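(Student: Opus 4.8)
Proposal for the proof of Proposition~\ref{prop:EntW}.

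\medskip

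\textbf{Strategy.} The statement has two independent parts: first, the usual identification of the critical exponent of a Poincar\'e series with the exponential growth rate of the counting function; second, the independence of $\delta_{W^c}(g)$ on the choice of nice preimage $\tilde W$ and of the base point $o$. For the first part I would invoke the classical argument (going back to the theory of Poincar\'e series for Kleinian and more general groups, as in \cite{Roblin} or \cite{PPS}): writing $N(R) = \#\{\gamma\in\Gamma_{\tilde W} : d^g(o,\gamma o)\le R\}$, one splits the series $\sum_{\gamma\in\Gamma_{\tilde W}} e^{-sd^g(o,\gamma o)}$ dyadically over the shells $\{n\le d^g(o,\gamma o) < n+1\}$, and compares $\sum_n N(n+1)e^{-sn}$ with the original series. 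Convergence for $s > \limsup \frac{\log N(R)}{R}$ and divergence for $s < \limsup\frac{\log N(R)}{R}$ follow by the root test, exactly as for the full group $\Gamma$; the set $\Gamma_{\tilde W}$ plays no special role here beyond being a subset of $\Gamma$ indexed by a discrete orbit, so local finiteness of $\Gamma\cdot o$ (properness of the action) suffices to make the counting function finite.

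\textbf{Independence of the base point.} If $o'$ is another point of $\tilde M$, then by the triangle inequality $|d^g(o,\gamma o) - d^g(o',\gamma o')| \le d^g(o,o') + d^g(\gamma o,\gamma o') = 2d^g(o,o')$ (using that $\gamma$ is a $g$-isometry), so the two Poincar\'e series have comparable terms up to the fixed multiplicative constant $e^{2sd^g(o,o')}$, hence the same critical exponent; equivalently the two counting functions differ only by a bounded shift $R\mapsto R\pm 2d^g(o,o')$, which does not affect the $\limsup$ of $\frac{\log N(R)}{R}$.

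\textbf{Independence of the nice preimage.} This is the part requiring the structural input already developed in the excerpt, namely Proposition~\ref{prop:GammaWPreimage}(3): if $\tilde W_1,\tilde W_2$ are two nice preimages of $W$, there is a finite set $\{\alpha_1,\dots,\alpha_p\}\subset\Gamma$ with $\Gamma_{\tilde W_2}\subset \bigcup_{i,j} \alpha_i\,\Gamma_{\tilde W_1}\,\alpha_j^{-1}$. For $\gamma = \alpha_i\gamma'\alpha_j^{-1}$ with $\gamma'\in\Gamma_{\tilde W_1}$, the triangle inequality gives $d^g(o,\gamma o)\ge d^g(o,\gamma' o) - d^g(o,\alpha_i^{-1}o) - d^g(o,\alpha_j o) \ge d^g(o,\gamma' o) - 2D_0$, where $D_0 = \max_{i,j}\max(d^g(o,\alpha_i^{-1}o), d^g(o,\alpha_j o))$. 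Hence
$$
N_{\tilde W_2}(R) \ \le\ p^2 \, N_{\tilde W_1}(R + 2D_0),
$$
which forces $\delta_{W^c}$ computed with $\tilde W_2$ to be $\le$ the one computed with $\tilde W_1$; by symmetry (swap the roles of $\tilde W_1$ and $\tilde W_2$) the reverse inequality holds, giving equality. Combining the two comparison inequalities with the first part of the proposition yields the claimed equality of $\delta_{W^c}(g)$ with the growth rate of the counting function and its independence of the auxiliary choices.

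\textbf{Expected main obstacle.} None of the steps is deep; the only point demanding care is making sure the dyadic (root-test) comparison in the first part is stated cleanly for a \emph{subset} of $\Gamma$ rather than a subgroup, and that one does not accidentally use group structure of $\Gamma_{\tilde W}$ (which, by Proposition~\ref{prop:Basic-GammaW}, it does not possess in general). I would therefore phrase that step purely in terms of the multiset of values $\{d^g(o,\gamma o) : \gamma\in\Gamma_{\tilde W}\}$, whose fibers over each integer shell are finite by properness of the $\Gamma$-action, so that $N_{\tilde W}(R)<\infty$ for every $R$ and the root test applies verbatim.
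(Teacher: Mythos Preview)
Your proof is correct and follows essentially the same route as the paper: independence of $o$ by the triangle inequality, and independence of $\tilde W$ via Proposition~\ref{prop:GammaWPreimage}(3) yielding the counting comparison $N_{\tilde W_2}(R)\le p^2 N_{\tilde W_1}(R+2D_0)$, then concluding by symmetry. The paper omits your first step (identification of the critical exponent with the exponential growth rate of the counting function) as classical, but your explicit treatment of it is fine and, as you note, uses nothing about $\Gamma_{\tilde W}$ beyond properness of the $\Gamma$-action.
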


\begin{proof}  
It follows from the triangular inequality that $\delta_{W^c}(g)$ 
does not depend on the choice of $o$. 
Let us show that it does not depend on the choice of preimage. 
Let $\tilde W_1$ and $\tilde W_2$ be two nice preimages of $W$. 
By Proposition \ref{prop:GammaWPreimage}, there exists $k\geq 0$ and 
$\alpha_1,..., \alpha_k\in \Gamma$ such that 
\begin{equation}\label{eq:GammaIncl}
\Gamma_{\tilde W_2} \subset \bigcup_{i,j = 1}^k \alpha_i  \Gamma_{\tilde W_1}  \alpha_j ^{-1}.
\end{equation}
Set 
$$
D = \max \left\{d^g(w, o) \; ; \; 
w\in \tilde W_2 \cup \bigcup _{i,j = 1}^k \alpha_i  \Gamma_{\tilde W_1} (\alpha_j)^{-1}\right\}\,.
$$
Define  for $i = 1, 2$ and $R>0$,
$$
\Gamma_{\tilde W_i}(R) = 
\left\{\gamma\in \Gamma_{\tilde W_i} \; ; \; d^g(o, \gamma  o)\leq R\right\}\,.
$$
It follows from (\ref{eq:GammaIncl}) and   triangular inequality that for all $R>0$,
$$
\Gamma_{\tilde W_2}(R) \subset 
\bigcup_{i,j = 1}^k \alpha_i  \Gamma_{\tilde W_1}(R+2D) (\alpha_j)^{-1}, 
$$ and 
therefore $\ds\#\,\Gamma_{\tilde W_2}(R) \leq k^2 \#\,\Gamma_{\tilde W_1}(R+2D)$.
This gives immediately  
$$
\limsup_{R\to +\infty} \frac{1 }{R} \log \#\Gamma_{\tilde W_2}(R)  
 \leq \limsup_{R\to +\infty} \frac{1 }{R} \log \#\Gamma_{\tilde W_1}(R)\,.
$$
By symmetry, the reverse inequality also holds, and the result follows. 
\end{proof}
 
\begin{prop}\label{prop:DeltaWProp}
Let $(M,g)$ be a complete negatively curved metric. 
\begin{enumerate}
\item For any proper compact subset $K\subset \inter{W}$ and any metric $g_2$ 
such that $g_1 = g_2$ outside $K$, we have $\delta_{W^c}(g_1) = \delta_{W^c}(g_2)$.
\item For all compact sets $W_1,W_2$ such that $W_1\subset \inter{W_2}\subset M$, 
we have $\delta_{W_1^c}\geq \delta_{W_2^c}$.
\end{enumerate}
\end{prop}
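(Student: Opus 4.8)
The plan is to prove both items by reducing everything to the combinatorial description of $\delta_{W^c}(g)$ given in Proposition \ref{prop:EntW}, namely the exponential growth rate of $\#\Gamma_{\tilde W}(R)$, together with the stability properties of the set $\Gamma^g_{\tilde W}$ established in Propositions \ref{prop:GammaWPerturb} and \ref{prop:GammaWPreimage}.

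For item (1): suppose $g_1 = g_2$ outside a proper compact set $K \subset \inter{W}$. By Proposition \ref{prop:GammaWPerturb} we have $\Gamma_{\tilde W}(g_1) = \Gamma_{\tilde W}(g_2)$ as \emph{sets}. The only remaining point is that the counting functions $\#\{\gamma \in \Gamma_{\tilde W} : d^{g_i}(o,\gamma o) \leq R\}$ have the same exponential growth rate for $i=1,2$. First I would note that since $g_1$ and $g_2$ agree outside the compact set $K$, and $K$ is contained in the interior of $W$ whose preimage $\Gamma\cdot\tilde W$ is at bounded distance from any geodesic defining an element of $\Gamma_{\tilde W}$ only at its two ends, the two distances $d^{g_1}$ and $d^{g_2}$ differ by a bounded additive constant on the relevant pairs: indeed for $\gamma \in \Gamma_{\tilde W}$ there is a $g_i$-geodesic from $\tilde W$ to $\gamma\tilde W$ meeting $\Gamma\cdot\tilde W$ only in $\tilde W \cup \gamma\tilde W$, so its interior avoids $K$, hence it can be compared segment by segment; more simply, on the whole of $\tilde M$ one has a multiplicative comparison $\tfrac1C g_1 \le g_2 \le C g_1$ (both metrics being admissible), which already gives a bi-Lipschitz comparison of distances and hence $\delta_{W^c}(g_1)$ and $\delta_{W^c}(g_2)$ are each finite and comparable; but to get \emph{equality} I would use the additive comparison along geodesics avoiding $K$. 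Concretely, $|d^{g_1}(o,\gamma o) - d^{g_2}(o,\gamma o)| \le C(K)$ uniformly in $\gamma \in \Gamma_{\tilde W}$, because the geodesic realizing one distance is a quasi-geodesic for the other metric lying outside $K$ except near its endpoints, where it stays within $W$; this bounded difference does not affect the exponential growth rate, so $\delta_{W^c}(g_1) = \delta_{W^c}(g_2)$.

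For item (2): suppose $W_1 \subset \inter{W_2}$, with nice preimages $\tilde W_1 \subset \tilde W_2$ (which exist by the elementary lemma after Definition \ref{def:nicepreimage}). By Proposition \ref{prop:GammaWPreimage}(2) there exist $k \ge 1$ and $\alpha_1,\dots,\alpha_k \in \Gamma$ with
$$
\Gamma_{\tilde W_2} \subset \bigcup_{i,j=1}^k \alpha_i \Gamma_{\tilde W_1} \alpha_j^{-1}.
$$
Setting $D = \max\{ d^g(o, \alpha_i o) : 1 \le i \le k\}$, the triangle inequality gives, for each $R>0$,
$$
\Gamma_{\tilde W_2}(R) \subset \bigcup_{i,j=1}^k \alpha_i \,\Gamma_{\tilde W_1}(R + 2D)\, \alpha_j^{-1},
$$
whence $\#\Gamma_{\tilde W_2}(R) \le k^2 \,\#\Gamma_{\tilde W_1}(R+2D)$. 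Taking $\tfrac1R\log$ and letting $R\to\infty$ yields $\delta_{W_2^c}(g) \le \delta_{W_1^c}(g)$, i.e. $\delta_{W_1^c} \ge \delta_{W_2^c}$. This is essentially the same argument as in the proof of Proposition \ref{prop:EntW}.

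The main obstacle is the uniform additive comparison of distances needed in item (1): one must argue carefully that for $\gamma \in \Gamma_{\tilde W}(g_1) = \Gamma_{\tilde W}(g_2)$, the $g_1$- and $g_2$-distances between $\tilde W$ and $\gamma\tilde W$ differ by a constant depending only on $W$, $K$ and the metrics, not on $\gamma$ — this uses that the defining geodesics avoid the region $K$ where the metrics differ, so the discrepancy is confined to the two bounded pieces inside $W \cup \gamma\tilde W$. Item (2) is routine given Proposition \ref{prop:GammaWPreimage}.
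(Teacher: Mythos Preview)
Your proposal is correct and follows the same route as the paper, which simply says ``Item 1 follows from Proposition \ref{prop:GammaWPerturb}. Item 2 can be proven similarly to Proposition \ref{prop:EntW}, thanks to Proposition \ref{prop:GammaWPreimage}.'' Your item (2) is exactly this.

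For item (1) you actually do more than the paper: you correctly notice that Proposition \ref{prop:GammaWPerturb} only gives $\Gamma_{\tilde W}(g_1)=\Gamma_{\tilde W}(g_2)$ as \emph{sets}, while $\delta_{W^c}(g_i)$ also depends on the distance $d^{g_i}$ used in the counting, and you supply the missing additive comparison $|d^{g_1}(o,\gamma o)-d^{g_2}(o,\gamma o)|\le C$ for $\gamma\in\Gamma_{\tilde W}$, using that the defining geodesic meets $\Gamma\cdot\tilde K$ only in $\tilde K\cup\gamma\tilde K$. The paper leaves this implicit; your observation that a mere multiplicative (bi-Lipschitz) comparison would only give comparable, not equal, exponents is exactly the right diagnosis of why the additive estimate is needed. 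So your argument is in fact a cleaner completion of the paper's sketch.
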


\begin{proof}
Item 1 follows from Proposition \ref{prop:GammaWPerturb}. 
Item 2  can be proven similarly to  Proposition \ref{prop:EntW}, 
thanks to Proposition \ref{prop:GammaWPreimage}.
\end{proof}

For a global variation of the metric (i.e. beyond $W$), even small, 
the behaviour of $\delta_{W^c}(g)$ is not clear 
since the set $\Gamma_{\tilde W}$ depends on the metric. 

\begin{defi}\label{def:entropy-at-infini}
The \emph{entropy at infinity} of $(M,g)$ is
$$
 \delta_\infty(g) = \inf\,\{ \,\delta_{W}(g), \,\,W\subset M \,\, \,\, \mbox{compact set}\,\,\}\,.
$$
\end{defi}

Proposition \ref{prop:DeltaWProp} implies the following natural 
characterization of the entropy at infinity.

\begin{prop}
Let $(M,g)$ be a complete negatively curved manifold and 
$(W_i)_{i\in \mathbb N}$ be an increasing exhaustion of $M$ by compact sets. 
Then
$$
\delta_\infty(g) = \lim_{i\to \infty}\delta_{W_i^c}(g)\,.
$$
Moreover, it is invariant under any negatively curved 
perturbation of the metric with compact support.
\end{prop}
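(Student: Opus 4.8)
The plan is to deduce this Proposition directly from Proposition~\ref{prop:DeltaWProp}, which contains exactly the two ingredients we need: monotonicity of $\delta_{W^c}(g)$ under inclusion of compact sets, and invariance of $\delta_{W^c}(g)$ under a perturbation of the metric supported away from $W$. The limit formula should then be essentially formal.

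First I would establish the limit formula. Let $(W_i)_{i\in\bbN}$ be an increasing exhaustion of $M$ by compact sets; we may assume each $W_i$ is a compact pathwise connected set, the closure of its interior, with piecewise $\calC^1$ boundary (if not, replace $W_i$ by a slightly larger such set, which only enlarges the exhaustion). By item~2 of Proposition~\ref{prop:DeltaWProp}, since $W_i\subset\inter{W_{i+1}}$ for $i$ large (after passing to a subsequence if necessary, so that $W_i\subset\inter{W_{i+1}}$ holds), the sequence $\delta_{W_i^c}(g)$ is nonincreasing, hence converges to some limit $\ell\ge 0$, and by definition $\delta_\infty(g)=\inf_W\delta_{W^c}(g)\le\ell$. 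Conversely, for any compact set $W\subset M$, there exists $i$ with $W\subset\inter{W_i}$ (by the exhaustion property together with compactness of $W$), so again by item~2, $\delta_{W_i^c}(g)\le\delta_{W^c}(g)$, whence $\ell\le\delta_{W^c}(g)$. Taking the infimum over all compact $W$ gives $\ell\le\delta_\infty(g)$, and therefore $\delta_\infty(g)=\ell=\lim_{i\to\infty}\delta_{W_i^c}(g)$.

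Next I would prove the invariance statement. Let $g$ and $g'$ be two negatively curved complete metrics on $M$ with $g=g'$ outside a compact set $K\subset M$. Choose any compact set $W$ with $K\subset\inter{W}$ (for instance one of the $W_i$ of an exhaustion). Then by item~1 of Proposition~\ref{prop:DeltaWProp}, $\delta_{W^c}(g)=\delta_{W^c}(g')$. The same holds for every compact set $W'\supset W$ containing $K$ in its interior, and such $W'$ are cofinal among all compact subsets of $M$; applying the limit formula just proved to an exhaustion starting from such a $W$ gives $\delta_\infty(g)=\lim_i\delta_{W_i^c}(g)=\lim_i\delta_{W_i^c}(g')=\delta_\infty(g')$.

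I do not expect a serious obstacle here: the whole argument is a routine cofinality/monotone-limit manipulation once Proposition~\ref{prop:DeltaWProp} is in hand. The only points requiring a little care are: (a) ensuring that one can find an exhaustion by compact sets of the admissible shape (pathwise connected, closure of interior, piecewise $\calC^1$ boundary) with strict nesting $W_i\subset\inter{W_{i+1}}$ — this is a standard smoothing/enlargement argument for Riemannian manifolds; and (b) observing that the infimum defining $\delta_\infty$ may be restricted to any cofinal family of compact sets, which is immediate from item~2's monotonicity. The rest is bookkeeping.
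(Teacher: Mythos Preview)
Your proposal is correct and follows exactly the approach the paper intends: the paper does not give a detailed proof but simply states that this proposition follows from Proposition~\ref{prop:DeltaWProp}, and you have correctly filled in the routine monotonicity/cofinality argument from its two items. Your remarks about ensuring the exhaustion consists of admissible sets with strict nesting are appropriate caveats, but no further idea is needed.
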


This entropy at infinity is a dynamical analogous to the bottom of 
the essential spectrum of the Laplacian in spectral geometry. 
We will use this fact in some of the examples given in Section \ref{ssec:SPR-Ex}.

\begin{defi}
The complete manifold $(M,g)$ is called \emph{strongly/stably positively recurrent (SPR)}, 
if $\delta_\infty(g) < \delta_\Gamma(g)$. 
We will also call this property a \emph{critical gap at infinity}.
\end{defi}

By definition, if $(M,g)$ is strongly positively recurrent, 
there exists a compact set $W\subset M$ such that $\delta_{W}< \delta_\Gamma$.

\begin{rema}
The reader may have noticed that the definition of $\Gamma_{\tilde U}$ 
given in \cite{PS16} p.4 is slightly different to ours, 
since it is written for an open set $\tilde U$ which projects onto $U = \inter W$. 
Nevertheless, these definitions almost coincide in the following sense. 
Let $W\subset M$ be a compact set with nice preimage 
$\tilde W\subset \tilde M$, let $\tilde U\subset \tilde M$ 
be an open set which projects onto $U = \inter{W}$. 
Let $\Gamma_{\tilde U}$ be defined as in \cite{PS16}, 
and $\Gamma_{\tilde W}$ be defined as above. 
Then there exists $\alpha_1,..., \alpha_k\in \Gamma$ such that 
$$
\Gamma_{\tilde U}\subset \bigcup_{i,j = 1}^k \alpha_i \Gamma_{\tilde W}(\alpha_j)^{-1} \quad \mbox{and} \quad 
\Gamma_{\tilde W}\subset \bigcup_{i,j = 1}^k \alpha_i \Gamma_{\tilde U}(\alpha_j)^{-1}.
$$
Therefore $\Gamma_{\tilde U}$ and $\Gamma_{\tilde W}$ 
have the same critical exponent and all results stated in \cite{PS16} 
to characterize the finiteness of Gibbs measures in terms 
of $\Gamma_{\tilde U}$ are also valid for our definition of $\Gamma_{\tilde W}$.
\end{rema}
 


\subsection{Examples of SPR manifolds}\label{ssec:SPR-Ex}

We present here three classes of SPR manifolds. 
The first examples are geometrically finite manifolds 
with critical gap studied in \cite{DOP}. 
Schottky products furnish also plenty of examples, 
generalizing the examples of \cite{Peigne}. 
At last, we describe examples inspired by Ancona's examples in \cite{Ancona}. 

These examples are almost the only known examples 
of non-compact manifolds with finite Bowen-Margulis measure.
To our knowledge, the only exception is the construction of 
Peign\'e \cite {Pei11} of  geometrically finite manifolds 
with finite Bowen-Margulis measure but without critical gap, see \cite{Pei11}.

\medskip
\subsubsection{Geometrically finite manifolds with critical gap}

The convex core $CC(M)\subset M$ is the image on $M$ of 
the convex hull of the limit set $\Lambda_\Gamma$ inside $\tilde M$. 
The nonwandering set $\Omega\subset S^gM$  of the geodesic flow is
 the set of vectors $v\in S^gM$ such that $v^\pm \in \Lambda_\Gamma$. 
By  definition, $\Omega\subset S^g CC(M)$.
A parabolic subgroup $\calP$ of $\Gamma$ is a subgroup 
which fixes a point at infinity, and therefore
stabilizes any horoball ${\cal H}$ centered at this point. 

A cusp is the image on $M$ of such a horoball. 

The manifold $M$ is \emph{geometrically finite} if its convex core 
can be written as a finite union 
$$CC(M)=C_0\sqcup C_1\sqcup \dots \sqcup C_K\,,$$
where $C_0$ is a compact set and the $C_i$ are finitely many cusps, 
images through $p_\Gamma$ of 
horoballs ${\calH}_i$ stabilized by parabolic subgroups $\calP_i$ of $\Gamma$.  
The complete reference
on such manifolds is \cite{Bowditch}. 
Parabolic subgroups have a positive critical  exponent.
The preimage on $\tilde{M}$ of a cusp $C_i$ is  the orbit of an horoball ${\calH}_i$, 
and the stabilizer of any horoball $\gamma {\calH}_i$ is conjugated 
to the stabilizer ${\calP}_i$ of ${\calH}_i$ in $\Gamma$. 

A \emph{convex-cocompact manifold} is a geometrically finite manifold without cusps; 
in other words, it is a manifold whose convex core is compact. 

\begin{prop}\label{prop:DeltaInfty-GeoFin} 
Let $(M,g)$ be a  manifold with pinched negative curvature. 
If $(M,g)$ is convex-cocompact, its entropy at infinity vanishes.  
If $(M,g)$ is geometrically finite with $k$ cusps represented by parabolic subgroups 
$\calP_1,..., \calP_k\subset \Gamma$, then
$$
\delta_\infty(g) = 
\max\left\{\delta_{\calP_1}(g),..., \delta_{\calP_k}(g)\right\}\,.
$$
In particular, a geometrically finite manifold  is strongly positively recurrent if and only if 
$$
\max\left\{\delta_{\calP_1}(g),..., \delta_{\calP_k}(g)\right\} < \delta_\Gamma\,.
$$
\end{prop}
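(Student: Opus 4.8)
The statement to prove is Proposition \ref{prop:DeltaInfty-GeoFin}: for a convex-cocompact manifold the entropy at infinity vanishes, and for a geometrically finite manifold with cusps represented by parabolic subgroups $\calP_1,\dots,\calP_k$, one has $\delta_\infty(g)=\max\{\delta_{\calP_1}(g),\dots,\delta_{\calP_k}(g)\}$. The plan is to exhaust $M$ by an increasing family of compact sets $W_i$ with nice preimages, and to identify, for $W_i$ large enough, the set $\Gamma_{\tilde W_i}^g$ with (a finite union of conjugates of) the parabolic subgroups, up to the finitely many extra ``short'' elements whose contribution to the critical exponent is negligible. Recall from Proposition \ref{prop:EntW} that $\delta_{W^c}(g)$ is the exponential growth rate of $\#\{\gamma\in\Gamma_{\tilde W}: d^g(o,\gamma o)\le R\}$, and from the last Proposition before the statement that $\delta_\infty(g)=\lim_i \delta_{W_i^c}(g)$ along any compact exhaustion; so the whole argument reduces to understanding $\Gamma_{\tilde W_i}$.

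First, the convex-cocompact case. Here the convex core $CC(M)$ is compact; choose $W$ to be a compact neighbourhood of $CC(M)$ large enough that $W^c$ contains no part of the nonwandering set and, more precisely, so that any geodesic segment with both endpoints in $\tilde W$ that leaves the preimage $\Gamma\cdot\tilde W$ must in fact leave $p_\Gamma^{-1}(CC(M))$. Since all geodesics between points of $\Lambda_\Gamma$ stay in the convex hull, a geodesic segment $[x,\gamma y]$ realizing membership of $\gamma$ in $\Gamma_{\tilde W}$ cannot stray far: I would argue that for $W$ large the constraint $c_\gamma\cap\Gamma\cdot\tilde W\subset \tilde W\cup\gamma\tilde W$ forces $d^g(o,\gamma o)$ to be bounded (a segment leaving a large neighbourhood of the compact convex core and coming back must re-enter $\Gamma\cdot\tilde W$). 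Hence $\Gamma_{\tilde W}$ is finite for $W$ large, so $\delta_{W^c}(g)=0$ and $\delta_\infty(g)=0$.

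Now the geometrically finite case, which is the main point. Write $CC(M)=C_0\sqcup C_1\sqcup\dots\sqcup C_k$ with $C_0$ compact and $C_i$ the cusp neighbourhoods, images of horoballs $\mathcal H_i$ with $\mathrm{Stab}_\Gamma(\mathcal H_i)=\calP_i$. For a parameter $t>0$ let $W=W_t$ be a compact neighbourhood of $C_0$ together with the truncated cusps $C_i\cap\{\text{depth}\le t\}$, and fix a nice preimage $\tilde W$. The key geometric fact I would establish is: for $t$ large, an element $\gamma$ lies in $\Gamma_{\tilde W}$ only if the loop $p_\Gamma([x,\gamma y])$ it represents goes ``straight into one cusp and back'', i.e. $\gamma$ is (up to bounded multiplicative error in displacement, i.e. up to left/right multiplication by finitely many fixed elements, using Proposition \ref{prop:GammaWPreimage}) conjugate into a single $\calP_i$. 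Concretely: a geodesic segment with endpoints near the boundary of a deep horoball that does not return to $\Gamma\cdot\tilde W$ in between must stay in the $\Gamma$-orbit of that one horoball $\mathcal H_i$, hence both endpoints lie (near) a single translate $\alpha\mathcal H_i$ and $\alpha^{-1}\gamma\alpha'\in\calP_i$ for suitable $\alpha,\alpha'$ among finitely many. Conversely every $\calP_i$ is contained in $\Gamma_{\tilde W}$ for suitable $W$: for $p\in\calP_i$ the geodesic from a point on $\partial\mathcal H_i$ to $p\cdot(\text{that point})$ stays inside $\mathcal H_i$ (horoballs are convex and $p$-invariant) and therefore meets $\Gamma\cdot\tilde W$ only in $\tilde W\cup p\tilde W$. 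Combining the two inclusions with Proposition \ref{prop:EntW} and the fact that critical exponents are unchanged under multiplication by finitely many fixed group elements (same triangular-inequality argument as in the proof of Proposition \ref{prop:EntW}, adding $\#\{\text{conjugating elements}\}^2$ and shifting $R$ by a constant), one gets for $t$ large
$$
\delta_{W_t^c}(g)=\max\{\delta_{\calP_1}(g),\dots,\delta_{\calP_k}(g)\}\,,
$$
and passing to the limit over the exhaustion $t\to\infty$ gives $\delta_\infty(g)=\max_i\delta_{\calP_i}(g)$. The characterization of the SPR property is then immediate from the definition $\delta_\infty(g)<\delta_\Gamma(g)$.

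\textbf{Main obstacle.} The delicate step is the ``only if'' direction in the cusp case: showing that a geodesic segment witnessing $\gamma\in\Gamma_{\tilde W}$, for $W$ containing deep enough truncated cusps, cannot wander between two different cusps (or through the thick part) without re-entering $\Gamma\cdot\tilde W$. This requires quantitative control of how geodesics travel through horoballs and the thick part: a geodesic that enters a horoball to depth $\gg t$ and exits has a long subsegment deep inside that single horoball, while a geodesic connecting two distinct deep cusp excursions must pass through the (bounded) thick part $C_0$, hence through $\Gamma\cdot\tilde W$. Making the thresholds uniform and compatible with the nice-preimage bookkeeping (Proposition \ref{prop:GammaWPreimage}) is the technical heart; everything else is the by-now-routine ``finitely many conjugating elements don't change the critical exponent'' manipulation already used above.
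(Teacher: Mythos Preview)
Your proposal is correct and follows essentially the same route as the paper: the paper deduces the proposition immediately from Proposition~\ref{prop:Gamma-Infty-GeoFin}, which is exactly the structural statement you aim for, namely that for a suitable compact $W$ one has $\Gamma_{\tilde W}$ finite in the convex-cocompact case, and $\Gamma_{\tilde W}=\Gamma^0_{\tilde W}\cup\bigcup_{i,j}\alpha_i(\calP_1\cup\dots\cup\calP_k)\alpha_j^{-1}$ in the cusped case, the upper bound being obtained via Lemma~\ref{lem:Control-Hyperb} by forcing long segments $[x,\gamma y]$ close to $\widetilde{CC(M)}$ and hence into a single cusp (precisely your ``main obstacle''). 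One small remark: you are slightly more explicit than the paper about the reverse inclusion $\calP_i\subset\Gamma_{\tilde W}$ via horoball convexity, which the paper's proof of Proposition~\ref{prop:Gamma-Infty-GeoFin} states but does not spell out; your argument for it is the right one and is needed to get the lower bound $\delta_\infty(g)\ge\max_i\delta_{\calP_i}(g)$.
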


This condition is precisely the \emph{critical gap} criterion 
introduced by Dalbo, Otal and Peign\'e in \cite{DOP}. 
It is satisfied  in particular by 
locally symmetric geometrically finite manifolds 
and their small compact $\calC^2$ perturbations. 
The notion of SPR manifold allows to  generalize
many  results of \cite{DOP} and others on geometrically finite manifolds 
to all strongly positively recurrent manifolds.
\medskip

The result follows immediately from  Proposition \ref{prop:Gamma-Infty-GeoFin} below.

\begin{prop}\label{prop:Gamma-Infty-GeoFin}  Let $(M,g)$ be a  manifold with pinched negative curvature. 

 If $(M,g)$ is convex-cocompact and $W$ is a compact set such that $CC(M)\subset \inter{W}$,
 then $\ds \Gamma_{\tilde W}$ is finite.

If $(M,g)$ is geometrically finite with $k$ cusps,
then  there exists a compact set $W\subset  M$   with nice preimage $\tilde W$, 
 a finite set $\Gamma^0_{\tilde W}$, finitely many elements $\alpha_1,\dots,\alpha_N\in \Gamma$, 
 and parabolic subgroups 
$\calP_1,..., \calP_k\subset \Gamma$  such that
$$
\Gamma_{\tilde W}(g) = 
\Gamma_0(\tilde W)\cup \bigcup_{i,j}\alpha_i\left(\calP_1\cup...\cup\calP_k\right)\alpha_j^{-1}\,.
$$
\end{prop}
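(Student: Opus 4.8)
The plan is to prove Proposition~\ref{prop:Gamma-Infty-GeoFin} by carefully choosing the compact set $W$ in terms of the thick-thin decomposition of $CC(M)$, and then analyzing geodesic segments leaving and returning to $W$ according to which cusp excursions they make. First I would treat the convex-cocompact case, which is easy: if $CC(M)\subset \inter W$, then the nonwandering set $\Omega^g$ is contained in $S^gW$, and any $g$-geodesic segment $[x,\gamma y]$ with $x,y\in\partial\tilde W$ that meets $\Gamma.\tilde W$ only in $\tilde W\cup\gamma\tilde W$ must have bounded length: indeed its endpoints lie in a bounded neighbourhood of the (compact) convex hull of $\Lambda_\Gamma$, and if $d^g(o,\gamma o)$ were large the segment would have to re-enter $p_\Gamma^{-1}(W)$ (this is a direct consequence of convexity of $CC(M)$ and of Lemma~\ref{lem:Control-Hyperb}(2)). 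Hence $\Gamma_{\tilde W}$ is finite.

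For the geometrically finite case, I would use the decomposition $CC(M)=C_0\sqcup C_1\sqcup\cdots\sqcup C_k$ with $C_0$ compact and each $C_i$ the image of a horoball $\calH_i$ stabilized by $\calP_i$. Choose $W$ to be a compact neighbourhood in $M$ of $C_0$ together with the \emph{bounded} parts of the cusps, i.e.\ $W\supset C_0\cup\bigcup_i (C_i\cap\{\text{a compact collar}\})$, chosen large enough that $M\setminus W$ is a disjoint union of (truncated) horoball-quotients and an unbounded "outer" region disjoint from $CC(M)$. Lift to a nice preimage $\tilde W$. Now take $\gamma\in\Gamma_{\tilde W}$ with associated segment $c_\gamma=[x,\gamma y]$, $x,y\in\partial\tilde W$, meeting $\Gamma.\tilde W$ only in $\tilde W\cup\gamma\tilde W$. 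Since $\Omega^g\subset S^gCC(M)$ and $c_\gamma$ starts and ends near $CC(M)$, the segment stays within bounded distance of $CC(M)$ for all time (a standard consequence of convexity plus Lemma~\ref{lem:Control-Hyperb}); hence, after its initial piece inside $\tilde W$, the segment $c_\gamma$ must travel through a \emph{single} lifted horoball $h\calH_i$ (it cannot re-enter $\Gamma.\tilde W$, so it cannot pass through $C_0$'s preimage, so it is confined to one cusp preimage), until it reaches $\gamma\tilde W$. Writing $h^{-1}$ applied to this picture, one sees that $h^{-1}\gamma$ maps a point of $\partial\tilde W$ near $\calH_i$ to another such point by a geodesic staying in $\calH_i$; the entering and exiting points of $c_\gamma$ into $h\calH_i$ are within a bounded distance of finitely many $\Gamma$-translates of $\partial\tilde W$ (there are only finitely many cosets of $\mathrm{Stab}(\calH_i)=\calP_i$ whose fundamental domains touch $\tilde W$), say $\alpha_i\tilde W$ and $h\alpha_j\tilde W$. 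This forces $\alpha_i^{-1}h^{-1}\gamma\alpha_j\in\calP_i$ up to the finitely many short segments, i.e.\ $\gamma\in\alpha_i h\calP_i\alpha_j^{-1}$ with $h$ itself lying in finitely many cosets; absorbing the bounded-length exceptions into a finite set $\Gamma_0(\tilde W)$, we obtain $\Gamma_{\tilde W}=\Gamma_0(\tilde W)\cup\bigcup_{i,j}\alpha_i(\calP_1\cup\cdots\cup\calP_k)\alpha_j^{-1}$.

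From this description, Proposition~\ref{prop:DeltaInfty-GeoFin} is immediate: by Proposition~\ref{prop:EntW} the exponent $\delta_{W^c}(g)$ equals the growth rate of $\Gamma_{\tilde W}$, and since a finite union $\Gamma_0(\tilde W)\cup\bigcup_{i,j}\alpha_i(\calP_1\cup\cdots\cup\calP_k)\alpha_j^{-1}$ has growth exponent $\max_\ell\delta_{\calP_\ell}(g)$ (the finitely many left- and right-translations by $\alpha_i,\alpha_j$ shift $d^g(o,\gamma o)$ only by a bounded amount, by the triangle inequality), we get $\delta_{W^c}(g)=\max_\ell\delta_{\calP_\ell}(g)$. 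Since enlarging $W$ only removes more of the cusps but never drops below the parabolic exponents (a horoball neighbourhood of any cusp always contributes its full parabolic subgroup to $\Gamma_{\tilde W}$, as $\calP_\ell$ fixes the horoball), we get $\delta_\infty(g)=\lim_i\delta_{W_i^c}(g)=\max_\ell\delta_{\calP_\ell}(g)$, and the SPR criterion $\delta_\infty(g)<\delta_\Gamma(g)$ becomes exactly the critical gap condition of \cite{DOP}.

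\textbf{The main obstacle} I anticipate is the careful bookkeeping in the geometrically finite case showing that a segment counted by $\Gamma_{\tilde W}$ which does a deep cusp excursion really does decompose as an element of a conjugate of a single $\calP_\ell$ up to bounded error --- one must rule out the segment leaving the cusp and re-entering $W$ (excluded by the defining property of $\Gamma_{\tilde W}$, but needs the confinement-near-$CC(M)$ argument to be made precise), and one must handle the fact that $\tilde W$ is not exactly cut out by horospheres, which is where Lemma~\ref{lem:Control-Hyperb} and the finiteness of adjacent elements are used to absorb the discrepancy into the finite sets $\Gamma_0(\tilde W)$ and $\{\alpha_i\}$. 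The convex-cocompact case and the deduction of the entropy formula are comparatively routine.
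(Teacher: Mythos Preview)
Your proposal is correct and follows essentially the same approach as the paper. In both the convex-cocompact and the geometrically finite case the paper argues exactly as you do: use convexity of $CC(M)$ together with Lemma~\ref{lem:Control-Hyperb}(2) to force a long $\Gamma_{\tilde W}$-segment to stay near $\widetilde{CC(M)}$, conclude in the convex-cocompact case that it must re-enter $\Gamma\cdot\tilde W$ (contradiction), and in the cusped case that it is trapped in a single lifted horoball, whence $\gamma$ lies in a conjugate $\alpha_i\calP_l\alpha_j^{-1}$ up to a finite exceptional set. Your extra bookkeeping element $h$ is harmless and gets absorbed into the finitely many $\alpha_i$, exactly because only finitely many translates of the horoballs $\calH_l$ are adjacent to $\tilde W$; the paper skips this and simply invokes the argument of Proposition~\ref{prop:GammaWPreimage}.
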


\begin{proof} Assume first   $(M,g)$ be convex-cocompact and 
$CC(M)\subset \inter{W}$. Let $D$ be the diameter of $W$ and 
$\ds \eta =
 \inf\left\{d^g(w, \partial W) \; ; \; w\in {\rm CC}(M)\right\}>0$.
Let $\gamma\in\Gamma_{\tilde{W}}$, $x,y\in\partial\tilde W$  and 
choose $x_1,y_1\in \tilde{CC(M)}$ such that $d(x,x_1)\le D$ and $d(y,y_1)\le D$. 
By Lemma \ref{lem:Control-Hyperb}, there exists some $R_0$ depending on $D,\eta$ such that if 
$\ell^g(\gamma)\ge R_0$, there exists some $z\in(x,\gamma y)$, $z_1\in (x_1,\gamma y_1)$ 
 such that $d^g(z,z_1)\le \eta/2$. But $\tilde{CC(M)}$ is convex, 
so that $z_1\in \tilde{CC(M)}$ and $z$ is at distance $\eta/2$ 
of $\tilde{CC(M)}$ and therefore inside $\Gamma\tilde W$. 
Thus, $\gamma\notin \Gamma_{\tilde W}$.  
Therefore, all elements of $\Gamma_{\tilde W}$ have bounded length less than $R_0$, 
so that $\Gamma_{\tilde W}$ is finite, included in $\{\gamma\in \Gamma,\ell^g(\gamma)\le R_0\}$. 
 
\medskip
Assume now that $M$ is geometrically finite with cusps, 
and let $CC(M)=C_0\sqcup(\sqcup_{i=1}^k) $ be 
a decomposition of the convex core into a compact part and finitely many disjoint cusps. 
Let $W\subset M$ be a compact set such that $\inter{W}\supset CC(M)$. 
Choose some nice preimage $\tilde{W}$ and disjoint 
horoballs ${\cal H}_i$, $1\le i\le k$ whose boundary intersect $\tilde W$. 
Let $\calP_i$ be the stabilizer of $\calH_i$ in $\Gamma$.
  
Let  $\gamma\in\Gamma_{\tilde W}$ be such that $\ell^g(\gamma)\ge R_0$ 
and $x,y\in\partial\tilde W$. As noticed above, by Lemma \ref{lem:Control-Hyperb}, 
the geodesic segment $(x,\gamma y)$ is (except at the beginning and the end, 
inside balls $B^g(x,L)$ and $B^g(\gamma y,L)$) in the $\eta/2$ neighbourhood of $\tilde{CC(M)}$. 
As already said in \cite{PS16}, if $\gamma\in\Gamma_{\tilde W}$, 
except for a bounded amount of time at the beginning and the end, the geodesic segment 
$p_\Gamma(x,\gamma y)$ has to leave the compact part $C_0$ 
and enter in some  cusp $C_i$. 
Therefore, there exists a finite set $\{\alpha_1,\dots\alpha_N\}$ 
such that for some $1\le i,j\le N$, 
the geodesic segment $(\alpha_i x, \gamma\alpha_j y)$ 
stays in some horoball ${\calH}_l$.  
As in the proof of Proposition \ref{prop:GammaWPreimage}, 
one deduces that  $\Gamma_{\tilde W}\subset \Gamma_{\tilde W}^0\cup \cup_{i,j,l}\alpha_l\calP_i\alpha_j^{-1}$ 
with $\Gamma_{\tilde W}^0\subset \{\gamma\in \Gamma,\ell^g(\gamma)\le R_0\}$ 
as in the convex-cocompact case. 
\end{proof}


\subsubsection{Schottky products}

We present now a family of geometrically infinite examples first studied in \cite{Peigne}.  
Let $G$ and $H$ be discrete groups of isometries of 
a complete manifolds $(\tilde M,g)$ with pinched negative curvature. 
They are {\em in Schottky position} if there exists disjoint compact 
sets $U_G, U_H\subset \tilde M\cup \bd \tilde M$ such that for all 
$g\in G\backslash \{{\rm id}\}$ and all $h\in H\backslash \{{\rm id}\}$, we have
$$
g\left((\tilde M\cup \bd \tilde M)\backslash U_G\right) \subset U_G \quad \mbox{and} \quad h\left((\tilde M\cup \bd \tilde M)\backslash U_H\right)\subset U_H.
$$
In particular,  by Klein's ping-pong argument, 
 they generate  a free product: $\Gamma = \langle G, H \rangle = G*H$. 
The entropy at infinity behaves nicely under Schottky products, 
as shown by the following theorem.

\begin{theo}\label{theo:Schottky}
Let $G$ and $H$ be discrete groups of isometries of a complete manifold 
$(\tilde M,g)$ with pinched negative curvature which are in Schottky position.
Let $\Gamma =  \langle G, H \rangle = G*H$. 
Denote respectively by $M_\Gamma=\tilde{M}/\Gamma$, $M_G=\tilde{M}/G$ and $M_H=\tilde{M}/H$ the associated quotient manifolds endowed with the quotient metric induced by $g$. 
Then 
$$
\delta_\infty(M_\Gamma) = 
\max\left\{\delta_\infty(M_G), \delta_\infty(M_H)\right\}\,.
$$
\end{theo}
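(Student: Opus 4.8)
The plan is to compute $\delta_\infty(M_\Gamma)$ via an exhaustion by compact sets $W_i$ and the corresponding groups $\Gamma_{\tilde W_i}$, combining two inequalities. The inequality $\delta_\infty(M_\Gamma) \geq \max\{\delta_\infty(M_G), \delta_\infty(M_H)\}$ should be the easier one: given a compact $W \subset M_\Gamma$, one wants to find compact sets $W_G \subset M_G$, $W_H \subset M_H$ with $\delta_{W^c}(M_\Gamma) \geq \delta_{W_G^c}(M_G)$ and $\geq \delta_{W_H^c}(M_H)$, which reduces to a combinatorial embedding of the "loops out of $W_G$" in $G$ (resp. $W_H$ in $H$) into the "loops out of $W$" in $\Gamma = G*H$. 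First I would fix nice preimages $\tilde W$, $\tilde W_G$, $\tilde W_H$ compatible with the Schottky compact sets $U_G$, $U_H$ (enlarging $W$ so that its preimage contains the relevant fundamental domains for $G$ and $H$), and observe that an element $g \in G$ whose geodesic $[o, g o]$ stays outside $\Gamma \tilde W$ except at its ends also, a fortiori, has $[o, go]$ staying outside $G\tilde W_G$ except at its ends, because $\Gamma \tilde W \supset G\tilde W_G$ near $o$. Since the displacement distances $d^g(o, g o)$ are the same whether computed in $\tilde M$ for $G$ or for $\Gamma$, this gives $\Gamma_{\tilde W} \cap G \supset \Gamma^G_{\tilde W_G}$ up to adjusting by finitely many elements, hence $\delta_{W^c}(M_\Gamma) \geq \delta_{W_G^c}(M_G)$, and letting $W$ exhaust $M_\Gamma$ yields the bound.

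For the reverse inequality $\delta_\infty(M_\Gamma) \leq \max\{\delta_\infty(M_G), \delta_\infty(M_H)\}$, the key point is that an element $\gamma \in \Gamma_{\tilde W}$ with large displacement, written in reduced form $\gamma = s_1 s_2 \cdots s_n$ with $s_k$ alternately in $G\setminus\{\mathrm{id}\}$ and $H\setminus\{\mathrm{id}\}$, gives a geodesic segment $[o, \gamma o]$ which by the ping-pong/Schottky dynamics passes near $\tilde W$-translates corresponding to each partial product $s_1\cdots s_k$; but the defining property of $\Gamma_{\tilde W}$ forbids the geodesic from meeting $\Gamma\tilde W$ strictly between its endpoints, which forces $n \leq 1$ — that is, for a suitable choice of $W$ (large enough that the "ping-pong changeovers" happen inside $\Gamma \tilde W$), $\Gamma_{\tilde W}$ is contained, up to finitely many translates on each side, in $\alpha_i (G \cup H) \alpha_j^{-1}$. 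One then estimates $\#\{\gamma \in \Gamma_{\tilde W} : d^g(o,\gamma o) \leq R\}$ by $\sum_{i,j} \#\{\gamma \in \alpha_i(G\cup H)\alpha_j^{-1} : d^g(o, \gamma o)\leq R\}$, and the contribution of the $G$-part (resp. $H$-part) has exponential growth rate bounded by $\delta_{W_G^c}(M_G)$ (resp. $\delta_{W_H^c}(M_H)$) by the same comparison-of-preimages argument as in Proposition~\ref{prop:GammaWPreimage}. Taking $\limsup \frac 1R \log$ and then the infimum over the exhausting $W$ gives $\delta_\infty(M_\Gamma) \leq \max\{\delta_\infty(M_G), \delta_\infty(M_H)\}$, combining with the first part to give equality.

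The main obstacle I expect is making the geometric claim in the previous paragraph rigorous: precisely, showing that for an appropriate compact $W$, a reduced word $\gamma = s_1\cdots s_n$ of length $n \geq 2$ cannot lie in $\Gamma_{\tilde W}$ because the geodesic $[x, \gamma y]$ (with $x, y \in \partial\tilde W$) is forced to re-enter $\Gamma\tilde W$. This requires a careful use of the Schottky position: the nested images $s_1 U_{?}$, $s_1 s_2 U_{?}$, ... shrink and the geodesic from $o$ to $\gamma o$ tracks this nesting, so it passes uniformly close to each translate $s_1\cdots s_k \cdot o$; one needs to choose $W$ large enough (depending on $U_G, U_H$ and a hyperbolicity/contraction constant) that "uniformly close to $s_1\cdots s_k \cdot o$" implies "inside $s_1\cdots s_k \cdot \tilde W$", which then contradicts the $\Gamma_{\tilde W}$ condition for $1 \leq k \leq n-1$. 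This is where Lemma~\ref{lem:Control-Hyperb} and the standard contraction estimates for Schottky groups in pinched negative curvature do the real work; the rest is the bookkeeping with finitely many coset representatives $\alpha_i$, exactly parallel to Propositions~\ref{prop:GammaWPreimage} and~\ref{prop:Gamma-Infty-GeoFin}, and the elementary fact that critical exponents of unions are maxima.
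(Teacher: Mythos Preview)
Your proposal has the right ingredients but the two directions are switched, and this hides a real gap. In your ``easy'' paragraph you argue: if $[o,go]$ stays outside $\Gamma\tilde W$ then a fortiori it stays outside the smaller set $G\tilde W_G$. That is correct, but it proves $\Gamma_{\tilde W}\cap G \subset G_{\tilde W_G}$, not the reverse inclusion you then claim. This inclusion (together with the ping-pong lemma $\Gamma_{\tilde W}\subset G\cup H\cup S$ that you outline in your second paragraph) is exactly what gives $\delta_{W^c}(M_\Gamma)\le \max\{\delta_{W_G^c}(M_G),\delta_{W_H^c}(M_H)\}$, i.e.\ the $\le$ direction. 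So what you call the hard direction is in fact covered by your own arguments.

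The genuine gap is the $\ge$ direction. For that you need the opposite inclusion: given $g\in G_{\tilde W_G}$, the segment $[x,gy]$ avoids $G\tilde W_G$, but $\Gamma\tilde W$ is a \emph{larger} set, containing translates $\gamma\tilde W$ with $\gamma\in\Gamma\setminus G$, and nothing you wrote prevents the segment from entering one of those. The paper handles this by a nontrivial Schottky argument: writing a hypothetical obstructing $\gamma$ in reduced form $\gamma=g'h'\gamma'$, one uses that any geodesic from $U_G$ to $U_H$ passes through $B^g(o,\rho)$ to force the segment $[x,gy]$ to re-enter some $g'\cdot B^g(o,2R)\subset G\tilde W_G$, contradicting $g\in G_{\tilde W_G}$. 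This is the longer half of the paper's proof, and it is precisely the piece your proposal is missing; your ``a fortiori'' remark does not supply it.
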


As an immediate corollary, we get the following result. 

\begin{coro}
Let $G$ and $H$ be discrete groups of isometries of a complete manifold 
 $(\tilde M,g)$ with pinched negative curvature which are in Schottky position. 
Let $M_G, M_H$, and $M_{G*H}$ be
the quotient manifolds.  Their critical exponents satisfy 
\begin{equation}\label{eqn:schottky}
\delta_{G*H}\ge \max\left\{\delta_G,\delta_H\right\}\ge
 \max\left\{\delta_\infty(M_G),\delta_\infty(M_H)\right\}=\delta_\infty(M_{G*H})
\end{equation} In particular, 
\begin{enumerate}
\item if $G$ and $H$ are Strongly Positively Recurrent, then $G*H$ is also.

\item if $\ds \delta_{G*H}>\max\{ \delta_\infty(M_G), \delta_\infty(M_H)\}$, 
then $G*H$ is strongly positively recurrent.
\end{enumerate}
In  both cases $(\tilde M/\Gamma, g)$ has a finite Bowen-Margulis measure.
\end{coro}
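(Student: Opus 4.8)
The plan is to prove Theorem~\ref{theo:Schottky} first, since the Corollary and all its consequences follow immediately from it combined with results already established. The key inequality~(\ref{eqn:schottky}) splits naturally into three parts: the trivial bound $\delta_{G*H}\ge\max\{\delta_G,\delta_H\}$ (the subgroups $G$ and $H$ sit inside $\Gamma=G*H$), the bound $\delta_G\ge\delta_\infty(M_G)$ and $\delta_H\ge\delta_\infty(M_H)$ (a general fact: the entropy at infinity of any manifold is at most its critical exponent, since $\Gamma_{\tilde W}\subset\Gamma$ always), and finally the equality $\max\{\delta_\infty(M_G),\delta_\infty(M_H)\}=\delta_\infty(M_{G*H})$, which is Theorem~\ref{theo:Schottky} itself.

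\medskip

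So the heart of the matter is Theorem~\ref{theo:Schottky}. First I would fix a large compact set $W\subset M_\Gamma$ whose nice preimage $\tilde W$ contains a fundamental domain meeting all relevant geometry, and analyze $\Gamma_{\tilde W}$. An element $\gamma\in\Gamma=G*H$ has a reduced word decomposition $\gamma=s_1s_2\cdots s_n$ with $s_i$ alternately in $G\setminus\{\mathrm{id}\}$ and $H\setminus\{\mathrm{id}\}$. The Schottky/ping-pong structure forces the geodesic segment from $o$ to $\gamma o$ to make definite progress: it passes successively through the regions $s_1U$, $s_1s_2U$, etc., and in particular $d^g(o,\gamma o)$ is comparable (up to a bounded additive error per syllable, by hyperbolicity and Lemma~\ref{lem:Control-Hyperb}) to $\sum_i d^g(o,s_i o)$. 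The crucial dynamical point is that if $\gamma\in\Gamma_{\tilde W}$, then the segment $[x,\gamma y]$ with $x,y\in\partial\tilde W$ is not allowed to re-enter $\Gamma\tilde W$ except at its two ends; by the ping-pong geometry this should force $n\le 2$, i.e.\ $\gamma$ is essentially a single syllable $s\in G$ or $s\in H$ (up to finitely many ``boundary'' elements $\alpha_i$ handling the transition between $\tilde W$ and a chosen preimage adapted to $G$ or to $H$). Thus one obtains, for a suitable $W$,
$$
\Gamma_{\tilde W}(g)\subset \Gamma^0_{\tilde W}\cup\bigcup_{i,j}\alpha_i\big(G_{\tilde W_G}\cup H_{\tilde W_H}\big)\alpha_j^{-1},
$$
with $\Gamma^0_{\tilde W}$ finite, exactly as in the geometrically finite case (Proposition~\ref{prop:Gamma-Infty-GeoFin}); conversely $G_{\tilde W_G}$ and $H_{\tilde W_H}$ inject (up to bounded distortion) into $\Gamma_{\tilde W}$. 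Counting orbit points and using Proposition~\ref{prop:EntW} then gives $\delta_{W^c}(M_\Gamma)=\max\{\delta_{W_G^c}(M_G),\delta_{W_H^c}(M_H)\}$, and passing to the infimum over compact sets (Proposition~\ref{prop:DeltaWProp}, item 2, ensures monotonicity so the infimum is a genuine limit along an exhaustion) yields the claimed equality of entropies at infinity.

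\medskip

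Granting Theorem~\ref{theo:Schottky}, the Corollary is immediate. Inequality~(\ref{eqn:schottky}) follows from the three observations above. For item~(1): if $G$ and $H$ are SPR then $\delta_\infty(M_G)<\delta_G\le\delta_{G*H}$ and likewise for $H$, so $\delta_\infty(M_{G*H})=\max\{\delta_\infty(M_G),\delta_\infty(M_H)\}<\delta_{G*H}$, which is exactly the SPR property for $G*H$. Item~(2) is just the definition unwound: $\delta_{G*H}>\max\{\delta_\infty(M_G),\delta_\infty(M_H)\}=\delta_\infty(M_{G*H})$ says precisely that there is a critical gap at infinity. Finally, in both cases $(M_\Gamma,g)$ is SPR, so by the first part of Theorem~\ref{theo:SPR-FinBM} its Bowen--Margulis measure is finite.

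\medskip

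The main obstacle will be the syllable-length control: making rigorous the claim that membership in $\Gamma_{\tilde W}$ forces the reduced word to have length $\le 2$. This requires combining the ping-pong combinatorics (which controls the \emph{rough} shape of the orbit) with the fine hyperbolic geometry of Lemma~\ref{lem:Control-Hyperb} (which controls when the geodesic $[x,\gamma y]$ genuinely stays $\eta/2$-close to the ``broken geodesic'' through the translates $s_1\cdots s_k\tilde W$, hence must meet $\Gamma\tilde W$). One must choose $W$ large enough that the compact ``core'' pieces of $M_G$ and $M_H$ both embed, that the Schottky regions $U_G,U_H$ are respected, and that the finitely many exceptional transition elements $\alpha_i$ are accounted for — this bookkeeping is where the real care is needed, but it parallels closely the argument already given for Proposition~\ref{prop:Gamma-Infty-GeoFin}, so I expect it to go through.
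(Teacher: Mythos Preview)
Your reduction of the Corollary to Theorem~\ref{theo:Schottky} is exactly the paper's approach, and your derivation of items~(1) and~(2) from inequality~(\ref{eqn:schottky}) is correct and complete.

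For the proof of Theorem~\ref{theo:Schottky} itself, your strategy is right but you have misidentified which inclusion is the delicate one. The claim that $\Gamma_{\tilde W^R_\Gamma}\subset G\cup H\cup S$ (your ``syllable length $\le 2$'' obstacle) is exactly Lemma~\ref{lem:SchottkyDom}, which the paper simply quotes from \cite{PS16}. The genuine work in the paper goes into the \emph{reverse} inclusion $G_{\tilde W^{2R}_G}\cup H_{\tilde W^{2R}_H}\subset F\cup\bigcup_{\alpha,\beta\in F}\alpha\,\Gamma_{\tilde W^R_\Gamma}\beta$, which you dismiss in half a sentence (``conversely $G_{\tilde W_G}$ and $H_{\tilde W_H}$ inject up to bounded distortion into $\Gamma_{\tilde W}$''). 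This direction is not obvious: an element $g\in G_{\tilde W_G}$ gives a geodesic $[x,gy]$ that avoids all $G$-translates of $\tilde W_G$, but a priori it might still hit some $\gamma\tilde W_\Gamma$ for $\gamma\in\Gamma\setminus G$ (i.e.\ $\gamma$ with nontrivial $H$-syllables). Ruling this out is precisely where the paper invokes the ping-pong geometry together with Lemma~\ref{lem:Control-Hyperb}, via a case analysis on whether $\gamma o\in U_G$ or $\gamma o\in U_H$ and on the first syllable of $\gamma$. So your instinct about the tools is right, but apply them to the other inclusion.
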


It was originally shown by M. Peign\'e in \cite{Pei11} that 
if $\ds \delta_\Gamma>\max\{ \delta_G, \delta_H\}$ 
then $(\tilde M/\Gamma, g)$ has a finite Bowen-Margulis measure. 
The above corollary with Theorem \ref{theo:SPR-FinBM}
 guarantees this finiteness under a weaker condition. 

It was shown in \cite{DOP} that if $G\subset \Gamma$ is a {\em divergent} subgroup, 
then $\delta_G < \delta_\Gamma$. We get therefore the following corollary.
\begin{coro}
Let $G,H$ be discrete \emph{divergent} groups of isometries 
of a complete manifolds $(\tilde M,g)$ with pinched negative curvature 
which are in Schottky position. 
Then $\Gamma =  \langle G, H \rangle = G*H$ is strongly positively recurrent.
\end{coro}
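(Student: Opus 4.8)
The plan is to combine three facts already at our disposal: Theorem \ref{theo:Schottky} on the behaviour of entropy at infinity under Schottky products, the theorem of \cite{DOP} asserting that a divergent subgroup $G\subset\Gamma$ satisfies $\delta_G<\delta_\Gamma$, and the first part of Theorem \ref{theo:SPR-FinBM} identifying the SPR property as the source of finiteness of the Bowen-Margulis measure. So let $G,H$ be divergent discrete groups in Schottky position and set $\Gamma=G*H$. The goal is to show $\delta_\infty(M_\Gamma)<\delta_\Gamma$, which is exactly the SPR property.

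First I would note that, since $G$ and $H$ are divergent subgroups of the free product $\Gamma=G*H$, the main result of \cite{DOP} applies to each of them: $\delta_G<\delta_\Gamma$ and $\delta_H<\delta_\Gamma$. Hence
$$
\max\{\delta_G,\delta_H\}<\delta_\Gamma\,.
$$
Next I would invoke the inequality chain (\ref{eqn:schottky}) from the preceding corollary, which gives
$$
\delta_\infty(M_{G*H})=\max\{\delta_\infty(M_G),\delta_\infty(M_H)\}\le\max\{\delta_G,\delta_H\}\,.
$$
Here the equality is Theorem \ref{theo:Schottky}, and the inequality $\delta_\infty(M_G)\le\delta_G$ (resp. for $H$) holds because the entropy at infinity of a quotient manifold is always bounded above by its critical exponent — each $\Gamma_{\tilde W}$ is a subset of $\Gamma$, so the associated Poincaré series has exponent at most $\delta_\Gamma$. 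Combining the two displays yields $\delta_\infty(M_\Gamma)\le\max\{\delta_G,\delta_H\}<\delta_\Gamma$, i.e. $(\tilde M/\Gamma,g)$ has a critical gap at infinity, which is precisely the definition of strongly positively recurrent.

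There is essentially no serious obstacle here: the statement is a direct concatenation of the divergence criterion of \cite{DOP} with Theorem \ref{theo:Schottky} and the elementary bound $\delta_\infty\le\delta_\Gamma$; it is really a corollary of the corollary just above it, specialised to the divergent case. The only point requiring a word of care is making sure that $G$ and $H$, viewed as subgroups of $\Gamma=G*H$, are indeed the same abstract groups with the same critical exponents — this is immediate from the ping-pong construction, which realises $\Gamma$ as the internal free product, so that the orbit maps $\gamma\mapsto d^g(o,\gamma o)$ for $\gamma\in G$ agree whether computed in $G$ or in $\Gamma$, and likewise for $H$. Once that is observed, the chain of inequalities closes and the SPR property — hence finiteness of the Bowen-Margulis measure by Theorem \ref{theo:SPR-FinBM} — follows.
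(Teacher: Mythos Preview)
Your proof is correct and follows exactly the approach the paper takes: the paper states just before this corollary that by \cite{DOP} a divergent subgroup $G\subset\Gamma$ satisfies $\delta_G<\delta_\Gamma$, and then the corollary is deduced immediately from the inequality chain (\ref{eqn:schottky}) of the preceding corollary. Your write-up is in fact more detailed than the paper's, which leaves the argument implicit.
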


This last corollary allows a lot of topologically infinite examples. 
For instance, if $G$ and $H$ are discrete subgroups of the group of isometries of 
the hyperbolic space, whose limit set are not the whole boundary, 
they can be settled in Schottky position by taking suitable conjugation 
with hyperbolic elements. If $G$ and $H$ are $\bbZ$-covers of convex-cocompact groups, 
they are divergent and their Schottky product gives   
a SPR manifold, hence with finite Bowen-Margulis measure, 
whose fundamental group is not even finitely generated.

\medskip

\begin{figure}[ht!]\label{fig:schottky} 
\begin{center}
\input{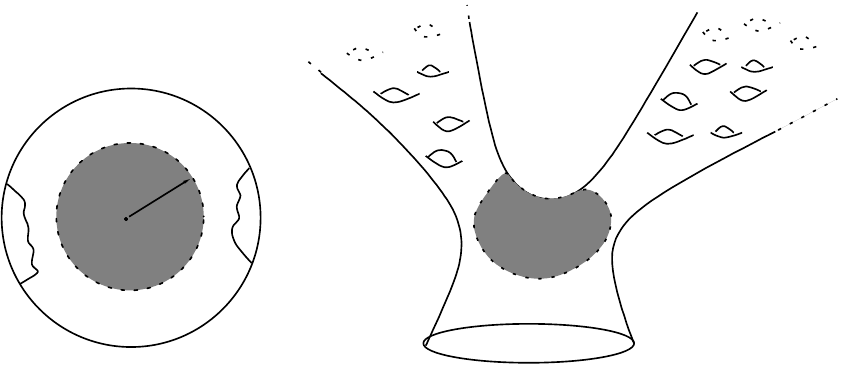_t} 
\caption{Schottky manifold} 
\end{center}
\end{figure}

\begin{proof}
Our proof relies on the ideas of Section 9 of \cite{PS16}. 
Let $U_G$ and $U_H$ be the sets ensuring the Schottky position of $G$ and $H$. 
Since they are compact in $\tilde M \cup \bd\tilde M$ 
and since $K_g\leq -a^2 < 0$, a key point is that there exists $\rho>0$ such that 
all geodesics from $U_G$ to $U_H$ intersect the ball $B^g(o, \rho)$. 
Moreover, without loss of generality, we can assume that 
the point $o$ is neither in $U_G$ nor in $U_H$. 

Let $M_\Gamma = \tilde M/\Gamma$, $M_G = \tilde M/G$ and 
$M_H = \tilde M/H$. 
Let $p_\Gamma : \tilde M\to M_\Gamma$, $p_G : \tilde M\to M_G$ and 
$p_H : \tilde M\to M_H$ be the associated covering maps. 

For all $R\ge \rho$, define   $W^R_\Gamma = p_\Gamma(B^g(o,R)) \subset M_\Gamma$,
$W^R_G= p_G(B^g(o,R)) \subset M_G$ and $W^R_H= p_H(B^g(o,R)) \subset M_H$. 
Let $\tilde W^R_\Gamma,\tilde W^R_G,\tilde W^R_H\subset \tilde M$ be 
nice preimages (Dirichlet domains viewed from $o$) of $W^R_\Gamma,W^R_G,W^R_H$, 
respectively for the actions of $\Gamma$, $G$, $H$. 
By definition of $W^R_\Gamma,W^R_G,W^R_H$ 
and of a Dirichlet domain, 
one easily checks that they all lie inside $B^g(o,R)$. 
Moreover, as $p_G$ and $p_H$ are intermediate covers between $\tilde M$ and $M_\Gamma$, we have
$o\in \tilde W^R_\Gamma\subset \tilde W^R_G\cap \tilde W^R_H\subset B^g(o,R)$. 
  
 
Let $\Gamma_{\tilde W^R_\Gamma}\subset \Gamma$, $G_{\tilde W^R_G}\subset G$ 
and $H_{\tilde W^R_H}\subset H$ be the fundamental groups  respectively of 
$\Gamma, G$ and $H$ respectively out of $\tilde W^R_\Gamma, \tilde W^R_G, \tilde W^R_H$, 
according to Definition \ref{def:GammaW}.  

A key fact is the following.
\begin{lemm}[Pit-Schapira, \cite{PS16}]\label{lem:SchottkyDom}
For all $R>0$, there exists a finite set $S\subset \Gamma$ such that
$$
\Gamma_{\tilde W^R_G}\subset G \cup H \cup S.
$$
\end{lemm}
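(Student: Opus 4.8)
The plan is to analyze, for a given $R \ge \rho$, an arbitrary element $\gamma \in \Gamma_{\tilde W^R_G}$ by looking at its normal form in the free product $\Gamma = G * H$. Write $\gamma = s_1 s_2 \cdots s_n$ as a reduced word, with each $s_i$ lying alternately in $G \setminus \{\mathrm{id}\}$ and $H \setminus \{\mathrm{id}\}$. By definition of $\Gamma_{\tilde W^R_G}$, there exist $x,y \in \partial \tilde W^R_G$ and a $g$-geodesic segment $c = [x, \gamma y]$ whose intersection with $G \cdot \tilde W^R_G$ is contained in $\tilde W^R_G \cup \gamma \cdot \tilde W^R_G$. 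The first step is to record the geometric meaning of the ping-pong partition: the geodesic from $o$ (or any point of $B^g(o,R)$) to $\gamma o$ passes, in order, through translates of the ``corridor'' $B^g(o,\rho)$ by the prefixes $s_1, s_1 s_2, \dots$ of the normal form; more precisely, since all geodesics from $U_G$ to $U_H$ meet $B^g(o,\rho)$, each transition $s_1\cdots s_{i-1} \to s_1 \cdots s_i$ forces $c$ to enter the ball $B^g(s_1\cdots s_{i-1} \cdot o, \rho)$. This is exactly the mechanism exploited in Section~9 of \cite{PS16}, and I would cite it rather than reprove it.

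The key step is then to show that $n \le 2$, possibly after absorbing a bounded ``boundary error'' into a finite set $S$. The idea: if $n \ge 3$, then $c$ meets at least one intermediate ball $B^g(\alpha \cdot o, \rho)$ with $\alpha = s_1\cdots s_i$ a nontrivial proper prefix, and $\alpha \cdot o$ lies in $B^g(o,R)$'s $G$-orbit translate structure in a way that forces $c$ to re-enter $G \cdot \tilde W^R_G$ outside of $\tilde W^R_G \cup \gamma \tilde W^R_G$ — contradicting $\gamma \in \Gamma_{\tilde W^R_G}$. The subtle point is that $\tilde W^R_G$ is a Dirichlet domain for $G$, not for $\Gamma$, so ``$c$ enters $G \cdot \tilde W^R_G$'' is the relevant obstruction, and one has to check that the intermediate ball $B^g(\alpha o, \rho)$ really does sit inside some $G$-translate $g' \tilde W^R_G$ with $g' \ne \mathrm{id}$ and $g'\tilde W^R_G \ne \gamma \tilde W^R_G$. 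When the first syllable $s_1$ lies in $G$ (resp. the last syllable $s_n$ lies in $G$), the corresponding initial (resp. terminal) portion of $c$ may legitimately wander inside $\tilde W^R_G$ itself; this accounts for at most one $G$-syllable at each end. After stripping those, what remains must have reduced length $0$ — i.e. $\gamma \in G \cup H$ up to multiplication on the left and right by elements of a fixed finite set of ``short'' words (those $s \in G$ with $s \tilde W^R_G \cap B^g(o, R+\rho) \ne \emptyset$, which is finite since $G$ acts properly discontinuously). Collecting all such boundary corrections into a single finite set $S \subset \Gamma$ gives $\Gamma_{\tilde W^R_G} \subset G \cup H \cup S$.

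The main obstacle I anticipate is the careful bookkeeping at the two ends of the geodesic: making precise that a single $G$-syllable at the start and a single $G$-syllable at the end can be ``hidden'' inside (translates of) $\tilde W^R_G$ without violating the defining condition of $\Gamma_{\tilde W^R_G}$, while any deeper alternation is detected. This is where the hypothesis $\rho \ge$ (radius controlling the corridor) and the choice $R \ge \rho$ with $o \notin U_G \cup U_H$ get used, and where one must be careful that the Dirichlet domains are nested, $\tilde W^R_\Gamma \subset \tilde W^R_G \cap \tilde W^R_H$, so that a $\Gamma$-corridor crossing is also a $G$-corridor crossing. Since the statement is quoted from \cite{PS16}, I would structure the argument to reduce to the precise lemma proved there (the finiteness of $S$ coming from proper discontinuity of the $G$-action on $\tilde M$), rather than redeveloping the ping-pong estimates from scratch.
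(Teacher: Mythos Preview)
The paper does not prove this lemma; it is cited from \cite{PS16}. Note also that the statement as printed contains a typo: the set in question should be $\Gamma_{\tilde W^R_\Gamma}$, not $\Gamma_{\tilde W^R_G}$. This is clear from the sentence immediately following the lemma (which draws a conclusion about $\delta_{(W^R_\Gamma)^c}(\Gamma)$) and from the proof of the next lemma, which opens with ``It follows from the previous lemma that $\Gamma_{\tilde W^{2R}_\Gamma} \subset G \cup H \cup S$.''

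Your overall strategy --- write $\gamma = s_1 \cdots s_n$ in reduced form and use the ping-pong property that any geodesic from a neighbourhood of $o$ to $\gamma o$ must pass through each ball $B^g(s_1\cdots s_i \cdot o, \rho)$ --- is exactly the right one and is essentially what is done in \cite{PS16}. However, the typo has led you astray in the details: you take the relevant obstruction to be ``$c$ enters $G \cdot \tilde W^R_G$'' and then worry about $G$-syllables at the ends being ``hidden'' inside the $G$-Dirichlet domain. With the correct set $\tilde W^R_\Gamma$, the obstruction is that $c$ enters $\Gamma \cdot \tilde W^R_\Gamma = \Gamma \cdot B^g(o,R)$, and since $R \ge \rho$ every intermediate prefix $s_1\cdots s_i$ (for $1 \le i \le n-1$) produces a $\Gamma$-translate of $B^g(o,R)$ that the geodesic must cross. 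Thus as soon as $n \ge 2$ and $\gamma$ is long enough for this intermediate crossing to be disjoint from the two endpoint copies, $\gamma \notin \Gamma_{\tilde W^R_\Gamma}$; the finite set $S$ simply absorbs the remaining short elements. The end-syllable bookkeeping you anticipate as the ``main obstacle'' is not needed.
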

It implies that $\ds 
\delta_\infty(M_\Gamma) \leq \delta_{(W^R_\Gamma)^c}(\Gamma) \leq \max\{\delta_G, \delta_H\}\,,
$
and therefore, if $\delta_\Gamma >  \max\{\delta_G, \delta_H\}$ 
then $\Gamma$ is strongly positively recurrent. 

We precise this inclusion in the following lemma, 
which implies immediately Theorem \ref{theo:Schottky}.

\begin{lemm}
With the previous notations, for all $R\ge \rho +1$, there exist  
a finite set $F$ such that
$$
 \Gamma_{\tilde W^{2R}_\Gamma}\subset S\cup G_{\tilde W^{2R}_G}\cup H_{\tilde W^{2R}_H}\subset 
S\cup F \cup \bigcup_{\alpha,\beta\in F} \alpha \Gamma_{\tilde W^R_\Gamma} \beta.
$$ 
\end{lemm}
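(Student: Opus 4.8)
The plan is to exploit the Schottky ping-pong structure to show that any element $\gamma\in\Gamma_{\tilde W^{2R}_\Gamma}$ which realizes a geodesic loop going out of $W^{2R}_\Gamma$ and back must, when decomposed as an alternating word $\gamma=s_1t_1s_2t_2\cdots$ in $G$ and $H$, actually have a very short such alternating decomposition — because each time the geodesic from $x$ to $\gamma y$ passes between the ping-pong sets $U_G$ and $U_H$ it crosses the ball $B^g(o,\rho)$, hence re-enters $\Gamma.\tilde W^{2R}_\Gamma$ (as $\rho+1\le R$). So the geodesic segment $[x,\gamma y]$ staying outside $\Gamma.\tilde W^{2R}_\Gamma$ between $\tilde W^{2R}_\Gamma$ and $\gamma\tilde W^{2R}_\Gamma$ forces $\gamma$ to be (up to bounded-length ``boundary'' corrections absorbed into a finite set) a \emph{single} factor, i.e. an element of $G$ or of $H$. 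That is precisely the content of the first inclusion $\Gamma_{\tilde W^{2R}_\Gamma}\subset S\cup G_{\tilde W^{2R}_G}\cup H_{\tilde W^{2R}_H}$, and indeed the cited Lemma \ref{lem:SchottkyDom} already gives $\Gamma_{\tilde W^{R}_G}\subset G\cup H\cup S$; I would adapt its proof (or quote \cite[Section 9]{PS16}) to get the membership in $G_{\tilde W^{2R}_G}\cup H_{\tilde W^{2R}_H}\cup S$ rather than merely $G\cup H\cup S$, the point being that if $\gamma\in G$ and its geodesic avoids $\Gamma.\tilde W^{2R}_\Gamma$ it also avoids the larger orbit $G.\tilde W^{2R}_G$ except near the endpoints, hence $\gamma\in G_{\tilde W^{2R}_G}$.

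For the second inclusion I would compare the two scales $R$ and $2R$ using the tools of Proposition \ref{prop:GammaWPreimage}(2) and its proof: $\tilde W^R_G$ is a nice $G$-preimage of a smaller compact set sitting inside the interior of a compact set with nice $G$-preimage $\tilde W^{2R}_G$, so by that proposition there is a finite set $F_G\subset G$ with $G_{\tilde W^{2R}_G}\subset\bigcup_{\alpha,\beta\in F_G}\alpha\, G_{\tilde W^R_G}\,\beta^{-1}$, and similarly for $H$; and then $G_{\tilde W^R_G}$, $H_{\tilde W^R_H}$ are themselves contained in $\Gamma_{\tilde W^R_\Gamma}$ up to finitely many $\Gamma$-translates, because a $G$-geodesic avoiding $G.\tilde W^R_G$ avoids the finer set $\Gamma.\tilde W^R_\Gamma$ — here one uses $\tilde W^R_\Gamma\subset\tilde W^R_G\cap\tilde W^R_H$ noted in the excerpt. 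Collecting the finitely many auxiliary elements into a single finite set $F$ and enlarging $S$ if necessary gives the stated chain $\Gamma_{\tilde W^{2R}_\Gamma}\subset S\cup G_{\tilde W^{2R}_G}\cup H_{\tilde W^{2R}_H}\subset S\cup F\cup\bigcup_{\alpha,\beta\in F}\alpha\,\Gamma_{\tilde W^R_\Gamma}\,\beta$.

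Finally, to deduce Theorem \ref{theo:Schottky} from the lemma I would take critical exponents. From $\Gamma_{\tilde W^{2R}_\Gamma}\subset S\cup F\cup\bigcup_{\alpha,\beta\in F}\alpha\,\Gamma_{\tilde W^R_\Gamma}\,\beta$ and the triangular inequality (exactly as in the proof of Proposition \ref{prop:EntW}), the counting function of $\Gamma_{\tilde W^{2R}_\Gamma}$ is bounded by a constant times that of $\Gamma_{\tilde W^R_\Gamma}$ shifted by a bounded amount, so $\delta_{(W^{2R}_\Gamma)^c}(\Gamma)\le\delta_{(W^R_\Gamma)^c}(\Gamma)$; but we also have $\delta_{(W^{2R}_\Gamma)^c}(\Gamma)\le\max\{\delta_G(G),\delta_H(H)\}$ from the first inclusion, hence letting $R\to\infty$ (using Proposition \ref{prop:DeltaWProp} to refine from $\delta_G$ to $\delta_\infty(M_G)$ at the level of $G_{\tilde W^{2R}_G}$, i.e. iterating the same argument inside $G$ and inside $H$) gives $\delta_\infty(M_\Gamma)\le\max\{\delta_\infty(M_G),\delta_\infty(M_H)\}$. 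The reverse inequality $\delta_\infty(M_\Gamma)\ge\max\{\delta_\infty(M_G),\delta_\infty(M_H)\}$ is easier: a compact set in $M_\Gamma$ pulls back to a compact set adapted to $G$ and to $H$, and a $G$-geodesic loop leaving the $G$-compact set gives a $\Gamma$-geodesic loop leaving the $\Gamma$-compact set, so $\delta_{(W_G)^c}(G)\le\delta_{(W_\Gamma)^c}(\Gamma)$ and similarly for $H$, whence the inequality after taking infima over compact sets. The main obstacle I anticipate is the bookkeeping in the first inclusion: carefully controlling the ``boundary corrections'' at the two ends of the geodesic segment (the balls $B^g(x,L)$, $B^g(\gamma y,L)$ of Lemma \ref{lem:Control-Hyperb}) so that exactly one alternating syllable survives, and making sure the finitely many correction elements are uniform in $\gamma$; this is where one leans hardest on \cite[Section 9]{PS16} and Lemma \ref{lem:SchottkyDom}.
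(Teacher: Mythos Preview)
Your argument for the left inclusion is correct and matches the paper's: quote Lemma~\ref{lem:SchottkyDom} to get $\Gamma_{\tilde W^{2R}_\Gamma}\subset G\cup H\cup S$, then observe that $G\cdot\tilde W^{2R}_G\subset\Gamma\cdot\tilde W^{2R}_\Gamma$, so a geodesic avoiding the latter also avoids the former, hence $\Gamma_{\tilde W^{2R}_\Gamma}\cap G\subset G_{\tilde W^{2R}_G}$.

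The right inclusion, however, has a genuine gap. Your step ``$G_{\tilde W^R_G}\subset\Gamma_{\tilde W^R_\Gamma}$ up to finite translates, because a geodesic avoiding $G\cdot\tilde W^R_G$ avoids the finer set $\Gamma\cdot\tilde W^R_\Gamma$'' gets the inclusion backwards. It is true that $\tilde W^R_\Gamma\subset\tilde W^R_G$ as single Dirichlet domains, but the full orbits satisfy
\[
G\cdot\tilde W^R_G \;=\; G\cdot B^g(o,R)\;\subset\;\Gamma\cdot B^g(o,R)\;=\;\Gamma\cdot\tilde W^R_\Gamma,
\]
since $G\subset\Gamma$. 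So a geodesic that avoids $G\cdot\tilde W^R_G$ may very well hit $\gamma\cdot\tilde W^R_\Gamma$ for many $\gamma\in\Gamma\setminus G$, and there is no a priori bound on how many such $\gamma$ occur along a long geodesic. Your reduction via Proposition~\ref{prop:GammaWPreimage}(2) from $G_{\tilde W^{2R}_G}$ to $G_{\tilde W^R_G}$ is fine, but it does not address the real difficulty, which is exactly these extra $\Gamma$-translates.

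This is where the paper does the work you skipped. Given $g\in G_{\tilde W^{2R}_G}$ with witnessing segment $[x,gy]$, the paper assumes by contradiction that $[x,gy]$ meets some $\gamma\cdot\tilde W^R_\Gamma$ with $\gamma\notin F_{\lambda R}$ and $g^{-1}\gamma\notin F_{\lambda R}$ (i.e.\ far from both endpoints), and then uses the Schottky ping-pong explicitly: writing $\gamma=g'h'\gamma'$ in reduced form, the geodesic from $(g')^{-1}o\in U_G$ to $h'\gamma' o\in U_H$ (or a variant) must cross $B^g(o,\rho)$, which forces $[x,gy]$ to pass through $g'\cdot B^g(o,2R)\subset G\cdot\tilde W^{2R}_G$ for some $g'\in G\setminus\{\mathrm{id},g\}$, contradicting $g\in G_{\tilde W^{2R}_G}$. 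The case distinction $\gamma o\in U_G$ versus $\gamma o\in U_H$, and $g'\in F$ versus $g'\notin F$, is handled separately. None of this follows from Proposition~\ref{prop:GammaWPreimage} alone; the Schottky structure is essential precisely to rule out the $\Gamma\setminus G$ translates.
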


\begin{proof}
Let us first show the left inclusion. 
It follows from the previous lemma that 
$
\ds \Gamma_{\tilde W^{2R}_\Gamma} \subset G \cup H \cup S.
$
Moreover, 
$$
G\cdot \tilde W^{2R}_G = G\cdot B^g(o,2R)\subset \Gamma\cdot \tilde W^{2R}_\Gamma 
= \Gamma\cdot B^g(o,2R).
$$
For each $\gamma\in \Gamma_{\tilde W^{2R}_\Gamma}\cap  G$,
 there exist $x,y\in \tilde W^{2R}_\Gamma\subset \tilde W^{2R}_G$ such that 
$$
[x, \gamma  y]\cap \Gamma. \tilde W^{2R}_\Gamma \subset 
\tilde W^{2R}_\Gamma \cup \gamma. \tilde W^{2R}_\Gamma\,,\quad \mbox{whence} \quad 
[x, \gamma y]\cap G\cdot \tilde W^{2R}_G \subset \tilde W^{2R}_G \cup \gamma  \tilde W^{2R}_G\,,
$$
so that $\gamma\in G_{\tilde W^{2R}_G}$. 
It shows that $\Gamma_{\tilde W^{2R}_\Gamma}\cap G\subset G_{\tilde W^{2R}_G}$. 

Similarly, $\ds \Gamma_{\tilde W^{2R}_\Gamma}\cap H \subset H_{\tilde W^{2R}_H}$.

\medskip

Let us now prove the right inclusion. 
We want to show that there exists a finite set $F\subset \Gamma$ such that 
$\ds 
 G_{\tilde W^{2R}_G}\subset F\cup \bigcup_{\alpha,\beta\in F} \alpha \Gamma_{\tilde W^R_\Gamma} \beta,
$
the case of $H_{\tilde W^{2R}_H}$ being similar. 

Define $F_{\lambda R}$ as $F_{\lambda R}=\{\gamma\in\Gamma, \gamma B^g(o,\lambda R)\cap B^g(o,\lambda R)\neq \emptyset\}$. 

First observe that for $\lambda \ge 2$, we have
\begin{equation}\label{inclusion-technique}
\tilde W^{2R}_\Gamma \subset \tilde W^{2R}_G\subset \cup_{\alpha\in F_{\lambda R}} \alpha.\tilde W^{R}_\Gamma\,.
\end{equation}
Let $g\in G_{\tilde W^{2R}_G}$, $g\notin F$.  
By definition, there exist $x,y\in \tilde W^{2R}_G$
such that $(x,gy)$ intersects $G.\tilde W^{2R}_G$ only 
in $\tilde W^{2R}_G$ and $g \tilde W^{2R}_G$. 
We will show that $(x,gy)$ intersects $\Gamma.\tilde W^R_\Gamma=\Gamma.\tilde W^R_G$ 
only inside $\tilde W^{2R}_G$ and $g\tilde W^{2R}_G$. 
By  equation (\ref{inclusion-technique}), 
as in the proof of Proposition \ref{prop:GammaWPreimage}, it will imply that
$g\in \cup_{\alpha,\beta\in F_{\lambda R}} \alpha\Gamma_{\tilde W^R_\Gamma}\beta$. 
In fact, we will show that if $(x,gy)$ intersects some $\gamma.\tilde W^R_\Gamma$, 
then either  $\gamma$ or $g^{-1}\gamma$ is in the finite 
set $F$, so that by the same argument, 
$g\in \cup_{\alpha,\beta\in F_{\lambda R}} \alpha \Gamma_{\tilde W^R_\Gamma}{\beta}$. 

By contradiction, assume that
the geodesic segment  $(x,gy)$ intersects $\gamma \tilde W^R_\Gamma$,
 with $\gamma\neq id, g$, and $\gamma, g^{-1}\gamma\notin F_{\lambda R}$. 
In particular, $d(o,\gamma o)> 2\lambda R$ and $d(go,\gamma o)>2\lambda R$. 
As $g\in G_{\tilde W^{2R}_G}$, 
we know that $\gamma \notin G$. 
Denote by $z_\gamma\in (x,gy)$ the closest point to 
$\gamma o$ in $(x,y)\cap \tilde W^R_\Gamma$. 
By the above, we have $\ds d(x,z_\gamma)\ge d(o,\gamma o)-3R\ge (2\lambda -3)R$. 

By definition of a Schottky product, as $o\notin U_G\cup U_H$,
either $\gamma o \in  U_G$ or
 $\gamma o\in U_H$.
Assume first that $\gamma o\in U_H$. Recall that $g o\in U_G$. 
Therefore, the geodesic segment $(\gamma o,go)$ 
intersects the ball $B(o,\rho)$. 
As $d(\gamma o,go)\ge 2\lambda R$ and $d(\gamma o, z_\gamma)\le R$, 
$d(go, gy)\le 2R$, the geodesic segment $(z_\gamma,gy)$ 
intersects the ball $B(o,\rho+2R)$. Let $w_\gamma$ be a point in this intersection.  
Therefore, we get 
$\ds d(x,w_\gamma)\le d(x,o)+d(o,w_\gamma)\le 4R+\rho\le 5R$. 
However, $d(x,w_\gamma)\ge d(x,z_\gamma)> (2\lambda-3)R$, 
which leads to a contradiction as soon as $\lambda\ge 4$.\\

Therefore, the first case holds, $\gamma o\in U_G$, so that $\gamma$ has a reduced form as
$\gamma=g'h'\gamma'$, with $g'\in G\setminus\{id\}$, $h'\in H\setminus\{id\}$, $\gamma'\in \Gamma$. We will distinguish the cases $g'\in F$ and $g'\notin F$. 

If $g'\notin F$, consider the segment $[(g')^{-1}o,h'\gamma'o]$. 
It goes from $U_G$ to $U_H$ so that it intersects the ball $B^g(o,\rho)$. 
It follows that $[o,\gamma o]$ intersects $g'.B(o,\rho)$ at a point $y$ with 
$d(o,y)\ge 2\lambda R-\rho$. By Lemma \ref{lem:Control-Hyperb}, for $\lambda$ large enough, 
the point $y$ is at distance less than $\rho$ from the geodesic segment $(x,\gamma o)$, 
and therefore at distance less than $R+1$ from the geodesic segment $(x,z_\gamma)$. 
Thus, we deduce that $(x,z_\gamma)$ intersects the ball $g'B(o,\rho+R+1)$. 
As we assumed $R\ge \rho+1$, this ball is included in 
$g'B(o,2R)\subset G.\tilde W^{2R}_G$. 
Moreover, as $\gamma'\notin F$, this intersection $(x,z_\gamma)\cap g'B(o,2R)$ is 
disjoint from $\tilde W^{2R}_G$, and as $\gamma \notin F$, 
and the intersection is between $x$ and $z_\gamma$, 
this intersection is also disjoint from $g.\tilde W^{2R}_G$. 
This is a contradiction with the hypothesis $g\in G_{\tilde W^{2R}_G}$. 

It remains the case $g'\in F$, which implies in particular $g'\neq g$. 
Consider in this case the 
geodesic segment $[h'\gamma' o,(g')^{-1}go]$. 
It goes from $U_H$ to $U_G$, so that it intersects the ball $B^g(o,\rho)$. 
It follows that $[\gamma o,go]$ intersects $g'B(o,\rho)$. 
The same arguments on $[z_\gamma, gy]$ instead of $[x,z_\gamma]$ 
lead once again 
to a contradiction with the hypothesis $g\in G_{\tilde W^{2R}_G}$.

It concludes the proof, for $F=F_{\lambda R}$, for some $\lambda\ge 4$ determined by the use of Lemma \ref{lem:Control-Hyperb}.  
\end{proof}
\end{proof}


\subsubsection{Ancona-like examples}

On hyperbolic manifolds, the dynamics is strongly related 
to the spectrum of the Laplacian.
 In particular, a well-known theorem of Patterson and Sullivan
 relates the entropy $\delta_\Gamma$ of $M = \bbH^{n+1}/\Gamma$ 
with the bottom of the spectrum of the Laplacian $\lambda_0(M)$:

\begin{theo}[Patterson 1976, Sullivan 1979 - 1987]
Let $M = \bbH^{n+1}/\Gamma$ be a complete hyperbolic manifold. 
If $\delta_\Gamma<\frac n 2$, then $\lambda_0(M) = \frac{n^2}{4}$. 
If $\delta_\Gamma>\frac n 2$, then $\ds \lambda_0(M) = \delta_\Gamma(n-\delta_\Gamma)$.
\end{theo}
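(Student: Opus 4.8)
The plan is to reduce the statement to the geometric and dynamical tools developed above together with standard Patterson–Sullivan theory on hyperbolic manifolds, and I would organize it as follows.

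\textbf{Step 1: Identify $\delta_\Gamma$ with the topological entropy and set up the spectral picture.} By Theorem~\ref{th:OP}, $\delta_\Gamma = \delta(g) = h_{top}(g)$ for the constant-curvature metric $g$ on $M = \bbH^{n+1}/\Gamma$ (here $b = a = 1$ so the pinching hypotheses are trivially met). The bottom $\lambda_0(M)$ of the $L^2$-spectrum of the Laplacian is, by the variational characterization, $\lambda_0(M) = \inf \int_M \|\nabla f\|^2 / \int_M f^2$ over compactly supported smooth $f$. The plan is to connect these two quantities through the Patterson–Sullivan density $(\nu^g_x)$ constructed in Section~\ref{BM}, which is $\delta_\Gamma$-conformal.

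\textbf{Step 2: The easy case $\delta_\Gamma \le \frac n 2$, giving $\lambda_0(M) = \frac{n^2}{4}$.} First recall that for the universal cover $\bbH^{n+1}$ itself one has $\lambda_0(\bbH^{n+1}) = \frac{n^2}{4}$, so by monotonicity of $\lambda_0$ under covering maps, $\lambda_0(M) \le \frac{n^2}{4}$ always. For the reverse inequality when $\delta_\Gamma < \frac n 2$, the plan is to exhibit, for every $\lambda < \frac{n^2}{4}$, no nonzero $L^2$ eigenfunction below $\lambda$: one uses that a positive $\lambda$-eigenfunction on $\bbH^{n+1}$ decays at rate governed by $s(n-s) = \lambda$ with $s \in [\frac n2, n]$, and the Poincar\'e series $\sum_\gamma e^{-s\, d(o,\gamma o)}$ converges for $s > \delta_\Gamma$; since $\delta_\Gamma < \frac n2 \le s$, the relevant Eisenstein-type sum/Green's function is square-integrable on $M$ only in a way that forces $\lambda_0(M) \ge \frac{n^2}{4}$. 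Concretely I would appeal to the classical fact (Sullivan, Corlette–Iozzi) that $\lambda_0(M) = \frac{n^2}{4}$ as soon as $\delta_\Gamma \le \frac n2$, which is exactly the regime where the Patterson–Sullivan measure has no atoms contributing below the continuous spectrum.

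\textbf{Step 3: The case $\delta_\Gamma > \frac n 2$, giving $\lambda_0(M) = \delta_\Gamma(n-\delta_\Gamma)$.} Here the plan has two halves. For the upper bound $\lambda_0(M) \le \delta_\Gamma(n - \delta_\Gamma)$: build a test function from the Patterson–Sullivan density. Define $\phi(x) = \int_{\Lambda_\Gamma} e^{-\delta_\Gamma\, \calB^g_\xi(o,x)} \, d\nu^g_o(\xi)$ on $\bbH^{n+1}$; by $\Gamma$-invariance of the conformal density this descends to a positive function on $M$ satisfying $\Delta \phi = \delta_\Gamma(n - \delta_\Gamma)\phi$ (each Busemann exponential is an eigenfunction with that eigenvalue, and integration preserves it). One then shows $\phi \in L^2(M)$ precisely when $\delta_\Gamma > \frac n2$, using Sullivan's Shadow Lemma (Proposition~\ref{ShadowLemma}) to estimate $\int_M \phi^2$ by a sum $\sum_{\gamma} e^{-2\delta_\Gamma\, d(o,\gamma o)} \cdot (\text{volume of a fundamental-domain piece})$, which converges because $\delta_\Gamma < n$. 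A positive $L^2$ eigenfunction with eigenvalue $\delta_\Gamma(n-\delta_\Gamma)$ forces $\lambda_0(M) \le \delta_\Gamma(n-\delta_\Gamma)$. For the lower bound $\lambda_0(M) \ge \delta_\Gamma(n-\delta_\Gamma)$: this is the harder half. The plan is to use Brooks' characterization relating $\lambda_0$ of a normal cover to the amenability/growth of the deck group, or more directly, to use the Green's function estimate: the minimal positive Green's function $G_\lambda(o,\cdot)$ of $\Delta - \lambda$ on $M$ is obtained by averaging the $\bbH^{n+1}$-Green's function over $\Gamma$, and this average converges iff $\lambda$ is below the spectrum iff the corresponding exponential rate $s(\lambda)$ exceeds $\delta_\Gamma$. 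Unwinding, positivity of $\lambda_0(M) - \lambda$ is equivalent to convergence of $\sum_\gamma e^{-s(\lambda) d(o,\gamma o)}$, i.e.\ to $s(\lambda) > \delta_\Gamma$, which at the threshold gives $\lambda_0(M) = s^{-1}(\delta_\Gamma)(n - \dots) = \delta_\Gamma(n-\delta_\Gamma)$.

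\textbf{Main obstacle.} The delicate point is the lower bound $\lambda_0(M) \ge \delta_\Gamma(n - \delta_\Gamma)$ in Step~3, i.e.\ ruling out $L^2$ (or approximate) eigenfunctions below $\delta_\Gamma(n-\delta_\Gamma)$. The clean route is the equivalence "$\lambda$ below the spectrum $\iff$ the $\Gamma$-average of the hyperbolic Green's function at energy $\lambda$ converges," which in turn is controlled by comparing the Green's function's radial decay $e^{-s(\lambda) r}$ against the orbital counting exponent $\delta_\Gamma$; establishing this equivalence rigorously (rather than just heuristically) is where the real work lies, and one should cite Sullivan's original papers \cite{Sull84} and the Patterson--Sullivan machinery of Section~\ref{BM} rather than reprove it, since on these constant-curvature manifolds it is by now classical. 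Everything else is bookkeeping with the Shadow Lemma and conformality.
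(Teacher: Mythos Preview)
The paper does not prove this theorem: it is stated with attribution to Patterson and Sullivan and invoked as a classical black box in the construction of the Ancona-like examples (Section~\ref{ssec:SPR-Ex}). There is therefore no proof in the paper to compare your proposal against.

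That said, a few remarks on your sketch. The overall architecture --- covering monotonicity for $\lambda_0 \le n^2/4$, the Patterson--Sullivan eigenfunction $\phi(x) = \int e^{-\delta_\Gamma \calB_\xi(o,x)}\,d\nu_o(\xi)$ for the upper bound when $\delta_\Gamma > n/2$, and the Green's function averaging criterion for the lower bound --- is the standard Sullivan approach. But your justification that $\phi \in L^2(M)$ via ``$\sum_\gamma e^{-2\delta_\Gamma d(o,\gamma o)}$ converges because $\delta_\Gamma < n$'' is not the right estimate: the $L^2$-norm of $\phi$ over a fundamental domain is not controlled by that series, and Sullivan's actual argument goes through a careful analysis of the measure $\phi^2\,d\mathrm{vol}$ against the Bowen--Margulis measure and requires $\delta_\Gamma > n/2$ (not $\delta_\Gamma < n$) precisely so that the horospherical decay beats the volume growth. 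Since the paper treats the whole statement as a citation, the cleanest fix is simply to cite Sullivan \cite{Sull84} (and Patterson for surfaces) rather than attempt a self-contained proof here.
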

 
We present now a family of surfaces inspired by examples of  
Ancona \cite{Ancona}, which is particularly easy to handle 
using the entropy at infinity introduced before. 

\begin{theo} There are plenty of examples of geometrically infinite hyperbolic surfaces 
$S=\bbH^2/\Gamma$ with  $\delta_\infty(S)<\delta_\Gamma(S)$. 
\end{theo}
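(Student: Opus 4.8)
The plan is to build explicit examples where one can control both $\delta_\Gamma(S)$ and $\delta_\infty(S)$ via the spectral interpretation recalled just above, combined with the Schottky machinery of Theorem~\ref{theo:Schottky}. First I would describe the building blocks: start with an elementary parabolic or hyperbolic cyclic group, or more flexibly a $\bbZ$-cover $\Sigma \to \Sigma_0$ of a convex-cocompact surface $\Sigma_0 = \bbH^2/\Gamma_0$. Such a $\bbZ$-cover has critical exponent $\delta_{\bbZ\text{-cover}} = \delta_{\Gamma_0}$ (a classical fact: amenable covers preserve the bottom of the spectrum, equivalently the critical exponent), and it is \emph{divergent} because $\bbZ$ is amenable and the base is convex-cocompact (this is in \cite{DOP}). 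Moreover, since such a cover is geometrically infinite but has no cusp, one checks that its entropy at infinity is controlled: one can choose a compact exhaustion $W_i$ for which $\Gamma_{\tilde W_i}$ consists essentially of powers of the deck transformation together with a bounded set, so $\delta_\infty$ of the $\bbZ$-cover equals $0$ (the deck group $\bbZ$ has zero critical exponent). Already this gives $\delta_\infty < \delta_\Gamma$ for a single $\bbZ$-cover of a convex-cocompact surface with $\delta_{\Gamma_0}>0$, which is the heart of Ancona's construction.

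Next I would assemble these blocks into a genuinely rich family using Schottky products, as in the last corollary of Section~\ref{ssec:SPR-Ex}. Take two (or more) discrete subgroups $G, H \le \mathrm{Isom}(\bbH^2)$ whose limit sets are proper subsets of $\partial\bbH^2$ — for instance two $\bbZ$-covers of convex-cocompact surfaces, or a $\bbZ$-cover together with a cyclic parabolic or hyperbolic group — and conjugate them by suitable hyperbolic elements so that disjoint ping-pong domains $U_G, U_H$ exist. Then $\Gamma = G*H$ acts discretely on $\bbH^2$, the quotient $S = \bbH^2/\Gamma$ is a geometrically infinite hyperbolic surface, and by Theorem~\ref{theo:Schottky}
$$
\delta_\infty(S) = \max\{\delta_\infty(\bbH^2/G), \delta_\infty(\bbH^2/H)\}.
$$
When $G$ and $H$ are divergent, \cite{DOP} gives $\delta_G < \delta_\Gamma$ and $\delta_H < \delta_\Gamma$; combined with the displayed inequality~(\ref{eqn:schottky}) this yields
$$
\delta_\infty(S) = \max\{\delta_\infty(\bbH^2/G),\delta_\infty(\bbH^2/H)\} \le \max\{\delta_G,\delta_H\} < \delta_\Gamma(S),
$$
so $S$ is SPR, and varying the base surfaces, the number of factors, and the conjugating elements produces infinitely many non-isometric such surfaces — hence "plenty of examples". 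One can also keep the argument entirely at the level of a single $\bbZ$-cover if one only wants the bare existence statement.

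The step I expect to be the main obstacle is verifying carefully that the entropy at infinity of a $\bbZ$-cover (or more generally of the geometrically infinite building block) is strictly below $\delta_\Gamma$, i.e. computing $\delta_{W^c}(g)$ for a large enough compact $W$. The subtlety is that $\Gamma_{\tilde W}$ is not a group, and geodesic segments that "go outside $W$ and come back" may wind around the core of $\Sigma_0$ before escaping down the $\bbZ$-direction. One needs Proposition~\ref{prop:GammaWPreimage} together with Lemma~\ref{lem:Control-Hyperb} to show that for $W$ containing a fundamental domain of the core, any $\gamma \in \Gamma_{\tilde W}$ with $d^g(o,\gamma o)$ large is, up to finitely many "end corrections" $\alpha_i, \alpha_j$, a power of the deck transformation, and that the deck transformation has zero (or strictly subcritical) exponent. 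For the $\bbZ$-cover this is clean; for cyclic parabolic factors one invokes $\delta_{\mathcal P} = 1/2 < \delta_\Gamma$ via Proposition~\ref{prop:DeltaInfty-GeoFin} instead. Alternatively, the spectral route of Patterson--Sullivan gives a slicker proof: choosing the base surface $\Sigma_0$ so that $\delta_{\Gamma_0} > 1/2$, the $\bbZ$-cover has $\lambda_0 = \delta_{\Gamma_0}(1-\delta_{\Gamma_0})$ while its essential spectrum bottom, which dominates $\delta_\infty(1-\delta_\infty)$, stays at $1/4$; a short comparison then forces $\delta_\infty \le 1/2 < \delta_{\Gamma_0} = \delta_\Gamma$. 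I would present the Schottky/\cite{DOP} argument as the main proof and mention the spectral picture as motivation.
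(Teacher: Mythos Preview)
Your Schottky-product route (divergent $G,H$ in ping-pong position, then apply Theorem~\ref{theo:Schottky} and \cite{DOP}) is correct and does yield geometrically infinite SPR surfaces, but those are precisely the examples already produced in the preceding subsection; invoking them here proves the bare statement but misses the point of the ``Ancona-like'' heading, which is to exhibit a different mechanism.

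The real gap is your claim that a single $\bbZ$-cover $\Sigma\to\Sigma_0$ of a convex-cocompact surface satisfies $\delta_\infty(\Sigma)<\delta_\Gamma(\Sigma)$. This is false. Such covers are divergent but have \emph{infinite} Bowen--Margulis measure (the geodesic flow on the nonwandering set is a $\bbZ$-extension of the compact base dynamics, hence null-recurrent; this is Rees' theorem for closed base and the same argument works for convex-cocompact base). By the first part of Theorem~\ref{theo:SPR-FinBM}, infinite Bowen--Margulis measure forces $\delta_\infty(\Sigma)=\delta_\Gamma(\Sigma)$. Your heuristic that ``$\Gamma_{\tilde W}$ consists essentially of powers of the deck transformation'' breaks down for two reasons: the deck transformation does not lie in $\Gamma$ at all, and once a geodesic leaves $W$ into one end of the chain it can wind arbitrarily around the infinitely many handles living out there, so $\Gamma_{\tilde W}$ retains exponential growth at rate $\delta_\Gamma$ for every compact $W$. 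The spectral variant you sketch fails for the same reason: the chain ends of a $\bbZ$-cover contribute essential spectrum at $\lambda_0$ itself, not at $1/4$, and in any case the inequality linking the bottom of the essential spectrum to $\delta_\infty$ is only stated as an analogy here, not proved.

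The paper's construction runs the spectral idea in the opposite direction. Start from a hyperbolic surface $N=\bbH^2/\Gamma$ with $\tfrac12<\delta_\Gamma<1$ --- for instance a \emph{nonamenable} regular cover of a closed surface --- so that automatically $\delta_\infty(N)\le\delta_\Gamma(N)<1$. Pinch finitely many pants curves down to length $\e$: this is a \emph{compact} change of hyperbolic structure, hence $\delta_\infty(g_\e)=\delta_\infty(g_0)$ by Proposition~\ref{prop:DeltaWProp}, while a direct estimate (small separating geodesics force $\lambda_0\to 0$) combined with Patterson--Sullivan gives $\delta_{\Gamma_\e}\to 1$. For $\e$ small one gets $\delta_{\Gamma_\e}>\delta_{\Gamma}(g_0)\ge\delta_\infty(g_\e)$. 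The key idea you are missing is thus: rather than seeking a surface with small $\delta_\infty$, one \emph{raises} $\delta_\Gamma$ by a compact perturbation while $\delta_\infty$ stays frozen.
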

By Theorem \ref{theo:SPR-FinBM}, all these examples have 
finite Bowen-Margulis measure. 
We postpone a more detailed study of these examples and 
relations between SPR property and spectrum of the Laplacian to our future paper  \cite{STdufutur}.

\begin{figure}[ht!]\label{fig:ancona} 
\begin{center}
\input{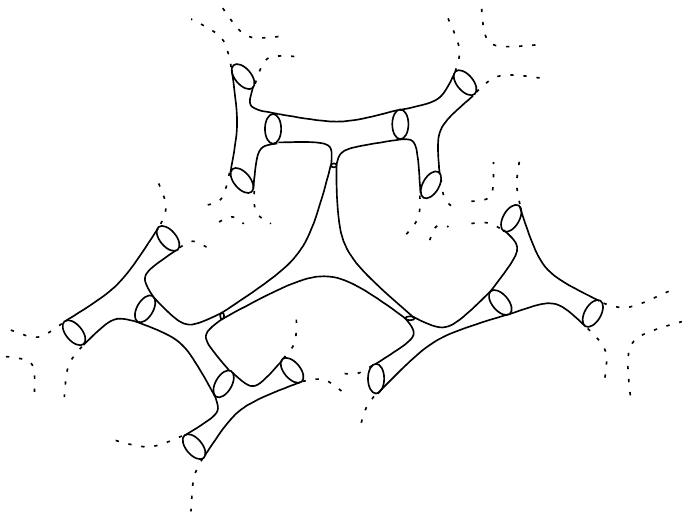_t} 
\caption{SPR surface} 
\end{center}
\end{figure} 

\begin{proof}
Let $N= \bbH^2 / \Gamma$ be a complete hyperbolic surface with $\frac{n}{2}<\delta_\Gamma<n$.  
Denote by $g_0$ its metric.
For example, 
$N$ can be build as a nonamenable regular cover of a compact hyperbolic surface $N_0$. 
In any pair of pants decomposition of $N$, choose finitely many pairs of pants $P_1,\dots P_K$. 
Change the metric of $N$ to a metric $g_\epsilon$, which is equal to $g_0$ far from the pants $P_i$, 
and modified in the neighbourhood of the $P_i$ 
 by shrinking the lengths of the boundary geodesics of the pants $P_i$ to a length $\epsilon$. 
Let $\Gamma_\epsilon$ be a discrete group such that the new hyperbolic surface $(N,g_\epsilon)$ is isometric
to $\bbH^2/\Gamma_\epsilon$. 

As the perturbation is compact, for all $\e>0$, $\delta_\infty(g_\e) = \delta_\infty(g_0)<n$.
An elementary computation (see for example \cite[Prop. II.2 (ii)]{CoCo} )
gives $\ds \lim_{\e \to 0} \lambda_0(N, g_\e) = 0$, 
therefore $\ds \lim_{\e\to 0} \delta_{\Gamma_\e} = n$. 
This implies that for $\e>0$ small enough, $(N, g_\e)$ has a critical gap at infinity~: 
$\delta_{\Gamma_\e} > \delta_{\Gamma_0}\geq \delta_\infty(g_\e)$.
\end{proof}


\subsection{SPR manifolds have finite Bowen-Margulis measure}
\label{ssec:finite-bm}
 This paragraph is devoted to the proof of the first part of
 Theorem \ref{theo:SPR-FinBM}\,: if  $(M,g)$ is a strongly positively recurrent manifold, then the Bowen-Margulis measure of its geodesic flow has finite mass. 

This finiteness result had been shown in \cite{DOP} on geometrically finite manifolds, under the assumption that $ \ds \max\left\{\delta_{\calP_1}(g),..., \delta_{\calP_k}(g)\right\} < \delta_\Gamma$, which is exactly the SPR assumption in the geometrically finite context, although they did not introduce this concept.

As said earlier, this result (finiteness of Bowen-Margulis measure) has been obtained independently, by a different approach, in \cite{Velozo}.

Our proof will rely on the following theorem shown in \cite{PS16}.

\begin{theo}[Pit-Schapira \cite{PS16}]\label{theo:Fin-BM-PS16}
Let $(M,g)$ be a complete manifold with negative curvatures. 
Then the Bowen-Margulis measure of $(M,g)$ is finite if and only if 
 $\Gamma = \pi_1(M)$ is divergent and
there exists a compact set $W\subset M$ with nice preimage $\tilde W$ 
such that $\Gamma_{\tilde W}$ satisfies 
$$
\sum_{\gamma\in \Gamma_{\tilde W}}d(o,\gamma  o)e^{-\delta_\Gamma d(o,\gamma  o)}<+\infty\,.
$$
\end{theo}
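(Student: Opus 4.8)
The plan is to compute the total mass $\norm{m_{BM}^g}$ by a renewal (Kac-type) decomposition of geodesic orbits according to their successive excursions out of $W$, each such excursion being coded by an element of $\Gamma_{\tilde W}$, and then to show that $\norm{m_{BM}^g}$ is comparable, up to multiplicative and additive constants depending only on $W$, to the series in the statement. The equivalence then drops out: the series converges if and only if the mass is finite.

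First I would dispose of the divergence hypothesis, which enters both directions. If $m_{BM}^g$ is finite then, being locally finite and invariant, Poincar\'e recurrence forces it to be conservative, and by the Hopf--Tsuji--Sullivan--Roblin dichotomy recalled in Subsection \ref{BM}, conservativity of $m_{BM}^g$ is equivalent to divergence of $P^g_\Gamma$ at $s=\delta_\Gamma$; in the converse direction divergence is assumed. So in both cases I may work under divergence, where the Patterson--Sullivan density $(\nu_x^g)$ is unique, $\nu_o^g$ gives full measure to the radial limit set $\Lambda_\Gamma^r$, and $m_{BM}^g$ is ergodic and conservative. This lets me reason $\nu_o^g\otimes\nu_o^g$-almost everywhere with geodesics returning infinitely often to $\Gamma\tilde W$.

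The core is then the mass estimate. In the Hopf parametrization the lift has the product form $d\tilde m_{BM}^g = e^{\delta_\Gamma(\calB^g_{v_+}(o,\pi v)+\calB^g_{v_-}(o,\pi v))}\,d\nu_o^g(v_+)\,d\nu_o^g(v_-)\,dt$. A typical geodesic meets a bi-infinite sequence of translates $\gamma_n\tilde W$ such that the segment joining consecutive ones meets no other translate of $\tilde W$; by Definition \ref{def:GammaW} the jumps $\gamma_n^{-1}\gamma_{n+1}$ lie in $\Gamma_{\tilde W}$. I would build from this a measurable fundamental domain for the flow modulo $\Gamma$ that records the current jump $\gamma\in\Gamma_{\tilde W}$ together with the position inside the corresponding excursion. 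On the slice labelled by $\gamma$, the transverse ($\nu_o^g\otimes\nu_o^g$) mass of the geodesics performing exactly that excursion is comparable to $\nu_o^g(\calO^g_o(B^g(\gamma o,R)))$, which by Sullivan's Shadow Lemma (Proposition \ref{ShadowLemma}) equals $\asymp e^{-\delta_\Gamma d^g(o,\gamma o)}$, while the flow-time spent in that excursion is its length, hence $d^g(o,\gamma o)$ up to a bounded error. Integrating the density in $t$ over the excursion therefore contributes $\asymp d^g(o,\gamma o)\,e^{-\delta_\Gamma d^g(o,\gamma o)}$. Summing over labels and adding the uniformly bounded contribution of the orbit pieces sitting inside $\Gamma\tilde W$ (bounded because $W$ is compact), I would obtain, for some $C=C(W)>1$,
$$
\frac1C\sum_{\gamma\in\Gamma_{\tilde W}}d^g(o,\gamma o)e^{-\delta_\Gamma d^g(o,\gamma o)}-C
\ \le\ \norm{m_{BM}^g}\ \le\
C\sum_{\gamma\in\Gamma_{\tilde W}}d^g(o,\gamma o)e^{-\delta_\Gamma d^g(o,\gamma o)}+C,
$$
which yields the asserted equivalence.

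The main obstacle will be turning the excursion coding into a genuine, measurably parametrized cross-section with two-sided control, since $\Gamma_{\tilde W}$ is \emph{not} a group and its elements do not compose freely. Concretely I expect the delicate points to be: the uniformity of the shadow estimates as the observation point ranges over the compact set $\tilde W$, so that the shadows of $\gamma\tilde W$ seen from different points of $\tilde W$ stay comparable (this forces $R$ large and uses compactness of $W$); the near-disjointness of the excursion slices attached to distinct $\gamma$, needed for the lower bound and relying on divergence to ensure $\nu_o^g$ is non-atomic and concentrated on $\Lambda_\Gamma^r$; and the passage from the continuous flow-time integral $dt$ to the discrete lengths $d^g(o,\gamma o)$, where the bounded defect between $d^g(o,\gamma o)$ and the true excursion duration must be absorbed into the constants uniformly in $\gamma$. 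Once these bookkeeping issues are settled, the comparison above, and hence the theorem, follow.
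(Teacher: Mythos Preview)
The paper does not prove this theorem; it is quoted verbatim from \cite{PS16} and used as a black box to deduce the first part of Theorem~\ref{theo:SPR-FinBM}. So there is no ``paper's own proof'' to compare against.

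That said, your outline is essentially the strategy carried out in \cite{PS16}: build a cross-section over $W$, code the first-return map by the excursion element in $\Gamma_{\tilde W}$, and then apply Kac's formula so that the total mass becomes a sum over $\Gamma_{\tilde W}$ of (return time)$\times$(transverse measure of the cell). The Shadow Lemma reduces the transverse mass to $\asymp e^{-\delta_\Gamma d(o,\gamma o)}$ and the return time to $\asymp d(o,\gamma o)$, giving the series. You have also correctly isolated the genuine technical work: uniformity of shadow constants over the compact $\tilde W$, quasi-disjointness of the cells for the lower bound, and absorbing the bounded defect between excursion time and $d(o,\gamma o)$. One point to be careful about in your write-up is that the coding by $\Gamma_{\tilde W}$ is not a bijection onto geodesics (several $\gamma$ can share overlapping shadows, and a single excursion may be attributable to boundedly many $\gamma$'s because of the thickness of $\tilde W$); in \cite{PS16} this is handled by showing the multiplicity is uniformly bounded, which preserves the two-sided comparison. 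Your displayed inequality with additive constants $\pm C$ is the right shape for the conclusion.
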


Let $(M,g)$ be a complete strongly positively recurrent manifold: 
there exists a compact set $W\subset M$ such that $\delta_{W^c}(g)< \delta_\Gamma(g)$. 
The second condition 
$\ds \sum_{\gamma\in \Gamma_{\tilde W}}d(o,\gamma  o)e^{-\delta_\Gamma d(o,\gamma  o)}<+\infty $ is then automatically satisfied for $\tilde W$ a nice lift of $W$. 
Therefore, Theorem \ref{theo:SPR-FinBM}  follows immediately from the following.

\begin{theo}\label{theo:SPR-Div}
Let $(M,g)$ be a strongly positively recurrent manifold. 
Then its fundamental group $\Gamma$ is divergent.
\end{theo}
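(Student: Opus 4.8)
The goal is to show that a strongly positively recurrent manifold $(M,g)$ has a divergent fundamental group $\Gamma$, i.e. that $\sum_{\gamma\in\Gamma}e^{-\delta_\Gamma d^g(o,\gamma o)}=+\infty$. The natural strategy is to exploit the critical gap $\delta_{W^c}(g)<\delta_\Gamma(g)$ to rewrite the full Poincar\'e series in terms of the ``fundamental group out of $W$''. First I would fix a compact set $W\subset M$ with nice preimage $\tilde W$ realizing the gap, so that $\delta_{\tilde W}:=\delta_{W^c}(g)<\delta_\Gamma$. The idea is that every element $\gamma\in\Gamma$ corresponds to a geodesic loop $p_\Gamma([o,\gamma o])$, and this loop decomposes into excursions outside $W$, each of which is (roughly) recorded by an element of $\Gamma_{\tilde W}$, separated by visits to $W$, which are controlled by a finite set (the adjacent elements, or more precisely a finite set of group elements translating $\tilde W$ to its neighbours). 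Concatenating, one gets an ``admissibility'' description: up to a bounded multiplicative and additive error, every $\gamma$ can be written as a product $\gamma=s_0\gamma_1 s_1\gamma_2 s_2\cdots\gamma_n s_n$ where the $\gamma_i$ lie in $\Gamma_{\tilde W}$, the $s_i$ range in a fixed finite set $S$, and $d^g(o,\gamma o)=\sum_i d^g(o,\gamma_i o)+O(n)$.

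\textbf{Key steps.} (1) Establish the coding: show there is a finite set $S\subset\Gamma$ and a constant $C>0$ such that every $\gamma\in\Gamma$ admits a (not necessarily unique) decomposition as above with additive error $\le Cn$ in the displacement, where $n$ is the number of factors. This is a geometric statement about cutting a geodesic segment at its last entry/first exit points relative to the $\Gamma$-orbit of $\tilde W$, using Lemma~\ref{lem:Control-Hyperb} and the definition of $\Gamma_{\tilde W}$ (Definition~\ref{def:GammaW}); it is essentially contained in the combinatorial machinery of \cite{PS16}. (2) Use this coding to bound the full Poincar\'e series from below: roughly,
$$
P_\Gamma^g(s)\ \ge\ \frac{1}{|S|}\sum_{n\ge 0}|S|^{\,n}\Big(e^{-Cs}\!\!\sum_{\gamma\in\Gamma_{\tilde W}}\!e^{-s d^g(o,\gamma o)}\Big)^{\!n+1}\ \asymp\ \sum_n\big(|S|\,e^{-Cs}P_{\tilde W}^g(s)\big)^{n+1},
$$
where $P_{\tilde W}^g(s)=\sum_{\gamma\in\Gamma_{\tilde W}}e^{-s d^g(o,\gamma o)}$ has critical exponent $\delta_{\tilde W}<\delta_\Gamma$. (3) Argue by contradiction: if $\Gamma$ were convergent at $s=\delta_\Gamma$, then $P_\Gamma^g(\delta_\Gamma)<\infty$, which by the above forces $|S|\,e^{-C\delta_\Gamma}P_{\tilde W}^g(\delta_\Gamma)<1$; but since $\delta_{\tilde W}<\delta_\Gamma$ the series $P_{\tilde W}^g(s)$ converges for all $s$ near (and below) $\delta_\Gamma$ with $P_{\tilde W}^g(s)\to 0$ as one can enlarge $W$, while $P_\Gamma^g(s)\to\infty$ as $s\downarrow\delta_\Gamma$. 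The clean contradiction: choose $s>\delta_\Gamma$ slightly; then $|S|\,e^{-Cs}P_{\tilde W}^g(s)$ is some finite number, and the geometric series in (2) diverges precisely when this number is $\ge 1$. One then has to upgrade: enlarge $W$ (using Proposition~\ref{prop:DeltaWProp}, item 2, that $\delta_{W^c}$ decreases) so that $P_{\tilde W}^g(\delta_\Gamma)$ is as small as we like — but $|S|$ and $C$ depend on $W$. The correct way around this is to observe that the \emph{critical exponent} of $\sum_n(|S|e^{-Cs}P_{\tilde W}^g(s))^{n+1}$ as a function of $s$ equals $\delta_\Gamma$ (it is a lower bound for $P_\Gamma^g$, and by the coding also essentially an upper bound up to the finite ambiguity), and a geometric series $\sum_n a(s)^n$ whose ``radius'' (the $s$ making $a(s)=1$) is exactly $\delta_\Gamma$ must \emph{diverge} at that value, because $a(\delta_\Gamma)=1$ exactly (if $a(\delta_\Gamma)<1$ the series would converge for some $s<\delta_\Gamma$, contradicting that $\delta_\Gamma$ is the exponent; if $a(\delta_\Gamma)>1$, divergence already holds). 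Hence $P_\Gamma^g(\delta_\Gamma)=+\infty$.

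\textbf{Main obstacle.} The serious point is step (1), the symbolic coding with uniform additive error in the displacement: one must show that cutting $[o,\gamma o]$ at successive returns to $\Gamma\cdot\tilde W$ produces genuine elements of $\Gamma_{\tilde W}$ (each excursion subsegment meets $\Gamma\cdot\tilde W$ only in the two boundary copies), which requires the hyperbolicity estimate of Lemma~\ref{lem:Control-Hyperb} to control how the concatenated segments track the actual geodesic, and to bound the transition pieces inside $W$ by a finite set $S$ depending only on $\tilde W$. A second, more bookkeeping-level obstacle is the \emph{lower} bound: concatenations of admissible pieces must themselves be within bounded distance of geodesics and must not ``backtrack'', so that distinct coded words give genuinely distinct group elements up to bounded multiplicity — again a consequence of negative curvature (the local-to-global principle for quasi-geodesics), but one that has to be stated carefully so that the counting in step (2) is legitimate. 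Once these are in place, the comparison of critical exponents and the elementary fact about geometric series at their radius of convergence finish the argument. This is exactly the scheme of \cite[\S5--6]{PS16}, so I would lean on their Lemma~\ref{lem:SchottkyDom}-type decompositions and cite the relevant statements rather than reprove the coding from scratch.
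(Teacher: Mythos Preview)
Your approach is genuinely different from the paper's and is in the spirit of \cite{DOP,Peigne}, but step (3) has a real gap. With an additive $O(n)$ error in the displacement, the coding yields
\[
c_1\sum_{n\ge 0}a_-(s)^n \ \le\ P^g_\Gamma(s)\ \le\ c_2\sum_{n\ge 0}a_+(s)^n,
\qquad a_\pm(s)=c_\pm\,e^{\pm Cs}\,P^g_{\tilde W}(s),
\]
and these are \emph{different} ratios. Knowing that $\delta_\Gamma$ is the critical exponent of $P^g_\Gamma$ only gives $a_-(\delta_\Gamma)\le 1\le a_+(\delta_\Gamma)$, not $a_-(\delta_\Gamma)\ge 1$; so the lower bound need not diverge at $\delta_\Gamma$. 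Your sentence ``by the coding it is also essentially an upper bound up to the finite ambiguity'' conflates the bounded multiplicity of the coding (a harmless multiplicative constant in front) with the $O(n)$ additive distortion in distances (which produces the $e^{\pm Cs}$ factor \emph{per letter}). To rescue the argument you would need a coding with $O(1)$ total error, i.e.\ that the concatenation $[o,\gamma_1 o]\cup[\gamma_1 o,\gamma_1\gamma_2 o]\cup\cdots$ is a $(1,C)$-quasi-geodesic with $C$ independent of $n$. This is what makes the argument run in the Schottky and geometrically finite cases (ping-pong, horoball convexity), but it is not automatic for a general SPR manifold and would require substantial extra work beyond what is in \cite{PS16}.

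The paper avoids this entirely by going through Hopf--Tsuji--Sullivan: divergence is equivalent to $\nu^g_o(\Lambda_\Gamma^r)=1$ for any Patterson--Sullivan density. One introduces the ``escape sets'' $U_T=\{\xi:\exists x\in\tilde W,\ [x,\xi)\text{ avoids }\Gamma\cdot\tilde W\setminus\tilde W\text{ for time }T\}$ and proves, by a direct shadow estimate on the Patterson construction, that $\nu_o^g(U_T)\le Ce^{-(\delta_\Gamma-\delta_{W^c}-\eta)T}$. Since $\Lambda_{\tilde W}=\bigcap_T U_T$ and the complement of its $\Gamma$-orbit lies in $\Lambda_\Gamma^r$, this gives $\nu_o^g(\Lambda_\Gamma^r)=1$. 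This sidesteps the combinatorial matching problem, and the quantitative decay of $\nu_o^g(U_T)$ is moreover reused later to get uniform control of the Bowen--Margulis mass under perturbation --- something your approach would not provide.
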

 
We give first the strategy of the proof. 
 Let $(M,g)$ be a SPR manifold, with $\Gamma=\pi_1(M)$. 
It follows from Hopf-Tsuji-Sullivan theorem (see \cite[p.18]{Roblin}) 
that $\Gamma$ is divergent if and only if any Patterson-Sullivan measure $\nu^g_o$ 
(cf Section \ref{ssec:Conv-BM}) gives full measure to the radial limit set $\Lambda_\Gamma^r$. 

Theorem \ref{theo:SPR-Div}   follows from a careful study of $\Lambda_\Gamma^r$. 
More precisely, if $W$ is a nice set with $\delta_{W^c}<\delta_\Gamma$ 
and nice lift $\tilde W$, we introduce a kind 
of limit set $\calL_{W^c }$ of the subset $\Gamma_{\tilde W }$ of $\Gamma$, 
see Definition \ref{def:limitset} and Proposition \ref{prop:Lambda-Infty}. 
We show in Proposition \ref{prop:MassPatSul} that $\nu_o^g( \calL_{W^c })=0$. 
By definition, $\partial\tilde M \setminus \calL_{W^c}$ 
consists in asymptotic directions of geodesics returning 
infinitely often in the compact set $W$. 
In particular, it is included in the radial limit set. 
We deduce that $\nu_o^g(\Lambda_\Gamma^r)=1$, 
which implies that $\Gamma$ is divergent by Hopf-Tsuji-Sullivan Theorem.

\begin{defi}\label{def:limitset}
Let $W\subset M$ be a compact subset and $\tilde W\subset \tilde M$ a nice preimage of $W$. 
Introduce the set 
$$
\Lambda_{\tilde W} = \left\{\xi\in \Lambda_\Gamma\; \mbox{s.t.}\;  
\exists x\in \tilde W, [x,\xi)\cap \Gamma \cdot \tilde W \subset \tilde W\right\}\,.
$$
We  call \emph{limit set of $\Gamma$ out of $W$} the set
$\ds{\cal L}_{W^c} = \Gamma\cdot \Lambda_{\tilde W}$.
\end{defi}

The following proposition shows that all elements of $\Lambda_{\tilde W}$ 
are limit points of $\Gamma_{\tilde W}\cdot o$ in the boundary at infinity, 
and that the only limit points of $\Gamma_{\tilde W}\cdot o$ 
which are not in $\Lambda_{\tilde W}$ are  endpoints  of geodesic  
rays  which do not come back inside the interior $\Gamma\cdot \inter{\tilde W}$, 
after leaving $\tilde W$ but touch the boundary $\bd (\Gamma\cdot \tilde W)$.

\begin{prop}\label{prop:Lambda-Infty}
Let $W\subset M$ be a compact subset and $\tilde W\subset \tilde M$ 
a nice preimage of $W$. 
Then
$$ 
\Lambda_{\tilde W} \,\,\subset\,\,
 \overline{\Gamma_{\tilde W} \cdot o}\backslash \Gamma_{\tilde W} \cdot o \,\, \subset\,\, 
 \left\{\xi\in \Lambda_\Gamma\; \mbox{s.t.}\;  \exists x\in \tilde W, [x,\xi)\cap \Gamma \cdot \inter{\tilde W} \subset \tilde W\right\}\,.
$$
\end{prop}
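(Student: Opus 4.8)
The claim is a two-sided inclusion. For the left inclusion, I would take $\xi \in \Lambda_{\tilde W}$ and produce a sequence in $\Gamma_{\tilde W}\cdot o$ converging to $\xi$ but never equal to a point of $\Gamma_{\tilde W}\cdot o$ eventually --- actually one just needs that $\xi$ is an accumulation point and that $\xi\notin\Gamma_{\tilde W}\cdot o$ (the latter being automatic since $\xi\in\partial\tilde M$ while $\Gamma_{\tilde W}\cdot o\subset\tilde M$, so really the content is that $\xi\in\overline{\Gamma_{\tilde W}\cdot o}$). So the real work on the left is: given $x\in\tilde W$ with $[x,\xi)\cap\Gamma\cdot\tilde W\subset\tilde W$, exhibit elements $\gamma_n\in\Gamma_{\tilde W}$ with $\gamma_n o\to\xi$. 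The idea is to walk along the ray $[x,\xi)$: pick points $y_n\in[x,\xi)$ going to infinity, and for each $n$ let $\gamma_n\in\Gamma$ be such that $y_n$ lies in (or very near) $\gamma_n\tilde W$ --- possible because $\Gamma\cdot\tilde W$ need not cover $\tilde M$, so instead I would use that $W$ is the closure of its interior and argue that the ray must pass through translates of $\tilde W$ infinitely often, OR, more robustly, replace the ray by a geodesic segment from $x$ to a deep point and slide its far endpoint into a translate of $\partial\tilde W$. Concretely: since $\xi\in\Lambda_\Gamma$, there are $\alpha_k\in\Gamma$ with $\alpha_k o\to\xi$; the geodesic segments $[x,\alpha_k o]$ converge to $[x,\xi)$; truncate each at its last intersection with $\Gamma\cdot\tilde W$ before leaving, giving an element $\gamma_k\in\Gamma$ with $\gamma_k\tilde W$ the last box met, and check $\gamma_k\in\Gamma_{\tilde W}$ by Definition \ref{def:GammaW} (the segment from $x\in\partial\tilde W$ to $\gamma_k y\in\partial(\gamma_k\tilde W)$ meets $\Gamma\cdot\tilde W$ only in $\tilde W\cup\gamma_k\tilde W$, using the hypothesis on the ray plus a convexity/fellow-traveling estimate from Lemma \ref{lem:Control-Hyperb}). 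Then $\gamma_k o\to\xi$.

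\textbf{The right inclusion.} Here I would take $\xi\in\overline{\Gamma_{\tilde W}\cdot o}\setminus\Gamma_{\tilde W}\cdot o$, i.e. $\xi\in\partial\tilde M$ with $\gamma_n o\to\xi$ for some sequence $\gamma_n\in\Gamma_{\tilde W}$ of elements with $d(o,\gamma_n o)\to\infty$. For each $n$, by definition of $\Gamma_{\tilde W}$ there are $x_n,y_n\in\partial\tilde W$ and a geodesic segment $c_n=[x_n,\gamma_n y_n]$ meeting $\Gamma\cdot\tilde W$ only inside $\tilde W\cup\gamma_n\tilde W$. By compactness of $\tilde W$, after extracting a subsequence $x_n\to x\in\tilde W$, and the segments $c_n$ converge to a geodesic ray $[x,\xi)$ (since their far endpoints $\gamma_n y_n$ converge to $\xi$: indeed $d(\gamma_n y_n,\gamma_n o)\le\operatorname{diam}\tilde W$ so $\gamma_n y_n\to\xi$ too). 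The limiting ray $[x,\xi)$ then meets $\Gamma\cdot\operatorname{int}\tilde W$ only inside $\tilde W$: any intersection point $p\in[x,\xi)\cap\gamma\cdot\operatorname{int}\tilde W$ with $\gamma\ne\mathrm{id}$ would be an interior point of a box, hence for $n$ large $c_n$ also meets $\gamma\cdot\operatorname{int}\tilde W$; but $\gamma\tilde W\ne\tilde W,\gamma_n\tilde W$ for large $n$ (since $\gamma_n$ escapes to infinity while $\gamma$ is fixed, and $\gamma\ne\mathrm{id}$), contradicting $c_n\cap\Gamma\cdot\tilde W\subset\tilde W\cup\gamma_n\tilde W$. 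This is exactly the defining property of the right-hand set, so $\xi$ lies in it.

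\textbf{Main obstacle.} The delicate point is the convergence of the segments (or rays) and the transfer of the "meets $\Gamma\cdot\tilde W$ only in two prescribed boxes" property to the limit: one must rule out that, in the limit, the ray grazes the \emph{boundary} of a translate without entering its interior (this is precisely why the right-hand set uses $[x,\xi)\cap\Gamma\cdot\operatorname{int}\tilde W\subset\tilde W$ with the closed $\tilde W$ on the right, and the left-hand set $\Lambda_{\tilde W}$ uses the closed $\Gamma\cdot\tilde W$ --- the asymmetry in the two bounding sets of the sandwich is exactly what makes the statement true and not sharper). Making this precise requires a uniform lower bound on how far interior intersection points sit from $\partial\tilde W$ for a \emph{fixed} $\gamma$, combined with the local uniform convergence $c_n\to[x,\xi)$ on compact sets, which follows from stability of geodesics in pinched negative curvature (Lemma \ref{lem:Control-Hyperb}) together with the finiteness of the set of adjacent elements of $\tilde W$. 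I expect the bookkeeping of "which box is hit last" and the extraction of subsequences to be the only genuinely fiddly part; everything else is a direct unravelling of Definition \ref{def:GammaW} and Definition \ref{def:limitset}.
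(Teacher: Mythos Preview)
Your approach is essentially the paper's. For the right inclusion your argument is actually spelled out more carefully than the paper's (which just says ``therefore $[x_\infty,\xi)$ cannot intersect the interior of $\Gamma\cdot\tilde W$'' without the explicit contradiction you give).

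For the left inclusion your phrasing is garbled in one place: you write ``truncate each at its last intersection with $\Gamma\cdot\tilde W$ before leaving, giving \ldots the last box met''. What you want (and what the paper does) is the \emph{first} translate $\tilde\gamma_n\cdot\tilde W$ that the segment $[x,\alpha_n o]$ enters \emph{after} leaving $\tilde W$; this choice makes $\tilde\gamma_n\in\Gamma_{\tilde W}$ immediate from the definition. The genuine work, which you allude to via ``fellow-traveling'' but do not carry out, is then to show $d(x,\tilde\gamma_n o)\to\infty$ (hence $\tilde\gamma_n o\to\xi$): the paper does this by fixing $R>0$, noting that $[x,\xi)\cap B(x,R)$ stays a positive distance $\eta$ from the closed set $\Gamma\cdot\tilde W\setminus\tilde W$ (by the hypothesis $[x,\xi)\cap\Gamma\cdot\tilde W\subset\tilde W$), and using uniform convergence of $[x,\alpha_n o]$ to $[x,\xi)$ on $B(x,R)$ to conclude that for large $n$ the segment meets no translate of $\tilde W$ other than $\tilde W$ inside $B(x,R)$, forcing $\tilde\gamma_n\cdot\tilde W$ outside $B(x,R)$.
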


\begin{proof} Without loss of generality, assume $o\in \tilde W$. 
We show first the left inclusion.
Let $\xi\in \Lambda_{\tilde W}\subset \Lambda_\Gamma$. 
There exists a sequence $(\gamma_n) $ of elements of $\Gamma$ such that $\gamma_n o\to \xi$. 
Moreover, by definition of $\Lambda_{\tilde W}$, there exists $x\in \tilde W$ 
such that the  geodesic $[x,\xi)$ does not intersect $\Gamma\cdot \tilde W$ after leaving $\tilde W$.
Thus, for $n$ large enough, the geodesic segment $[x,\gamma_n o]$ also leaves $\tilde W$ before returning to 
$\gamma_n \tilde W$. 
Let $\tilde \gamma_n\cdot \tilde W$ be the first image of $\tilde W$ crossed 
by the geodesic segment $[x, \gamma_n o]$ after leaving $\tilde W$. 
By construction, $\tilde \gamma_n\in \Gamma_{\tilde W}$.
Moreover, we have 
\begin{equation}
  \lim_{n\to +\infty}d^g(x, \tilde \gamma_n o) = +\infty\,.
\end{equation} 
Indeed, for all  $R>0$, there exists $\eta>0$ such that 
inside the (compact) ball $B^g(x, R)$, 
the distance between $[x, \xi)\cap B^g(x,R)$ and 
$\Gamma\cdot \tilde W \backslash \tilde W$ is at least $\eta$. 
Moreover, it follows from Lemma \ref{lem:Control-Hyperb} 
that the sequence of geodesic segments $([x, \gamma_n  o])_{n\in \bbN}$ 
converges to the half geodesic $[x, \xi]$ uniformly on $B^g(x,R)$. 
Thus, for all $n $ large enough, $[x, \gamma_n o]\cap B^g(x, R)$ and
$[x, \xi]\cap B^g(x,R)$ are $\frac \eta 2$-close, 
so that $[x, \gamma_n\cdot o]$ does not meet 
$\Gamma\cdot \tilde W \backslash \tilde W$ on $B^g(x,R)$, whence $d^g(x, \tilde \gamma_n o)\geq R$. 

It follows from the above  that the sequence of geodesic segments 
$([x, \tilde \gamma_n o])_{n\in \bbN}$ also converges to the half geodesic $[x, \xi]$, so that 
$$
\xi =
 \lim_{n\to +\infty}\tilde \gamma_n  o \in
 \overline{\Gamma_{\tilde W} \cdot o}\backslash \Gamma_{\tilde W} \cdot o \,.
$$

\medskip

Let us now show that 
$
\ds \overline{\Gamma_{\tilde W} \cdot o}\backslash \Gamma_{\tilde W} \cdot o \subset 
 \left\{\xi\in \Lambda_\Gamma\; \mbox{s.t.}\;  \exists x\in \tilde W, [x,\xi)\cap \Gamma \cdot \inter{\tilde W} \subset \tilde W\right\}\,.
$
Let $\ds \xi \in \overline{\Gamma_{\tilde W} \cdot o}\backslash \Gamma_{\tilde W}$. 
There exists a sequence $(\gamma_n)_{n\in \bbN}$ of elements of $\tilde \Gamma_{\tilde W}$ 
such that $ \gamma_n   o\to \xi$ and $d^g(o, \gamma_n\cdot o)\to +\infty$. 
By definition of $\Gamma_{\tilde W}$, for all $n\geq 0$ there exist 
$x_n, y_n\in \tilde W$ such that 
the geodesic segment $[x_n, \gamma_n   y_n]$ intersects $\Gamma\cdot \tilde W$ only in $\tilde W$
 and $\gamma_n \cdot \tilde W$. 
Up to taking a subsequence, 
we can assume that $x_n\to x_\infty\in \tilde W$ and $y_n\to y_\infty\in \tilde W$ as $n\to +\infty$. 
Once again, it follows from the compactness of $\tilde W$ 
and Lemma \ref{lem:Control-Hyperb} that the sequence of 
geodesic segments $([x_\infty, \gamma_n y_\infty])_{n\in \bbN}$ 
converges to $[x_\infty, \xi ]$ uniformly on compact sets. 
Therefore $[x_\infty, \xi )$ cannot intersect the interior of $\Gamma\cdot \tilde W$.
\end{proof}


We gather in the following proposition elementary properties of the sets $\Lambda_{\tilde W}$ and
$\calL_{W^c}$.  
\begin{prop}\label{prop:EnsLimRad} Let $(M,g)$ be a manifold with pinched negative curvature. 
Let $W\subset M$ be a nice compact set, and $\tilde W$ a nice preimage. 
With the above notations, 
\begin{itemize}
\item 
the set $\calL_{W^c} = \Gamma\cdot \Lambda_{\tilde W}$ is 
the set of endpoints of geodesics which eventually leave  $\Gamma \cdot \tilde W$:
$$
\calL_{W^c}=\Gamma \cdot \Lambda_{\tilde W} =
 \left\{v_+\in \Lambda_\Gamma \; ; \;  
\exists v\in S^g\tilde M, \exists T>0 \mbox{ s.t. } \forall t\geq T, \pi g^tv \notin \Gamma \cdot \tilde W\right\}\,.
$$
\item  
The \emph{limit set out of $W$}, $\ds\calL_{W^c}  = \Gamma \cdot \Lambda_{\tilde W}$ 
does not depend on the choice of nice preimage $\tilde W$. 
\item 
If $W_1\subset W_2$, we have $\calL_{W_2^c}\subset \calL_{W_1^c}$.
\item  $\ds \Lambda_\Gamma \backslash \left(\calL_{W^c}\right) \subset \Lambda_\Gamma^r$, 
where $\Lambda_\Gamma^r$ is the radial limit set.
\end{itemize} 
\end{prop}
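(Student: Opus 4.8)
The whole proposition will follow once the first bullet is established, since its right-hand side $\bigl\{v_+\in\Lambda_\Gamma : \exists\,v\in S^g\tilde M,\ \exists\,T>0,\ \forall\,t\ge T,\ \pi g^tv\notin \Gamma\cdot\tilde W\bigr\}$ is manifestly intrinsic: it depends only on the closed set $\Gamma\cdot\tilde W=p_\Gamma^{-1}(W)$, hence only on $W$ and not on the chosen nice preimage. So the plan is: (i) prove bullet one; (ii) read off bullets two, three, four as immediate consequences. For (i), the forward inclusion $\Gamma\cdot\Lambda_{\tilde W}\subset(\text{RHS})$ is straightforward: if $\xi\in\Lambda_{\tilde W}$ with $x\in\tilde W$ and $[x,\xi)\cap\Gamma\cdot\tilde W\subset\tilde W$, then, $\tilde W$ being compact and the ray $[x,\xi)$ escaping to infinity, $[x,\xi)\cap\tilde W$ is bounded, so there is $T>0$ with $\pi g^tv\notin\tilde W$, hence $\notin\Gamma\cdot\tilde W$, for $t>T$, where $v$ is the unit vector at $x$ pointing to $\xi$; applying $\gamma\in\Gamma$ gives the same for $\gamma\xi$ with the vector $\gamma v$.

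\textbf{The reverse inclusion.} This is the delicate point. Suppose $v_+=\xi\in\Lambda_\Gamma$ and $\pi g^tv\notin\Gamma\cdot\tilde W$ for all $t\ge T$; replacing $v$ by $g^Tv$ we may assume the whole ray $\{\pi g^tv : t\ge 0\}$ avoids $\Gamma\cdot\tilde W$. Fix any $x_0\in\tilde W$ and consider $r_0=[x_0,\xi)$, together with the set $A=\{t\ge 0 : r_0(t)\in\Gamma\cdot\tilde W\}$, which is closed because $\Gamma\cdot\tilde W=p_\Gamma^{-1}(W)$ is closed. If $A$ is bounded, put $t^\ast=\sup A$ (finite, attained), say $r_0(t^\ast)\in\gamma^\ast\tilde W$; then $x:=(\gamma^\ast)^{-1}r_0(t^\ast)\in\tilde W$ and the ray from $x$ to $(\gamma^\ast)^{-1}\xi$ meets $\Gamma\cdot\tilde W$ only at parameter $0$, i.e.\ only inside $\tilde W$, so $(\gamma^\ast)^{-1}\xi\in\Lambda_{\tilde W}$ and $\xi\in\Gamma\cdot\Lambda_{\tilde W}$. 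The hard case is $A$ unbounded: then $r_0$ meets translates $\gamma_n\tilde W$ at parameters $t_n\to\infty$ with $\gamma_no\to\xi$. Here I would invoke Lemma \ref{lem:Control-Hyperb}: the asymptotic rays $r_0$ and $\{\pi g^tv\}$ converge, so near each $t_n$ the $v$-ray passes within $\varepsilon_n\to 0$ of $\gamma_n\tilde W$; combining this with the fact that $\tilde W$ is the closure of its interior (so $\gamma_n\tilde W$ contains, near the part met by $r_0$, a uniformly non-degenerate piece up to the scale controlled by Lemma \ref{lem:Control-Hyperb}) forces the $v$-ray actually to enter $\Gamma\cdot\tilde W$ for arbitrarily large $t$, contradicting our reduction. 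Hence $A$ is bounded and we are done. I expect this grazing estimate to be the only genuine obstacle; everything else is unwinding definitions.

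\textbf{The remaining bullets.} With bullet one in hand: bullet two is immediate, since the right-hand side description of $\calL_{W^c}$ involves only $\Gamma\cdot\tilde W=p_\Gamma^{-1}(W)$. Bullet three: if $W_1\subset W_2$ then $\Gamma\cdot\tilde W_1\subset\Gamma\cdot\tilde W_2$, so any geodesic whose forward ray eventually avoids $\Gamma\cdot\tilde W_2$ a fortiori eventually avoids $\Gamma\cdot\tilde W_1$, giving $\calL_{W_2^c}\subset\calL_{W_1^c}$. Bullet four: let $\xi\in\Lambda_\Gamma\setminus\calL_{W^c}$. By bullet one, for \emph{every} $v$ with $v_+=\xi$ and every $T$ there is $t\ge T$ with $\pi g^tv\in\Gamma\cdot\tilde W=p_\Gamma^{-1}(W)$; projecting to $M$, the geodesic ray toward $\xi$ returns to the compact set $W$ infinitely often, which is precisely the defining property of the radial limit set, so $\xi\in\Lambda_\Gamma^r$.
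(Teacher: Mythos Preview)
Your overall strategy coincides with the paper's: the paper also deduces bullets two, three and four from bullet one with essentially the same one-line arguments you give, and it leaves bullet one ``to the reader''. Your forward inclusion $\Gamma\cdot\Lambda_{\tilde W}\subset\{\ldots\}$ is correct, and your Case~1 (the set $A$ bounded) for the reverse inclusion is also correct.

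The gap is in Case~2. You assert that, because $r_0$ and the $v$-ray are asymptotic and $r_0(t_n)\in\gamma_n\tilde W$ with $t_n\to\infty$, the $v$-ray must itself enter $\Gamma\cdot\tilde W$, invoking that $\tilde W$ is the closure of its interior so that ``$\gamma_n\tilde W$ contains, near the part met by $r_0$, a uniformly non-degenerate piece''. This step is not justified. Let $p_n$ be the nearest point on the $v$-ray to $r_0(t_n)$; then $d(p_n,\gamma_n\tilde W)\le d(p_n,r_0(t_n))\to 0$, but this is perfectly compatible with $p_n\notin\gamma_n\tilde W$ for every $n$: the $v$-ray can skim the boundaries of infinitely many translates without ever entering. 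The hypothesis that $\tilde W$ is the closure of its interior gives no lower bound on the thickness of $\gamma_n\tilde W$ \emph{in the direction of $p_n$}; the interior near $r_0(t_n)$ could lie entirely on the side away from the $v$-ray. Lemma~\ref{lem:Control-Hyperb} controls how close two geodesics are, not which side of a hypersurface they lie on, so it does not rescue the argument.

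A cleaner route that sidesteps the auxiliary ray $r_0$: work directly with the bi-infinite geodesic through $v$. If it meets $\Gamma\cdot\tilde W$ at some (possibly negative) time, take the supremum of such times and run your Case~1 argument verbatim. The remaining situation---the entire geodesic through $v$ avoids $\Gamma\cdot\tilde W$---still needs an argument (e.g.\ approximating the forward ray by segments $[\pi v,\gamma_n o]$ with $\gamma_n o\to\xi$ and tracking their \emph{first} entry into $\Gamma\cdot\tilde W$), and the paper gives no more detail here than you do.
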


\begin{proof} The first property is left to the reader.  

The set $\ds \left\{v_+\in \Lambda_\Gamma \; ; \;  
\exists v\in S^g\tilde M, \exists T>0 \mbox{ s.t. } \forall t\geq T, 
\pi g^tv \notin \Gamma \cdot \tilde W\right\}$ only depends on 
$\Gamma\cdot \tilde W = p_\Gamma^{-1}(W)$, which is independent of the choice of $\tilde W$. 

If $W_1\subset W_2$, then for all nice preimages $\tilde W_1$ and $\tilde W_2$, we have 
$$
\Gamma\cdot \tilde W_1 = p_\Gamma^{-1}(W_1)\subset p_\Gamma^{-1}(W_2) = \Gamma\cdot \tilde W_2\,,
$$
 which shows the third point.
 
The radial limit set is the set of $\xi\in \Lambda_\Gamma$ such that 
there exists $x\in \tilde M$ and a compact set $K\subset M$ such that 
the geodesic ray $[x, \xi)$ intersects infinitely  often the preimage 
$p_\Gamma^{-1}(K)$. If $\xi \in  \Lambda_\Gamma \backslash \left(\Lambda_{\tilde W}\right)$, 
by the above proposition, the geodesic  ray $[x,\xi]$ intersects infinitely 
 often the set $\Gamma\cdot \tilde W = p_\Gamma^{-1}(W)$, which shows the last claim.
\end{proof}

As seen in Section \ref{ssec:SPR-Ex}, basic examples are given by geometrically finite manifolds. 
The following proposition is an immediate consequence of 
Propositions \ref{prop:Gamma-Infty-GeoFin} and \ref{prop:Lambda-Infty}.

\begin{prop}
Let $(M,g)$ be a geometrically finite manifold with pinched negative curvature, 
with $k$ cusps  $C_1,..., C_k$.  Let $W=B^g(x,R)$ be a large ball. 
It admits   a nice preimage $\tilde W$ such that
$$
\Lambda_{\tilde W} = \{\xi_1,..., \xi_i\}\,,
$$ each point $\xi_1,..., \xi_k\in \Lambda_\Gamma$ being a parabolic point  fixed by a parabolic group  $\calP_i<\Gamma$ representing the cusp $C_i$. 
\end{prop}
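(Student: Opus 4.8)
The plan is to read off the result from Propositions \ref{prop:Gamma-Infty-GeoFin} and \ref{prop:Lambda-Infty} once $W$ and its nice preimage are chosen suitably. Write $CC(M) = C_0 \sqcup C_1 \sqcup \dots \sqcup C_k$ with $C_0$ compact and $C_i = p_\Gamma(\calH_i)$, where the horoballs $\calH_i\subset\tilde M$ are taken precisely invariant ($\gamma\calH_i\cap\calH_i\neq\emptyset \Rightarrow \gamma\in\calP_i:=\mathrm{Stab}_\Gamma(\calH_i)$) and the $\Gamma$-orbits of distinct $\calH_i$ are disjoint. Fix $o$ in the compact lift of $C_0$ and choose $R$ so large that $W=B^g(o,R)$ contains $CC(M)$ in its interior and meets every cusp. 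Let $\tilde W$ be the Dirichlet domain centred at $o$ inside $p_\Gamma^{-1}(W)$; for $R$ large this is a nice preimage, and --- running the argument of Proposition \ref{prop:Gamma-Infty-GeoFin} with this choice --- exactly one horoball of the $\Gamma$-orbit of each cusp type meets $\tilde W$ along a full chimney, still denoted $\calH_i$, and
$$
\Gamma_{\tilde W} \;=\; \Gamma^0_{\tilde W}\,\cup\,\calP_1\cup\dots\cup\calP_k\,,
$$
with $\Gamma^0_{\tilde W}$ finite. Let $\xi_i\in\Lambda_\Gamma$ be the fixed point of $\calP_i$, which represents the cusp $C_i$.

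\emph{Upper bound.} By Proposition \ref{prop:Lambda-Infty}, $\Lambda_{\tilde W}\subset\overline{\Gamma_{\tilde W}\cdot o}\setminus\Gamma_{\tilde W}\cdot o$. The finite set $\Gamma^0_{\tilde W}$ has no accumulation point on $\bd\tilde M$, and for each $i$ the parabolic subgroup $\calP_i$, acting on $\tilde M$ with pinched negative curvature, has $\calP_i\cdot o$ accumulating only at $\xi_i$, so $\overline{\calP_i\cdot o}\setminus\calP_i\cdot o=\{\xi_i\}$. Taking the union gives $\Lambda_{\tilde W}\subset\{\xi_1,\dots,\xi_k\}$.

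\emph{Lower bound.} Fix $i$ and pick $x$ in the interior of the face $\tilde W\cap\partial\calH_i$. The geodesic ray $[x,\xi_i)$ is orthogonal to the horospheres centred at $\xi_i$, hence enters $\calH_i$ and stays in it. By precise invariance, $\Gamma\cdot\tilde W$ meets $\calH_i$ (away from a bounded collar near $\partial\calH_i$, absorbed into $\tilde W$ for $R$ large) only along $\calP_i\cdot(\tilde W\cap\calH_i)$; and in horospherical coordinates based at $\xi_i$ the ray $[x,\xi_i)$ keeps a constant transverse coordinate, so it never crosses a wall between two $\calP_i$-translates of the chimney and stays in the one containing $x$, namely $\tilde W$. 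Hence $[x,\xi_i)\cap\Gamma\cdot\tilde W\subset\tilde W$, i.e. $\xi_i\in\Lambda_{\tilde W}$. Combined with the upper bound, $\Lambda_{\tilde W}=\{\xi_1,\dots,\xi_k\}$, each $\xi_i$ fixed by the parabolic group $\calP_i$ representing $C_i$.

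The genuinely delicate step is the first paragraph: arranging $W$ and the Dirichlet domain $\tilde W$ so that $\Gamma_{\tilde W}$ splits as a finite set together with the \emph{un-conjugated} subgroups $\calP_1,\dots,\calP_k$ --- the a priori output of Proposition \ref{prop:Gamma-Infty-GeoFin} is the larger union $\bigcup_{i,j}\alpha_i\calP_l\alpha_j^{-1}$ --- and synchronising this with the precise-invariance normalisation of the horoballs; this is classical geometric finiteness bookkeeping (cf. \cite{Bowditch}). A minor additional point, already built into the choice above, is controlling the ray $[x,\xi_i)$ near $\partial\calH_i$ before it is deep in the cusp, handled by taking $x$ interior to the face and $R$ large. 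Once this is in place, the identification of $\Lambda_{\tilde W}$ is the short computation above.
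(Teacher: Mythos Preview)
Your proposal is correct and follows exactly the route the paper intends: the paper gives no proof at all beyond the single sentence ``an immediate consequence of Propositions \ref{prop:Gamma-Infty-GeoFin} and \ref{prop:Lambda-Infty}''. You have unpacked that sentence --- the upper bound via $\Lambda_{\tilde W}\subset\overline{\Gamma_{\tilde W}\cdot o}\setminus\Gamma_{\tilde W}\cdot o$ together with the cusp decomposition of $\Gamma_{\tilde W}$, and the lower bound by exhibiting for each $\xi_i$ a ray into the horoball --- and you correctly flag that the passage from the conjugated union $\bigcup_{i,j}\alpha_i\calP_l\alpha_j^{-1}$ in Proposition \ref{prop:Gamma-Infty-GeoFin} to the un-conjugated $\calP_1\cup\dots\cup\calP_k$ requires a specific choice of $\tilde W$, which the statement allows (``It admits a nice preimage\ldots'').
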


The following proposition is a detailed version of  
Theorem \ref{theo:SPR-Div}, with additional properties which will be useful in section \ref{ssec:entropy-variation}.

\begin{prop}\label{prop:MassPatSul}\label{lem:SPR-PatSul}\label{prop:SPR-PatSul}
Let $(M = \tilde M/\Gamma,g)$ be a complete manifold with $K_g\leq -a^2$, 
with $\Gamma = \pi_1(M)$ its fundamental group. 
Assume that $(M,g)$ is SPR. 
Then $\Gamma$ is divergent. 

Moreover, for all compact sets  $W\subset M$  
such that $\delta_{W^c} < \delta_\Gamma$, for all $\eta\in (0, \delta_\Gamma - \delta_{W^c})$, 
there exists $C = C(g, W, \eta, a)>0$ such that for all nice preimage $\tilde W$ of $W$ and all $T\geq 4  {\rm diam}_g(W)$, if 
$$
U_T = U_T(\tilde W, g) = \left\{ \xi\in \tilde M \cup \bd \tilde M\; ; \; \exists x\in \tilde W \mbox{ s.t. } \forall t\in [0, T], [x,\xi]_T \cap \Gamma W \subset \tilde W.\right\}\,,
$$
then the
unique Patterson-Sullivan density $(\nu_x^g)_{x\in\tilde M}$ on $\Lambda_\Gamma$ 
such that $\nu_o^g(\Lambda_\Gamma^r)=1$ satisfies 
$$
\nu_o^g(U_T) \leq Ce^{-(\delta_\Gamma - \delta_{W^c} - \eta)T}\,.
$$
In particular,
$$
\nu_o^g(\Lambda_{\tilde W})=
\nu_o^g(\cap_{T>0} U_T)=0\,.
$$
\end{prop}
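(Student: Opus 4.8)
The plan is to establish the quantitative bound $\nu_o^g(U_T)\le Ce^{-(\delta_\Gamma-\delta_{W^c}-\eta)T}$ by a covering-and-shadow argument, and then deduce divergence of $\Gamma$ as a consequence. First I would fix $W$ with $\delta_{W^c}<\delta_\Gamma$, fix $\eta\in(0,\delta_\Gamma-\delta_{W^c})$, choose a nice preimage $\tilde W$, and set $D={\rm diam}_g(W)$. The key geometric observation is that if $\xi\in U_T$, i.e.\ there is $x\in\tilde W$ with $[x,\xi]_T\cap\Gamma\tilde W\subset\tilde W$, then the geodesic ray $[x,\xi)$ leaves $\tilde W$ and travels a $g$-distance of order $T$ before it can next meet $\Gamma\tilde W$; the \emph{first} translate $\gamma\tilde W$ it enters after time $T$ (with $\gamma\neq{\rm id}$) is necessarily adjacent-to-or-reachable only through the mechanism defining $\Gamma_{\tilde W}$, so that $\gamma\in\Gamma_{\tilde W}$ with $d^g(o,\gamma o)\ge T-2D$. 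Thus $U_T$ is covered by the shadows $\calO^g_o(B^g(\gamma o,R_0))$ as $\gamma$ ranges over $\Gamma_{\tilde W}$ with $d^g(o,\gamma o)\ge T-2D$, where $R_0$ is a fixed radius depending only on $D$ (large enough that every geodesic from a point of $\tilde W$ through a translate $\gamma\tilde W$ is seen in $B^g(\gamma o,R_0)$ from $o$). The careful point here is that one must use Lemma~\ref{lem:Control-Hyperb}(2) and the uniform upper curvature bound $K_g\le-a^2$ to guarantee that a geodesic staying in $\tilde W^c$ for time $\ge T$ really does ``go out to distance $\sim T$'', so that the relevant $\gamma$ indeed lives arbitrarily far out in $\Gamma_{\tilde W}$.

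Next I would invoke Sullivan's Shadow Lemma (Proposition~\ref{ShadowLemma}) to bound $\nu_o^g(\calO^g_o(B^g(\gamma o,R_0)))\le c\,e^{-\delta_\Gamma d^g(o,\gamma o)}$, and sum over $\gamma\in\Gamma_{\tilde W}$ with $d^g(o,\gamma o)\ge T-2D$. By the definition of $\delta_{W^c}=\delta_{\tilde W}$ as the critical exponent of $\sum_{\gamma\in\Gamma_{\tilde W}}e^{-s d^g(o,\gamma o)}$ (Proposition~\ref{prop:EntW}), for the chosen $\eta$ there is a constant so that $\#\{\gamma\in\Gamma_{\tilde W}: d^g(o,\gamma o)\le R\}\le C_\eta e^{(\delta_{W^c}+\eta/2)R}$ for all $R$. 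A dyadic (or Abel-summation) decomposition of the tail $\sum_{d^g(o,\gamma o)\ge T-2D} e^{-\delta_\Gamma d^g(o,\gamma o)}$ then gives a bound $\le C'e^{-(\delta_\Gamma-\delta_{W^c}-\eta)(T-2D)}$, and absorbing the $e^{2D(\delta_\Gamma-\delta_{W^c}-\eta)}$ factor and the constant $c$ from the Shadow Lemma into a single constant $C=C(g,W,\eta,a)$ yields $\nu_o^g(U_T)\le Ce^{-(\delta_\Gamma-\delta_{W^c}-\eta)T}$ as claimed. Since the exponent is strictly positive, letting $T\to\infty$ gives $\nu_o^g\big(\bigcap_{T>0}U_T\big)=0$; and $\Lambda_{\tilde W}\subset\bigcap_{T>0}U_T$ essentially by definition (a point of $\Lambda_{\tilde W}$ has a geodesic ray from some $x\in\tilde W$ that never re-enters $\Gamma\tilde W$ after leaving $\tilde W$, hence lies in $U_T$ for every $T$), so $\nu_o^g(\Lambda_{\tilde W})=0$.

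Finally, for divergence of $\Gamma$: since $\nu_o^g$ is $\Gamma$-quasi-invariant and $\calL_{W^c}=\Gamma\cdot\Lambda_{\tilde W}$ is a countable union of translates of a $\nu_o^g$-null set, $\nu_o^g(\calL_{W^c})=0$. By Proposition~\ref{prop:EnsLimRad}, $\Lambda_\Gamma\setminus\calL_{W^c}\subset\Lambda_\Gamma^r$, so $\nu_o^g(\Lambda_\Gamma^r)=\nu_o^g(\Lambda_\Gamma)=1$. By the Hopf--Tsuji--Sullivan theorem (as in Roblin \cite{Roblin}, cf.\ Section~\ref{BM}), a Patterson--Sullivan density giving full measure to the radial limit set forces the Poincar\'e series $P_\Gamma^g$ to diverge at $s=\delta_\Gamma$, i.e.\ $\Gamma$ is divergent; and then this $(\nu_x^g)$ is the unique Patterson--Sullivan density, justifying the phrasing in the statement. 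I expect the main obstacle to be the first step: making fully rigorous the claim that $U_T$ is covered by shadows of the far-out elements of $\Gamma_{\tilde W}$, i.e.\ correctly handling the bookkeeping of ``first return to $\Gamma\tilde W$'' and the geometric estimate (via Lemma~\ref{lem:Control-Hyperb}) that pins the distance $d^g(o,\gamma o)$ from below by $\sim T$ uniformly over all $\xi\in U_T$ and all nice preimages $\tilde W$; once that is in place, the Shadow Lemma plus the counting estimate for $\Gamma_{\tilde W}$ is routine.
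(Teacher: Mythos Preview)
Your overall architecture (cover $U_T$ by ``shadows'' of far elements of $\Gamma_{\tilde W}$, bound each shadow, sum, then feed into Hopf--Tsuji--Sullivan) matches the paper's, but there is a genuine gap in the covering step, and the paper handles it by a different mechanism than the one you propose.

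\medskip
\textbf{The gap.} You assume that for every $\xi\in U_T\cap\bd\tilde M$ the ray $[x,\xi)$ has a \emph{first return} to some $\gamma\tilde W$ with $\gamma\in\Gamma_{\tilde W}$, so that $\xi$ lies in the shadow $\calO_o^g(B^g(\gamma o,R_0))$. But $U_T$ contains $\Lambda_{\tilde W}$, the set of $\xi$ for which $[x,\xi)$ \emph{never} re-enters $\Gamma\tilde W$ after leaving $\tilde W$. For such $\xi$ there is no first-return element $\gamma$, and nothing guarantees that $\xi$ is captured by a shadow of some $\gamma o$ with $\gamma\in\Gamma_{\tilde W}$ at distance $\ge T-2D$ (Proposition~\ref{prop:Lambda-Infty} only says $\xi\in\overline{\Gamma_{\tilde W}\cdot o}$, which does not put $\xi$ in a shadow of bounded radius). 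Since $\nu_o^g(\Lambda_{\tilde W})=0$ is exactly one of the things to be proved, you cannot discard this part of $U_T$ a priori; your covering is incomplete precisely on the set whose nullity is the goal.

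\medskip
\textbf{How the paper avoids this.} The paper does \emph{not} apply the Shadow Lemma to $\nu_o^g$ directly. Instead it works with the approximating orbit measures $\nu_o^s$, $s>\delta_\Gamma$, which are supported on $\Gamma\cdot o\subset\tilde M$. For an orbit point $\alpha o\in U_T$, the segment $[x,\alpha o]$ is \emph{finite} and certainly meets $\alpha\tilde W$, so a first return $\gamma\in\Gamma_{\tilde W}$ with $d(o,\gamma o)\ge T-2D$ always exists and the inclusion
\[
\Gamma\cdot o\cap U_T\ \subset\ \bigcup_{\gamma\in\Gamma_{\tilde W},\ d(o,\gamma o)\ge T-2D}\calO_{\tilde W}(\gamma\tilde W)
\]
is genuinely a covering. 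The mass of each $\calO_{\tilde W}(\gamma\tilde W)$ is then bounded not via the Shadow Lemma but by the explicit quasi-conformality of $\nu_o^s$ (with the Patterson slow-growth function $h$), giving $\nu_o^s(\calO_{\tilde W}(\gamma\tilde W))\le C_\eta e^{(-s+\eta)d(o,\gamma o)}$. Summing and letting $s\to\delta_\Gamma^+$ yields the bound on $\nu_o^g(U_T)$; only \emph{after} this (hence after divergence is established) does the paper rerun the computation with $h\equiv 1$ to obtain the explicit constant $C(g,W,\eta,a)$. Your use of the Shadow Lemma would in any case need divergence (or at least a fixed density) to speak of ``the'' $\nu_o^g$, so the paper's order of operations --- estimate on $\nu_o^s$, pass to the limit, deduce divergence, then sharpen --- is what makes the argument close.
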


\begin{proof} We  start with any Patterson-Sullivan density $(\nu_x^g)$ on $\Lambda_\Gamma$ obtained
as a weak limit of an average as in section \ref{BM}. We will show that   there exists $C>0$ such that 
for all $T>0$ large enough, 
\begin{equation}\label{eqn:HTS}
\nu_o^g(U_T) \leq Ce^{-(\delta_\Gamma - \delta_{W^c} - \eta)T}\,.
\end{equation}
By definition, $U_T$    is the set of points joined by a geodesic from $\tilde W$ wich, 
after exiting $\tilde W$, does not enter $\Gamma \cdot \tilde W$ before time $T$, so that 
$$
\Lambda_{\tilde W} = \bigcap_{T>0} U_T\,.
$$ 
Therefore, (\ref{eqn:HTS}) implies 
$$
\nu^g_o\left(\Lambda_{\tilde W}\right)=  0 \quad 
\mbox{so that } \quad \nu_o^g(\calL_{W^c})=0 \quad \mbox{and}\quad \nu_o^g(\Lambda_\Gamma^r)=1\,.
$$ 
By Hopf-Tsuji-Sullivan Theorem, it will imply that $\Gamma$ is divergent 
and the Patterson-Sullivan density is unique. 

If $x\in \tilde M$ and $\xi \in \tilde M \cup \bd \tilde M$, 
set $[x,\xi]_T = (\pi g^t v)_{t\in [0, T]}$, 
where $v\in S^g _x \tilde M$ is the tangent vector at $x$ of the geodesic $[x,\xi]$.

Recall notations from Section \ref{BM}. 
We omit the mention of the metric $g$ here. 
As in  \cite{Patterson},   choose a positive increasing map 
$h : \bbR^+ \to \bbR^+$  such that for all $\eta>0$,
 there exist  $C_\eta>0$ and $r_\eta>0$ such that
\begin{equation}\label{eq:CroissFaible1}
\forall r\geq r_\eta, \quad \forall t\geq 0, \quad h(t+r)\leq C_\eta e^{\eta t}h(r)\,,
\end{equation}
and the series  
$\ds  \tilde P_\Gamma(s) = \sum_{\gamma\in \Gamma} h(d(o,\gamma o)e^{-s d(o, \gamma   o)}$ diverges
at the critical exponent $\delta_\Gamma$. 
Construct a Patterson-Sullivan density  $(\nu_x)$ s.t. for all $x\in \tilde M$, the measure $\nu_x$ is a weak limit as $s\to \delta_\Gamma^+$ of the positive finite measures
$$
\nu_x^{s} = \frac{1}{\tilde P_\Gamma(o,s)} \sum_{\gamma\in \Gamma}h(d(x,\gamma o))e^{-s d(x, \gamma   o)}\delta_{\gamma o}\,.
$$

For all $\gamma \in \Gamma_{\tilde W}$, define $\calO_{\tilde W}(\gamma \cdot \tilde W)$ as
the set of  $y \in\tilde M\cup \bd \tilde M$ such that 
there exists $v\in S^g \tilde W$ such that 
the first intersection of the geodesic ray $(\pi g^tv)_{t\geq 0}$ with $\Gamma \cdot \tilde W$, after the first exit of $\tilde W$ is in $\gamma \cdot \tilde W$, and the point $y$ belongs to $(\pi g^t v)_{t\ge 0}$.

 By definition of $U_T$ and $\Gamma_{\tilde W}$, and 
triangular inequality, for all $T>0$ and   $\alpha \in \Gamma$, 
if $\alpha o\in U_T$, there exists $\gamma\in \Gamma_{\tilde W}$ 
such that $\alpha o\in \calO_{\tilde W}( \gamma \cdot \tilde W)$ 
and $d(o, \gamma   o)\geq T - 2D$, 
with $D = {\rm diam}(\tilde W)$. 
Indeed, choose $\gamma$  so that $\gamma\tilde W$ 
is the first copy of $\tilde W$ intersected by all geodesic segments 
from $\tilde W$ to $\alpha o$ after exiting $\tilde W$ inside $\Gamma\tilde W$. 

In other words, we have 
\begin{equation}\label{eq:LambdaOmbres}
\Gamma \cdot o \cap U_T \subset
 \bigcup_{\gamma \in \Gamma_{\tilde W}, d(o, \gamma \cdot o)\geq T - 2D} \calO_{\tilde W}(\gamma \cdot \tilde W).
\end{equation}
Fix $s>\delta_\Gamma$ and recall from section \ref{BM} that for all $x,y\in \tilde M$ 
and   $\xi \in \Gamma \cdot o$,
$$
\frac{d\nu_y^s}{d\nu_x^s}(\xi) =
 e^{-s(d(y,\xi) - d(x,\xi))}\frac{h(d(y,\xi))}{h(d(x,\xi))}\,.
$$
Therefore, for all $\gamma\in \Gamma_{\tilde W}$,
\begin{eqnarray*}
\nu_o^s\left(\calO_{\tilde W}(\gamma \cdot \tilde W)\right) 
& = & \nu_{\gamma^{-1} o}^s\left(\calO_{\gamma^{-1}\tilde W}(\tilde W)\right)\\
& = & \int_{\calO_{\gamma^{-1}\tilde W}(\tilde W)}e^{-s(d(\gamma^{-1} o,\xi) - d(o,\xi))}\frac{h(d(\gamma^{-1} o,\xi))}{h(d(o,\xi))}\,d\nu_o^s(\xi)\,.
\end{eqnarray*}
Moreover, there exists $C>0$ such that as soon as $d(o, \gamma  o)> 2D$, 
for all $\xi \in \calO_{\gamma^{-1}\tilde W}(\tilde W)$,
$\ds
d(\gamma^{-1} o, \xi) \geq 
d(\gamma^{-1} o, o) + d(o, \xi) - C$, 
 which implies by (\ref{eq:CroissFaible1}) that 
$$
e^{-s(d(\gamma^{-1} o,\xi) - d(o,\xi))}\frac{h(d(\gamma^{-1} o,\xi))}{h(d(o,\xi))} \leq 
e^{sC} C_\eta e^{- s(d(\gamma^{-1} o, o)}e^{\eta d(\gamma^{-1} o, o)}\,.
$$
Therefore, as $\nu_o(\bd \tilde M) = 1$, 
there exists $C_\eta>0$ such that for all $\gamma \in \Gamma_{\tilde W}$ 
with $d(o, \gamma   o)> 2D$ and   $s>\delta_\Gamma$,
$$
\nu_o^s\left(\calO_{\tilde W}(\gamma \cdot \tilde W)\right) \leq C_\eta e^{(-s + \eta)d(o, \gamma \cdot o)}\,.
$$
By (\ref{eq:LambdaOmbres}), for all $T>4D$, we get
$$
\nu_o^s(U_T) \leq
 C_\eta \sum_{\stackrel{\gamma \in \Gamma_{\tilde W}}{ d(o, \gamma \cdot o)\geq T - 2D}} e^{(-s+\eta)d(o, \gamma \cdot o)}\,.
$$
Taking any weak limit as $s\to \delta_{\Gamma}^+$, we obtain
$$
\nu_o^g(U_T) \leq 
C_\eta \sum_{\stackrel{\gamma \in \Gamma_{\tilde W}}{ d(o, \gamma \cdot o)\geq T - 2D}} e^{(-\delta_\Gamma+\eta)d(o, \gamma \cdot o)}\,.
$$
As $\delta_\Gamma-\eta>\delta_{W^c}$, the right hand side decreases exponentially fast as $T\to +\infty$. 
As mentioned as the beginning of the proof, by Hopf-Tsuji-Sullivan, we deduce that
$\Gamma$ is divergent so that Theorem \ref{theo:SPR-Div} is proven. 

Let us prove now the end of the statement of Proposition \ref{prop:MassPatSul}. 
The Patterson-Sullivan $\nu_o^g$ is \emph{the} weak limit as $s\to \delta_\Gamma^+$ 
of $\ds \nu_o^{s} = \frac{1}{P_\Gamma(o,s)} \sum_{\gamma\in \Gamma}e^{-s d(o, \gamma o)}$.
 Repeting exactly the same computations, setting $h \equiv 1$,
 we get that there exists $C_a>0$, depending only on the curvature upperbound, such that for all $T\geq 4D$
$$
\nu_o^g(U_T) \leq e^{\delta_\Gamma C_a}\sum_{\stackrel{\gamma \in \Gamma_{\tilde W}}{ d(o, \gamma   o)\geq T - 2D}} e^{(-\delta_\Gamma)d(o, \gamma   o)}.
$$
We get therefore that for all $T\geq 4D$,
$$
\nu_o^g(U_T) \leq e^{\delta_\Gamma (C_a+2D)}e^{-(\delta_\Gamma - \delta_{W^c} -\eta)T}\sum_{\gamma \in \Gamma_{\tilde W}} e^{-(\delta_{W^c} + \eta)d(o, \gamma   o)},
$$
which is precisely the desired estimate with
\begin{equation}\label{eq:CsteMassPS}
C(\eta, g, W, a) = e^{\delta_\Gamma (C_a+2D)}\sum_{\gamma \in \Gamma_{\tilde W}} e^{-(\delta_{W^c} + \eta)d(o, \gamma  o)}.
\end{equation} 
 \end{proof}

Under the above assumption,   the   Patterson-Sullivan measure $\nu_o^g$ gives full mass to the set of endpoints of lifts of geodesics of $(M,g)$ which come  back infinitely often in $W$. This set is in general strictly smaller than the radial limit set. The product structure of the Bowen-Margulis measure (see section \ref{BM})  implies the following useful fact.

\begin{coro}\label{coro:support-BM} Under the same assumptions,   let $W\subset M$ be any compact set such that $\delta_{W^c}(g)<\delta_\Gamma(g)$. Then the Bowen-Margulis measure of $S^gM$ is finite and gives full mass to the set of bi-infinite geodesics which intersect infinitely often $W$ in the past and in the future.
\end{coro}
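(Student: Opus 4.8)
The finiteness of the Bowen--Margulis measure $m_{BM}^g$ is already Theorem~\ref{theo:SPR-FinBM} (via Theorem~\ref{theo:SPR-Div} and the criterion of Theorem~\ref{theo:Fin-BM-PS16}), so the plan is to identify its support by transferring the boundary estimate $\nu_o^g(\mathcal{L}_{W^c}) = 0$ of Proposition~\ref{prop:MassPatSul} into a statement about bi-infinite geodesics, using the product structure of $m_{BM}^g$ recalled in Section~\ref{BM}.

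First I would fix a nice preimage $\tilde W$ of $W$ and extract from Proposition~\ref{prop:EnsLimRad} the following characterisation: a point $\xi\in\Lambda_\Gamma$ lies outside $\mathcal{L}_{W^c}=\Gamma\cdot\Lambda_{\tilde W}$ if and only if \emph{every} geodesic ray with endpoint $\xi$ meets $\Gamma\cdot\tilde W=p_\Gamma^{-1}(W)$ for arbitrarily large times. Indeed $\mathcal{L}_{W^c}$ is exactly the set of endpoints of rays which \emph{eventually leave} $\Gamma\cdot\tilde W$, so its complement forces every such ray to return infinitely often. Since the forward (resp. backward) ray of a vector $v\in S^g\tilde M$ has endpoint $v_+^g$ (resp. $v_-^g$), it follows that whenever $v_+^g\notin\mathcal{L}_{W^c}$ and $v_-^g\notin\mathcal{L}_{W^c}$, the geodesic $t\mapsto\pi g^t v$ meets $p_\Gamma^{-1}(W)$ for arbitrarily large positive and arbitrarily large negative $t$; projecting to $M$, the corresponding geodesic of $(M,g)$ intersects $W$ infinitely often both in the past and in the future.

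Next I would invoke the product structure: in Hopf coordinates $\tilde m_{BM}^g$ is absolutely continuous with respect to $d\nu_o^g(v_-)\,d\nu_o^g(v_+)\,dt$, with a positive density locally bounded on $\partial^2\tilde M\times\mathbb{R}$. By Proposition~\ref{prop:MassPatSul}, $\nu_o^g(\mathcal{L}_{W^c})=0$, so $\nu_o^g\otimes\nu_o^g$ gives zero mass to $(\mathcal{L}_{W^c}\times\partial\tilde M)\cup(\partial\tilde M\times\mathcal{L}_{W^c})$; cutting $\mathbb{R}$ and the two boundary factors into countably many pieces on which the density is bounded shows that the set $\{v\in S^g\tilde M : v_+^g\in\mathcal{L}_{W^c}\}\cup\{v : v_-^g\in\mathcal{L}_{W^c}\}$ has $\tilde m_{BM}^g$-measure zero. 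This set is $\Gamma$-invariant and invariant under the geodesic flow, hence its complement descends to a full $m_{BM}^g$-measure, flow-invariant subset of $S^gM$; by the previous step every vector of this subset generates a geodesic of $(M,g)$ meeting $W$ infinitely often in both time directions, which is precisely the assertion.

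I do not expect a serious obstacle: the substance is carried by Propositions~\ref{prop:MassPatSul} and~\ref{prop:EnsLimRad}, and what remains is the routine null-set bookkeeping through the product structure together with the local boundedness of the Bowen--Margulis density. The one point deserving care is the logical direction in the first step: because $\mathcal{L}_{W^c}$ is defined through the \emph{existence} of a single ray escaping $\Gamma\cdot\tilde W$, its complement really does constrain \emph{all} rays, which is what makes the return property hold surely --- not merely almost surely --- on the full-measure set of vectors whose two endpoints both avoid $\mathcal{L}_{W^c}$.
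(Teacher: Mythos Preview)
Your argument is correct and is exactly the one the paper has in mind: the text does not spell out a proof but simply says that the corollary ``follows from the product structure of the Bowen--Margulis measure'' together with Proposition~\ref{prop:MassPatSul}, and your write-up is a faithful expansion of that remark. The only superfluous step is the appeal to local boundedness of the density: since $\tilde m_{BM}^g$ is absolutely continuous with respect to $\nu_o^g\otimes\nu_o^g\otimes dt$, any $(\nu_o^g\otimes\nu_o^g\otimes dt)$-null set is automatically $\tilde m_{BM}^g$-null, with no need to cut into pieces.
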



\subsection{Entropy variation for SPR manifolds}\label{ssec:entropy-variation}

As mentionned earlier, the original motivation of this article
 was to find reasonnable geometric assumptions 
on non-compact manifolds with negative curvature 
such that the entropy is regular under a  small  variation of the metric.
In this subsection, our aim is to finish the proof of Theorem
\ref{theo:ConvMassSPR}.

Let $(g_\epsilon)_{\epsilon\in (-1, 1)}$ be a uniformly $\mathcal C^1$ family of complete metrics on the manifold $M$ such that for all $\epsilon\in (-1, 1)$, $-b^2 \leq K_{g_\epsilon} \leq  -a^2$ for some $b>a>0$, and $(M,g_0)$ is SPR.

 Let $W\subset M$ be a compact subset such that 
$\delta_{W^c}(g_0) < \delta_\Gamma(g_0)$, 
and let $\tilde W$ be a nice preimage of $W$. 
For   $r>0$, denote by $\ds W_r = \{x\in M ; d^{g_0}(x,W)\leq r\}$ 
the $(g_0, r)$-neighbourhood of $W$.
Note that $\ds \delta_{W_r^c}(g_0)\leq \delta_{W^c}(g_0) \le \delta_\infty(g_0)< \delta_\Gamma(g_0)$. 
Denote by $\tilde W_r$  a nice preimage of $W_r$ 
such that $\tilde W\subset \tilde W_r$. 
Observe that $\gamma \cdot \tilde W_r$ is the $(g_0, r)$-neighbourhood of $\gamma \cdot \tilde W$.

\begin{lemm}\label{lem:ContGammaW}
For all $r>0$, there exists a finite set $F\subset \Gamma$ and $\epsilon_0>0$ such that for all $\epsilon\in(-\epsilon_0, \epsilon_0)$, we have 
$$
\Gamma_{\tilde W_{2r}}(g_0)\subset \bigcup_{\alpha,\beta\in F}\alpha \Gamma_{\tilde W_r}(g_\epsilon)\beta\quad \mbox{and}\quad \Gamma_{\tilde W_r}(g_\epsilon)\subset  \bigcup_{\alpha,\beta\in F}\alpha\Gamma_{\tilde W}(g_0)\beta\,.
$$
\end{lemm}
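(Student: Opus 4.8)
The plan is to prove Lemma \ref{lem:ContGammaW} by combining the stability of the sets $\Gamma_{\tilde W}$ under compact perturbation (Proposition \ref{prop:GammaWPerturb}) with the comparison of geodesics for quasi-isometric metrics (Proposition \ref{prop:Conv-Geod2} and Lemma \ref{lem:Control-Hyperb}), following the pattern of Proposition \ref{prop:GammaWPreimage}(2). The key observation is that although a $g_0$-geodesic segment and a $g_\epsilon$-geodesic segment with the same endpoints are genuinely different curves, for $\epsilon$ small enough they stay within $\alpha(\epsilon)$ of each other, where $\alpha(\epsilon)\to 0$ by Proposition \ref{prop:Conv-Geod2}; hence a segment that avoids $\Gamma\cdot\tilde W_{2r}$ for one metric avoids $\Gamma\cdot\tilde W_r$ for the other, provided the ``entrance/exit caps'' (balls of bounded radius near the endpoints, where the geodesics may diverge more) are absorbed into a finite index-changing set $F$.

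First I would fix $r>0$ and choose $\epsilon_0$ small enough that for all $|\epsilon|<\epsilon_0$ one has both $e^{-\epsilon}g_0\le g_\epsilon\le e^\epsilon g_0$ and $\alpha(\epsilon)\le r/4$, where $\alpha$ is the function from Proposition \ref{prop:Conv-Geod2}. Then I would invoke Lemma \ref{lem:Control-Hyperb}(2) with $K=2\,\mathrm{diam}_{g_0}(W_{2r})$, some $\alpha=r/4$, and a fixed time window $T$ of size $\mathrm{diam}_{g_0}(W_{2r})$ to get a threshold $R_0$: for a $g_0$-geodesic $[x,\gamma y]$ with $x,y\in\tilde W_{2r}$ and $d^{g_0}(x,\gamma y)$ large, the portion outside two balls of radius $L=L(r)$ around the endpoints lies $r/4$-close to the corresponding $g_\epsilon$-geodesic (after adjusting endpoints within distance $\mathrm{diam}_{g_0}(W_{2r})$). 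Let $F=F_{\lambda r}=\{\gamma\in\Gamma : \gamma B^{g_0}(o,\lambda r)\cap B^{g_0}(o,\lambda r)\neq\emptyset\}$ for a suitable $\lambda$ (depending on $L$, $R_0$ and the diameters), which is finite by properness of the $\Gamma$-action. Elements of $\Gamma_{\tilde W_{2r}}(g_0)$ of bounded $g_0$-translation length automatically lie in such a finite set $F$ up to enlarging it; for the remaining long elements $\gamma$, I would write $\gamma$-geodesic segments as concatenations, absorb the initial/final caps into prefixes/suffixes $\alpha,\beta\in F$, and conclude that the middle segment, viewed for $g_\epsilon$, avoids $\Gamma\cdot\tilde W_r$ except in $\tilde W_r\cup(\alpha^{-1}\gamma\beta^{-1})\tilde W_r$, so $\alpha^{-1}\gamma\beta^{-1}\in\Gamma_{\tilde W_r}(g_\epsilon)$. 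The reverse inclusion $\Gamma_{\tilde W_r}(g_\epsilon)\subset\bigcup_{\alpha,\beta\in F}\alpha\Gamma_{\tilde W}(g_0)\beta$ is handled by exactly the same argument with the roles of $g_0$ and $g_\epsilon$ interchanged and $\tilde W_r$ playing the role of the larger domain, $\tilde W$ the smaller one — here one uses $\tilde W\subset\tilde W_r$ and that the $r$-margin is wide enough to swallow the $\alpha(\epsilon)\le r/4$ error plus the bounded drift from moving endpoints.

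The main obstacle I anticipate is bookkeeping the constants so that a \emph{single} finite set $F$ and a \emph{single} $\epsilon_0$ work simultaneously for both inclusions and uniformly in $\epsilon$. The subtlety is that $\Gamma_{\tilde W_r}(g_\epsilon)$ depends on $\epsilon$ through both the metric $g_\epsilon$ in the definition of ``geodesic segment'' and through the quasi-isometry constants; one must ensure that the comparison between $g_0$- and $g_\epsilon$-geodesics in Proposition \ref{prop:Conv-Geod2} is uniform over $|\epsilon|<\epsilon_0$, which it is since $\alpha(\epsilon)\to 0$, and that the finiteness set $F_{\lambda r}$ is defined purely in terms of $g_0$ (hence $\epsilon$-independent). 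A secondary point is the handling of elements whose geodesic realization is short: these contribute only finitely many group elements, which can be thrown into $F$ from the start (as in the convex-cocompact part of Proposition \ref{prop:Gamma-Infty-GeoFin}). Once these uniformity issues are pinned down, the argument is a routine adaptation of Proposition \ref{prop:GammaWPreimage}, and the lemma follows; it will then feed directly into the proof that $\delta_{W^c}(g_\epsilon)$ stays below $\delta_\Gamma(g_\epsilon)$ for small $\epsilon$, i.e., that the SPR property is stable.
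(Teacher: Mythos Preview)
Your proposal is correct and follows essentially the same approach as the paper: use the uniform convergence of $g_\epsilon$-geodesics to $g_0$-geodesics from Section~\ref{sec:ConvGeod} (your Proposition~\ref{prop:Conv-Geod2} with $\alpha(\epsilon)\to 0$) to ensure that a geodesic segment avoiding $\Gamma\cdot\tilde W_r$ for one metric avoids the smaller $\Gamma\cdot\tilde W$ for the other, then repeat the argument of Proposition~\ref{prop:GammaWPreimage}(2) to absorb the endpoint caps into a finite set $F$. The paper's own proof is in fact a one-paragraph sketch saying exactly this (``the $g_\epsilon$-geodesic between $x$ and $\gamma y$ is at distance less than $r$ to the $g_0$-geodesic \ldots\ reasoning as in Proposition~\ref{prop:GammaWPreimage} leads to the desired result''), so your more careful bookkeeping of constants and your explicit handling of short elements only make the argument more complete.
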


\begin{proof}  We   prove the right inclusion, the left one is   proved similarly.

Let $D = {\rm diam}_{g_0}(\tilde W)$ and $D' = e^1 (D+1)$, 
so that for all $\epsilon\in(-1, 1)$, 
${\rm diam}_{g_\epsilon}(\tilde W_r)\leq D'$. 
It follows from Section \ref{sec:ConvGeod} that there exists $\epsilon_0>0$ 
such that for all $\epsilon\in (\epsilon_0, \epsilon_0)$ 
  $x,y\in \tilde W_r$, and $\gamma\in \Gamma_{\tilde W_r}$, 
the $g_\epsilon$-geodesic between $x$ and $\gamma y$ 
is at distance less than $r$ to the $g_0$-geodesic between $x$ and $y$. 
Reasoning as in  Proposition \ref{prop:GammaWPreimage}    leads to the desired result.
\end{proof}

This lemma leads to the following corollary, which implies the first item of 
Theorem \ref{theo:SPR-FinBM}. 
\begin{coro}\label{coro:continuite-delta-W} Let $(g_\varepsilon)_{\varepsilon\in (-1, 1)}$ be a uniformly $\mathcal C^1$ family of complete metrics on the manifold $M$ such that for all $\varepsilon\in (-1, 1)$, $-b^2 \leq K_{g_\varepsilon} \leq  -a^2$ for some $b>a>0$, and $(M,g_0)$ is SPR. Then for all $\alpha>0$ and  $r>0$, there exists $\varepsilon_0>0$ such that for all $\varepsilon\in(-\varepsilon_0, \varepsilon_0)$, we have
$$
e^{-\alpha}\delta_{W_{2r}}(g_0) \leq \delta_{W_r}(g_\varepsilon) \leq e^\alpha \delta_{W^c}(g_0).
$$
In particular, the entropy at infinity $\varepsilon \mapsto \delta_\infty(g_\varepsilon)$ is continuous at $\varepsilon = 0$, and if $\alpha>0$ is small enough, $g_\varepsilon$ is SPR for $\varepsilon\in(-\varepsilon_0, \varepsilon_0)$.
\end{coro}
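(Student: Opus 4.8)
The plan is to deduce Corollary~\ref{coro:continuite-delta-W} directly from Lemma~\ref{lem:ContGammaW}, using only elementary estimates comparing the $g_0$- and $g_\varepsilon$-distances, together with the quasi-isometry bound $e^{-\varepsilon}g_0\le g_\varepsilon\le e^\varepsilon g_0$ which holds (for $|\varepsilon|$ small) because the family is uniformly $\calC^1$. First I would fix $\alpha>0$ and $r>0$. By Lemma~\ref{lem:ContGammaW}, there is a finite set $F\subset\Gamma$ and $\varepsilon_0>0$ such that for all $\varepsilon\in(-\varepsilon_0,\varepsilon_0)$ we have the two inclusions $\Gamma_{\tilde W_{2r}}(g_0)\subset\bigcup_{\alpha',\beta'\in F}\alpha'\Gamma_{\tilde W_r}(g_\varepsilon)\beta'$ and $\Gamma_{\tilde W_r}(g_\varepsilon)\subset\bigcup_{\alpha',\beta'\in F}\alpha'\Gamma_{\tilde W}(g_0)\beta'$. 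Exactly as in the proof of Proposition~\ref{prop:EntW} (where such a "finite union of translates" inclusion was turned into a comparison of critical exponents via the triangle inequality), each of these inclusions yields a comparison of the corresponding critical exponents, except that here the two sides use \emph{different} metrics, so one must also account for the distortion of distances.

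The key point for the metric distortion is the following: for $v\in TM$ with $\|v\|_{g_0}\le 1$ and all $|\varepsilon|<\varepsilon_0$ one has $e^{-|\varepsilon|}\le \|v\|_{g_\varepsilon}^2/\|v\|_{g_0}^2\le e^{|\varepsilon|}$ (this is where condition~(2) of Definition~\ref{def:unifC1} is used, via the bound $e^{-B\varepsilon}g_0\le g_\varepsilon\le e^{B\varepsilon}g_0$ already recorded after that definition), hence for all $x,y\in\tilde M$,
$$
e^{-|\varepsilon|/2}\,d^{g_0}(x,y)\ \le\ d^{g_\varepsilon}(x,y)\ \le\ e^{|\varepsilon|/2}\,d^{g_0}(x,y)\,.
$$
Now I would run the counting argument of Proposition~\ref{prop:EntW}. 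Set $D=\max\{d^{g_0}(o,w):w\in F\cdot\{o\}\}$ and, given $R>0$, compare the cardinalities of $\{\gamma\in\Gamma_{\tilde W_{2r}}(g_0):d^{g_0}(o,\gamma o)\le R\}$ and of $\{\gamma\in\Gamma_{\tilde W_r}(g_\varepsilon):d^{g_\varepsilon}(o,\gamma o)\le R'\}$ for a suitable $R'$: from the first inclusion of Lemma~\ref{lem:ContGammaW} and the triangle inequality in $g_0$, any $\gamma\in\Gamma_{\tilde W_{2r}}(g_0)$ with $d^{g_0}(o,\gamma o)\le R$ equals $\alpha'\gamma'\beta'$ with $\alpha',\beta'\in F$ and $\gamma'\in\Gamma_{\tilde W_r}(g_\varepsilon)$, $d^{g_0}(o,\gamma' o)\le R+2D$, whence $d^{g_\varepsilon}(o,\gamma' o)\le e^{|\varepsilon|/2}(R+2D)$. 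This gives $\#\{\gamma\in\Gamma_{\tilde W_{2r}}(g_0):d^{g_0}(o,\gamma o)\le R\}\le (\#F)^2\,\#\{\gamma'\in\Gamma_{\tilde W_r}(g_\varepsilon):d^{g_\varepsilon}(o,\gamma' o)\le e^{|\varepsilon|/2}(R+2D)\}$; taking $\limsup_{R\to\infty}\frac1R\log$ of both sides and using Proposition~\ref{prop:EntW} to identify these with $\delta_{W_{2r}^c}(g_0)$ and $\delta_{W_r^c}(g_\varepsilon)$ respectively, one gets $\delta_{W_{2r}^c}(g_0)\le e^{|\varepsilon|/2}\delta_{W_r^c}(g_\varepsilon)$. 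Symmetrically, the second inclusion of Lemma~\ref{lem:ContGammaW} gives $\delta_{W_r^c}(g_\varepsilon)\le e^{|\varepsilon|/2}\delta_{W^c}(g_0)$. Choosing $\varepsilon_0$ small enough that $e^{\varepsilon_0/2}\le e^\alpha$ yields the stated double inequality $e^{-\alpha}\delta_{W_{2r}^c}(g_0)\le\delta_{W_r^c}(g_\varepsilon)\le e^\alpha\delta_{W^c}(g_0)$.

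For the "in particular" clause: continuity of $\varepsilon\mapsto\delta_\infty(g_\varepsilon)$ at $\varepsilon=0$ follows by applying the above to an exhausting sequence $W=W_i$ of compact sets (using Proposition~\ref{prop:DeltaWProp}(2), $\delta_{W^c}$ is monotone in $W$, so $\delta_\infty(g)=\lim_i\delta_{W_i^c}(g)$): given $\alpha>0$, choose $W$ with $\delta_{W^c}(g_0)$ within $\alpha$ of $\delta_\infty(g_0)$, then the bounds force $\delta_\infty(g_\varepsilon)$ (which is $\le\delta_{W_r^c}(g_\varepsilon)$) to be close to $\delta_\infty(g_0)$ for small $\varepsilon$; for the lower bound one combines $\delta_\infty(g_\varepsilon)\ge$ (a limit of $\delta_{W_{2r}^c}$-type quantities) with the left inequality and Lemma~\ref{lem:continuity-exponent}. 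Finally, since $(M,g_0)$ is SPR we have $\delta_\infty(g_0)<\delta_\Gamma(g_0)$; by Lemma~\ref{lem:continuity-exponent} $\delta_\Gamma(g_\varepsilon)\to\delta_\Gamma(g_0)$, and by the continuity just proved $\delta_\infty(g_\varepsilon)\to\delta_\infty(g_0)$, so for $\varepsilon_0$ small and $\alpha$ small the gap persists: $\delta_\infty(g_\varepsilon)<\delta_\Gamma(g_\varepsilon)$, i.e.\ $g_\varepsilon$ is SPR. The only real subtlety, and the step I expect to demand the most care, is bookkeeping the \emph{two} sources of loss — the finite set $F$ of translates from Lemma~\ref{lem:ContGammaW}, and the metric distortion $e^{\pm|\varepsilon|/2}$ — simultaneously, making sure the passage $W\rightsquigarrow W_r\rightsquigarrow W_{2r}$ of neighbourhoods is threaded consistently so that the chain of inequalities closes up; everything else is a routine repetition of the argument already given for Proposition~\ref{prop:EntW}.
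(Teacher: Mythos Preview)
Your proof is correct and follows essentially the same approach as the paper: invoke Lemma~\ref{lem:ContGammaW}, combine its inclusions with the distance distortion $e^{-|\varepsilon|/2}d^{g_0}\le d^{g_\varepsilon}\le e^{|\varepsilon|/2}d^{g_0}$, and run the counting argument of Proposition~\ref{prop:EntW} to compare critical exponents. The paper's own proof is a two-line sketch of exactly this; you have simply written out the details, and your handling of the ``in particular'' clause (upper semicontinuity of $\delta_\infty$ via the right inequality, plus Lemma~\ref{lem:continuity-exponent} for $\delta_\Gamma$) suffices for the SPR stability, which is all that is used later.
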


\begin{proof}
Let $r,\alpha>0$ be fixed, and choose $\varepsilon_0$ as in Lemma \ref{lem:ContGammaW}. 
For all $\gamma\in  \Gamma$, 
we have 
$\ds d^{g_\varepsilon}(o, \gamma   o) \geq e^{-\varepsilon/2} d^{g_0}(o, \gamma   o)$. 
Therefore,  for all $\varepsilon\in (-\varepsilon_0, \varepsilon_0)$ 
we get $\delta_{W_r}(g_\varepsilon) \leq e^{\varepsilon/2}\delta_{W^c}(g_0) \leq e^\alpha \delta_{W^c}(g_0)$ up to reducing $\varepsilon_0$. 
The other inequality is proved similarly.
\end{proof}

Let us  show now the last item of Theorem \ref{theo:SPR-FinBM}, that is 
that the mass of the Bowen-Margulis measure 
of $g_\varepsilon$ varies continuously. 
This will rely on the following estimate, which is a uniform version of Proposition \ref{prop:SPR-PatSul}.

\begin{lemm}\label{lem:UnifMassPatSul}
For all $\delta_0\in (0, \delta_\Gamma(g_0) - \delta_\infty(g_0))$ 
and $\beta\in (0, \delta_0)$, 
there exists a compact set $W\subset M$ with nice preimage $\tilde W$, 
$\varepsilon_0>0$ and $C>0$ such that for all $\varepsilon\in (-\varepsilon_0, \varepsilon_0)$, we have $\delta_\Gamma(g_\varepsilon) - \delta_{W^c}(g_\varepsilon) \geq \delta_0$ and
$$
\nu_o^{g_\varepsilon}(U_T(\tilde W, g_\varepsilon)) \leq C e^{-\beta T}\,,
$$
where $U_T(\tilde W, g_\varepsilon)$ is defined as in Proposition \ref{prop:SPR-PatSul}.
\end{lemm}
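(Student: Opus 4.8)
The statement to prove, Lemma \ref{lem:UnifMassPatSul}, is a uniform-in-$\varepsilon$ version of the estimate $\nu_o^{g}(U_T)\le C e^{-(\delta_\Gamma-\delta_{W^c}-\eta)T}$ obtained in Proposition \ref{prop:MassPatSul}. The plan is to first fix the compact set $W$ using the uniform continuity of the relevant entropies (Corollary \ref{coro:continuite-delta-W}), then to run the argument of Proposition \ref{prop:MassPatSul} \emph{for each fixed $\varepsilon$}, keeping track of the fact that every constant that appears can be bounded independently of $\varepsilon$ in a small interval $(-\varepsilon_0,\varepsilon_0)$.

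\textbf{Step 1: choice of $W$ and uniform control of the critical gap.} Given $\delta_0\in(0,\delta_\Gamma(g_0)-\delta_\infty(g_0))$, by definition of the entropy at infinity there is a compact set $W_0\subset M$ (with nice preimage) such that $\delta_{W_0^c}(g_0)<\delta_\Gamma(g_0)-\delta_0'$ for some $\delta_0'>\delta_0$ close to $\delta_0$. I would take $W=(W_0)_{2r}$ for a suitable $r>0$, with nice preimage $\tilde W\supset \tilde W_0$. By Corollary \ref{coro:continuite-delta-W} and Lemma \ref{lem:continuity-exponent}, there is $\varepsilon_0>0$ such that for all $\varepsilon\in(-\varepsilon_0,\varepsilon_0)$ one has both $\delta_{W^c}(g_\varepsilon)\le e^{\alpha}\delta_{W_0^c}(g_0)$ and $\delta_\Gamma(g_\varepsilon)\ge e^{-\varepsilon/2}\delta_\Gamma(g_0)$; choosing $\alpha$ and $\varepsilon_0$ small enough gives $\delta_\Gamma(g_\varepsilon)-\delta_{W^c}(g_\varepsilon)\ge\delta_0$ for all such $\varepsilon$, which is the first assertion. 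Fix $\eta=(\delta_0-\beta)/2>0$, so that $\delta_\Gamma(g_\varepsilon)-\delta_{W^c}(g_\varepsilon)-\eta\ge\beta+\eta>\beta$ uniformly in $\varepsilon$.

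\textbf{Step 2: uniform Patterson--Sullivan estimate.} For each $\varepsilon\in(-\varepsilon_0,\varepsilon_0)$ the manifold $(M,g_\varepsilon)$ is SPR by Step 1, hence divergent by Proposition \ref{prop:MassPatSul}, and its Patterson--Sullivan density $(\nu^{g_\varepsilon}_x)$ is unique and concentrated on the radial limit set. Now I would re-run the shadow computation of Proposition \ref{prop:MassPatSul} verbatim with $g$ replaced by $g_\varepsilon$, using the slow-growth function $h\equiv1$ version (second half of that proof). The inclusion $\Gamma\cdot o\cap U_T(\tilde W,g_\varepsilon)\subset\bigcup_{\gamma\in\Gamma_{\tilde W}(g_\varepsilon),\,d^{g_\varepsilon}(o,\gamma o)\ge T-2D_\varepsilon}\calO_{\tilde W}(\gamma\tilde W)$ and the shadow estimate $\nu^{g_\varepsilon}_o(\calO_{\tilde W}(\gamma\tilde W))\le e^{\delta_\Gamma(g_\varepsilon)C_a}e^{-\delta_\Gamma(g_\varepsilon)d^{g_\varepsilon}(o,\gamma o)}$ go through as before. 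The key point is that $C_a$ depends only on the uniform curvature upper bound $-a^2$ (valid for all $g_\varepsilon$), that $D_\varepsilon={\rm diam}_{g_\varepsilon}(\tilde W)\le e\,{\rm diam}_{g_0}(\tilde W)=:D$ uniformly, that $\delta_\Gamma(g_\varepsilon)\le e\,\delta_\Gamma(g_0)$, and — the only genuinely nontrivial ingredient — that $\Gamma_{\tilde W}(g_\varepsilon)$ is controlled uniformly in $\varepsilon$ by Lemma \ref{lem:ContGammaW}: there is a finite set $F$ with $\Gamma_{\tilde W_r}(g_\varepsilon)\subset\bigcup_{\alpha,\beta\in F}\alpha\Gamma_{\tilde W_0}(g_0)\beta$, which together with $d^{g_\varepsilon}(o,\gamma o)\ge e^{-1/2}d^{g_0}(o,\gamma o)-\mathrm{const}$ yields, for $T\ge 4D$,
$$
\nu^{g_\varepsilon}_o(U_T(\tilde W,g_\varepsilon))\le e^{\delta_\Gamma(g_\varepsilon)(C_a+2D)}\sum_{\gamma\in\Gamma_{\tilde W}(g_\varepsilon)}e^{-(\delta_{W^c}(g_\varepsilon)+\eta)d^{g_\varepsilon}(o,\gamma o)}\cdot e^{-(\delta_\Gamma(g_\varepsilon)-\delta_{W^c}(g_\varepsilon)-\eta)T}.
$$
The prefactor $e^{\delta_\Gamma(g_\varepsilon)(C_a+2D)}$ is bounded by $e^{e\delta_\Gamma(g_0)(C_a+2D)}$, and the series $\sum_{\gamma\in\Gamma_{\tilde W}(g_\varepsilon)}e^{-(\delta_{W^c}(g_\varepsilon)+\eta)d^{g_\varepsilon}(o,\gamma o)}$ converges with a bound uniform in $\varepsilon$, since by Lemma \ref{lem:ContGammaW} and the quasi-isometry $e^{-\varepsilon}g_0\le g_\varepsilon\le e^{\varepsilon}g_0$ it is dominated by a finite multiple of $\sum_{\gamma\in\Gamma_{\tilde W_0}(g_0)}e^{-(\delta_{W^c}(g_\varepsilon)+\eta/2)d^{g_0}(o,\gamma o)}$, and $\delta_{W^c}(g_\varepsilon)+\eta/2$ stays above $\delta_{W_0^c}(g_0)$ up to shrinking $\varepsilon_0$. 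Combining with $\delta_\Gamma(g_\varepsilon)-\delta_{W^c}(g_\varepsilon)-\eta\ge\beta$ from Step 1 gives $\nu^{g_\varepsilon}_o(U_T(\tilde W,g_\varepsilon))\le Ce^{-\beta T}$ for a constant $C=C(\delta_0,\beta,g_0,a,W)$ independent of $\varepsilon$, which is the claim.

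\textbf{Main obstacle.} The delicate part is not any single inequality but the bookkeeping that makes \emph{every} constant uniform in $\varepsilon$: the diameter of $\tilde W$, the shadow constant, the critical exponent, and above all the convergence of the series over $\Gamma_{\tilde W}(g_\varepsilon)$ whose index set itself depends on $\varepsilon$. The resolution of this last issue is precisely Lemma \ref{lem:ContGammaW}, which compares $\Gamma_{\tilde W}(g_\varepsilon)$ to the fixed set $\Gamma_{\tilde W_0}(g_0)$ up to multiplication by a finite set; this is why one must enlarge $W$ to a neighbourhood $(W_0)_{2r}$ at the outset so that $g_\varepsilon$-geodesics, which wander off $g_0$-geodesics by an $\varepsilon$-small amount (Section \ref{sec:ConvGeod}), still ``see'' the same combinatorics. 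Once that comparison is in hand, the rest is the argument of Proposition \ref{prop:MassPatSul} with uniform constants substituted in.
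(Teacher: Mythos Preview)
Your proposal is correct and follows essentially the same route as the paper's proof: choose the compact set via Corollary \ref{coro:continuite-delta-W} to get a uniform critical gap, then rerun the shadow computation of Proposition \ref{prop:MassPatSul} with $h\equiv 1$, using Lemma \ref{lem:ContGammaW} and the quasi-isometry $e^{-\varepsilon}g_0\le g_\varepsilon\le e^{\varepsilon}g_0$ to bound the series over $\Gamma_{\tilde W}(g_\varepsilon)$ by a fixed convergent series over $\Gamma_{\tilde W_0}(g_0)$. Your identification of the main obstacle---the $\varepsilon$-dependence of the index set $\Gamma_{\tilde W}(g_\varepsilon)$, resolved precisely by Lemma \ref{lem:ContGammaW}---matches the paper's reasoning exactly; the only cosmetic difference is that the paper takes the final compact set to be an $r$-neighbourhood (writing $W_r$) while you take a $2r$-neighbourhood, which is immaterial.
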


\begin{proof}
 Let $\delta_0\in (0, \delta_\Gamma(g_0) - \delta_\infty(g_0))$ be fixed. 
By the above corollary, for $|\varepsilon|$ small enough, 
$(M, g_\varepsilon)$ is SPR and has therefore a finite Bowen-Margulis. 
Choose  $\alpha>0$ small enough and a large enough compact set $W\subset M$ so that 
$\delta_\infty(g_0)\leq \delta_{W^c}(g_0) \leq e^{\alpha} \delta_\infty(g_0)$. 
Let $r>0$ small enough and $\varepsilon_0>0$ 
given by  Corollary \ref{coro:continuite-delta-W}  be such that
 for all $\varepsilon\in (-\varepsilon_0, \varepsilon_0)$, 
$$
e^{-\alpha}\delta_{W^c}(g_0)\leq \delta_{W_r}(g_\varepsilon) 
\leq e^{\alpha}\delta_{W^c}(g_0)\,.
$$
Up to decreasing $\alpha>0$, we can  therefore assume
 that for all $\varepsilon\in (-\varepsilon_0, \varepsilon_0)$,  
$$
\delta_{\Gamma}(g_\varepsilon) - \delta_{W_r}(g_\varepsilon) \geq \delta_0>0 \,.
$$
Let $\beta\in (0, \delta_0)$  and $\tilde W_r$ nice preimage of $W_r$ be fixed. Define  $D>0$ as 
$\ds D=\sup_{\varepsilon\in (-\varepsilon_0, \varepsilon_0)}\mbox{diam}( \tilde W_r)$. 
For all $\varepsilon\in (-\varepsilon_0, \varepsilon_0)$, 
let $U_T^\varepsilon = U_T(\tilde W_r, g_\varepsilon)$ be defined as in 
Proposition \ref{prop:SPR-PatSul}.
By the last estimate  in the proof of Proposition \ref{prop:MassPatSul}, 
there exists $C_a>0$, only depending on the curvature upperbound of 
the metrics $g_\varepsilon$, such that for all $T>4D$,
$$
\nu_0^{g_\varepsilon}(U_T^\varepsilon) \leq 
e^{\delta_\Gamma(g_\varepsilon) (C_a+2D)}e^{-\beta T}
\sum_{\gamma \in \Gamma_{\tilde W_r(g_\varepsilon)}} e^{-(\delta_{W_r}(g_\varepsilon) + \beta)d^{g_\varepsilon}(o, \gamma   o)}.
$$
Therefore, 
$$
\nu_0^{g_\varepsilon}(U_T^\varepsilon) \leq e^K e^{-\beta T} \sum_{\gamma \in \Gamma_{\tilde W(g_0)}} e^{-(e^{-\e}\delta_{W}(g_0) + \alpha)e^{-\varepsilon}d^{g_0}(o, \gamma \cdot o)},
$$
where $K\in \mathbb R$ is independent of $\varepsilon$.
 Up to reducing $\alpha>0$ and $\varepsilon_0>0$, 
we can suppose that $\ds e^{-\alpha}\delta_{W}(g_0) + \beta)e^{-\varepsilon} > \delta_{W}(g_0) + \frac \beta 2$. 
Therefore, we get that for all $\varepsilon\in (-\varepsilon_0, \varepsilon_0)$,
\begin{equation}\label{eq:MassPSUnif}
\nu_0^{g_\varepsilon}(U_T^\varepsilon) \leq Ce^{-\beta T},
\end{equation}
with $C>0$ being independent of $\varepsilon$. 
This concludes the proof of Lemma \ref{lem:UnifMassPatSul}, the compact set $W$ of the statement being the set  $W_r$ of the proof.
\end{proof}

Let us now conclude the proof of Theorem \ref{theo:ConvMassSPR}. 
Let $W\subset M$, $\tilde W \subset \tilde M$ and $\beta, \varepsilon_0, C>0$ satisfy the conclusion of Lemma \ref{lem:UnifMassPatSul}. 
For all $R>0$, set as usual $\ds W_R = \{x\in M ; d_{g_0}(x,W)\leq R\}$. 
We have shown in Theorem \ref{theo:ContMBM} that, 
under our current hypotheses, the Bowen-Margulis measure 
$\varepsilon \mapsto m_{BM}^{g_\varepsilon}$ varies continuously
for the weak-* convergence, i.e. on the dual of compactly supported maps. 
In particular, for all \emph{fixed compact set} $K\subset M$ 
with $m_{BM}^{g_0}(\partial S^{g_0}K)=0$, 
the map $\varepsilon \mapsto m_{BM}^{g_\varepsilon}(S^{g_\varepsilon}K)$ is continuous at $\e=0$. 
Therefore the following lemma will imply Theorem \ref{theo:ConvMassSPR}.


\begin{lemm} With the above notations, for all $\alpha>0$, 
there exists $R_0>0$ such that for all $R\geq R_0$ and all $\varepsilon\in(-\varepsilon_0, \varepsilon_0)$, we have
$$
m_{BM}^{g_\varepsilon}\left(S^{g_\varepsilon}(M\backslash W_R)\right) \leq \alpha\,.
$$
\end{lemm}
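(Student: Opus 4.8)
The plan is to split a generic $g_\varepsilon$-orbit into its excursions away from $\Gamma\cdot\tilde W$ and to control the total Bowen--Margulis mass of the \emph{long} ones by means of the uniform Patterson--Sullivan estimate of Lemma \ref{lem:UnifMassPatSul}. Throughout, $W$, $\tilde W$, $\varepsilon_0$, $\beta>0$ and $C>0$ are those furnished by Lemma \ref{lem:UnifMassPatSul}; by Corollary \ref{coro:continuite-delta-W} we may assume every $(M,g_\varepsilon)$ with $|\varepsilon|<\varepsilon_0$ is SPR, hence has a finite, ergodic and conservative Bowen--Margulis measure, and by Corollary \ref{coro:support-BM} each $m_{BM}^{g_\varepsilon}$ is carried by bi-infinite $g_\varepsilon$-geodesics meeting $W$ infinitely often both in the past and in the future; in particular, for $m_{BM}^{g_\varepsilon}$-a.e.\ $v$, every maximal interval on which the footpoint of a lift of the orbit stays outside $\Gamma\cdot\tilde W$ has finite length. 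The first, purely geometric, observation I would record is that if such a maximal interval has length $\tau$, then at each of its times $s$ the footpoint lies at $g_\varepsilon$-distance at most $\min(s,\tau-s)\le\tau/2$ from $\Gamma\cdot\tilde W$, hence at $g_0$-distance at most $e^{\varepsilon/2}\tau/2<e^{1/2}\tau/2$ from it (using $e^{-\varepsilon}g_0\le g_\varepsilon\le e^{\varepsilon}g_0$); so, with $c:=e^{-1/2}$, any excursion of length $\tau<2cR$ projects entirely into $W_R$. Consequently $S^{g_\varepsilon}(M\setminus W_R)$ meets only excursions of length $\tau\ge 2cR$, inside each of which it occupies a time-interval of length at most $\tau$.

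Next I would realize $m_{BM}^{g_\varepsilon}$ as the suspension, over the cross-section $\Sigma^{g_\varepsilon}$ of outgoing unit vectors based on $\bd(\Gamma\cdot\tilde W)$ with induced measure $\bar m^{g_\varepsilon}$, of the first-return time $\tau^{g_\varepsilon}$ (the length of the excursion it starts); by the previous paragraph,
\[
m_{BM}^{g_\varepsilon}\!\left(S^{g_\varepsilon}(M\setminus W_R)\right)\ \le\ \int_{\{\tau^{g_\varepsilon}\ge 2cR\}}\tau^{g_\varepsilon}\,d\bar m^{g_\varepsilon}\ \le\ \sum_{n\ge 2cR}(n+1)\,\bar m^{g_\varepsilon}\!\left(\{\tau^{g_\varepsilon}\ge n\}\right),
\]
so everything reduces to bounding $\bar m^{g_\varepsilon}(\{\tau^{g_\varepsilon}\ge n\})$ uniformly in $\varepsilon$. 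If $w\in\Sigma^{g_\varepsilon}$ has $\tau^{g_\varepsilon}(w)\ge n$, the forward $g_\varepsilon$-geodesic from $\pi w\in\bd\tilde W$ avoids $\Gamma\cdot\tilde W$ for time $\ge n$ (up to a possible initial grazing of $\tilde W$), whence $w_+\in\bigcup_{\gamma\in F}\gamma\cdot U_{n-2D}(\tilde W,g_\varepsilon)$ where $D={\rm diam}_{g_0}(\tilde W)$ and $F$ is the finite, $\varepsilon$-uniform (Lemma \ref{lem:ContGammaW}) set of adjacent elements. Since $\pi w$ ranges over the \emph{compact} set $\bd\tilde W$, the Busemann factors in the product formula for $\tilde m_{BM}^{g_\varepsilon}$ (section \ref{BM}) are bounded above and below independently of $\varepsilon$ by the uniform pinching $-b^2\le K_{g_\varepsilon}\le -a^2$ and $e^{-\varepsilon}g_0\le g_\varepsilon\le e^{\varepsilon}g_0$, so $\bar m^{g_\varepsilon}(\{\tau^{g_\varepsilon}\ge n\})\le C_1\,\nu_o^{g_\varepsilon}\!\big(\bigcup_{\gamma\in F}\gamma U_{n-2D}(\tilde W,g_\varepsilon)\big)$; using $\nu_o^{g_\varepsilon}(\gamma\,\cdot)=\nu_{\gamma^{-1}o}^{g_\varepsilon}(\cdot)$, the conformality $d\nu_{\gamma^{-1}o}^{g_\varepsilon}/d\nu_o^{g_\varepsilon}\le e^{\delta_\Gamma(g_\varepsilon)d^{g_\varepsilon}(o,\gamma o)}$ and finiteness of $F$, this is $\le C_2\,\nu_o^{g_\varepsilon}\big(U_{n-2D}(\tilde W,g_\varepsilon)\big)$ with $C_1,C_2$ uniform. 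Lemma \ref{lem:UnifMassPatSul} then gives $\bar m^{g_\varepsilon}(\{\tau^{g_\varepsilon}\ge n\})\le C_3 e^{-\beta n}$ with $C_3$ independent of $\varepsilon$, hence $m_{BM}^{g_\varepsilon}(S^{g_\varepsilon}(M\setminus W_R))\le C_3\sum_{n\ge 2cR}(n+1)e^{-\beta n}$, which tends to $0$ as $R\to\infty$ uniformly in $|\varepsilon|<\varepsilon_0$. Choosing $R_0$ so that this tail is $<\alpha$ proves the lemma; together with the weak-$*$ continuity of $m_{BM}^{g_\varepsilon}$ from Theorem \ref{theo:ContMBM} it yields continuity of $\varepsilon\mapsto\|m_{BM}^{g_\varepsilon}\|$, i.e.\ Theorem \ref{theo:ConvMassSPR}.

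The main obstacle is not the excursion decomposition but keeping \emph{every} constant ($F$, the Busemann and transverse bounds over $\bd\tilde W$, the exponent $\beta$ and the constant $C$) uniform over the whole family $(g_\varepsilon)_{|\varepsilon|<\varepsilon_0}$; this is precisely what Lemma \ref{lem:UnifMassPatSul} and Lemma \ref{lem:ContGammaW} were established for, and why the uniform pinching and the comparison $e^{-\varepsilon}g_0\le g_\varepsilon\le e^{\varepsilon}g_0$ are invoked. A secondary technical point, to be dealt with using the piecewise $\calC^1$ regularity of $\bd\tilde W$, is that the cross-section $\Sigma^{g_\varepsilon}$ is genuinely transverse to the $g_\varepsilon$-geodesic flow off an $m_{BM}^{g_\varepsilon}$-null set, so that the suspension representation of $m_{BM}^{g_\varepsilon}$ used above is valid.
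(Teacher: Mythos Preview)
Your argument is correct and follows essentially the same route as the paper: both bound the mass outside $W_R$ by summing over excursions away from $W$ of length $\gtrsim R$, pull back along the flow to vectors based near $\tilde W$ whose forward endpoint lies in $U_n(\tilde W,g_\varepsilon)$, and then invoke the product structure of $m_{BM}^{g_\varepsilon}$ together with the uniform estimate of Lemma~\ref{lem:UnifMassPatSul}. The only cosmetic difference is that the paper writes this as a direct decomposition $S^{g_\varepsilon}O_R=\coprod_{n\ge R-1}O_n^\varepsilon$ indexed by the backward hitting time of $W$, whereas you package the same computation as a suspension over the outgoing cross-section; the two are equivalent, and your acknowledged technical point about transversality of $\Sigma^{g_\varepsilon}$ is exactly what the paper sidesteps by working with the sets $O_n^\varepsilon$ directly.
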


\begin{proof}
Let $R>8 {\rm diam} (W)$ be fixed and let $O_R = M\backslash W_R$. 
By Corollary \ref{coro:support-BM},
 for all $\varepsilon\in (-\varepsilon_0, \varepsilon_0)$, 
since $\delta_{W^c}(g_\varepsilon) < \delta_\Gamma(g_\varepsilon)$,
 the Bowen-Margulis measure $m_{BM}^{g_\varepsilon}$ 
gives full mass to the set of vectors 
which hit infinitely often $W$ in the past and in the future. 
In particular,
$$
m_{BM}^{g_\varepsilon}(S^{g_\varepsilon}O_R) = 
m_{BM}^{g_\varepsilon}\left(\coprod_{n\geq R-1} O_n^\varepsilon\right),
$$
where $O_n^\e$ is defined for all integers $n\geq R-1$ by 
$$
O_n^\varepsilon = \left\{ v\in S^{g_\varepsilon}O_R\; ; \; \exists t\in [n, n+1[ \mbox{ s.t. } \forall s\in [0, t), \pi g^{-s}v\notin W \mbox{ and } \pi g_\varepsilon^{-t}v\in W\right\}\,.
$$
Therefore, since the Bowen-Margulis measure $m_{BM}^{g_\varepsilon}$ 
is invariant under the geodesic flow $(g_\varepsilon^t)$,
$$
m_{BM}^{g_\varepsilon}(S^{g_\varepsilon}O_R) =
 \sum_{n\geq R-1} m_{BM}^{g_\varepsilon}\left(O_n^\varepsilon\right)
 = \sum_{n\geq R-1} m_{BM}^{g_\varepsilon}\left(g^{-n}_\varepsilon(O_n^\varepsilon)\right)
$$
Now, by definition for all $v\in g^{-n}_\varepsilon(O_n^\varepsilon)$, 
there exists $t\in [0, 1)$ such that $w = g^{-n-t}_\varepsilon v \in S^{g_\varepsilon} W$ and for all $s\in [0, n]$, 
we have $\pi g^s w \notin W$. 

Let us write 
$$
\tilde A_n^\varepsilon = \left\{v\in S^{g_\varepsilon} \tilde W ; \exists t\in [n, n+1) \mbox{ s.t. } \forall s\in (0, t), \pi g_\varepsilon^s v\notin \gamma \cdot \tilde W \mbox{ and } \pi g^tv\in \gamma \cdot \tilde W\right\}.
$$
The reader will easily check that 
$\ds \bigcup_{s \in [0, 1)} g_\varepsilon^s A_n^{\varepsilon}\subset S^{g_\varepsilon}\tilde M$ projects onto
 $g^{-n}(O_n^\varepsilon)$. 
Moreover, as soon as $\varepsilon_0$ is small enough, 
since $g_\varepsilon\geq e^{-\varepsilon}g_0 \geq \frac 14g_0$, 
all vectors $v\in \tilde A_n^\varepsilon$ have a point at infinity $v^+$ which satisfies $v_+\in U_{n/2}(\tilde W, g_\varepsilon)$.  
As the map 
$$
v\mapsto e^{\delta_\Gamma(g_\varepsilon)(\calB_{v_+}(o, \pi v)+\calB_{v_-}(o, \pi v)}
$$
is uniformly bounded in $v\in   W$ and $\e\in (-\e_0,\e_0)$,  
the product structure of the Bowen-Margulis measure (see section \ref{BM}) 
implies
$$
m_{BM}^{g_\varepsilon}(O_n^\varepsilon) = 
m_{BM}^{g_\varepsilon}(g_\varepsilon^{-n}(O_n^\varepsilon)) \leq 
2K\nu_o^{g_\varepsilon}(\Lambda_\Gamma)\times \nu_o^{g_\varepsilon}(U_{n/2}^\varepsilon),
$$
which eventually gives by Lemma \ref{lem:UnifMassPatSul} 
$$
m_{BM}^{g_\varepsilon}(O_n^\varepsilon) \leq 2KCe^{-\frac \alpha 2 n},
$$
where $C$ and $\alpha$ do not depend on $\varepsilon$. 
Therefore, we  get
$$
m_{BM}^{g_\varepsilon}(S^{g_\varepsilon}O_R) \leq 2KC\sum_{n\geq R-1}e^{-\frac \alpha 2 n} \leq \e
$$
as soon as $R$ is large enough.
\end{proof}

\bibliography{biblio}

\bibliographystyle{amsalpha}

\end{document}